\documentclass[oneside,a4paper,reqno]{amsart}

\usepackage[utf8]{inputenc}

\usepackage{enumerate}  

\usepackage{mathrsfs}   

\usepackage{multirow}

\usepackage{amsmath, amsthm, amscd, amssymb, latexsym, eucal, mathtools}
\usepackage{tensor}

\usepackage[all]{xy}


\def\serieslogo@{} \def\@setcopyright{} \makeatother

\usepackage{boondox-calo}


\usepackage{upgreek}   

\usepackage{graphicx}

\usepackage{array}


\usepackage{multienum}

\usepackage[colorinlistoftodos]{todonotes}

\usepackage[bookmarksdepth=3]{hyperref}
\usepackage{color}
\usepackage{cite}
\usepackage[capitalise, noabbrev, nameinlink]{cleveref}
\crefname{subsection}{Subsection}{Subsections}

\usepackage{tikz-cd}
\usepackage{quiver}

\usepackage{slashbox}

\usepackage{makecell}



\makeatletter
\renewcommand*\env@matrix[1][c]{\hskip -\arraycolsep
	\let\@ifnextchar\new@ifnextchar
	\array{*\c@MaxMatrixCols #1}}
\makeatother

\usepackage{color}

\usepackage{adjustbox}

\usepackage{hhline}

\usepackage{float}

\usepackage{tikz}
\usetikzlibrary{calc}



 \pagestyle{myheadings}

\makeatletter
\def\@endtheorem{\endtrivlist}
\makeatother


\newcommand{\GG}{\mathcal{G}}

\newcommand{\lr}{\mathrm{l}}
\newcommand{\er}{\mathrm{e}}

\newcommand{\Ga}{\Gamma}
\newcommand{\La}{\Lambda}

\newcommand{\Ll}{\mathcal{l \! l}}
\newcommand{\lal}{\mathcal{l }}

\newcommand{\BQ}{ \mathcal{B}_{ Q} }
\newcommand{\BQA}{ \mathcal{B}_{\! Q}^{ A} }
\newcommand{\BQnotA}{ \mathcal{B}_Q^{\, \nott \!\! A} }
\newcommand{\CC}{ \mathcal{C} }
\newcommand{\NN}{ \mathcal{N} }

\newcommand{\BQa}{ \mathcal{B}_{\! Q}^{\, \alpha} }
\newcommand{\BQnota}{ \mathcal{B}_Q^{\, \nott \! \alpha} }

\newcommand{\BQB}{ \mathcal{B}_{\! Q}^{ B} }

\newcommand{\BQQnotB}{ \mathcal{B}_{Q'}^{\, \nott \!\! B} }

\newcommand{\epic}{\twoheadrightarrow}

\newcommand{\monicc}{\hookrightarrow}

\newcommand{\la}{\langle}
\newcommand{\ra}{\rangle}

\newcolumntype{C}[1]{>{\centering\arraybackslash}m{#1}}

\newcommand\TTT{\rule{0pt}{2.6ex}}       
\newcommand\BBB{\rule[-1.2ex]{0pt}{0pt}} 

\newcommand{\ol}[1]{\overline{#1}}

\newcommand\isomto{\stackrel{\sim}{\smash{\longrightarrow}\rule{0pt}{0.4ex}}}

\newcommand\wt[1]{\widetilde{#1}}


\DeclareMathOperator{\pd}{\mathsf{pd}}
\DeclareMathOperator{\fd}{\mathsf{fd}}
\DeclareMathOperator{\idim}{\mathsf{id}}

\DeclareMathOperator{\id}{\mathrm{id}}

\DeclareMathOperator{\Hom}{\mathrm{Hom}}

\DeclareMathOperator{\Ker}{\mathrm{Ker} \!}
\DeclareMathOperator{\Image}{\mathrm{Im} \!}

\DeclareMathOperator{\rmod}{\! - \mathrm{mod}}

\DeclareMathOperator{\rMod}{\! - \mathrm{Mod}}

\DeclareMathOperator{\tip}{\mathrm{Tip}}
\DeclareMathOperator{\ntip}{\mathrm{NonTip}}

\DeclareMathOperator{\nott}{\mathrm{not}}

\DeclareMathOperator{\fpd}{\mathsf{fin.dim}}
\DeclareMathOperator{\Fpd}{\mathsf{Fin.dim}}

\DeclareMathOperator{\rad}{\mathrm{rad}}
\DeclareMathOperator{\topp}{\mathsf{top}}

\DeclareMathOperator{\gd}{\mathsf{gl.dim}}

\DeclareMathOperator{\Tor}{\mathrm{Tor}}

\DeclareMathOperator{\supp}{\mathrm{supp}}

\DeclareMathOperator{\codomdim}{\mathrm{-}\,\mathrm{codom.}\mathrm{dim}}


\newtheorem{thm}{Theorem}[section]
\newtheorem{cor}[thm]{Corollary}
\newtheorem{lem}[thm]{Lemma}
\newtheorem{prop}[thm]{Proposition}

\newtheorem*{thmA}{Theorem A}
\newtheorem*{thmB}{Theorem B}
\newtheorem*{thmC}{Theorem C}
\newtheorem*{thmD}{Theorem D}

\newtheorem*{thmF}{Theorem F}

\theoremstyle{definition}

\newtheorem{defn}[thm]{Definition}

\newtheorem{exam}[thm]{Example}
\newtheorem{con}[thm]{Construction}

\newtheorem*{conE}{Construction E}

\theoremstyle{remark}

\newtheorem{rem}[thm]{Remark}

\crefname{exam}{Example}{Examples}

\crefname{lem}{Lemma}{Lemmas}

\crefname{cor}{Corollary}{Corollaries}


\def\a{\alpha}
\def\b{\beta}
\def\g{\gamma}
\def\d{\delta}
\def\e{\varepsilon}
\def\z{\zeta}
\def\i{\iota}
\def\k{\kappa}
\def\l{\lambda}
\def\m{\mu}

\hypersetup{hidelinks}




\begin{document}

	\title{Arrow reductions for the finitistic dimension conjecture}

	\author[Giatagantzidis]{Odysseas Giatagantzidis}
	
	\address{Department of Mathematics, Aristotle University of Thessaloniki, Thessaloniki 54124, Greece}
	
	\email{odysgiat@math.auth.gr}
	
	\date{\today}
	
	\keywords{%
		Finitistic dimensions, Bound quiver algebras, Generalized arrow removal operation, Removable sets of arrows, Arrow reduced version, Arrow irredundant version, Nilpotent multiplicative bimodules, Perfect bimodules, Strongly-finite projective dimension, Radical-preserving ring/algebra cleft extensions, Split ring/algebra homomorphisms, Trivial one-arrow extensions, Submodules with absorbing complement}
	
	\subjclass[2020]{%
		16D25,		
		16E05,		
		16E10,		
		16G10,		
		16G20,		
		16N20,		
		16S70.		
	}

	\begin{abstract}
		We present new techniques for removing arrows of bound quiver algebras, reducing thus the Finitistic Dimension Conjecture $\mathsf{(FDC)}$ for a given algebra to a smaller one. Unlike the classic arrow removal operation of Green-Psaroudakis-Solberg, our methods allow for removing arrows even when they occur in every generating set for the defining admissible ideal of the algebra.
		Our first main result establishes an equivalence for the finiteness of the fi\-ni\-ti\-stic (and global) dimensions of a ring $ \La $ and its quotient $ \La / K $, under specific homological and structural conditions on the ideal $ K $, in the broader context of left artinian rings. The application of this result to bound quiver algebras suggests the notion of removability for sets of arrows, and we prove that successive arrow removals of this sort lead to a uniquely defined arrow reduced version of the algebra.
		Towards the opposite direction, we characterize generalized arrow removal algebras via removable multiplicative bimodules, and introduce trivial one-arrow extensions as a novel combinatorial construction for adding arrows while preserving the finiteness of the finitistic (and global) dimensions.
		All these new techniques are illustrated through various concrete bound quiver algebras, for which confirming the $ \mathsf{(FDC)} $ had previously seemed intractable to the best of our knowledge.
	\end{abstract}

	\maketitle

	\setcounter{tocdepth}{1} \tableofcontents

	\section{Introduction and main results}

	\smallskip
	
	One of the longstanding homological conjectures in the representation theory of Artin algebras, namely the \emph{Finitistic Dimension Conjecture $\mathsf{(FDC)}$}, claims that the little finitistic dimension of every Artin algebra $ \La $, denoted by $\fpd \La $, is finite. Recall that the little finitistic dimension of any associative ring with unit is by definition the supremum of the projective dimensions of finitely generated modules with finite projective dimension. A stronger version of the conjecture asserts that the big finitistic dimension of every Artin algebra $ \La $, denoted by $\Fpd \La $, is finite, where the supremum is taken now over all modules (not necessarily finitely generated) with finite projective dimension.
	
	The importance of the finitistic dimensions lies in the fact that they provide a better measure for the homological complexity of the respective module categories of any ring $\La$ compared to the global dimension, denoted by $\gd \La$, when the latter is infinite. Moreover, most of the other homological conjectures in the representation theory of Artin algebras follow from the validity of the $\mathsf{(FDC)}$, see for instance the survey \cite{Happel}{}.

	Although there are many classes of Artin algebras that are known to satisfy the $\mathsf{(FDC)}$ -- see for instance the survey \cite{Huisgentale}{} -- there is a lack of techniques for proving that a given algebra outside of these classes satisfies the conjecture, even if we restrict our attention to finite dimensional algebras over a field.
	In an attempt to fill this gap, Green-Psaroudakis-Solberg introduced the \emph{arrow removal operation} \cite{arrowrem1}{} which made it possible to reduce the finiteness of the little finitistic dimension for a bound quiver algebra to the same problem for a smaller algebra by removing special sets of arrows. Specifically, they showed for a bound quiver algebra $\La = k Q / I $ that if (i) the admissible ideal $I$ can be generated by relations avoiding all arrows in some set $A$, and (ii) every path divided by an arrow in $ A $ (at least) twice is in $ I $, then the little finitistic dimension of $\La $ is finite if and only if the same holds for the quotient algebra of $\La $ over the ideal $K_A = \la A + I \ra $ generated by the arrows in $ A $.
	
	We recall that a bound quiver algebra $ \La = k Q / I $ consists of a finite quiver $ Q $, a field $ k $ and an ideal $ I $ of the path algebra $ k Q $. Furthermore, the ideal $ I $ is required to be admissible, that is every path occurring in it has length at least two, and for some $ N > 1 $ all paths of length $ N $ are contained in $ I $. It is well-known that bound quiver algebras are the basic finite dimensional algebras with one-dimensional simple modules. If the base field $ k $ is algebraically closed, then the latter condition is redundant, implying in particular that bound quiver algebras over $ k $ represent all finite dimensional algebras over $ k $ up to Morita equivalence.

	In this paper, we develop new tools that allow us to remove arrows even when they occur in every generating set for the defining admissible ideal. Our reduction technique is based on the following more general result.

	\begin{thmA}[\mbox{\cref{thm:main.III}{}}]		\hypertarget{thmA}{}
		Let $K $ be an ideal of a left artinian ring $\La $ contained in its Jacobson radical, and such that the natural epimorphism $\La \epic \La / K$ splits. Assume furthermore that $\pd K_\La < \infty $, and at least one of the conditions $\pd_\La K < \infty $ and $K ^2 = 0 $ is satisfied. Then
		\[
		\fpd \La < \infty \, \, \iff \, \, \fpd { \La / K } < \infty 
		\]
		and the equivalence remains valid if $\fpd $ is replaced by $\Fpd $ or $\gd$.
	\end{thmA}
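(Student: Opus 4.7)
The splitting $\iota \colon \La/K \hookrightarrow \La$ of the projection $\pi \colon \La \twoheadrightarrow \La/K$ turns every $\La/K$-module into a $\La$-module, setting up a change-of-rings framework. My plan is to establish two quantitative comparisons between projective dimensions over $\La$ and over $\La/K$, and then to combine them by means of the finite filtration $N \supseteq KN \supseteq K^2 N \supseteq \cdots$, which terminates because $\La$ is left artinian and $K \subseteq \rad(\La)$.

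The first inequality is $\pd_\La M \leq \pd_{\La/K} M + \pd_\La(\La/K)$ for every $\La/K$-module $M$, obtained by a Horseshoe Lemma induction along a $\La/K$-projective resolution of $M$; this requires $\pd_\La(\La/K)$ to be finite. The exact sequence $0 \to K \to \La \to \La/K \to 0$ delivers this immediately when $\pd_\La K < \infty$, and in the case $K^2 = 0$ the module $K$ is itself a $\La/K$-module, so combining the splitting with the right-module hypothesis $\pd K_\La < \infty$ transfers finiteness from right to left $\La$-projective dimension of $K$, and thence to $\La/K$. The second inequality, $\pd_{\La/K}(N/KN) \leq \pd_\La N + \pd K_\La + 1$ for any $\La$-module $N$ with $\pd_\La N < \infty$, is obtained by applying $\La/K \otimes_\La -$ to a finite $\La$-projective resolution of $N$ and exploiting the vanishing of $\Tor_i^\La(\La/K, N)$ for $i > \pd K_\La + 1$.

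The forward direction $\fpd \La < \infty \Rightarrow \fpd(\La/K) < \infty$ is then nearly immediate: for a $\La/K$-module $M$ with $\pd_{\La/K} M < \infty$, the first comparison gives $\pd_\La M \leq \fpd \La + \pd_\La(\La/K)$, and the second comparison applied to $M$ viewed as a $\La$-module (where $M/KM = M$) bounds $\pd_{\La/K} M$ uniformly. For the converse, given $N$ with $\pd_\La N < \infty$, I would induct along the filtration by the $K^i N$: the second comparison shows each factor $K^i N / K^{i+1} N$ has finite $\pd_{\La/K}$, hence at most $\fpd(\La/K)$; the first comparison lifts this back to a bound on $\pd_\La(K^i N / K^{i+1} N)$; and the standard bound on projective dimension in extensions reassembles $N$. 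I expect the main obstacle to be this reassembly in the reverse direction, and this is where the dichotomy plays its role: in the square-zero case the filtration has only one non-trivial step, while in the case $\pd_\La K < \infty$ an induction on $\pd_\La K$ is needed to propagate finiteness without circularity. The statements for $\Fpd$ and $\gd$ follow from the same arguments, since the comparisons make no use of finite generation and $\gd$ is simply the supremum over all modules.
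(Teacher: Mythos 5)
The most serious gap is your treatment of the square-zero case. You claim that when $K^2=0$, the splitting together with $\pd K_\La < \infty$ ``transfers finiteness from right to left $\La$-projective dimension of $K$''. This is false, and in fact the whole point of the square-zero alternative in the theorem is to cover situations where it fails: in \cref{exam:ar.rem.3} the ideal $K = \la \b + I_3 \ra$ satisfies $K^2=0$ and $\pd K_{\La_3} = 1$, yet $\pd_{\La_3} K = \infty$. In such cases $\pd_\La (\La/K)$ is infinite, your first comparison inequality $\pd_\La M \le \pd_{\La/K} M + \pd_\La(\La/K)$ is unavailable, and your argument only covers the hypothesis $\pd_\La K < \infty$. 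The paper handles $K^2=0$ on the other side of the ring: squaring to zero makes $K$ a right $\Ga$-module sitting inside itself with absorbing $\La$-complement $0$, so $\pd K_\Ga \le \pd K_\La < \infty$ by \cref{cor:cleft.2}, i.e.\ $\fd \La_\Ga < \infty$, and the comparison along the section $\i$ (\cref{cor:cleft.3}, resting on \cite[Theorem~3.11]{Giata1}) gives $\fpd \Ga \le \fpd \La + \fd K_\Ga$; the left projective dimension of $K$ is never used.

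Your second comparison, $\pd_{\La/K}(N/KN) \le \pd_\La N + \pd K_\La + 1$, is also not established by the argument you sketch: applying $\Ga \otimes_\La -$ to a finite projective resolution of $N$ yields a complex whose degree-$i$ homology is $\Tor_i^\La(\Ga,N)$, and the hypothesis only kills these for $i > \fd \Ga_\La \le \pd K_\La + 1$; in the low degrees the complex need not be exact, so it is not a resolution of $N/KN$ and gives no bound on $\pd_\Ga(N/KN)$. The legitimate use of this Tor-vanishing runs in the opposite direction: one first replaces $N$ by the syzygy $\Omega^{d}_\La N$ with $d = \fd \Ga_\La$ so that all higher Tor vanish, and uses $K \subseteq J(\La)$ to keep the tensored minimal resolution minimal --- this is precisely the content of the comparison in \cite{Giata1} that the paper invokes for $\fpd \La \le \fpd \Ga + \pd K_\La + 1$ (cf.\ \cref{lem:cleft.3}). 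The forward direction is instead powered by a genuinely different structural fact: for any $\Ga$-module (more generally, any $\Ga$-submodule with an absorbing $\La$-complement) the minimal $\Ga$-projective resolution is a direct summand of the minimal $\La$-projective resolution (\cref{lem:cleft.1}), giving $\pd_\Ga N \le \pd_\La N$ outright, with no correction term and no tensoring of a possibly non-exact complex. Finally, your filtration-and-reassembly plan for the converse stalls for a further reason: the submodules $K^i N$ of a module of finite $\La$-projective dimension need not have finite projective dimension themselves, so even granting your second comparison it could not be applied to the deeper filtration factors.
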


	Returning to the context of a bound quiver algebra $ \La = k Q / I $, we define a set of arrows $ A $ to be \emph{removable} if (i) the ideal $K = K_A$ satisfies the assumptions of 
	\hyperlink{thmA}{Theorem~A}, and (ii) the natural epimorphism $\La \epic \La / K_A $ admits a canonical section monomorphism. See \cref{defn:ar.rem.6}{} and the preceding lemma for more details on the latter condition, which we call \emph{pre-removability}.
	
	
	A direct application of \hyperlink{thmA}{Theorem~A} yields the following result.
	
	\begin{thmB}[\mbox{\cref{thm:GAR}{}}]		\phantomsection		\hypertarget{thmB}{}
		Let $A $ be a removable set of arrows in $\La = k Q / I$. Then
		\[
		\fpd \La < \infty  \,\, \iff \,\,  \fpd { \La / \la A + I \ra } <\infty
		\]
		and the equivalence remains valid if $\fpd $ is replaced by $\Fpd $ or $\gd$.
	\end{thmB}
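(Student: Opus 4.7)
The plan is to apply \hyperlink{thmA}{Theorem~A} directly to the ideal $K := K_A$ of $\La = kQ/I$, once its hypotheses have been verified for this particular choice. Since the definition of a removable set of arrows is engineered precisely to supply those hypotheses, the proof should reduce to a short bookkeeping argument.

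The verification splits into three points. First, since $A$ consists of arrows and $\La$ is a bound quiver algebra, the Jacobson radical $\rad \La$ coincides with the image in $\La$ of the arrow ideal of $kQ$; in particular, any ideal generated by (the images of) a subset of arrows lies inside $\rad \La$, whence $K_A \subseteq \rad \La$. Second, the pre-removability part of the definition of a removable set produces a canonical section of the natural epimorphism $\La \epic \La/K_A$, which delivers the splitting hypothesis required by \hyperlink{thmA}{Theorem~A}. Third, condition~(i) in the definition of removable explicitly packages the remaining assumptions of \hyperlink{thmA}{Theorem~A}, namely $\pd (K_A)_\La < \infty$ together with at least one of $\pd\, {}_\La K_A < \infty$ and $K_A^{\,2} = 0$.

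With all the hypotheses of \hyperlink{thmA}{Theorem~A} in place for the pair $(\La, K_A)$, the equivalence
\[
\fpd \La < \infty \iff \fpd(\La / K_A) < \infty,
\]
as well as the corresponding ones obtained by replacing $\fpd$ with $\Fpd$ or $\gd$, follow at once. Since $\La / K_A = \La / \la A + I \ra$ by construction, this is exactly the statement of \hyperlink{thmB}{Theorem~B}.

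There is no real obstacle here, because the removability conditions have been tailored to match \hyperlink{thmA}{Theorem~A}; the genuine content lives inside the latter, where one must argue ring-theoretically about the split extension $\La/K \monic \La \epic \La/K$ and track projective dimensions across it. The only subtlety that could demand a few extra words is the verification that pre-removability yields a section in the sense needed by \hyperlink{thmA}{Theorem~A} (i.e., a ring-theoretic splitting, compatible with the admissible-ideal structure), rather than merely a splitting of modules; this is exactly what the auxiliary lemma preceding the definition of pre-removability in the paper records, so I would simply cite that lemma at the appropriate point and be done.
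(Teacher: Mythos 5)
Your proposal is correct and is essentially the paper's own proof: both arguments simply note that $\La$ is left artinian, that $K_A \subseteq J(\La)$ with the natural epimorphism split via the pre-removability lemma (\cref{lem:ar.rem.2}{}), and that the removability conditions of \cref{defn:ar.rem.1}{} supply exactly the projective-dimension hypotheses of \cref{thm:main.III}{}, which is then applied directly. No substantive difference.
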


	Moreover, we show that if one keeps removing arrows from a bound quiver algebra through successive applications of \hyperlink{thmB}{Theorem~B} until no more arrows can be removed, then the resulting algebra is uniquely determined; see \cref{prop:ar.rem.2}{}. This is achieved when the quotient algebras produced at every step are considered as bound quiver algebras in the canonical way specified in \cref{defn:2}{}. See also \cref{exam:ar.rem.5}{} for a concrete algebra illustrating this dependence.
	
	We note that in \cite{EGPS}, a set of explicit combinatorial conditions has been established for an arrow $ \a $ occurring in a minimal monomial relation of $ \La = k Q / I $, ensuring that the finiteness of $ \fpd \La / K_{ \{ \a \} } $ implies the finiteness of $ \fpd \La $. Such an arrow is two-sided removable according to \cite[Lemma~4.2]{EGPS}{}, since it is already pre-removable by its definition. We note that the techniques used in \cite{EGPS}{} differ from those presented here, relying heavily on the use of non-commutative Gr\"{o}bner bases in the sense of Green \cite{Green}{} and arguments within the framework of abelian category cleft extensions.

	
	It is also worth noting that the arrow removal operation of Green-Psaroudakis-Solberg falls into the scope of our reduction technique, a non-trivial fact that requires the use of non-commutative Gr\"{o}bner bases in the sense of Green \cite{Green}{}. Specifically, we show in the \hyperlink{appendix}{Appendix} that a set of arrows $ A $ induces an arrow removal algebra of $ \La $ in the sense of \cite{arrowrem1}{} if and only if it is pre-removable and $ K_A $ is projective as a $\La$-bimodule; see \cref{prop:ar.rem.1}{}.

	Based on the results of \cite{arrowrem2}{} and the fact that condition (ii) of the arrow removal operation is redundant when $ A $ comprises a single arrow (see \cref{cor:ar.rem.1}{}), we show that arrows not occurring in some generating set for $ I $ do not affect the homological behavior of $\La = k Q / I $.
	Call a set of arrows $ A $ \emph{redundant} if for every $ \a \in A $ there is a generating set for $ I $ avoiding $ \a $. Then, we prove the following:

	\begin{thmC}[\mbox{\cref{cor:ar.rem.3}{}}]		\hypertarget{thmC}{}
		Let $\La = k Q / I $ be a bound quiver algebra. For any set of redundant arrows $ A $ and the quotient algebra $ \Ga = { \La / \la A + I \ra } $:
		\begin{enumerate}[\rm(i)]
			\item It holds that $\fpd \La < \infty $ if and only if $\fpd \Ga < \infty $, and the equivalence remains valid if $\fpd$ is replaced by $\Fpd$ or $\gd$.
			\item The algebra $\La $ is Iwanaga-Gorenstein if and only if the same holds for $  \Ga $.
			\item The algebra $\La $ satisfies the finite generation condition for the Hochschild cohomology if and only if the same holds for $  \Ga $.
			\item The singularity categories of $\La $ and $ \Ga $ are triangle equivalent.
		\end{enumerate}
	
	\end{thmC}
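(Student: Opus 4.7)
The plan is to reduce to successive single-arrow removals and invoke the classical arrow removal machinery at each step.

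Fix $\a \in A$. Redundancy provides a generating set $R_\a$ of $I$ avoiding $\a$, which is exactly condition (i) of the Green--Psaroudakis--Solberg arrow removal operation applied to the singleton $\{\a\}$. By \cref{cor:ar.rem.1}{}, condition (ii) of that operation is redundant when one removes a single arrow, so $\{\a\}$ induces an arrow removal algebra in the sense of \cite{arrowrem1}{}. Combining \cref{prop:ar.rem.1}{} with \hyperlink{thmB}{Theorem~B} then yields the equivalence of finiteness of $\fpd$, $\Fpd$ and $\gd$ between $\La$ and $\La_1 := \La / \la \a + I \ra$, while the results of \cite{arrowrem2}{} applied to the same one-step removal give the remaining three assertions (preservation of the Iwanaga--Gorenstein property, of the $\mathsf{(Fg)}$ condition for Hochschild cohomology, and triangle equivalence of the singularity categories) for the pair $(\La, \La_1)$.

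To iterate, I would verify that $A \setminus \{\a\}$ is still redundant in $\La_1 = k Q_1 / I_1$, where $Q_1 = Q \setminus \{\a\}$ and $I_1$ is the image of $I$ under $k Q \epic k Q_1$. Given any $\b \in A \setminus \{\a\}$, a generating set $R_\b$ of $I$ avoiding $\b$ provided by the original redundancy hypothesis has image in $k Q_1$ which generates $I_1$; this image still avoids $\b$, because the quotient map $k Q \epic k Q_1$ merely deletes every path containing $\a$ and so cannot create an occurrence of $\b$ in any generator. An induction on $|A|$ thus produces a finite chain $\La = \La_0 \epic \La_1 \epic \cdots \epic \La_{|A|} = \Ga$ of one-arrow removals, and each of the four conclusions transports across the full chain by composing the equivalences step by step (the finiteness equivalences are transitive, and the Iwanaga--Gorenstein/$\mathsf{(Fg)}$ preservations and the singularity triangle equivalences can be chained in the obvious way).

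The main obstacle I anticipate is the redundancy-preservation step above, which is a mild piece of combinatorial bookkeeping rather than a genuine homological difficulty; beyond that, every part of the statement reduces to a direct citation of the single-arrow theory already available through \hyperlink{thmB}{Theorem~B}, \cref{cor:ar.rem.1}{}, \cref{prop:ar.rem.1}{} and \cite{arrowrem2}{}.
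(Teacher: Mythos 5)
Your proposal is correct and follows essentially the same route as the paper: the paper also reduces to successive single-arrow removals via \cref{cor:ar.rem.1}{}, checks that redundancy of the remaining arrows persists in each quotient (its argument with $T_{\nott \! \a}$ is the same as your image-of-generators bookkeeping, since the projection $kQ \epic kQ_1$ sends a generating set avoiding $\b$ exactly to its $\nott\a$-part), and then cites \cref{thm:GAR}{} for part (i) and \cite{arrowrem2}{} for parts (ii)--(iv). The only point worth making explicit in your write-up is that identifying $\La_1$ with $kQ_1/I_1$, where $I_1$ is the image of $I$, uses that a redundant arrow is pre-removable (\cref{lem:ar.red.1}{}), which indeed holds.
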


	Moreover, using techniques of cleft extensions developed in \cite{arrowrem1}{}, we show that the respective homological dimensions of the two algebras in \hyperlink{thmC}{Theorem~C} differ at most by one, and are almost always equal; see \cref{thm:1}{} for details.
	
	Turning to the inverse problem, we investigate ways of adding arrows to a bound quiver algebra so that the finiteness of the finitistic (or global) dimensions is preserved. To achieve this goal we utilize the notion of \emph{multiplicative bimodules}, see \cite{Pierce}{}. See also \cite{thetaexts}{}, where the same concept is introduced in the broader context of arbitrary rings under the term $\theta$-extensions. Specifically, a pair $ ( M, \theta ) $ is a multiplicative bimodule over a ring $ \Ga $ if $ M $ is a $ \Ga $-bimodule and $ \theta \colon M \otimes_\Ga M \to M $ is an associative $ \Ga $-bimodule homomorphism. Then $ E = \Ga \ltimes_\theta M $ denotes a new ring that is equal to $ \Ga \oplus M $ as an abelian group, where $ \wt{ M } = 0 \oplus M $ is an ideal.
	
	Our most general result in this direction characterizes \emph{generalized arrow removal algebras} of $ \La = k Q / I $ (i.e.\ algebras $ \Ga $ isomorphic to a quotient $ \La / K_A $ where $ A $ is a removable set of arrows) in terms of \emph{removable multiplicative bimodules}. This is essentially achieved through \cref{lem:ar.rem.8}{}; see also \hyperlink{thm:main.ar.rem.3}{\cref{thm:main.ar.rem.3}{}}{}. However, it is difficult to detect removable multiplicative bimodules for concrete examples in general, as it entails controlling the finiteness of the projective dimensions of $ \wt{ M } $ over $ E = \Ga \ltimes_\theta M $. Even the strongest homological condition on $ M $ over $ \Ga $, i.e.\ $ M $ being a projective $ \Ga $-bimodule, does not ensure removability; see \cref{exam:ar.rem.7}{}.

	We tackle this obstacle by introducing special kinds of removable multiplicative bimodules,
	the first type of which entails bimodules over an Artin algebra $ \Ga $ that satisfy some well-studied homological conditions; see \cref{defn:ar.rem.5}{}. In particular, the $ \Ga $-bimodules $ M $ considered here are tensor nilpotent, i.e.\ $ M^{ \otimes_\Ga i } = 0 $ for some $ i > 1 $, a necessary condition according to \cref{exam:ar.rem.8}{}.

	\begin{thmD}[\cref{cor:ar.rem.6}{}]		\hypertarget{thmD}{}
		Let $(M, \theta)$ be a multiplicative bimodule for an Artin algebra $\Ga$, where $M$ is a tensor nilpotent $ \Ga $-bimodule. If
		\begin{enumerate}[\rm(i)]
			\item the bimodule $M$ is perfect over $\Ga$, or
			\item the bimodule $M$ is right perfect over $\Ga$ and $\theta = 0$,
		\end{enumerate}
		then $ ( M , \theta ) $ is removable.
		In particular, it holds that
		\[
		\fpd { \Ga \ltimes_\theta M } < \infty  \, \, \iff  \, \,  \fpd \Ga < \infty
		\]
		and the equivalence remains valid if $\fpd $ is replaced by $\Fpd $ or $\gd $.
	\end{thmD}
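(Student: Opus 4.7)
The plan is to verify the hypotheses of \hyperlink{thmA}{Theorem~A} for the ring $\La = E = \Ga \ltimes_\theta M$ and the ideal $K = \wt{M}$, since this is precisely what removability of $(M,\theta)$ amounts to via the characterization in \cref{lem:ar.rem.8}. Once the hypotheses are checked, both the removability conclusion and the \emph{In particular} clause (i.e., the finitistic and global dimension equivalences) follow immediately.

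First I would use tensor nilpotence, say $M^{\otimes_\Ga n} = 0$ for some $n \geq 1$, to deduce by iterating the product in $E$ (which factors through $\theta$) that $\wt{M}^{\,n} = 0$. Hence $\wt{M}$ is a nilpotent ideal of the artinian ring $E$ and is therefore contained in $J(E)$. The canonical inclusion $\Ga \hookrightarrow E$ given by $g \mapsto (g,0)$ is a ring homomorphism splitting $E \epic E/\wt{M} \cong \Ga$, and it supplies the canonical section required for pre-removability as well.

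The main step is to establish $\pd \wt{M}_E < \infty$ from the right perfectness of $M$ over $\Ga$, which is common to both cases (i) and (ii). I would build a finite projective resolution of $\wt{M}$ over $E$ layer by layer: starting from a finite projective resolution of $M$ as a right $\Ga$-module and inducing up through $E = \Ga \oplus M$, the kernels that appear in the resulting complex are controlled by tensor powers $M^{\otimes_\Ga i}$, which vanish for $i \geq n$; the construction therefore terminates after finitely many steps. Setting up this iterative procedure correctly, keeping track of how the $E$-bimodule structure interacts with the right $\Ga$-resolution, is the principal technical obstacle.

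The fourth hypothesis of Theorem~A splits cleanly according to the two cases. In case (i), $M$ is additionally left perfect over $\Ga$, so the symmetric version of the previous step yields $\pd_E \wt{M} < \infty$. In case (ii), the assumption $\theta = 0$ forces the product of any two elements of $\wt{M}$ in $E$ to vanish, so $\wt{M}^{\,2} = 0$ holds directly. With all four hypotheses of Theorem~A verified, the equivalences of $\fpd$, $\Fpd$, and $\gd$ between $E$ and $\Ga$ follow, and $(M,\theta)$ is removable by definition.
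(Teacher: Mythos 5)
Your overall route is the paper's: apply Theorem~A (\cref{thm:main.III}{}) to $E=\Ga\ltimes_\theta M$ with $K=\wt{M}$, note that tensor nilpotency forces $\wt{M}$ to be a nilpotent ideal and hence $\wt{M}\subseteq J(E)$, use the inclusion $\Ga\monicc E$ as the splitting, observe that $\theta=0$ gives $\wt{M}^2=0$ in case (ii), and obtain case (i) from the right-handed statement together with its left-handed dual. (The appeal to \cref{lem:ar.rem.8}{} and pre-removability is beside the point: removability of $(M,\theta)$ is \cref{defn:ar.rem.7}{}, i.e.\ exactly the finiteness conditions on $\wt{M}$ over $E$ that you are verifying.) The genuine gap is that the one substantive step --- $\pd\wt{M}_E<\infty$ from tensor nilpotency plus right perfectness, which in the paper is a separate result (\cref{lem:ar.rem.3}{}) --- is only sketched, and you yourself flag its execution as an unresolved ``principal technical obstacle''.

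Concretely, two ingredients are missing from the sketch. First, you never invoke condition (i) of right perfectness (\cref{defn:ar.rem.5}{}), namely $\Tor_i^\Ga(M^{\otimes_\Ga j},M)=0$ for $i,j\geq1$; yet this is precisely what makes induction along $\Ga\monicc E$ behave, since ${}_\Ga E\simeq{}_\Ga\Ga\oplus{}_\Ga M$ and a projective resolution of $M^{\otimes_\Ga j}_\Ga$ stays exact after $-\otimes_\Ga E$ only because these Tor groups vanish; only then does one get $\pd\,(M^{\otimes_\Ga j}\otimes_\Ga E)_E\leq\pd M^{\otimes_\Ga j}_\Ga\leq j\,\pd M_\Ga$ (the last inequality again uses the Tor conditions, via \cite[Lemma~4.5]{perfectbimods}{}). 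Finiteness of $\pd M_\Ga$ and nilpotency alone do not suffice. Second, inducing up a resolution of $M_\Ga$ resolves $M\otimes_\Ga E$, not $\wt{M}$, whose right $E$-action is twisted by $\theta$, so one needs a mechanism relating the two. The paper uses the kernel $G$ of the counit of the adjunction $(\lr,\er)$: splicing the sequences $0\to G^{j+1}(\wt{M})\to\lr\er G^{j}(\wt{M})\to G^{j}(\wt{M})\to0$ and using $\er G^{j}\simeq F^{j}\er$ with $F=-\otimes_\Ga M$ (\cite[Lemma~2.4]{arrowrem1}{}) produces the exact sequence $0\to M^{\otimes_\Ga N'}\otimes_\Ga E\to\cdots\to M\otimes_\Ga E\to\wt{M}\to0$, truncated by tensor nilpotency, whence $\pd\wt{M}_E\leq N'(1+\pd M_\Ga)-1$. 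Supplying this argument (or an equivalent filtration of $\wt{M}$) is exactly what your ``layer by layer'' construction requires; with it in place, the rest of your proposal goes through as in the paper.
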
  
	
	We note that perfect bimodules have been studied recently in the context of injective generation for derived module categories \cite{PanosPsarou}{} and Iwanaga-Gorensteiness of noetherian rings \cite{Panos}{}. We also refer the reader to \cite{perfectbimods}{}, and to \cite{FGR,Beli2}{} for two classical treatments in the context of abelian categories.

	Our approach to the inverse of the arrow removal operation terminates with the following purely combinatorial construction attaching a new arrow to a bound quiver algebra, and possibly new relations ensuring in particular that every path passing through the new arrow at least twice is zero.

	\begin{conE}[\cref{exam:ar.rem.11}{}]		\hypertarget{conE}{}
		Let $\Ga = k Q_\Ga / I_\Ga $ be a bound quiver algebra, and let $i , j $ be a pair of distinct vertices. Let $ V $ be a $ \Ga $-submodule of $\Ga e_i$ such that $ e_j \Ga e_i \subseteq V \subseteq \rad _\Ga \Ga e_i $. We denote by $ \Ga_{ i \to j }^V $ the algebra $ k Q / I $, where:
		\begin{enumerate}[\rm(i)]
			\item $Q = Q_\Ga \dot \cup \{ \a \colon i \to j \}$ for a new arrow $\a \colon i \to j$;
			\item $I $ is the ideal of $k Q $ generated by $ I_\Ga \cup \{ z \a \, | \, z \in k Q_\Ga e_i \textrm{ and } z + I_\Ga \in V \} $.
		\end{enumerate}
		Furthermore, we call the algebra $ \Ga_{ i \to j }^V $ a \emph{trivial one-arrow extension} of $ \Ga $. 
	\end{conE}

	Our interest in the above construction is due to the following result, which rests partly on \hyperlink{thmD}{Theorem~D}.
	
	\begin{thmF}		\hypertarget{thmF}{}
		Let $ \Ga_{ i \to j }^V $ be a trivial one-arrow extension of $ \Ga $. Then
			\[
				\fpd { \Ga_{ i \to j }^V } < \infty  \,\, \iff \,\, 	\fpd \Ga < \infty 
			\]
		and the equivalence remains valid if $ \fpd $ is replaced by $ \Fpd $ or $ \gd $.
	\end{thmF}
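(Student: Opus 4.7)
The plan is to realize $\Ga_{i \to j}^V$ as a trivial extension $\Ga \ltimes M$ of $\Ga$ by a single $\Ga$-bimodule $M$, and then invoke \hyperlink{thmD}{Theorem~D}(ii) with $\theta = 0$. Writing $K := \la \a + I \ra$ for the two-sided ideal generated by the class of the new arrow, the natural epimorphism $\Ga_{i \to j}^V \twoheadrightarrow \Ga$ splits canonically via the $k$-subalgebra lift of $\Ga$ inside $\Ga_{i \to j}^V$, since $\a$ does not occur in any generator of $I_\Ga$.

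First I would show $K^2 = 0$. A spanning set for $K^2$ consists of classes of paths passing through $\a$ at least twice; any such path has the shape $p_2 \, \a \, q \, \a \, p_1$ with $q$ a path in $Q_\Ga$ from $j$ to $i$, so $q + I_\Ga \in e_j \Ga e_i \subseteq V$, and the defining relations impose $q \a = 0$, forcing the whole product to vanish. Hence $M := K$ inherits a $\Ga$-bimodule structure with $\theta = 0$, and $\Ga_{i \to j}^V \cong \Ga \ltimes M$. Moreover, $M$ admits an explicit bimodule description: the map $(\Ga e_i / V) \otimes_k (e_j \Ga) \to M$, $\bar{r} \otimes s \mapsto r \a s$, is well-defined (from $V \a = 0$), surjective (bimodule-generation by $\a$), and injective by a dimension count — every element of $M$ lifts to a combination of paths $p_2 \, \a \, p_1$ with $p_1 \in kQ_\Ga e_i$ and $p_2 \in e_j kQ_\Ga$, and the only generators of $I$ affecting these are $I_\Ga$ acting on either side together with $\{ z \a : z + I_\Ga \in V \}$, matching exactly the relations in the displayed tensor product.

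With $M$ identified, checking the hypotheses of \hyperlink{thmD}{Theorem~D}(ii) is routine. For tensor nilpotency,
\[
M \otimes_\Ga M \;\cong\; (\Ga e_i / V) \otimes_k e_j(\Ga e_i / V) \otimes_k (e_j \Ga),
\]
and the inclusion $e_j \Ga e_i \subseteq V$ yields $e_j V = e_j \Ga e_i$, so $e_j(\Ga e_i / V) = 0$ and $M \otimes_\Ga M = 0$. Right perfectness is immediate because $M$ is a finite direct sum of copies of the projective right $\Ga$-module $e_j \Ga$, one for each $k$-basis element of $\Ga e_i / V$. Combined with $\theta = 0$, \hyperlink{thmD}{Theorem~D}(ii) applies and yields the desired equivalences for $\fpd$, $\Fpd$, and $\gd$.

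I expect the main obstacle to be the explicit bimodule identification in step two — specifically, showing that no unanticipated relations appear among the paths $p_2 \, \a \, p_1$. This should reduce to choosing a suitable Gr\"obner basis for $I$ adapted to the new arrow $\a$, so that the normal forms spanning $M$ are precisely the paths through $\a$ exactly once, with the left factor lying in $kQ_\Ga e_i$ modulo $V$ and the right factor in $e_j kQ_\Ga$.
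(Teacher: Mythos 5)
Your overall architecture is the same as the paper's: its \cref{thm:last} likewise realizes $\Ga_{i \to j}^V$ as the trivial extension $\Ga \ltimes M$ with $M \cong (\Ga e_i / V) \otimes_k e_j\Ga$, checks $K^2 = 0$ and the splitting exactly as you do, and feeds tensor nilpotency plus right perfectness (packaged there as strongly-finite right projective dimension via \cref{lem:ar.rem.5}) into Theorem~D(ii) (\cref{cor:ar.rem.6}). The genuine gap is the step you yourself flag: injectivity of the bimodule map $(\Ga e_i / V)\otimes_k e_j\Ga \to \la \a + I \ra$. Your justification -- that ``the only generators of $I$ affecting these'' are $I_\Ga$ on either side together with $\{ z\a \}$ -- is precisely the assertion to be proved, not an argument for it: a priori an element $\sum \lambda_{p,q}\, p\a q$ could lie in $I_\La$ via combinations $r w s$ in which $r$ or $s$ itself passes through $\a$, or via cancellations mixing the two kinds of generators, and nothing in your sketch excludes such ``unanticipated relations''. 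This matters because your proof that $M_\Ga$ is projective (hence right perfect, hence Theorem~D(ii) applies) rests entirely on the identification; without it you only have a surjection from $(\Ga e_i/V)\otimes_k e_j\Ga$, which gives no bound on $\pd K_\Ga$. (Tensor nilpotency, by contrast, does not need the identification: $p\a q \otimes p'\a q' = p\a \otimes (q p')\a q'$ and $qp' + I_\Ga \in e_j\Ga e_i \subseteq V$, so the second factor is already zero in $K$.)

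The paper closes this gap by a different and arguably more robust device than your proposed Gr\"obner-basis normal forms: it forms $E = \Ga \ltimes (W \otimes_k e_j\Ga)$ abstractly, uses the presentation of split extensions from \cref{lem:ar.rem.8} to define an algebra homomorphism $\phi \colon k Q_\La \to E$ with $\a \mapsto (0, \ol{e_i}\otimes e_j)$, verifies by a case analysis that $I_\La \subseteq \Ker \phi$, and then compares dimensions against the epimorphism $W\otimes_k e_j\Ga \epic \la \a + I_\La \ra$ to conclude both maps are isomorphisms. If you pursue the Gr\"obner route, note that $V$ is an arbitrary submodule of $\Ga e_i$, not monomially generated, so the intended normal forms ``through $\a$ exactly once'' are not literally a set of paths, and controlling the overlaps created when completing $I_\Ga \cup \{ z\a \}$ to a Gr\"obner basis is genuine work. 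A minor further point: the splitting of $\La \epic \Ga$ is not because $\a$ avoids the generators of $I_\Ga$ alone -- the new relations $z\a$ do involve $\a$ -- but because the generating set $I_\Ga \cup T_\a$ splits into relations avoiding $\a$ and relations all of whose paths pass through $\a$, which is condition (v) of \cref{lem:ar.rem.2}.
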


	We close this introduction by outlining the contents of the paper. 
	
	We start \cref{subsec:rad.pres.clefts.1}{} by recalling that a ring cleft extension $(\La, \Ga, \pi , \i)$ consists of ring homomorphisms $ \i \colon \Ga \monicc \La $ and $ \pi \colon \La \epic \Ga $ such that $ \pi \i = \id_\Ga $. Then we introduce a special class of such extensions, i.e.\ \emph{radical-preserving ring cleft extensions with superfluous kernel}, and establish their basic ring-theoretic properties. In \cref{subsec:rad.pres.clefts.2}{}, we introduce a class of $\La$-modules associated with an arbitrary ring cleft extension $(\La, \Ga, \pi , \i )$, i.e.\ $ \La $-modules with a $ \Ga $-submodule admitting an \emph{absorbing} $ \La $-complement, and study their crucial homological properties; see \cref{defn:cleft.1}{} and \cref{lem:cleft.1}{}. The close homological connection between these $\La$-modules and their $\Ga$-submodules with an absorbing $\La$-complement leads in particular to the proof of \hyperlink{thmA}{Theorem~A} in \cref{subsec:rad.pres.clefts.3}{}.
	
	In \cref{subsec:3.1}{}, we prove \hyperlink{thmB}{Theorem~B}, and express finite global dimension as the removability of the set of all arrows in a bound quiver algebras; see \cref{exam:ar.rem.0}{}. In \cref{subsec:3.2}{}, we introduce some non-triviality conditions for a bound quiver algebra ensuring that the algebra does not belong to a class known to satisfy the $ \mathsf{(FDC)} $, or fulfill the requirements of a well-known reduction technique to the best of our knowledge. In \cref{subsec:3.3}{}, we apply \hyperlink{thmB}{Theorem~B} to concrete bound quiver algebras that are non-trivial in the sense of \cref{defn:irredu}{}.

	In \cref{subsec:4.1}{}, we develop the technical machinery leading to the definition of the \emph{arrow reduced version} of a bound quiver algebra. In particular, we use this machinery to show that if we keep removing removable sets of arrows from a bound quiver algebra while considering the produced quotients as bound quiver algebras in a canonical way, then the resulting bound quiver algebra when no more arrows can be removed is uniquely determined, even though the intermediate algebras can differ. We call the arrows of the initial algebra that are not part of its arrow reduced version \emph{eventually removable}.
	
	In \cref{subsec:4.2}{}, we develop two combinatorial criteria, one for eliminating arrows from being eventually removable and one for showing that a bound quiver algebras satisfying certain conditions is Iwanaga-Gorenstein. In \cref{subsec:4.3}{}, we compute the arrow reduced version of the example algebras in \cref{subsec:3.3}{}, proving thus that they all have finite finitistic dimensions.
	
	In \cref{subsec:adding.ar.1}{}, we lay the foundation for the rest of the section. In particular, we show that \emph{nilpotent multiplicative bimodules} over a bound quiver algebra are exactly the ones inducing a bigger bound quiver algebra by adding new arrows, and possibly new relations as well. As a consequence of this result we deduce that the natural algebra epimorphism induced by an ideal contained in the square of the Jacobson radical of a bound quiver algebra never splits (\cref{cor:ar.rem.4}{}).
	
	In \cref{subsec:adding.ar.2}{}, we characterize generalized arrow removal algebras as described above and prove \hyperlink{thmD}{Theorem~D}. Finally, in \cref{subsec:adding.ar.3}{}, we introduce the notion of \emph{strongly-finite} right projective dimension of a bimodule over a perfect ring, taking into account the interaction between the homological aspect of the bimodule from the right and the support of the bimodule from the left; see \cref{defn:strongly-finite}{} for details. In particular, we show that such a bimodule over an Artin algebra is tensor nilpotent and right perfect (\cref{lem:ar.rem.5}{}), and thus the induced trivial multiplicative bimodule is removable. Moreover, we introduce \hyperlink{conE}{Construction~E} and prove \hyperlink{thmF}{Theorem~F} by showing that a trivial one-arrow extension of a bound quiver algebra $ \Ga $ is isomorphic to the trivial extension of $ \Ga $ induced by a bimodule of strongly-finite right projective dimension.
	This novel construction is then illustrated in \cref{exam:3}{} with two concrete bound quiver algebras, which are highly non-trivial in the sense of \cref{defn:irredu}{} and are shown to have finite finitistic dimensions through \hyperlink{thmF}{Theorem~F}.

	\subsection*{Notation}

We denote by $J( R)$ the Jacobson radical of an associative ring $R$ with unit. A module over a ring will be a \emph{left} module unless stated otherwise.
For a module ${}_R M$, we denote by $\rad _R M$ its radical and by $\topp_R M$ the induced quotient $M / \rad _R M $. By $\pd {}_R M$, $\fd {}_R M$ and $\idim {}_R M$ we denote the projective, flat and injective dimension of $ M $, respectively. Similarly, we write $N_R$ to denote a right $R$-module, its radical is denoted by $\rad N_R $ and so on.
We denote by $R \rMod$ (resp.\ $R \rmod$) the category of left (finitely generated) $R$-modules. The respective right $R$-module categories are denoted by $ R^{\mathrm{op}} \rMod $ and $ R^{\mathrm{op}} \rmod $. The little finitistic, big finitistic and global dimension of $R$ are denoted by $\fpd R$, $\Fpd R$ and $\gd R$, respectively.
If $R$ is a semiprimary ring, then $\Ll (R )$ denotes its \emph{Loewy length}.


\subsection*{Acknowledgements}

I would like to thank my Ph.D.\ supervisor, Chrysostomos Psa\-rou\-da\-kis, for several useful discussions regarding this paper. I would also like to express my gratitude to Steffen Koenig for his warm hospitality at the Institute of Algebra and Number Theory of the University of Stuttgart, in the spring semester of 2024. A big part of this project was carried out during this research stay, when I also had the opportunity to present parts of the project at the seminar of the group. Finally, I would also like to thank Sibylle Schroll for her kind hospitality at the Mathematical Institute of the University of Cologne in March 2025, where I also had the opportunity to present parts of the project at the local seminar.

The present research project was supported by the Hellenic Foundation for
Research and Innovation (3rd Call for HFRI Ph.D.\
Fellowships, FN: 47510/03.04.2022).

	\medskip
	
	\section{Submodules with absorbing complements}	\label{sec:rad.pres.clefts}
	
	\smallskip
	
	This section is divided into three parts. In the first one, we introduce radical-preserving ring cleft extensions with superfluous kernel and establish some of their basic ring-theoretic properties. In the second part, we introduce a special class of $\La $-modules for a ring cleft extension $( \La , \Ga , \pi , \i )$, consisting of the modules that admit a $\Ga$-submodule with an absorbing $\La $-complement, see \cref{defn:cleft.1}{}. Furthermore, we showcase the strong homological relationship between such $\La$-modules and the associated $\Ga $-submodules, by showing that the minimal projective resolution of such a $\Ga$-submodule is a direct summand of the minimal projective resolution of the initial $\La$-module, see \cref{lem:cleft.1}{}. Finally, we exploit this relationship in order to establish a general reduction result for artinian rings (\cref{thm:main.III}{}), which will serve as the basis for the main theorem of \cref{sec:gen.ar.rem}{}.

	\subsection{Radical-preserving cleft extensions with superfluous kernel}		\label{subsec:rad.pres.clefts.1}
	
	In this subsection, we introduce the notion of radical-preserving ring cleft extensions with superfluous kernel and establish preliminary ring-theoretic facts about them. The usefulness of this new notion lies on the fact that it provides a context for the interpretation of the nice behavior of the arrow removal operation \cite{arrowrem1}{} with respect to the invariance of finiteness of the little finitistic dimension, while allowing for a significant generalization.
	
	A \emph{ring cleft extension} is a quadruple $(\La, \Ga, \i, \pi )$ where $ \i \colon \Ga \monicc \La$ and $ \pi \colon \La \epic \Ga$ are two ring homomorphisms such that $ \pi \i = \id_\Ga$. The employed terminology is due to Beligiannis \cite{Beligiannis}{}, who was the first to study cleft extenions in the context of arbitrary abelian categories. 
	Furthermore, we find this setup convenient as it fixes the homomorphisms $ \i $ and $ \pi $, while there are more than one retraction epimorphisms for $ \i $ and more than one  section monomorphisms for $ \pi $ in general.
	

	Let $\La $ be a ring and $K$ an ideal. We say that the quotient ring $ \La / K $ is \emph{split} or that \emph{$K$ induces a split quotient of $\La $} if there is a ring cleft extension of the form $(\La , \La / K , \i , \pi )$ where $\pi $ is the natural epimorphism. Similarly, we say that a ring epimorphism $ \pi \colon \La \epic \Ga $ \emph{splits} or that \emph{$\pi $ is a split epimorphism} if there is a ring cleft extension of the form $(\La , \Ga , \pi , \i )$.

	Within the context of Artin algebras, we adopt a specialized interpretation of these notions, defined as follows. If $\La $ and $\Ga $ are two Artin algebras over the same commutative artinian ring $k$, then the quadruple $(\La, \Ga, \i, \pi )$ is an \emph{algebra cleft extension} if it is a ring cleft extension such that $i$ and $\pi $ respect the $k$-algebra structure of the rings. Analogously, if $K$ is an ideal of $\La $ then the quotient algebra $ { \La / K } $ is \emph{split}, or \emph{$K$ induces a split algebra quotient of $\La $}, if there is an algebra cleft extension of the form $(\La , { \La / K } , \i , \pi )$ where $ \pi $ is the natural epimorphism. Furthermore, an algebra epimorphism $\pi \colon \La \epic \Ga $ \emph{splits}, or \emph{$\pi $ is a split algebra epimorphism}, if there is an \emph{algebra} cleft extension of the form $(\La , \Ga , \pi , \i )$.
	
	In what follows, a cleft extension will be a cleft extension of rings or algebras. Furthermore, we assume without loss of generality that the monomorphism $ \i $ in a cleft extension $(\La, \Ga, \i , \pi )$ is an inclusion unless stated otherwise. In particular, it holds that $\La = { \Ga \oplus \Ker \pi } $.

	\begin{defn}	
		\label{defn:1}
		A cleft extension $(\La, \Ga, \i, \pi)$ is \emph{radical-preserving with superfluous kernel} if both the kernel of $\pi $ and the Jacobson radical of $\Ga $ are contained in the Jacobson radical of $\La $.
	\end{defn}
	
	Recall that a ring homomorphism $\phi \colon A \to B $ is called \emph{radical-preserving} \cite{Giata1}{} if the image of the Jacobson radical of $ A $ under $ \phi $ is contained in the Jacobson radical of $ B $, and its kernel is called \emph{superfluous} if it is contained in the Jacobson radical of $A$. Therefore, a cleft extension $(\La, \Ga, \i, \pi)$ satisfies \cref{defn:1}{} if and only if both $ \i $ and $ \pi $ are radical-preserving with superfluous kernel. 
	According to the next lemma, it is sufficient to require that the kernel of $ \pi $ is superfluous in most cases.

	\begin{lem}
		\label{lem:cleft.2}
		The following are equivalent for a ring cleft extension $(\La, \Ga, \i, \pi)$ where $\La $ is semilocal.
		\begin{enumerate}[\rm(i)]
			\item The cleft extension is radical-preserving with superfluous kernel.
			\item The kernel of $\pi$ is superfluous.
			\item It holds that $ J(\La) = { J(\Ga) \oplus { \Ker \pi } } $.
		\end{enumerate}
		In particular, the above conditions are equivalent for all algebra cleft extensions.
	\end{lem}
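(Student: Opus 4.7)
The implications (i) $\Rightarrow$ (ii) and (iii) $\Rightarrow$ (i) are immediate from \cref{defn:1}{} once one recalls that $\i$ is an inclusion by convention: (i) $\Rightarrow$ (ii) is literally part of the definition, while (iii) $\Rightarrow$ (i) follows because the equality $J(\La) = J(\Ga) \oplus \Ker\pi$ delivers both $J(\Ga) \subseteq J(\La)$ and $\Ker\pi \subseteq J(\La)$ at once (and $\Ker\i = 0$ automatically). Thus the substantive content is the implication (ii) $\Rightarrow$ (iii), and the plan is to handle it in two steps.

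First, I would show that the hypothesis $\Ker\pi \subseteq J(\La)$ alone forces the decomposition
\[
J(\La) = (\Ga \cap J(\La)) \oplus \Ker\pi.
\]
Indeed, writing an arbitrary $x \in J(\La)$ as $x = \gamma + k$ via $\La = \Ga \oplus \Ker\pi$ and using $k \in \Ker\pi \subseteq J(\La)$, one gets $\gamma = x - k \in \Ga \cap J(\La)$; directness of the sum is automatic from $\Ga \cap \Ker\pi = 0$. As a consequence there is an induced ring isomorphism $\La/J(\La) \cong \Ga/(\Ga \cap J(\La))$, and by semilocality of $\La$ the left-hand side, hence the right-hand side, is semisimple artinian.

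Second, I would identify $\Ga \cap J(\La)$ with $J(\Ga)$ by applying, in both directions, the standard fact that a surjective ring homomorphism $\phi \colon R \twoheadrightarrow S$ sends $J(R)$ into $J(S)$ (the inverse of $1 - rx$ in $R$ maps to an inverse of $1 - \phi(r)\phi(x)$ in $S$). Applied to the surjection $\Ga \twoheadrightarrow \Ga/(\Ga \cap J(\La))$ onto a ring with zero Jacobson radical, it yields $J(\Ga) \subseteq \Ga \cap J(\La)$; applied to $\pi$ itself, together with the fact that $\pi$ restricts to the identity on $\Ga$, it gives $\Ga \cap J(\La) = \pi(\Ga \cap J(\La)) \subseteq \pi(J(\La)) \subseteq J(\Ga)$. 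Combining the two inclusions and substituting back into the decomposition of the previous step produces exactly (iii). The \emph{in particular} clause is then immediate, since every Artin algebra is semiprimary and in particular semilocal, so the hypothesis on $\La$ is automatic for algebra cleft extensions.

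I do not expect a serious obstacle; the only delicate point is the first step, where one must recognise that the compatibility of the decomposition of $J(\La)$ with the direct sum $\La = \Ga \oplus \Ker\pi$ genuinely relies on $\Ker\pi$ lying \emph{inside} $J(\La)$, and not merely being an ideal of $\La$. This is precisely the feature that makes the definition ``superfluous kernel'' the right hypothesis to assume a priori, rather than the seemingly weaker ``radical-preserving'' condition.
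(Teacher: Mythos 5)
Your proposal is correct. It differs from the paper in organization rather than in substance: the paper proves (ii) $\Rightarrow$ (i) and (i) $\Rightarrow$ (iii) separately, after first establishing that $J(\La / K) = J(\La)/K$ for every ideal $K \subseteq J(\La)$ of the semilocal ring $\La$, which yields $J(\Ga) = \pi( J(\La))$ and the semilocality of $\Ga$; the implication (ii) $\Rightarrow$ (i) is then an element-splitting argument ($y = \i(x) + (y - \i(x))$ with $y - \i(x) \in \Ker \pi$), and (i) $\Rightarrow$ (iii) follows because $\La / ( J(\Ga) \oplus \Ker \pi ) \simeq \Ga / J(\Ga)$ is semisimple. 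You instead close the cycle by proving (ii) $\Rightarrow$ (iii) in one step: you split $J(\La)$ along $\La = \Ga \oplus \Ker \pi$ (which indeed uses exactly the hypothesis $\Ker \pi \subseteq J(\La)$), obtain $\Ga / (\Ga \cap J(\La)) \simeq \La / J(\La)$, and then use semilocality together with the standard fact that surjective ring homomorphisms carry radicals into radicals -- applied once to $\Ga \epic \Ga/(\Ga \cap J(\La))$ and once to $\pi$ -- to identify $\Ga \cap J(\La) = J(\Ga)$. The ingredients are the same (a semisimple quotient coming from semilocality, plus radical-preservation under surjections), but your packaging is slightly more economical, replacing the paper's computation $J(\Ga) = \pi(J(\La))$ by the internal description $J(\Ga) = \Ga \cap J(\La)$; what the paper's route buys in exchange is the explicit intermediate facts that $\Ga$ is again semilocal and that (ii) alone already forces the radical-preserving condition (i), which matches the terminology used later in the text. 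Your treatment of the trivial implications and of the ``in particular'' clause (Artin algebras are semilocal) is also fine.
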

	
	\begin{proof}
		The implications (i) $\Rightarrow$ (ii) and (iii) $\Rightarrow$ (i) are trivial.
		
		We begin by showing that $J ( { \La / K } ) = { J ( \La ) / K } $ for every ideal $ K $ of $ \La $ contained in $J ( \La )$, since $\La $ is semilocal. The inclusion $ { J(\La ) / K } \subseteq { J ( \La / K ) } $ is well-known, see for instance \cite[Corollary~15.8]{AndersonFuller}{}. For the inverse inclusion, note that the quotient of $ { \La / K } $ over $ { J( \La ) / K } $ is isomorphic as a $\La$-module to the semisimple module $ { \La / J( \La ) } $ and, thus, it is a semisimple ring. It follows that the inclusion $ { J ( \La / K  ) } \subseteq { J( \La ) / K } $ also holds, see \cite[Corollary 15.6]{AndersonFuller}{}. Returning to the given ring cleft extension, we have that $J ( \Ga ) = { \pi ( J ( \La ) ) } $ and the ring $\Ga $ is semilocal if $\La $ is semilocal and the kernel of $\pi $ is superfluous.
		
		(ii) $\Rightarrow $ (i): Take $x \in J ( \Ga ) $ and $y \in J ( \La ) $ such that $x = \pi ( y )$. Then $y = \i ( x ) + ( y - \i ( x ) )$, where $ y - \i ( x ) \in J( \La )$ as $\pi ( y - \i ( x ) ) = 0 $, implying that $ \i ( x ) \in J ( \La )$.
		
		(i) $\Rightarrow$ (iii): Note that the kernel of the composition of $ \pi $ with the natural epimorphism $\Ga \epic { \Ga / J ( \Ga ) } $ is equal to $ { J ( \Ga ) \oplus \Ker \pi } $ and contained in $  J ( \La ) $. It follows that $\La / ( { J ( \Ga ) \oplus \Ker \pi } ) $ is a semisimple ring as it is isomorphic to $ { \Ga / J ( \Ga ) } $, implying that $ J ( \La ) \subseteq { J ( \Ga ) \oplus \Ker \pi } $.
	\end{proof}

	We close this subsection by showing that radical-preserving cleft extensions with superfluous kernel behave well with respect to semiperfectness. We refer the reader to \cite{AndersonFuller}{} for an introduction to semiperfect rings, or to the second section of \cite{Giata1}{} for a condensed account of the necessary background.

	\begin{lem}
		\label{lem:semiperfect.clefts}
		Let $(\La, \Ga, \i, \pi)$ be a radical-preserving ring cleft extension with superfluous kernel. Then the ring $\La $ is (basic) semiperfect if and only if $\Ga $ is (basic) semiperfect. In particular, a complete or basic set of primitive orthogonal idempotents for $\Ga$ retains this status over $\La $ if the rings are semiperfect.
	\end{lem}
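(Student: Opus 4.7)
The plan is to reduce the entire statement to the ring isomorphism $\bar\pi \colon \La/J(\La) \isomto \Ga/J(\Ga)$ induced by $\pi$, and then transfer idempotent lifts through the homomorphisms $\i$ and $\pi$.

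First, I would establish the identity $J(\La) = J(\Ga) \oplus \Ker\pi$ in both directions. In the forward direction, $\La$ is semiperfect and hence semilocal, so this identity is precisely \cref{lem:cleft.2}{}. In the backward direction, $\La$ is not a priori semilocal, so I would argue directly: since $\Ga$ is semiperfect the quotient $\Ga/J(\Ga)$ is semisimple, and the composition $\La \xr{\pi} \Ga \epic \Ga/J(\Ga)$ has kernel $J(\Ga) \oplus \Ker\pi$ (the sum is direct because $\Ga \cap \Ker\pi = 0$ inside $\La = \Ga \oplus \Ker\pi$). Since $J(\La)$ is contained in the kernel of any ring homomorphism into a semisimple ring, this gives $J(\La) \subseteq J(\Ga) \oplus \Ker\pi$, while the reverse inclusion is exactly the radical-preserving and superfluous-kernel hypothesis. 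Thus $\La$ is semilocal in both directions, and $\pi$ descends to the claimed ring isomorphism $\bar\pi$.

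Next, I would transfer the lifting of idempotents through $\bar\pi$. If $\Ga$ is semiperfect, then for any idempotent $\bar e \in \La/J(\La)$ the image $\bar\pi(\bar e) \in \Ga/J(\Ga)$ lifts to an idempotent $e \in \Ga \subseteq \La$, and this $e$ lifts $\bar e$ because $\bar\pi$ intertwines the two quotient maps. Conversely, if $\La$ is semiperfect, any idempotent $\bar f \in \Ga/J(\Ga)$ corresponds via $\bar\pi^{-1}$ to an idempotent of $\La/J(\La)$ which lifts to $e \in \La$, and then $\pi(e) \in \Ga$ is an idempotent reducing to $\bar f$. This yields the iff for semiperfectness. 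For the basic variant, I would invoke the standard characterization that a semiperfect ring $R$ is basic iff $R/J(R)$ is a product of division rings, a property manifestly preserved by $\bar\pi$.

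For the final assertion on primitive orthogonal idempotents, since $\i$ is a unital ring homomorphism one has $\i(1_\Ga) = 1_\La$, so a complete (resp.\ basic) set $\{e_1,\dots,e_n\}$ of primitive orthogonal idempotents in $\Ga$ automatically satisfies $\sum e_i = 1_\La$ and remains orthogonal in $\La$. Primitivity of an idempotent in a semiperfect ring is detected on the radical quotient, and the images of the $e_i$ agree in $\Ga/J(\Ga)$ and $\La/J(\La)$ under $\bar\pi$; likewise the basic property of the set is encoded by pairwise non-isomorphism of the associated simples over the radical quotient, and transfers via $\bar\pi$. The main obstacle I anticipate is the asymmetry built into \cref{lem:cleft.2}{}, which presupposes $\La$ to be semilocal; once the direct semisimple-quotient argument above recovers this in the backward direction, the remainder reduces to a routine translation of standard facts about semiperfect rings.
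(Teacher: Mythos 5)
Your proof is correct, but it runs along a different track from the paper's. You pass to the ring isomorphism $\bar\pi \colon \La / J(\La) \isomto \Ga / J(\Ga)$ coming from the decomposition $J(\La) = J(\Ga) \oplus \Ker \pi$ and then use the characterization ``semiperfect $=$ semilocal $+$ idempotents lift modulo the radical'', transporting idempotent lifts through $\i$ and $\pi$; the paper instead fixes a complete set of primitive orthogonal idempotents $\{e_j\}_j$ of $\Ga$ and works module-theoretically with the abelian-group decompositions $\La e_j = \Ga e_j \oplus (\Ker \pi) e_j$ and $J(\La) e_j = J(\Ga) e_j \oplus (\Ker \pi) e_j$, which identify $\topp_\La \La e_j$ with $\topp_\Ga \Ga e_j$, so that $\La e_j$ is local exactly when $\Ga e_j$ is, and the chain $\La e_{j_1} \simeq \La e_{j_2} \Leftrightarrow \topp_\La \La e_{j_1} \simeq \topp_\La \La e_{j_2} \Leftrightarrow \Ga e_{j_1} \simeq \Ga e_{j_2}$ delivers basicness and the final claim about the idempotent set in one stroke. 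What your route buys is an explicit treatment of the direction in which $\La$ is not yet known to be semilocal: you re-derive $J(\La) \subseteq J(\Ga) \oplus \Ker\pi$ directly from the semisimplicity of $\Ga / J(\Ga)$, whereas the paper simply cites \cref{lem:cleft.2}{}, whose statement carries a semilocal hypothesis on $\La$ (the implication used does go through verbatim once $\Ga$ is semilocal, but you are right to flag and close this asymmetry). What the paper's route buys is that the last assertion of the lemma -- that the given primitive orthogonal idempotents of $\Ga$ retain primitivity, completeness and basicness over $\La$ -- is automatic from the argument, while in your version it requires the (standard, and correctly invoked) facts that over a semiperfect ring primitivity of an idempotent and isomorphism of the projectives $Re_i$ are detected on the radical quotient, plus the observation that $\bar\pi$ matches the classes of $\i(e_i)$ and $e_i$. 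Both proofs ultimately rest on the same decomposition $J(\La) = J(\Ga) \oplus \Ker\pi$.
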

	
	\begin{proof}
		Let $ \{ e_j \}_j $ be a complete set of primitive orthogonal idempotents for $\Ga$. The crucial fact here is that we have the direct sum decompositions of abelian groups $\La e_j = \Ga e_j \oplus ( \Ker \pi ) e_j$ and $J( \La ) e_j = J(\Ga ) e_j \oplus ( \Ker \pi )  e_j$ due to \cref{lem:cleft.2}{}, implying that $ \topp _\La \La e_j $ and $\topp _\Ga \Ga e_j  $ are isomorphic as $\Ga $-modules. Note that the $\Ga $-structure on $ \topp _\La \La e_j $ is the one induced from the fact that $\Ker \pi ( { \topp_\La \La e_j } )  = 0$, and it coincides with the $\Ga$-structure that comes from restriction of scalars along $i$.
		As a consequence, the module ${}_\La \La e_j$ is local if and only if the module ${}_\Ga  \Ga e_j $ is local, implying that the ring $\La $ is semiperfect if and only if $\Ga $ is semiperfect. Furthermore, the remaining claims of the lemma follow from the fact that
		\[
		\La e_{j_1} \simeq  \La e_{j_2}   \Leftrightarrow    \topp _\La  \La e_{j_1}  \simeq \topp _\La  \La e_{j_2}   \Leftrightarrow	
		\topp _\Ga  \Ga e_{j_1} \simeq \topp _\Ga  \Ga e_{j_2}  \Leftrightarrow     \Ga e_{j_1} \simeq    \Ga e_{j_2}
		\]
		for any indices $j_1$, $j_2$; see also \cite[Proposition 17.18]{AndersonFuller}{}. 
	\end{proof}

	\subsection{Submodules with absorbing complements}		\label{subsec:rad.pres.clefts.2}

	Given a ring cleft extension $(\La, \Ga, \i, \pi)$, this subsection introduces and investigates $\La$-modules possessing a $ \Ga $-submodule with an absorbing $\La$-complement. We establish a strong homological connection between such a $\La$-module and its associated $\Ga$-submodules, which is instrumental for the main results in \cref{sec:gen.ar.rem,sec:ar.red.ver}{}.

	\begin{defn}
		\label{defn:cleft.1}
		Let $(\La, \Ga, \pi , \i )$ be a ring cleft extension, and let $ M $ be a $ \La $-module. For a $ \Ga $-submodule $ N $ and a $ \La $-submodule $ M ' $ of $ M $:
		\begin{enumerate}[\rm(i)]
			\item The submodule $ N $ is called \emph{generating} if its image generates $ M $ over $\La $.
			\item The submodule $M'$ is a \emph{$\La$-complement} of $ N $ if $M = N \oplus M'$ as abelian groups. 
			\item If $ M ' $ is a $\La$-complement of $ N $, we call it \emph{absorbing} if $ { \Ker \pi } N \subseteq M'$, and \emph{tight} if equality holds.
		\end{enumerate}
	\end{defn}

	As a first observation, we note that a $ \La $-module $ M $ is in the essential image of the restriction functor along $\pi $ if and only if it admits the zero submodule as an absorbing (or tight) $\La$-complement. Furthermore, a $\Ga$-submodule $N$ of $ M $ admits a tight $\La $-complement if and only if it admits an absorbing $\La $-complement and is generating. Its $\La$-complement is unique in that case and equal to $ { \Ker \pi } N $. 

	In the following lemma, we characterize the $\La$-modules in the essential image of the induction functor along $ \i $, and in the intersection of this subcategory with the essential image of the restriction functor along $ \pi $.

	\begin{lem}			\hypertarget{lem:cleft.4}{}
		\label{lem:cleft.4}
		Let $(\La, \Ga, \pi , \i )$ be a ring cleft extension.
		\begin{enumerate}[\rm(i)]
			\item A $ \La $-module $ M$ is in the essential image of the tensor functor $ { \La \otimes_\Ga - } $ if and only if $M$ admits a $\Ga$-submodule $ N $ with tight $\La$-complement, such that the map $ { \Ker \pi \otimes_\Ga N } \epic { { \Ker \pi } \, N } $ defined by $ { x \otimes n } \mapsto { x n } $ is an isomorphism.
			\item A $ \La $-module $ M$ is in the essential image of both the tensor functor $ { \La \otimes_\Ga - } $ and the restriction functor $ { {}_\La ( - ) } $ if and only if $\Ker \pi \otimes_\Ga M = 0 $.
		\end{enumerate}
	\end{lem}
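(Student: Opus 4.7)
The plan is to treat both parts through the natural multiplication map
\[
\mu_N \colon \La \otimes_\Ga N \lxr M, \qquad \l \otimes n \mapsto \l n,
\]
associated with a $\Ga$-submodule $N$ of $M$, exploiting that the abelian group decomposition $\La = \Ga \oplus \Ker \pi$ actually holds as a splitting of \emph{right} $\Ga$-modules: $\Ga$ is a subring and $\Ker \pi$ is a two-sided ideal, so both are stable under right multiplication by $\i(\Ga)$. Tensoring with any left $\Ga$-module $N$ then yields a natural abelian group decomposition $\La \otimes_\Ga N = N \oplus ( \Ker \pi \otimes_\Ga N )$, with first summand $1 \otimes N$; moreover $\Ker \pi \otimes_\Ga N$ is even a $\La$-submodule of $\La \otimes_\Ga N$ since $\Ker \pi$ is a left ideal.

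For the forward direction of part (i), assume $M \simeq \La \otimes_\Ga N'$ and set $N := 1 \otimes N'$ and $M' := \Ker \pi \otimes_\Ga N'$. The decomposition above shows $M = N \oplus M'$ as abelian groups, and a direct check identifies $M'$ with $\Ker \pi \cdot N$, so the $\La$-complement is tight; the required isomorphism $\Ker \pi \otimes_\Ga N \isomto \Ker \pi \cdot N$ then reduces to an identity modulo the identification $N \simeq N'$. The backward direction uses $\mu_N$ itself: surjectivity follows from tightness via $M = N + \Ker \pi \cdot N \subseteq \La \cdot N$, and for injectivity we write any $\xi \in \La \otimes_\Ga N$ as $1 \otimes n_0 + \sum_i k_i \otimes n_i$ using the decomposition; its image $n_0 + \sum_i k_i n_i$ lies in the direct sum $N \oplus \Ker \pi \cdot N$, so vanishing forces $n_0 = 0$ and $\sum_i k_i n_i = 0$, whence the assumed isomorphism $\Ker \pi \otimes_\Ga N \isomto \Ker \pi \cdot N$ gives $\sum_i k_i \otimes n_i = 0$, hence $\xi = 0$.

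For part (ii), note first that a $\La$-module $M$ is in the essential image of the restriction functor along $\pi$ precisely when $\Ker \pi \cdot M = 0$. The natural map $\Ker \pi \otimes_\Ga M \to M$, $k \otimes m \mapsto km$, is well-defined and has image exactly $\Ker \pi \cdot M$, so the vanishing $\Ker \pi \otimes_\Ga M = 0$ immediately forces $M$ into this essential image; the same vanishing collapses the abelian group decomposition of the first paragraph to $\La \otimes_\Ga M \simeq M$ via $\mu_M$, placing $M$ also in the essential image of $\La \otimes_\Ga -$. Conversely, if $M$ lies in both essential images, then applying part (i) yields a tight decomposition $M = N \oplus \Ker \pi \cdot N$; the hypothesis $\Ker \pi \cdot M = 0$ kills the complement, so $M = N$ as both $\La$-modules and $\Ga$-modules, and the isomorphism of part (i) gives $\Ker \pi \otimes_\Ga M \simeq \Ker \pi \otimes_\Ga N \simeq \Ker \pi \cdot N = 0$.

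The main point to watch is the coherent interplay between the various structures --- the left $\La$-action on $\La \otimes_\Ga N$, the left $\Ga$-action on $N$ arising from its inclusion in $M$ (which passes through $\i$), and the right $\Ga$-action on $\Ker \pi$ --- so that the decomposition $\La \otimes_\Ga N = N \oplus ( \Ker \pi \otimes_\Ga N )$ is meaningful as a splitting of $\La$-modules with the second summand genuinely a $\La$-submodule, and so that $\mu_N$ is indeed a $\La$-homomorphism. Once this bookkeeping is secured, both parts reduce to straightforward computation with the decomposition.
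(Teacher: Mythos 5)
Your proposal is correct and follows essentially the same route as the paper: the right $\Ga$-module splitting $\La = \Ga \oplus \Ker \pi$, the induced decomposition $\La \otimes_\Ga N \simeq N \oplus ( \Ker \pi \otimes_\Ga N )$, and the multiplication map $\La \otimes_\Ga N \to M$ for the converse directions. You merely spell out the injectivity/surjectivity check that the paper leaves as a ``straightforward calculation'', and your bookkeeping of the $\Ga$- and $\La$-structures matches the paper's.
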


	\begin{proof}
		Recall that there is a decomposition $\La = { \Ga \oplus { \Ker \pi } } $ of right $\Ga$-modules. Therefore, for any $ \Ga $-module $ N $, there is an isomorphism of abelian groups of the form $ { \La \otimes_\Ga N } \simeq N \oplus ( { { \Ker \pi } \otimes_\Ga N } ) $, where the isomorphism is given by $( \g + x ) \otimes n \mapsto ( \g n  \, , \,  x \otimes n ) $ with inverse $(n  \, , \,  \sum_i x_i \otimes n_i ) \mapsto 1_\La \otimes n + \sum_i x_i \otimes n_i$ for every $ \g \in \Ga$, every $ x , x_i \in \Ker \pi$ and $n  , n_i \in N$. The inherited $\La $-action on $ N \oplus ( { { \Ker \pi } \otimes_\Ga N } ) $ is given by
		\[
		( \g + x ) (n  \, , \,  \sum_i x_i \otimes n_i ) = ( \g  n  \, , \,  x \otimes n + \sum_i [( \g + x ) x_i] \otimes n_i ).
		\]
		We consider the above isomorphism as an identification for the rest of the proof.
		
		(i) One implication follows immediately from the above identification. Indeed, it is straightforward to verify that $N$ is a $\Ga$-submodule of $ { \La \otimes_\Ga N } $ with tight $\La $-complement $  { \Ker \pi } N =  { { \Ker \pi } \otimes_\Ga N } $. For the converse implication, assume that a $ \La $-module $ M $ admits a $\Ga $-submodule $N$ with tight $\La $-complement such that the map $ { \Ker \pi } \otimes_\Ga N \epic { { \Ker \pi } N } $ defined by $ { x \otimes n } \mapsto { x n } $ is bijective. A straightforward calculation shows now that $ M $ is isomorphic to $ \La \otimes_\Ga N $ through the $\La$-homomorphism $ \La \otimes_\Ga N \to M $ induced by the $\La$-multiplication $ N $ inherits from $ M $.

		(ii) 	Suppose ${}_\La M $ is in the essential image of the functor $ \La \otimes_\Ga - $. By the preceding argument, the module $M$ admits a $\Ga$-submodule $N$ such that $M = N \oplus { { \Ker \pi } N } $. Moreover, the map $ { \Ker \pi } \otimes_\Ga N \to { \Ker \pi } N $ defined by $ { x \otimes n } \mapsto { x n } $ is bijective.
		If $M$ is also in the essential image of the functor $ {}_\La ( - ) $, then $ { \Ker \pi } N = 0 $. This directly implies $ M = N $ and $ { \Ker \pi } \otimes_\Ga N = 0 $.
		
		Conversely, assume $ { \Ker \pi } \otimes_\Ga M = 0 $. Then the map $ \La \otimes_\Ga M \to M $ defined by $ a \otimes m \mapsto a m $ is a well-defined $\La$-isomorphism. This shows that $M$ is in the essential image of $ \La \otimes_\Ga - $.
		Furthermore, since the map $ { \Ker \pi } \otimes_\Ga M \to { \Ker \pi } M $ defined by $ { x \otimes m } \mapsto { x m } $ is surjective, we have $ { \Ker \pi } M = 0 $, which is equivalent to $ M $ being in the essential image of the functor $ {}_\La ( - ) $.
	\end{proof}

	\begin{rem}
			\label{rem:1}
		Retaining the identification introduced in the proof of \cref{lem:cleft.4}{}, $\La$-modules admitting a $\Ga$-submodule with tight $\La$-complement are precisely those isomorphic to modules of the form $ { ( \La \otimes_\Ga N ) / Z } $. Here, we let $ N $ be an arbitrary $\Ga$-module, and $ Z $ is a $\La$-submodule of $\La \otimes_\Ga N$ contained in ${ \Ker \pi } \otimes_\Ga N$.
	\end{rem}

	\begin{rem}
		The notions introduced in \cref{defn:cleft.1}{} possess a dual version, where the notion of $\Ga $-submodule with $\La $-complement is self-dual. Specifically, a $\Ga$-submodule $N $ of $ M $ is \emph{co-generating} if the map $ M \to \Hom_\Ga ( \La , M / N ) $ defined by $ m \mapsto [ \l \mapsto   \l m + N ] $ is injective. Equivalently, the module $ N $ is co-generating if the zero submodule is the only $\La$-submodule of $ M $ contained in $ N $. The dual relationship with generating $\Ga$-submodules can be seen by observing that $N$ is generating if and only if the map $ \La \otimes_\Ga N \to M $ defined by $\l \otimes n \mapsto \l n $ is surjective.
		
		Moreover, a $\La $-complement $ M ' $ of $ N $ is \emph{co-absorbing} if $ { \Ker \pi } M' = 0 $, and \emph{co-tight} if in addition $ N $ is co-generating.
	\end{rem}

	The following lemma plays an essential role in the sequel.
	Before that, we recall briefly the standard way of constructing projective covers of (finitely generated) modules over a left perfect (resp.\ semiperfect) ring $ \La $. For simplicity, the reader may assume $ \La $ is an Artin algebra throughout this section.
	
	Let $ \{ e_j \}_j $ be a basic set of primitive orthogonal idempotents for a left perfect ring $\La$, that is for every projective indecomposable $\La$-module $P$ there is a unique $j$ such that $P \simeq \La e_j $. For any non-zero $\La$-module $X$, it holds that $\topp_\La X = X / \rad_\La X $ is a non-zero semisimple $\La$-module. Therefore, there is a subset $\{ x_i \}_i \subseteq X$ such that $\topp _\La X = \oplus_{i }  \La \ol{ x_i }$ and ${}_\La \La \ol{ x_i }$ is simple for every $ i $, where $\ol{ x_i }$ denotes the image of $x_i$ in $\topp _\La X$ under the natural epimorphism.
	
	Note that we may assume without loss of generality that $x_i = e_{g_i} x_i$ for every $i$, where $e_{g_i} $ is some primitive idempotent from the above. This follows from the fact that $e_j \ol{ x_i }$ has to be non-zero for some $j$ in any case implying that $\La \ol{ e_j x_i  } = \La \ol{ x_i }  $. Such a subset of $X$ is often called a \emph{$\La$-basis}. 
	
	The unique $\La$-homomorphism $P_\La (X) = \oplus_{i } \La e_{g_i} \to X$ defined by the correspondence $e_{g_i} \mapsto x_i$ is a projective cover of ${}_\La X$, see for instance \cite[Proposition 28.13]{AndersonFuller}{}. Furthermore, the above construction also works in case $\La $ is semiperfect and $X$ is finitely generated. However, if we want to construct a minimal projective resolution of $X$, then we need the extra assumption that $\La$ is left noetherian so that submodules of finitely generated $\La $-modules are again finitely generated.
	
	For the remainder of this subsection, we fix a radical-preserving ring cleft extension $(\La, \Ga, \pi , \i)$ with superfluous kernel. Throughtout, we assume $\La$ is either left perfect or left noetherian semiperfect. Note that $\Ga $ is also left perfect or left noetherian semiperfect, respectively, as a quotient ring of $\La $; see for instance \cite[Corollaries~27.9, 28.7]{AndersonFuller}{}. Furthermore, if $\La$ is left noetherian semiperfect, all modules discussed are assumed to be finitely generated.

	\begin{lem}
		\label{lem:cleft.1}
		Let $ M $ be a $ \La $-module, and let $ N $ be a $\Ga$-submodule with an absorbing $\La $-complement. Let $P_\La (M) \epic M$ and $P_\Ga (N) \epic N$ be projective covers. Then there is a commutative diagram of $\Ga$-modules of the form
		\phantomsection		\hypertarget{eq:1'}{}
		\begin{equation}		
			\label{eq:1}
			\begin{tikzcd}
				0 & {{}_\Gamma \Omega^1_\Lambda(M)} & {{}_\Gamma P_\La (M)} & {{}_\Gamma M} & 0 \\
				\\
				0 & {\Omega^1_\Gamma(N)} & {P_\Ga (N)} & N & 0
				\arrow[from=1-4, to=1-5]
				\arrow[from=3-4, to=3-5]
				\arrow[curve={height=-6pt}, hook', from=3-4, to=1-4]
				\arrow[curve={height=-6pt}, two heads, from=1-4, to=3-4]
				\arrow[from=3-3, to=3-4]
				\arrow[from=1-3, to=1-4]
				\arrow[curve={height=-6pt}, hook', from=3-3, to=1-3]
				\arrow[curve={height=-6pt}, two heads, from=1-3, to=3-3]
				\arrow[from=1-1, to=1-2]
				\arrow[from=3-1, to=3-2]
				\arrow[from=3-2, to=3-3]
				\arrow[curve={height=-6pt}, hook', from=3-2, to=1-2]
				\arrow[curve={height=-6pt}, two heads, from=1-2, to=3-2]
				\arrow[from=1-2, to=1-3]
			\end{tikzcd}
		\end{equation}
		where the rows are exact, and each vertical monomorphism is split and admits an absorbing $\La$-complement in a canonical way.
	\end{lem}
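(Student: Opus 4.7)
The plan is to build the diagram by choosing compatible projective covers of $N$ and $M$, leveraging the absorbing hypothesis $(\Ker \pi) N \subseteq M'$ together with the Jacobson radical decomposition $J(\La) = J(\Ga) \oplus \Ker \pi$ from \cref{lem:cleft.2}{}. By \cref{lem:semiperfect.clefts}{}, a basic set $\{e_j\}$ of primitive orthogonal idempotents for $\Ga$ also works for $\La$, which lets me align the two projective covers. First I would fix a $\Ga$-basis $\{n_i\}$ of $N$ with $n_i = f_i n_i$ for suitable $f_i \in \{e_j\}$, producing $P_\Ga(N) = \bigoplus_i \Ga f_i$ with $f_i \mapsto n_i$.

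The crucial preliminary is that $\topp_\La M$ decomposes canonically as $\topp_\Ga N \oplus \bar T$, where $\bar T \simeq M'/(M' \cap J(\La) M)$ and $\topp_\Ga N$ sits inside $\topp_\La M$ as a $\La$-submodule. A short intersection computation using $M = N \oplus M'$, the inclusion $(\Ker \pi) M \subseteq M'$, and the radical decomposition yields the identity $J(\La) M \cap N = J(\Ga) N$, from which the decomposition follows; that $\topp_\Ga N$ is a $\La$-submodule is because $\Ker \pi$ acts trivially on the top, making the $\La$- and $\Ga$-actions coincide there. Lifting a $\La$-basis of $\bar T$ to elements $t_k \in M'$ with $t_k = e_{h_k} t_k$ (which is possible because $\bar T$ is literally a $\La$-quotient of $M'$), I obtain a combined $\La$-basis $\{n_i\} \cup \{t_k\}$ of $M$, and hence $P_\La(M) = \bigoplus_i \La f_i \oplus \bigoplus_k \La e_{h_k}$ with $P_\Ga(N)$ canonically embedded in the first factor.

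The middle vertical monomorphism is then the natural inclusion $\bigoplus_i \Ga f_i \monicc \bigoplus_i \La f_i$, and its absorbing $\La$-complement is $C_P := \bigoplus_i (\Ker \pi) f_i \oplus \bigoplus_k \La e_{h_k}$; this is manifestly a $\La$-submodule, and the absorbing condition holds since $(\Ker \pi)(\Ga f_i) = (\Ker \pi) f_i$. Commutativity of the right square is automatic from the assignments $f_i \mapsto n_i$. For the induced map on first syzygies, I would decompose any $\xi \in \Omega^1_\La(M)$ as $\xi_1 + \xi_2$ via $P_\La(M) = P_\Ga(N) \oplus C_P$; the image of $\xi_1$ in $M$ lies in $N$, while the image of $\xi_2$ lies in $M'$ (using $(\Ker \pi) n_i \subseteq M'$ and $\La t_k \subseteq M'$, by the choice $t_k \in M'$). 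Since $\xi$ maps to $0$ in $M = N \oplus M'$, both components vanish separately, yielding $\Omega^1_\La(M) = \Omega^1_\Ga(N) \oplus (\Omega^1_\La(M) \cap C_P)$ with an absorbing $\La$-complement, as the intersection contains $(\Ker \pi)\Omega^1_\Ga(N)$.

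The main obstacle I anticipate is the lifting step in the second paragraph: realising $\bar T$ as the canonical $\La$-quotient $M'/(M' \cap J(\La) M)$, rather than as an arbitrary complement, is precisely what allows the $t_k$ to be chosen inside $M'$. This in turn forces $C_P$ to map into $M'$ through the projective cover, which is the ingredient that makes the syzygy decomposition respect the bottom summand. This alignment rests decisively on the absorbing hypothesis via the key identity $J(\La) M \cap N = J(\Ga) N$; without it, the compatibility between the two rows would break already at the top.
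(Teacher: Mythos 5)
Your proposal is correct and takes essentially the same route as the paper's proof: fix a common basic set of idempotents via \cref{lem:semiperfect.clefts}{}, use the decomposition $J(\La)=J(\Ga)\oplus\Ker\pi$ from \cref{lem:cleft.2}{} to build a projective cover of $M$ that extends the chosen cover of $N$ with the extra generators taken inside $M'$, take the canonical complement $\bigoplus_i (\Ker\pi) f_i \oplus \bigoplus_k \La e_{h_k}$, and decompose $\Omega^1_\La(M)$ accordingly. The only cosmetic difference is that you obtain the top decomposition canonically from the identity $J(\La)M\cap N=J(\Ga)N$, whereas the paper takes a $\La$-basis of $M'$ and prunes it; the substance is identical.
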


	\begin{proof}
		Let $\La$ be left perfect and let $ \{ e_j \}_j $ be a basic set of primitive orthogonal idempotents for $\Ga$. Note that $ \{ e_j \}_j $ is a set with the same properties over $\La$ according to \cref{lem:semiperfect.clefts}{}.
		Let $ K = \Ker \pi $ and let $M'$ be the absorbing $\La$-complement of $N$ in $M$. Then we have that
			\[
				\rad _\La M = J(\La ) M = J( \Ga ) N \oplus ( K N + J( \La ) M' ) = \rad _\Ga N \oplus ( K N + \rad_\La M')
			\]
		since $J(\La ) = { J( \Ga ) \oplus K } $ according to \cref{lem:cleft.2}, and $ M = N \oplus M' $.

		Now let $\{ n_i\}_i $ be a fixed $\Ga$-basis of $ N $. We claim that the module $\La \ol{ n_i }$, that is the $\La$-submodule of $\topp _\La M$ generated by $ \ol{ n_i } = n_i + \rad _\La M$, is simple for every $i$, and that the sum of all these submodules is direct.
		For the first claim, note that the $ \Ga $-structure that ${}_\La \La \ol{ n_i } $ inherits due to the fact that $ K  \La \ol{ n_i } = 0$ is the same as the $ \Ga $-structure induced by restriction of scalars along $ \i $. In particular, we have that ${}_\La \La \ol{ n_i } $ is simple if and only if ${}_\Ga \La \ol{ n_i }$ is simple. But the latter module is simple as it is isomorphic to ${}_\Ga \Ga \ol{ n_i }$ through the bijection defined by $ { { \g  n_i } + \rad_\La M } \mapsto { { \g n_i } + \rad_\Ga N } $, proving thus our first claim.
		
		For the second claim, let $ a_i  $ be non-zero elements of $\La $ for a finite number of indices $i$ such that $ \sum_i a_i  n_i \in \rad_\La M$, and write $a_i = \g_i + x_i $ for $ \g_i \in \Ga $ and $ x_i \in K $. It follows from the above decomposition of $\rad_\La M $ that $\sum_i \g_i  (  n_i + \rad_\Ga N) = 0$, implying that $\g_i n_i \in \rad_\Ga N$ for every $i$. Therefore, it holds that $ a_i n_i \in \rad_\La M$ for every $ i $, which completes the proof of our second claim.

		Next, we fix a $\La$-basis ${ m_l }_l $ of $ M' $. Then $\topp_\La M = \sum_i \La \ol{ n_i } + \sum_l \La \ol{ m_l }$ and, therefore, we have an equality of the form
		\[
		{ \topp_\La M } = { { \big( \! \oplus_i { \La \ol{ n_i } } \big) } \bigoplus { \big( \! \oplus_{l'} { \La \ol{m_{l'}} } \big) } }
		\]
		where $l'$ ranges over an appropriate subset of the initial index set, see for instance \cite[Proposition 9.3]{AndersonFuller}{}. Let $e_{g_i}$ and $e_{ v_{l'}} $ denote the primitive idempotents such that $n_i = e_{g_i } n_i$ and $m_{l'} = e_{v_{l'}} m_{l'}$ for every $i$ and $l'$, respectively. Then the map $P_\Ga (N) = { \oplus_i { \Ga e_{g_i } } } \epic N$ defined by $( \g_i e_{g_i } )_i \mapsto { \sum_i { \g_i  n_i } }$, where almost all $\g_i$ are zero, is a projective cover of ${}_\Ga N$. Similarly, the map $P_\La (M) = \big( \! \oplus_{i } \La e_{g_i} \big) \oplus \big( \! \oplus_{ l' } \La e_{v_{l'}} \big) \epic M$ defined by $( a_i e_{g_i} )_i \cup ( b_{l'} e_{v_{l'}})_{l'} \mapsto \sum_i a_i  n_i + \sum_{l'} b_{l'}  m_{l'} $ is a projective cover of ${}_\La M$.

		A key property for the final part of the proof is that $\Ga e_j $ is a $\Ga$-submodule of $ \La e_j $ with tight $\La$-complement $ K e_j $ for every idempotent $e_j$. Consequently, the natural injection $ P_\Ga (N) \monicc P_\La (M) $ is a split $\Ga$-monomorphism with absorbing $\La$-complement $( \oplus_i K e_{g_i} ) \oplus ( \oplus _{ l' } \La e_{v_{l'}} ) $. Moreover, both the split monomorphism and its retraction (induced by the given absorbing $\La$-complement of $ P_\Ga (N) $) make the respective squares commutative in diagram \hyperlink{eq:1'}{(\ref{eq:1})}.
		
		By the universal property of kernels, there exist unique $ \Ga $-homomorphisms between $\Omega^1_\Ga (N)$ and ${}_\Ga \Omega^1_\La (M)$ that make the respective squares commutative. Furthermore, the composition $\Omega^1_\Ga (N) \to {}_\Ga \Omega^1_\La (M) \to \Omega^1_\Ga (N)$ is the identity on $\Omega^1_\Ga (N)$, showing that the vertical map for the syzygies is also a split monomorphism.

		To show that the image of $ \Omega^1_\Ga (N) $ in $ \Omega^1_\La (M) $ also admits an absorbing $\La$-complement, we characterize the kernel of the retraction from ${}_\Ga \Omega^1_\La (M) $ to $ \Omega^1_\Ga (N)$. This kernel consists of elements $( x_i e_{g_i} )_i \cup ( b_l e_{v_l} )_l \in P_\La (M) $ such that $ x_i e_{g_i} \in K e_{g_i} $ and $ b_l e_{v_l} \in \La e_{v_l} $, and $\sum_i x_i  n_i + \sum_l b_l m_l = 0$. This kernel forms an absorbing $\La$-complement of the image of $ \Omega^1_\Ga (N) $ in $ \Omega^1_\La (M) $ because $ \La e_j = \Ga e_j \oplus K e_j $ for every idempotent $e_j$, completing the proof.
	\end{proof}

	\begin{cor}
			\label{cor:1}
		Let $ M $ be a $ \La $-module, and let $ N $ be a $\Ga$-submodule with an absorbing $\La $-complement. Then $\pd _\Ga N \leq \pd _\La M $.
	\end{cor}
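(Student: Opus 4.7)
The plan is to obtain the inequality by iterating \cref{lem:cleft.1}{}. The case $\pd_\La M = \infty$ is trivial, so assume $\pd_\La M = n < \infty$. I would argue by induction that for every $ k \geq 0 $, the $k$-th syzygy $ \Omega^k_\Ga (N) $ is a $\Ga$-submodule of $ {}_\Ga \Omega^k_\La (M) $ with an absorbing $\La$-complement, where all syzygies are taken with respect to the respective minimal projective resolutions. The base case $ k = 0 $ is exactly the hypothesis, while the inductive step is a direct application of \cref{lem:cleft.1}{} with $ M $ and $ N $ replaced by $ \Omega^k_\La (M) $ and $ \Omega^k_\Ga (N) $, respectively, once we recall that $ \Omega^{k+1}_\Ga (N) = \Omega^1_\Ga ( \Omega^k_\Ga (N) ) $ and similarly for $\La $.

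Now the hypothesis $\pd _\La M = n < \infty$ implies that $ \Omega^n_\La (M) $ is projective in the minimal projective resolution of $M$ over $\La $, so its projective cover is the identity, forcing $ \Omega^{n+1}_\La (M) = 0 $. Combining this with the case $ k = n + 1 $ of the induction, the $\Ga$-module $ \Omega^{n+1}_\Ga (N) $ embeds into the zero module and is therefore itself zero. Consequently $\pd _\Ga N \leq n = \pd_\La M$, as required.

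I expect no real obstacle here beyond verifying that the iteration is legitimate, and this is immediate from the symmetric form of the diagram in \cref{lem:cleft.1}{}: the conclusion of the lemma reproduces at the syzygy level exactly the hypotheses needed to invoke it again. The one point worth being explicit about is that under the running assumptions on $\La$ (left perfect, or left noetherian semiperfect with $M$ finitely generated), minimal projective resolutions exist both over $\La$ and over its quotient $\Ga$, and the syzygies that appear in the iteration remain in the class of modules for which the lemma applies -- this is automatic in the left perfect case, and follows from the noetherian hypothesis in the other.
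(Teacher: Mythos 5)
Your proposal is correct and follows exactly the paper's argument: the paper's proof is precisely an inductive application of \cref{lem:cleft.1}{}, which you have merely spelled out (the syzygy inclusions with absorbing complements propagate, and $\Omega^{n+1}_\La(M)=0$ forces $\Omega^{n+1}_\Ga(N)=0$). No gaps; the extra care about existence of minimal resolutions over $\La$ and $\Ga$ is consistent with the standing hypotheses fixed before \cref{lem:cleft.1}{}.
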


	\begin{proof}
		The corollary follows from an inductive application of \cref{lem:cleft.1}{}.
	\end{proof}

	\begin{cor}
		\label{cor:cleft.2}
		Let $ N $ be a $ \Ga $-module, and let $ Z $ be a $ \La $-submodule of $ { \La \otimes_\Ga N } $ contained in $ { \Ker \pi } \otimes_\Ga N $. Then
			\[
				\pd N_\Ga \leq \pd _\La { ( \La \otimes _\Ga N ) / Z } .
			\]
		In particular, it holds that $ \pd _\Ga N \leq \min \{  { \pd _\La N } , \, { \pd _\La \La \otimes_\Ga N } \} $.
	\end{cor}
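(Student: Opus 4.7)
The plan is to apply \cref{cor:1}{} to the $\La$-module $M = (\La \otimes_\Ga N) / Z$, by exhibiting $N$ as a $\Ga$-submodule of $M$ admitting an absorbing $\La$-complement. Indeed, \cref{rem:1}{} tells us that $\La$-modules of the form $(\La \otimes_\Ga N)/Z$ are exactly those admitting a $\Ga$-submodule with tight $\La$-complement, so the hypotheses of \cref{cor:1}{} are essentially built into the statement we want to prove.

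Concretely, I would work with the identification $\La \otimes_\Ga N \simeq N \oplus (\Ker \pi \otimes_\Ga N)$ of $\Ga$-modules recalled in the proof of \hyperlink{lem:cleft.4}{\cref{lem:cleft.4}{}}. Under this identification, $N \oplus 0$ is a $\Ga$-submodule and $0 \oplus (\Ker \pi \otimes_\Ga N)$ is a $\La$-submodule (since $\Ker \pi$ is a left ideal of $\La$). Because $Z \subseteq \Ker \pi \otimes_\Ga N$ by hypothesis, the intersection of $Z$ with $N \oplus 0$ is zero, so $N$ injects into $M$ as a $\Ga$-submodule; moreover, setting $M' = (\Ker \pi \otimes_\Ga N)/Z$ yields an abelian-group decomposition $M = N \oplus M'$ in which $M'$ remains a $\La$-submodule. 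The absorbing condition $\Ker \pi \cdot N \subseteq M'$ is immediate from the explicit formula for the $\La$-action given in the proof of \cref{lem:cleft.4}{}: for $x \in \Ker \pi$ and $n \in N$, one has $x \cdot n = x \otimes n \in \Ker \pi \otimes_\Ga N$. Invoking \cref{cor:1}{} then yields $\pd {}_\Ga N \leq \pd {}_\La M$, which is the desired inequality.

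For the ``in particular'' clause, I would specialise the above in two ways. Taking $Z = 0$ gives $M = \La \otimes_\Ga N$ and hence $\pd {}_\Ga N \leq \pd {}_\La \La \otimes_\Ga N$. Taking $Z = \Ker \pi \otimes_\Ga N$ produces a $\La$-module on which $\Ker \pi$ acts as zero, so that its $\La$-action factors through $\pi$; the resulting module is exactly $N$ viewed as a $\La$-module via restriction along $\pi$, yielding $\pd {}_\Ga N \leq \pd {}_\La N$.

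There is no real obstacle here, since the work is essentially packaged in \cref{cor:1}{} and \cref{rem:1}{}; the only thing to be careful about is bookkeeping with the $\La$-action on the quotient $(\La \otimes_\Ga N)/Z$ and verifying that the pieces fit together as a $\Ga$-submodule with absorbing $\La$-complement. Note also that the statement applies equally well on either side (the notation $\pd N_\Ga$ is read here as the projective dimension of $N$ as a $\Ga$-module of the relevant side), since the argument is symmetric and the cleft extension hypothesis is left/right symmetric under the present assumptions.
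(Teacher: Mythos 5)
Your proposal is correct and follows essentially the same route as the paper, which proves the first inequality by combining \cref{cor:1}{} with \cref{rem:1}{} and then obtains the ``in particular'' clause by specialising to $Z = 0$ and $Z = \Ker \pi \otimes_\Ga N$; you have merely spelled out the decomposition $M = N \oplus (\Ker \pi \otimes_\Ga N)/Z$ and the absorbing condition that the paper leaves implicit.
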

	
	\begin{proof}
		The first statement follows directly from \cref{cor:1}{} and \cref{rem:1}{}. The second one is a direct application of the first for $ Z = 0 $ and $ Z = { \Ker \pi } \otimes_\Ga N $.
	\end{proof}

	The following lemma outlines conditions under which projective dimensions are preserved when considering $\Ga$-modules as $\La$-modules via induction or restriction.

	\begin{lem}
		\label{lem:cleft.3}
		For a minimal projective resolution $ \mathbb{P} $ of a $ \Ga $-module $ N $, the following are equivalent.
		\begin{enumerate}[\rm(i)]
			\item The complex $\La \otimes_\Ga \mathbb{P}$ is exact.
			\item The group $\Tor_i^\Ga ( \Ker \pi , N ) $ is trivial for every $i \geq 1$.
			\item The complex $\La \otimes_\Ga \mathbb{P}$ is a minimal projective resolution of $ \La \otimes_\Ga N$.
		\end{enumerate}
		Furthermore, it holds that $ \pd _\Ga N = \pd _\La \La \otimes_\Ga N $ in that case.
	
		Moreover, the complex $\La \otimes_\Ga \mathbb{P}$ is a minimal projective resolution of $ {}_\La N$ if and only if the group $\Tor_i^\Ga ( \Ker \pi , N ) $ is trivial for every $i \geq 0$, in which case $\pd _\Ga N = \pd _\La N $.
	\end{lem}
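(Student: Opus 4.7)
The plan is to exploit two structural facts guaranteed by the radical-preserving cleft extension with superfluous kernel: first, $\La = \Ga \oplus \Ker \pi$ as right $\Ga$-modules, which yields $\La \otimes_\Ga \Ga e_j = \La e_j$ on the standard projective indecomposables; second, $J(\Ga) \subseteq J(\La)$ by \cref{lem:cleft.2}, so that $\La \cdot J(\Ga) \subseteq J(\La)$.

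Using these, I would first verify two preparatory facts about the complex $\La \otimes_\Ga \mathbb{P}$ augmented by the canonical map to $\La \otimes_\Ga N$. Its terms are projective $\La$-modules, since each $P_i = \oplus_k \Ga e_{g_k}$ becomes $\oplus_k \La e_{g_k}$; and its differentials factor through the Jacobson radical, because minimality of $\mathbb{P}$ gives $d_{i+1}(P_{i+1}) \subseteq J(\Ga) P_i$, so after applying $\La \otimes_\Ga -$ the image lies in $\La \cdot J(\Ga) \cdot (\La \otimes_\Ga P_i) \subseteq J(\La) \cdot (\La \otimes_\Ga P_i)$. With these in hand, the equivalence (i) $\Leftrightarrow$ (iii) is immediate, since a minimal complex of projectives whose augmentation is exact is by definition a minimal projective resolution, and the reverse implication is trivial. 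For (i) $\Leftrightarrow$ (ii), I would identify the higher homology of $\La \otimes_\Ga \mathbb{P}$ with $\Tor_i^\Ga(\La, N)$ (as $\mathbb{P}$ is a projective resolution of $N$ over $\Ga$), and split this via the right-$\Ga$-decomposition of $\La$ together with the flatness of ${}_\Ga\Ga$ to obtain $\Tor_i^\Ga(\La, N) = \Tor_i^\Ga(\Ker \pi, N)$ for $i \geq 1$. The equality $\pd_\Ga N = \pd_\La(\La \otimes_\Ga N)$ in the equivalent case then follows because the tensored minimal resolution has the same length as $\mathbb{P}$: each non-zero summand $\Ga e_g$ becomes the non-zero $\La e_g$, so no term is annihilated by tensoring.

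For the \emph{moreover} part, the backward direction combines (i)$\Leftrightarrow$(ii) for $i \geq 1$ with the observation that $\Ker \pi \otimes_\Ga N = 0$ makes the natural surjection $\La \otimes_\Ga N \epic {}_\La N$, $\l \otimes n \mapsto \pi(\l) n$, an isomorphism, since its kernel equals $\Ker \pi \otimes_\Ga N$ via the decomposition $\La \otimes_\Ga N = N \oplus (\Ker \pi \otimes_\Ga N)$ used in the proof of \cref{lem:cleft.4}. For the converse, exactness of $\La \otimes_\Ga \mathbb{P}$ as a resolution of ${}_\La N$ gives (i) and hence vanishing of $\Tor_i^\Ga(\Ker \pi, N)$ for $i \geq 1$; its canonical cokernel $\La \otimes_\Ga N$ is identified with ${}_\La N$ via the natural augmentation, forcing $\Ker \pi \otimes_\Ga N = 0$. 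The equality $\pd_\Ga N = \pd_\La N$ then reduces to the previous identification via $\La \otimes_\Ga N \cong {}_\La N$.

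The only delicate point, which I expect to be the main obstacle, lies in the converse of the \emph{moreover} part: one must confirm that the augmentation realizing $\La \otimes_\Ga \mathbb{P}$ as a resolution of ${}_\La N$ genuinely induces the canonical identification of $\La \otimes_\Ga N$ with ${}_\La N$, so that $\Ker \pi \otimes_\Ga N$ is really forced to vanish rather than merely isomorphic to zero via an abstract $\La$-isomorphism of the cokernel. This is cleanest if the intended augmentation is taken to be the natural map $\l \otimes p \mapsto \pi(\l) p$, which is the canonical choice dictated by the $\La$-action on ${}_\La N$.
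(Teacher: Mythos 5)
Your argument is essentially the paper's: the equivalence (i)$\Leftrightarrow$(ii) via $\La_\Ga \simeq \Ga_\Ga \oplus (\Ker\pi)_\Ga$ and flatness of $\Ga_\Ga$ is identical, and your hands-on verification that the differentials of $\La \otimes_\Ga \mathbb{P}$ have image in $J(\La)(\La\otimes_\Ga P_i)$ (using $\i(J(\Ga)) \subseteq J(\La)$ from \cref{lem:cleft.2}{}) is an inline proof of the fact the paper instead imports from \cite{Giata1}, namely that induction along a radical-preserving monomorphism preserves projective covers; both yield minimality and the length comparison $\pd_\Ga N = \pd_\La \La\otimes_\Ga N$ in the same way, and your treatment of the backward direction of the \emph{moreover} part matches the paper's short exact sequence argument. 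The ``delicate point'' you flag in the converse direction is not actually an obstacle, and you should not need to privilege a particular augmentation: whatever augmentation exhibits $\La\otimes_\Ga\mathbb{P}$ as a resolution of ${}_\La N$, right exactness identifies the cokernel of the last differential with $\La\otimes_\Ga N$, so one obtains an abstract $\La$-isomorphism $\La\otimes_\Ga N \simeq {}_\La N$; by \cref{lem:cleft.4}{}.(ii) — which you already use for the other direction — the module ${}_\La N$ then lies in the essential image of both $\La\otimes_\Ga -$ and ${}_\La(-)$, and since its restriction along $\i$ is $N$ itself (because $\pi\i = \id_\Ga$), this forces $\Ker\pi\otimes_\Ga N = 0$ outright. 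This is exactly how the paper closes that step, so your proposal needs no repair beyond replacing the appeal to ``the canonical augmentation'' with this structural observation.
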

	
	\begin{proof}
		The equivalence between (i) and (ii) follows from the isomorphisms
			\[
				\Tor_i^\Ga ( \La , N ) \simeq \Tor_i^\Ga ( \Ga , N ) \oplus \Tor_i^\Ga ( \Ker \pi , N ) \simeq \Tor_i^\Ga ( \Ker \pi , N )
			\]
		for every $i \geq 1 $, which hold since there is an isomorphism of right $\Ga$-modules $ \La_\Ga \simeq { \Ga_\Ga \oplus { \Ker \pi }_\Ga  } $ and the regular module $ \Ga_\Ga $ is flat.
		
		Assume that the complex $\La \otimes_\Ga \mathbb{P}$ is exact. Since induction functors preserve projectivity, the complex $\La \otimes_\Ga \mathbb{P}$ is a projective resolution of $ \La \otimes_\Ga N $. For minimality, we use that the monomorphism $\i \colon \Ga \monicc \La$ is radical-preserving (with superfluous kernel), as $( \La , \Ga , \pi , \i )$ is a radical-preserving cleft extension with superfluous kernel. According to \cite[Proposition~3.10]{Giata1}{}, this implies that the associated induction functor $ \La \otimes _\Ga - $ preserves non-trivially projective covers. In other words, if $ f \colon P \epic N $ is a projective cover over $ \Ga $, then $ \La \otimes_\Ga f \colon \La \otimes_\Ga P \epic \La \otimes_\Ga N $ is a projective cover over $ \La $. We deduce that the complex $\La \otimes_\Ga \mathbb{P}$ is a minimal projective resolution of $\La \otimes_\Ga N$. Moreover, we have that $ \pd _\Ga N = \pd _\La \La \otimes_\Ga N $ as the lengths of the complexes $ \La \otimes_\Ga \mathbb{P} $ and $ \mathbb{P}$ are equal.

		For the last part of the lemma, consider the short exact sequence of $\La$-$\Ga$-bimodules $0 \to { \Ker \pi } \to \La \xrightarrow{ \pi \, }  \Ga \to 0 $. Since tensoring with $ N $ over $\Ga$ is right exact, and $\Ga_\Ga$ is flat (so $\Tor_1^\Ga ( \Ga , N ) = 0 $), the induced long exact sequence is of the form
			\[
				0 \to { \Ker \pi } \otimes_\Ga N \to \La \otimes_\Ga N \to \Ga \otimes_\Ga N \to 0.
			\]
		If $\Tor_i^\Ga ( \Ker \pi , N ) = 0 $ for every $i \geq 0$, then in particular $ { \Ker \pi } \otimes_\Ga N = 0 $. Our claim follows now from the first part of the proof and the fact that ${}_\La \Ga \otimes_\Ga N \simeq {}_\La N $.
		
		For the converse, assume $\La \otimes_\Ga \mathbb{P}$ is a minimal projective resolution of ${}_\La N$
		In particular, we have that $\La \otimes\Ga N \simeq {}_\La N $. From \hyperlink{lem:cleft.4}{\cref{lem:cleft.4}{}.(ii)}{}, this isomorphism implies that ${ { \Ker \pi } \otimes_\Ga N } = 0$. This, combined with the first part of the proof, shows that $\Tor_i^\Ga ( \Ker \pi , N ) = 0 $ for all $ i \geq 0 $.
	\end{proof}

	Building on \cref{lem:cleft.1}{}, we establish an even stronger homological connection between a $ \La $-module and a $\Ga$-submodule with an absorbing $\La$-complement.
	
	\begin{prop}
		\label{prop:cleft.1}
		Let $ M $ be a $ \La $-module, and let $ N$ be a $\Ga$-submodule with an absorbing $\La$-complement. For any right $\Ga$-module $ L $, the group $ \Tor^\Ga_i ( L , N ) $ is a direct summand of $\Tor^\La_i (L , M) $ for every $i \geq 0$.
	\end{prop}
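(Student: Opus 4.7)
The plan is to promote the degree-wise comparison of \cref{lem:cleft.1}{} into a split of complete minimal projective resolutions, and then transport it through the two tensor functors involved. Iterating \cref{lem:cleft.1}{} on successive syzygies, one constructs minimal projective resolutions $\mathbb{P}^\La \to M$ and $\mathbb{P}^\Ga \to N$ together with a chain map $\iota_\bullet \colon \mathbb{P}^\Ga \to \mathbb{P}^\La$ whose components are $\Ga$-split monomorphisms $\iota_i \colon P_i^\Ga \monicc P_i^\La$ with $\Ga$-linear retractions $\rho_i$ and absorbing $\La$-complements $Q_i$. The commutativity of the squares in diagram \hyperlink{eq:1'}{(\ref{eq:1})}, applied at each syzygy, makes both $\iota_\bullet$ and $\rho_\bullet$ into chain maps satisfying $\rho_\bullet \circ \iota_\bullet = \id_{\mathbb{P}^\Ga}$.

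Next, view $L$ as a right $\La$-module via $\pi$, so that $L \otimes_\La \mathbb{P}^\La$ computes $\Tor^\La_\bullet(L,M)$ while $L \otimes_\Ga \mathbb{P}^\Ga$ computes $\Tor^\Ga_\bullet(L,N)$. The maps $\iota_i$ induce a natural chain map $\phi_\bullet \colon L \otimes_\Ga \mathbb{P}^\Ga \to L \otimes_\La \mathbb{P}^\La$ sending $l \otimes p \mapsto l \otimes \iota_i(p)$, which is well defined since this is the composition of the canonical map $L\otimes_\Ga P_i^\La \to L\otimes_\La P_i^\La$ with $L \otimes_\Ga \iota_i$. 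The core technical step is to produce a compatible chain-map retraction $\tilde\rho_\bullet \colon L \otimes_\La \mathbb{P}^\La \to L \otimes_\Ga \mathbb{P}^\Ga$ descending from $\rho_\bullet$. Well-definedness reduces to the identity $\rho_i(xp) = \pi(x)\,\rho_i(p)$ for $x \in \La$ and $p \in P_i^\La$, which I would verify by decomposing $x = \i(\pi(x)) + (x - \i(\pi(x)))$: the first term contributes $\pi(x)\,\rho_i(p)$ by $\Ga$-linearity of $\rho_i$, while the second lies in $(\Ker \pi)\,P_i^\La$ and hence in $Q_i = \Ker \rho_i$, since the absorbing hypothesis gives $(\Ker \pi)\,P_i^\Ga \subseteq Q_i$ and $Q_i$ is a $\La$-submodule closed under multiplication by $\Ker \pi$.

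With both $\phi_\bullet$ and $\tilde\rho_\bullet$ in place, the identities $\rho_i \circ \iota_i = \id$ and the chain-map property of $\iota_\bullet$ and $\rho_\bullet$ (inherited from \cref{lem:cleft.1}{}) yield $\tilde\rho_\bullet \circ \phi_\bullet = \id$ as morphisms of chain complexes. Taking homology in each degree then exhibits $\Tor^\Ga_i(L,N)$ as a direct summand of $\Tor^\La_i(L,M)$ for every $i \geq 0$. The main obstacle is precisely the well-definedness of $\tilde\rho_\bullet$, i.e.\ checking that a $\Ga$-linear retraction genuinely descends through the change of tensor base from $\Ga$ to $\La$: this is exactly where the absorbing complement hypothesis (together with the radical-preserving cleft structure implicit in the choice of bases in \cref{lem:cleft.1}{}) enters in an essential way.
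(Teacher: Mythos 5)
Your proposal is correct and follows essentially the same route as the paper: iterate \cref{lem:cleft.1}{} to obtain split chain maps between the minimal resolutions of $N$ over $\Ga$ and of $M$ over $\La$, tensor with $L$, and verify that the retraction descends because $L\cdot\Ker\pi=0$ and $(\Ker\pi)P_i^\La$ lies in the kernel of the retraction, before passing to homology. Your verification that $\rho_i(xp)=\pi(x)\rho_i(p)$ via the decomposition $x=\i(\pi(x))+(x-\i(\pi(x)))$ is exactly the computation the paper performs for $\tilde{\pi}_j$, so there is nothing to add.
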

	
	\begin{proof}
		We begin by fixing a truncated minimal projective resolution
		\[
		\ol{\mathbb{Q}}_N \colon   \, \, \, \, \, \, \ldots  \to  Q_j \xrightarrow{g_j}  Q_{j-1} \to \ldots \to Q_1 \xrightarrow{g_1} Q_0  \to 0 
		\]
		of ${}_\Ga N$. Furthermore, we use $ \ol{\mathbb{Q}}_N$ to construct a truncated minimal projective $ \La $-resolution of $ M $
		\[
		\ol{\mathbb{P}}_M \colon   \, \, \, \, \, \, \ldots  \to  P_j \xrightarrow{f_j}  P_{j-1} \to \ldots \to P_1 \xrightarrow{f_1} P_0 \to 0 
		\]
		as in the proof of \cref{lem:cleft.1}{}.
		
		Recall that for every $ j \geq 0 $ there is a split $\Ga$-monomorphism $ i_j \colon Q_j \monicc {}_\Ga P_j $. The kernel of its canonical retraction $ \pi_j  $, described in the proof of \cref{lem:cleft.1}{}, is an absorbing $\La$-complement of (the image of) $ i_j$ containing $ { \Ker \pi }  P_j $. Furthermore, both $(i_j)_j$ and $ ( \pi_j)_j $ are chain maps between the complexes $\mathbb{Q}_N$ and ${}_\Ga \mathbb{P}_M$.

		Our aim is to show that $L \otimes_\Ga \ol{\mathbb{Q}}_N$ is a direct summand of $L \otimes_\La \ol{\mathbb{P}}_M$ as a complex of abelian groups, where $ L $ is viewed as a right $ \La $-module through restriction of scalars along $ \pi $ as per our convention.
		We do that by establishing the existence of split monomorphisms of abelian groups $\tilde{ i }_j \colon L \otimes_\Ga Q_j \monicc L \otimes_\La P_j $ with retractions $\tilde{\pi}_j $ for every $j \geq 0 $, such that both $( \tilde{ i }_j )_j $ and $ ( \tilde{ \pi }_j )_j $ are chain maps between the abelian group complexes $L \otimes_\Ga \mathbb{Q}_N $ and $ L \otimes_\La \mathbb{P}_M $. 
		
		First, observe that the correspondence $l \otimes q_j \mapsto l \otimes i_j (q_j) $, where $l \in L$ and $q_j \in Q_j$, defines an abelian group homomorphism $\tilde{i}_j \colon { L \otimes_\Ga Q_j } \to { L \otimes_\La P_j } $ for every $j \geq 0$. Indeed, we have that 
		\[
		{ ( l  \g ) \otimes q_j } \mapsto { (l  \g ) \otimes i_j (q_j) } = l \otimes { ( \g i_j (q_j) ) } = l \otimes { i_j ( \g q_j) }
		\]
		for every $\g \in \Ga$. Similarly, the correspondence $ { l \otimes p_j } \mapsto { l \otimes \pi_j ( p_j ) } $, where $l \in L $ and $p_j \in P_j$, defines an abelian group homomorphism $\tilde{\pi}_j \colon { L \otimes_\La P_j } \to { L \otimes_\Ga Q_j } $ for every $j \geq 0$. Indeed, it holds that
		\begin{multline*}
			{ ( l  ( { \g + x } ) ) \otimes p_j } = { ( l \g ) \otimes p_j } \mapsto		\\
			{ ( l \g ) \otimes \pi_j ( p_j ) } =  l \otimes ( \g  \pi_j ( p_j ) ) =  l \otimes { \pi_j ( \g p_j ) } = l \otimes \pi_j ( (\g + x ) p_j )
		\end{multline*}
		for every $\g \in \Ga$ and $x \in \Ker \pi$, as $L  { \Ker \pi } = 0$ and $\pi_j ( { \Ker \pi }  P_j ) = 0$ for every $j \geq 0$.
		
		The equality $ \tilde{ \pi}_j \circ \tilde{ i }_j = \id_{L \otimes_\Ga Q_j }$ is an immediate consequence of the definitions. Moreover, the commutativity of the diagram
			\[\begin{tikzcd}
				{L \otimes_\Gamma Q_j } & {L \otimes_\Gamma Q_{j - 1}} \\
				{L \otimes_\Lambda P_j} & {L \otimes_\Lambda P_{j - 1}}
				\arrow["{1 \otimes g_j}", from=1-1, to=1-2]
				\arrow["{\tilde{i}_j}"', curve={height=6pt}, hook, from=1-1, to=2-1]
				\arrow["{\tilde{i}_{j- 1}}"', curve={height=6pt}, hook, from=1-2, to=2-2]
				\arrow["{\tilde{\pi}_j}"'{pos=0.44}, curve={height=6pt}, two heads, from=2-1, to=1-1]
				\arrow["{1 \otimes f_j}", from=2-1, to=2-2]
				\arrow["{\tilde{\pi}_{j - 1}}"'{pos=0.44}, curve={height=6pt}, two heads, from=2-2, to=1-2]
			\end{tikzcd}\]
		follows from the commutativity of the respective squares of $\Ga$-modules before tensoring with $L$.
	\end{proof}	
	
	We close this subsection with an application of \cref{prop:cleft.1}{} for the special subclass of $\La$-modules admitting a $\Ga$-submodule with tight $\La $-complement.
	
	\begin{cor}[cf.\!\mbox{\cite[Corollary 4.6]{Beli2}{}}]
		Let $ L_\Ga $ and $ {}_\Ga N$ be any modules, and let $Z$ be any submodule of $\La \otimes_\Ga N$ contained in $ { \Ker \pi } \otimes_\Ga N $. Then the group $\Tor_i^\Ga ( L , N ) $ is a direct summand of $\Tor_i^\La ( L ,  { \La \otimes_\Ga N } / Z ) $ for every $ i \geq 0 $. 
	\end{cor}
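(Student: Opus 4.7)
The plan is to recognize this corollary as a direct consequence of \cref{prop:cleft.1}{}, once one sees that the module $M = (\La \otimes_\Ga N)/Z$ carries exactly the structure that proposition requires. Concretely, I will exhibit a canonical $\Ga$-submodule of $M$ that is isomorphic to $N$ and admits an absorbing $\La$-complement, and then read off the conclusion.

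First I would verify that the composite $N \to \La \otimes_\Ga N \epic M$, $n \mapsto 1_\La \otimes n + Z$, is a $\Ga$-monomorphism. Injectivity follows by postcomposing with the retraction $\La \otimes_\Ga N \to N$ induced by the decomposition $\La = \Ga \oplus \Ker \pi$ of right $\Ga$-modules from the proof of \cref{lem:cleft.4}{}: this retraction sends $Z \subseteq \Ker \pi \otimes_\Ga N$ to zero, so it descends to $M$ and remains a retraction of the composite. Identifying $N$ with its image in $M$, the $\La$-complement of $N$ is the image of $\Ker \pi \otimes_\Ga N$ in $M$, i.e.\ $(\Ker \pi \otimes_\Ga N)/Z$, and this is absorbing since $(\Ker \pi) \cdot N \subseteq (\Ker \pi \otimes_\Ga N)/Z$ by construction. (In fact it is tight, consistently with \cref{rem:1}{}, but absorbing is all we need.)

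With this structure in place, \cref{prop:cleft.1}{} applied to $M$ and its $\Ga$-submodule $N$ immediately gives that $\Tor^\Ga_i(L, N)$ is a direct summand of $\Tor^\La_i(L, M)$ for every $i \geq 0$, which is exactly the desired conclusion. There is no real obstacle: the only thing to check carefully is that the quotient by $Z$ does not destroy either the embedding of $N$ or the absorbing property of the complement, and both are secured by the hypothesis $Z \subseteq \Ker\pi \otimes_\Ga N$.
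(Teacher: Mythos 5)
Your proof is correct and follows the same route as the paper: the paper deduces the corollary directly from \cref{prop:cleft.1}{} together with \cref{rem:1}{}, and your argument simply unpacks the content of \cref{rem:1}{} (that $(\La \otimes_\Ga N)/Z$ contains a copy of $N$ as a $\Ga$-submodule with absorbing, indeed tight, $\La$-complement $(\Ker \pi \otimes_\Ga N)/Z$) before applying \cref{prop:cleft.1}{}. The explicit verification via the retraction induced by $\La = \Ga \oplus \Ker\pi$ is exactly the identification already set up in the proof of \cref{lem:cleft.4}{}, so nothing is missing.
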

	
	\begin{proof}
		Direct consequence of \cref{prop:cleft.1}{} and \cref{rem:1}{}.
	\end{proof}

	\subsection{A novel reduction for the finitistic dimensions of artinian rings}		\label{subsec:rad.pres.clefts.3}

	Understanding the homological dimensions of rings and their quotients is a central theme in homological algebra, offering profound insights into ring structure and module theory. This subsection develops a powerful reduction result that relates the finitistic (and global) dimensions of a left artinian ring $ \La $ to those of a split quotient $ \La / K $ under specific conditions. As in the previous subsection, the reader may assume that $ \La $ is an Artin algebra throughout, or even a finite dimensional algebra, for the sake of simplicity.

	\begin{thm}
		\label{thm:main.III}
		Let $K $ be an ideal of a left artinian ring $\La $ contained in $J (\La )$ such that the natural epimorphism $\La \epic \La / K$ splits. Assume furthermore that $\pd K_\La < \infty $ and at least one of $\pd_\La K < \infty $ and $K ^2 = 0 $ holds. Then
		\[
		\fpd \La < \infty \, \, \iff \, \, \fpd { \La / K } < \infty 
		\]
		and the equivalence remains valid if $\fpd $ is replaced by $\Fpd $ or $\gd$.
	\end{thm}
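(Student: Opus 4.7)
The plan is to interpret the hypotheses as equipping the natural surjection $\pi \colon \La \epic \La/K$ with the structure of a ring cleft extension, and then to extract bounds on homological dimensions from the machinery of \cref{subsec:rad.pres.clefts.2}{}. Setting $\Ga := \La / K$, the splitting hypothesis provides a ring cleft extension $(\La, \Ga, \pi, \i)$; since $K \subseteq J(\La)$ and $\La$ is semilocal (being left artinian), \cref{lem:cleft.2}{} upgrades this to a radical-preserving cleft extension with superfluous kernel. As $\La$ is left noetherian and semiperfect, the results of \cref{subsec:rad.pres.clefts.2}{} apply throughout; moreover $K$ is nilpotent by Hopkins' theorem.

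For the forward direction $\fpd \La < \infty \Rightarrow \fpd \Ga < \infty$, I would fix a $\Ga$-module $N$ with $\pd_\Ga N < \infty$ and form the induced $\La$-module $M := \La \otimes_\Ga N$. By \cref{lem:cleft.4}{}(i), $N$ embeds in $M$ as a $\Ga$-submodule with tight $\La$-complement $K \otimes_\Ga N$, so \cref{cor:1}{} gives $\pd_\Ga N \le \pd_\La M$; it therefore suffices to bound $\pd_\La M$ uniformly in $N$. The canonical short exact sequence $0 \to K \otimes_\Ga N \to M \to N \to 0$ of $\La$-modules reduces the task to bounding $\pd_\La N$ and $\pd_\La (K \otimes_\Ga N)$ separately. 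For $\pd_\La N$, the sequence $0 \to K \to \La \to \Ga \to 0$ of left $\La$-modules yields $\pd_\La \Ga \le 1 + \pd_\La K$; in Case~1 ($\pd_\La K < \infty$) this is finite, and a standard change-of-rings inequality gives $\pd_\La N \le \pd_\La \Ga + \pd_\Ga N$, while in Case~2 ($K^2 = 0$) the ring $\La = \Ga \ltimes K$ is a split trivial extension with $KN = 0$, affording a direct bound. For $\pd_\La (K \otimes_\Ga N)$, a projective $\Ga$-resolution $Q_\bullet \to N$ makes each $K \otimes_\Ga Q_i$ a direct summand of copies of the $\La$-module $K$ (since $K \otimes_\Ga \Ga e_j \simeq K e_j$), hence of finite $\La$-projective dimension; dimension-shifting along $Q_\bullet$ then produces a uniform finite bound.

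For the reverse direction $\fpd \Ga < \infty \Rightarrow \fpd \La < \infty$, given a $\La$-module $M$ with $\pd_\La M < \infty$, I would exploit the finite filtration $M \supseteq KM \supseteq K^2 M \supseteq \cdots = 0$ (finite by nilpotence of $K$). Each quotient $K^i M / K^{i+1} M$ is a $\Ga$-module, and \cref{cor:cleft.2}{} bounds its $\Ga$-projective dimension by the $\La$-projective dimension of the corresponding subquotient. Reassembling via the short exact sequences $0 \to K^{i+1} M \to K^i M \to K^i M / K^{i+1} M \to 0$, and again invoking either $\pd_\La K < \infty$ (in Case~1) or $K^2 = 0$ (in Case~2, where the filtration has length one), yields the finite bound on $\pd_\La M$ in terms of $\fpd \Ga$ and the data of $K$. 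The main obstacle is Case~1's bookkeeping for $\pd_\La (K \otimes_\Ga N)$: dimension-shifting along the $\Ga$-resolution of $N$ generates $\Tor^\Ga_i(K, N)$ contributions that must be absorbed into one uniform estimate. Finally, the extension from $\fpd$ to $\Fpd$ and $\gd$ is automatic, since every constructed bound depends only on $\pd_\La K$ (or the condition $K^2 = 0$), $\pd K_\La$, and the projective dimension of the input module, not on finite generation.
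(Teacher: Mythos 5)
Your outline does not engage with the hypothesis that actually drives the theorem, namely $\pd K_\La < \infty$ as a \emph{right} module, and instead leans on left-sided finiteness that is simply not available. In the forward direction, your reduction of $\pd_\La (\La \otimes_\Ga N)$ to $\pd_\La N$ and $\pd_\La (K \otimes_\Ga N)$ breaks down in Case~2 ($K^2 = 0$): there $\pd_\La K$, and hence $\pd_\La \Ga = \pd_\La K + 1$, may be infinite — take $K = \la \b + I_3 \ra$ in \cref{exam:ar.rem.3}{}, where $K^2 = 0$ and $\pd K_\La = 1$ but $\pd_\La K = \infty$ — so already $N = \Ga$ has $\pd_\La N = \infty$, and writing $K \otimes_\Ga Q_i$ as a summand of copies of ${}_\La K$ buys nothing; the phrase ``$\La = \Ga \ltimes K$ \ldots affording a direct bound'' is not an argument (the saving grace in Case~2 is precisely $\pd K_\La < \infty$, which your chain never uses). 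Even in Case~1 the detour through induction creates the $\Tor_i^\Ga(K,N)$ obstruction you acknowledge but do not resolve, and it is unnecessary: the paper (\cref{cor:cleft.1}{}) restricts the minimal $\Ga$-resolution to get $\pd_\La N \le \pd_\Ga N + \pd_\La \Ga < \infty$ and then $\pd_\Ga N \le \pd_\La N \le \fpd \La$ via \cref{cor:cleft.2}{}; in Case~2 it instead notes that $K^2 = 0$ makes $K$ a right $\Ga$-module with absorbing $\La$-complement, so $\pd K_\Ga \le \pd K_\La < \infty$, and concludes $\fpd \Ga \le \fpd \La + \fd K_\Ga$ from \cref{cor:cleft.3}{}.

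The reverse direction is not established either. From $\pd_\La M < \infty$ you cannot conclude that the subquotients $K^iM/K^{i+1}M$ have finite projective dimension over $\Ga$ (or over $\La$); \cref{cor:cleft.2}{} only gives inequalities of the form $\pd_\Ga \le \pd_\La$, which point the wrong way for reassembling a bound on $\pd_\La M$ out of $\fpd \Ga$; and converting a $\Ga$-projective-dimension bound on a filtration quotient into a $\La$-bound again requires $\pd_\La K < \infty$, unavailable in Case~2. What carries this direction in the paper (the right-hand inequalities of \cref{prop:main.III}{} and \cref{cor:cleft.3}{}) is the change-of-rings theorem \cite[Theorem~3.11]{Giata1}{} for radical-preserving homomorphisms with superfluous kernel, applied to $\pi$, yielding $\fpd \La \le \fpd \Ga + \fd \Ga_\La \le \fpd \Ga + \pd K_\La + 1$; your sketch contains no substitute for this input. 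A repair would have to import that theorem (or prove an analogue) for the reverse direction, and rework Case~2 of the forward direction through the right $\Ga$-structure of $K$ rather than through left projective dimensions over $\La$; the concluding claim that the $\Fpd$ and $\gd$ cases are ``automatic'' inherits all of these gaps.
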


	\cref{thm:main.III}{} is a direct application of the following proposition. Indeed, a ring is left noetherian left perfect if and only if it is left artinian; see \cite[Lemma~2.4]{Giata1}{}.

	\begin{prop}
		\label{prop:main.III}
		Let $K $ be an ideal of a ring $\La $ contained in $J (\La )$ such that the natural epimorphism $\La \epic { \La / K } $ splits. Assume that $\La $ is left noetherian semiperfect.
		\begin{enumerate}[\rm(i)]
			\item If the projective dimension of $ K $ as a left $\La $-module is finite, then
			\[
			\fpd \La / K  \, \leq \,  \fpd \La  \, \leq \,  \fpd \La / K + \pd K_\La + 1.
			\]
			\item If the square of $ K $ is zero, then
			\[
			\fpd \La / K  - \pd K_\La  \, \leq \,  \fpd \La  \, \leq \,  \fpd \La / K + \pd K_\La + 1.
			\]
		\end{enumerate}
		Furthermore, the above inequalities are valid if $\fpd $ is replaced by $\Fpd $ and $\La $ is left perfect, and by $\gd $ in both cases.
	\end{prop}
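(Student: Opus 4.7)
I would establish the four inequalities of the proposition separately, leveraging the cleft extension machinery of \cref{subsec:rad.pres.clefts.2}{}.

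\emph{Upper bound $\fpd \La \le \fpd \Ga + \pd K_\La + 1$ (valid in both cases).} Fix $M \in \La \rmod$ with $\pd_\La M = n$ finite, set $k := \pd K_\La$, and assume $n \ge k+1$ (else the bound is trivial). From the short exact sequence $0 \to K \to \La \to \Ga \to 0$ of right $\La$-modules I get $\pd \Ga_\La \le k + 1$, so $\Tor^\La_i(\Ga, -)$ vanishes for $i > k + 1$. Dimension shifting then yields $\Tor^\La_i(\Ga, \Omega^{k+1}_\La M) = \Tor^\La_{i+k+1}(\Ga, M) = 0$ for every $i \ge 1$. Applying $\Ga \otimes_\La -$ to a minimal $\La$-projective resolution $\mathbb{P}$ of $\Omega := \Omega^{k+1}_\La M$ therefore produces a projective $\Ga$-resolution of $\Omega/K\Omega$ of length $n - k - 1$. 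Its $\Ga$-minimality follows from $J(\La) = J(\Ga) \oplus K$ (\cref{lem:cleft.2}{}): each differential $d_i \colon P_i \to P_{i-1}$ lands in $J(\La) P_{i-1}$, whose image in $P_{i-1}/KP_{i-1}$ equals $\rad_\Ga(P_{i-1}/KP_{i-1})$. Hence $\pd_\Ga (\Omega/K\Omega) = n - k - 1$, a finite value contributing to $\fpd \Ga$.

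\emph{Lower bound in~(i).} Under $\pd_\La K < \infty$, the analogous sequence of left modules gives $\pd_\La \Ga \le \pd_\La K + 1 < \infty$. For $N \in \Ga \rmod$ with $\pd_\Ga N < \infty$, the standard change-of-rings estimate (resolve $N$ over $\Ga$ and bound each projective term as a $\La$-module via $\pi$ by $\pd_\La \Ga$) gives $\pd_\La N \le \pd_\Ga N + \pd_\La \Ga < \infty$. Then \cref{cor:1}{}, applied with $M = N$ and the $\Ga$-submodule $N$ itself having trivial absorbing $\La$-complement, yields $\pd_\Ga N \le \pd_\La N \le \fpd \La$.

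\emph{Lower bound in~(ii) --- the main obstacle.} Given $N$ with $\pd_\Ga N = m$ finite and $m \ge k$, the natural candidate is $\wt M := \La \otimes_\Ga \Omega^k_\Ga N$, in which $\Omega^k_\Ga N$ sits as a $\Ga$-submodule with tight $\La$-complement by \cref{lem:cleft.4}{}. \cref{cor:1}{} immediately gives $m - k = \pd_\Ga \Omega^k_\Ga N \le \pd_\La \wt M$. The hard step is to establish $\pd_\La \wt M < \infty$, where the $K^2 = 0$ hypothesis is essential. Exploiting that $K^2 = 0$ turns $\La = \Ga \ltimes K$ into a trivial extension and $K$ into a $\Ga$-bimodule, the short exact sequence of $\La$-modules $0 \to K \otimes_\Ga \Omega^k_\Ga N \to \wt M \to \Omega^k_\Ga N \to 0$ has both outer terms annihilated by $K$. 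A careful analysis of the $\Tor^\Ga_\ast(K,-)$ groups, combined with the vanishing $\Tor^\La_i(\Ga,-) = 0$ for $i \ge k+2$ and an iterated mapping-cone construction, then yields the required finite upper bound on $\pd_\La \wt M$.

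All three arguments adapt verbatim to $\Fpd$ under the additional hypothesis that $\La$ is left perfect (ensuring that big syzygies exist), and to $\gd$ by taking the supremum over all $\La$-modules.
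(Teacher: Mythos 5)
Your right-hand inequality and the lower bound in case (i) are correct. The upper bound is a hands-on version of what the paper obtains by citing \cite[Theorem~3.11]{Giata1} (see the proof of \cref{cor:cleft.3}{}): you tensor a minimal resolution of $\Omega^{d+1}_\La M$ (where $d=\pd K_\La$) with $\Ga$, using $\pd \Ga_\La\le d+1$ for exactness and $J(\La)=J(\Ga)\oplus K$ (\cref{lem:cleft.2}{}) for minimality; this is sound, also for $\Fpd$ over a left perfect ring and for $\gd$. Your lower bound in (i) is essentially the paper's \cref{cor:cleft.1}{}: restrict $N$ along $\pi$, use $\pd_\La N\le\pd_\Ga N+\pd_\La\Ga<\infty$, and then $\pd_\Ga N\le\pd_\La N$ from \cref{cor:1}{} since $KN=0$.

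The gap is in the lower bound of (ii). You correctly reduce it to showing $\pd_\La\bigl(\La\otimes_\Ga\Omega^d_\Ga N\bigr)<\infty$, but this is exactly the point where the hypothesis $K^2=0$ must do its work, and you leave it unproved: the sketch you offer (an analysis of $\Tor^\Ga_\ast(K,-)$, the vanishing of $\Tor^\La_i(\Ga,-)$ for $i\ge d+2$, iterated mapping cones) names no mechanism, and the quoted $\Tor^\La$-vanishing concerns the right $\La$-structure of $\Ga$, i.e.\ the ingredient of the upper bound, not of this step. What is actually needed is the transfer $\pd K_\Ga\le\pd K_\La$: since $K^2=0$, the right $\Ga$-module $K$ (structure via a section of $\pi$) is a $\Ga$-submodule of $K_\La$ admitting the zero submodule as an absorbing $\La$-complement (\cref{defn:cleft.1}{}), so the right-module version of \cref{cor:1}{}, i.e.\ \cref{cor:cleft.2}{}, gives $\pd K_\Ga\le\pd K_\La=d$. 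From this, $\Tor^\Ga_i\bigl(K,\Omega^d_\Ga N\bigr)\cong\Tor^\Ga_{i+d}(K,N)=0$ for all $i\ge1$, and \cref{lem:cleft.3}{} then shows that inducing a minimal $\Ga$-resolution of $\Omega^d_\Ga N$ up to $\La$ stays minimal and exact, so $\pd_\La\bigl(\La\otimes_\Ga\Omega^d_\Ga N\bigr)=\pd_\Ga N-d<\infty$, which completes your argument. The paper reaches the same inequality by combining $\pd K_\Ga\le\pd K_\La$ with \cref{cor:cleft.3}{} (that is, $\fpd\Ga-\fd K_\Ga\le\fpd\La$); either way, without the step $\pd K_\Ga\le\pd K_\La$ (or an equivalent), your case (ii) is not established.
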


	We postpone the proof of \cref{prop:main.III}{} to the end of this section, as it rests on the next two lemmas.

	\begin{lem}[cf.\!\mbox{\cite[Corollary 1.6]{SteffenDiracca}{}}]
		\label{cor:cleft.1}
		Let $(\La , \Ga , \pi , \i)$ be a radical-preserving ring cleft extension with superfluous kernel. Then $ \gd \Ga  \, \leq \,  \gd \La $ if $\La $ is left perfect or left noetherian semiperfect.
		If we furthermore assume that $ \pd _\La { \Ker \pi } < \infty $, then
		\begin{enumerate}[\rm(i)]
			\item $\fpd \Ga \leq \fpd \La$ if $\La$ is left noetherian semiperfect, and
			\item $\Fpd \Ga \leq \Fpd \La$ if $\La$ is left perfect.
		\end{enumerate}
	\end{lem}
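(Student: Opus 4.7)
The plan is to deduce all three inequalities from \cref{cor:cleft.2}{}, supplemented by a short dimension-shifting argument that lifts finiteness of $\pd_\Ga N $ to finiteness of $\pd_\La N $ once $\pd_\La \Ker \pi < \infty $. Throughout, I regard any $\Ga $-module $N $ as a $\La $-module via restriction of scalars along $\pi $; the surjectivity of $\pi $ guarantees that ${}_\La N $ is finitely generated whenever ${}_\Ga N $ is.

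For the first claim $\gd \Ga \leq \gd \La $, I would invoke \cref{cor:cleft.2}{} directly, which yields $\pd_\Ga N \leq \pd_\La N $ for every $\Ga $-module $N $ (finitely generated when $\La $ is left noetherian semiperfect, arbitrary when $\La $ is left perfect). Taking suprema over the appropriate class of modules gives the desired inequality, with no extra hypothesis on $\Ker \pi $ required.

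For parts (i) and (ii), the heart of the argument is to show that $\pd_\La N < \infty $ whenever $\pd_\Ga N < \infty $. I would fix a basic set $ \{ e_j \}_j $ of primitive orthogonal idempotents for $\Ga $, which is also such a set for $\La $ by \cref{lem:semiperfect.clefts}{}, and consider for each $j $ the short exact sequence of $\La $-modules
\[
    0 \to ( \Ker \pi ) e_j \to \La e_j \to \Ga e_j \to 0 .
\]
A direct verification that $\Ker \pi = \bigoplus_j ( \Ker \pi ) e_j $ as a left $\La $-module (each summand being a left $\La $-submodule of $\La e_j $, with the $\La e_j $ independent) shows $\pd_\La ( \Ker \pi ) e_j \leq \pd_\La \Ker \pi < \infty $, and the displayed sequence then yields $\pd_\La \Ga e_j \leq \pd_\La \Ker \pi + 1 < \infty $.

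Given any $\Ga $-module $N $ with $\pd_\Ga N = n < \infty $, a projective resolution of $N $ over $\Ga $ of length $n $ may be regarded as an exact sequence of $\La $-modules whose terms all have $\La $-projective dimension bounded by $\pd_\La \Ker \pi + 1 $. A standard dimension-shifting induction then gives $\pd_\La N \leq n + \pd_\La \Ker \pi + 1 < \infty $, and combining with $\pd_\Ga N \leq \pd_\La N $ produces $\pd_\Ga N \leq \fpd \La $ in the finitely generated setting (establishing (i)) and $\pd_\Ga N \leq \Fpd \La $ for arbitrary modules when $\La $ is left perfect (establishing (ii)). I do not anticipate a serious obstacle; the only non-routine point is confirming the idempotent decomposition of $\Ker \pi $ as a left $\La $-module, which is the crucial ingredient for bounding $\pd_\La \Ga e_j $.
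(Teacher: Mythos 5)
Your proposal is correct and essentially reproduces the paper's argument: the global dimension inequality is quoted directly from \cref{cor:cleft.2}{}, and for $\fpd$ and $\Fpd$ both proofs restrict a finite $\Ga$-projective resolution of $N$ to $\La$, bound the $\La$-projective dimension of projective $\Ga$-modules by $\pd_\La \Ker\pi + 1$ (the paper via the single sequence $0\to\Ker\pi\to\La\to\Ga\to 0$ and the projective cover observation, you idempotent-wise via $0\to(\Ker\pi)e_j\to\La e_j\to\Ga e_j\to 0$), and then combine $\pd_\Ga N\le\pd_\La N$ with the finiteness of $\pd_\La N$ and the preservation of finite generation under restriction along $\pi$. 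One cosmetic nit: a basic set of primitive orthogonal idempotents need not be complete, so instead of claiming $\Ker\pi=\bigoplus_j(\Ker\pi)e_j$ you should either take a complete set or simply note that $(\Ker\pi)e_j$ is a $\La$-direct summand of $\Ker\pi$ because $1=e_j+(1-e_j)$, which is all your argument actually uses.
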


	\begin{proof}
		The first part of the lemma concerning the global dimension follows immediately from \cref{cor:cleft.2}{}. For the case when $ \La $ is left noetherian semiperfect we also use the fact that the global dimension of a ring may be attained on finitely generated modules. Now let $\La $ be left perfect and consider a $ \Ga $-module $ N$. Denote by $\mathbb{P }$ a minimal projective resolution of $ N $, which is available because $\Ga $ is also left perfect as a quotient ring of $\La $. The exact complex ${}_\La \mathbb{P}$ implies that $\pd _\La N \leq \pd _\Ga N + \pd _\La \Ga$ by a well-known homological argument. If $\pd {}_\La \Ker \pi < \infty$, then $\pd {}_\La \Ga = \pd {}_\La \Ker \pi + 1 < \infty$ as the kernel of $\pi $ is contained in $ \rad_\La \La = J( \La ) $; in particular, we have that $ \pi $ is a projective cover of $ \Ga $ viewed as a left $ \La $-module. Therefore, if the projective dimension of ${}_\Ga N $ is finite, then the same holds for $ {}_\La N $. \cref{cor:cleft.2}{} implies now that $ \pd _\Ga N \leq \pd _\La N \leq \gd \La  $ and inequality (ii) follows readily. The proof of inequality (i) is analogous, modulo the observation that $ N $ is finitely generated over $ \La $ if the same holds over $ \Ga $.
	\end{proof}

	\begin{lem}
		\label{cor:cleft.3}
		Let $K $ be an ideal of a ring $\La $ contained in $J (\La )$ such that the natural epimorphism $\pi \colon \La \epic { { \La / K } = \Ga } $ splits. If $\La $ is left noetherian semiperfect, then
		\[
		\fpd \Ga - \fd K_\Ga \leq  \fpd \La  \leq  \fpd \Ga + \fd K_\La + 1 
		\]
		where $K$ is considered as a right $\Ga$-module through restriction of scalars along a section monomorphism of $\pi $.
		
		Furthermore, if $\La $ is left perfect then the analogous inequalities hold for the big finitistic dimensions of the rings. In both cases, the analogous inequalities hold for the global dimensions of the rings.
	\end{lem}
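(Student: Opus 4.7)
The plan is to prove both inequalities by producing, on each side, a specific syzygy-type module whose projective dimension realizes the target bound. First I fix a section $\i \colon \Ga \monicc \La$ of $\pi$; since $K \subseteq J(\La)$ and $\La$ is semilocal, \cref{lem:cleft.2} makes $(\La, \Ga, \i, \pi)$ a radical-preserving cleft extension with superfluous kernel. Then \cite[Proposition~3.10]{Giata1}, applied in turn to $\i$ and to $\pi$, shows that both induction functors $\La \otimes_\Ga -$ and $\Ga \otimes_\La -$ preserve projective covers. This is the key ingredient, since it upgrades an exact tensored resolution into a \emph{minimal} one and thereby pins down projective dimensions exactly.

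For the lower bound $\fpd \Ga - \fd K_\Ga \leq \fpd \La$, I take a finitely generated $\Ga$-module $N$ with $\pd_\Ga N = n < \infty$, set $d = \fd K_\Ga$, and consider $\Omega := \Omega^d_\Ga N$ (so $\pd_\Ga \Omega = n - d$). The right $\Ga$-splitting $\La_\Ga \simeq \Ga_\Ga \oplus K_\Ga$ coming from the section, combined with dimension shift, yields $\Tor^\Ga_i(\La, \Omega) \simeq \Tor^\Ga_{i+d}(K, N) = 0$ for every $i \geq 1$. By the preceding observation, $\La \otimes_\Ga \mathbb{Q}_\Omega$ is then a minimal projective $\La$-resolution of $\La \otimes_\Ga \Omega$, forcing $\pd_\La \La \otimes_\Ga \Omega = n - d \leq \fpd \La$.

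For the upper bound $\fpd \La \leq \fpd \Ga + \fd K_\La + 1$, I take a finitely generated $\La$-module $M$ with $\pd_\La M = n < \infty$, set $d = \fd K_\La$, and consider $M' := \Omega^{d+1}_\La M$. The long exact sequence from $0 \to K \to \La \to \Ga \to 0$ tensored with $M'$ reduces $\Tor^\La_i(\Ga, M') = 0$ for $i \geq 2$ to $\Tor^\La_{i+d}(K, M) = 0$, which is immediate. The case $i = 1$, i.e.\ the injectivity of the multiplication map $K \otimes_\La M' \to M'$, is the only subtle point, since dimension shift does not vanish $K \otimes_\La M'$ directly. I will handle it by exploiting the embedding $M' \hookrightarrow P_d$ from the minimal resolution of $M$: applying $K \otimes_\La -$ to $0 \to M' \to P_d \to \Omega^d_\La M \to 0$ and using $\Tor^\La_1(K, \Omega^d_\La M) = \Tor^\La_{d+1}(K, M) = 0$ together with $K \otimes_\La P_d \simeq K P_d \hookrightarrow P_d$ (flatness of $P_d$) gives $K \otimes_\La M' \hookrightarrow P_d$, hence $K \otimes_\La M' \hookrightarrow M'$. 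Then $\Ga \otimes_\La \mathbb{P}_{M'}$ is a minimal projective $\Ga$-resolution of $M'/KM'$, so $\pd_\Ga (M'/KM') = n - d - 1 \leq \fpd \Ga$.

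The extensions to $\Fpd$ (with $\La$ left perfect, so that projective covers exist for arbitrary modules) and to $\gd$ (applied to every module, regardless of the finiteness of its projective dimension) follow the same scheme with essentially no change. The main anticipated obstacle is the $i = 1$ step of the upper bound: without the auxiliary embedding into the projective $P_d$, one cannot propagate the dimension-shift vanishing to the required boundary term, and the minimality argument for the $\Ga$-resolution of $M'/KM'$ would fall short.
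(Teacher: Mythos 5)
Your proof is correct, but it takes a genuinely different route from the paper. The paper disposes of both inequalities in a few lines by applying the external result \cite[Theorem~3.11]{Giata1} twice, once to the section $\i$ and once to $\pi$, which gives $\fpd \Ga \leq \fpd \La + \fd \La_\Ga$ and $\fpd \La \leq \fpd \Ga + \fd \Ga_\La$, and then simply notes $\fd \La_\Ga = \fd K_\Ga$ (from $\La_\Ga \simeq \Ga_\Ga \oplus K_\Ga$) and $\fd \Ga_\La \leq \fd K_\La + 1$ (from $0 \to K \to \La \to \Ga \to 0$). You instead re-derive the two change-of-rings inequalities by hand in the split setting: your lower bound is in substance a reproof of \cref{lem:cleft.3}, which you could have cited directly (the vanishing $\Tor^\Ga_i(K,\Omega^d_\Ga N)=0$ for $i \geq 1$ is exactly its hypothesis and it already yields $\pd_\La \La \otimes_\Ga \Omega = \pd_\Ga \Omega$), while your upper bound replaces the black-box theorem applied to $\pi$ by the explicit syzygy shift $M' = \Omega^{d+1}_\La M$; the one delicate point there, the injectivity of the multiplication map $K \otimes_\La M' \to M'$ (i.e.\ $\Tor^\La_1(\Ga, M') = 0$), is handled correctly via the embedding $M' \hookrightarrow P_d$, the vanishing $\Tor^\La_{d+1}(K,M)=0$ and flatness of $P_d$. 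In both halves you then rightly use preservation of projective covers (\cite[Proposition~3.10]{Giata1} for $\i$; for the surjection $\pi$ it is the standard fact that covers pass to $\La/K$ when $K \subseteq J(\La)$) together with Nakayama to get \emph{equality} of projective dimensions, which is what the bounds need. Your route buys a self-contained argument that makes the source of the ``$+1$'' transparent; the paper's route buys brevity and rests on a result that does not require the quotient to split. One minor point: in the left noetherian semiperfect case minimal resolutions are only available for finitely generated modules, so for the $\gd$ statement you should restrict to finitely generated (e.g.\ cyclic) modules, on which the global dimension is attained -- a harmless adjustment.
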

	
	\begin{proof}
		Observe that $ \pi $ is radical-preserving as it is surjective, and its kernel is superfluous by assumption. Let $ \i \colon \Ga \monicc \La $ be a section ring monomorphism of $\pi $, and observe that $ \i $ is also radical-preserving (with superfluous kernel) according to \cref{lem:cleft.1}{}. See also \cref{defn:1}{} and the following discussion. A direct application of \cite[Theorem~3.11]{Giata1}{} for $ \i $ and $ \pi $ yields now the inequalities
			\[
				\fpd \Ga \leq \fpd \La + \fd \La_\Ga \, \, \, \textrm{ and } \, \, \, \fpd \La \leq \fpd \Ga + \fd \Ga_\La
			\]
		respectively, assuming that $\La $ is left noetherian semiperfect.
		It remains to note that $\fd \Ga_\La \leq \fd K_\La +1 $ as the sequence $0 \to K \to \La \to \Ga \to 0 $ is a short exact sequence of right $\La$-modules, and $\fd K_\Ga = \fd \La_\Ga $ due to the direct sum of right $ \Ga$-modules $\La_\Ga \simeq \Ga_\Ga \oplus K_\Ga $.
		
		The proof for the other two homological dimensions also follows from \cite[Theorem~3.11]{Giata1}{} in a similar way.
	\end{proof}

	We are now ready to prove \cref{prop:main.III}{}.

	\begin{proof}[Proof of \cref{prop:main.III}{}]
		In both cases, we have that $ \pi $ is a radical-preserving homomorphism with superfluous kernel. Therefore, the right-hand side inequalities follow from \cite[Theorem~3.11]{Giata1}{} applied to $ \pi $ for all three homological dimensions; see also the proof of \cref{cor:cleft.3}{}.
		
		As for the left-hand side inequalities, observe that they follow from \cref{cor:cleft.1}{} in the first case, for all three homological dimensions.
		In the second case, note that $ K^2 = 0 $ implies $ K $ may be seen as a right $ \Ga $-submodule of itself admitting the zero submodule as an absorbing $\La $-complement; see also \cref{defn:cleft.1}{}. In particular, we have that $ \pd K_\Ga \leq \pd K_\La $ according to \cref{cor:cleft.2}{}, since the $ \La $-structure of $ ( K _\Ga )_\La $ is the same as the initial structure that $ K $ inherits as a (right) ideal of $ \La $. Applying \cref{cor:cleft.3}{}, we deduce that the left-hand side inequalities hold in the second case for all three homological dimensions.
	\end{proof}

	\medskip
	
	\section{Generalized arrow removal operation}	\label{sec:gen.ar.rem}

	\smallskip

	This section establishes conditions on a set of arrows $A$ in a bound quiver algebra $\La = k Q / I$ ensuring that
	\begin{equation*}
		\fpd \La < \infty  \,\, \iff \,\,  \fpd { \La / \la A + I \ra } < \infty
	\end{equation*}
	where $\la A + I \ra $ denotes the ideal of $\La $ generated by the arrows in $A$. The novelty of our reduction technique is illustrated with concrete example algebras (\cref{subsec:3.3}{}) that are highly non-trivial in the sense of \cref{defn:irredu}{}.
	
	\subsection{Main result}		\label{subsec:3.1}
	We begin by characterizing when a set of arrows $A$ is such that the natural epimorphism $\La \epic { \La / \la A + I \ra } $ admits a section algebra monomorphism with image the subalgebra of $\La $ generated by all trivial paths and all arrows in $Q_1 \setminus A$. \hyperlink{lem:ar.rem.2}{\cref{lem:ar.rem.2}{}} generalizes previous partial results such as \cite[Proposition 5.4]{SteffenDiracca}{} and the third part of \cite[Proposition 4.4]{arrowrem1}{}. It may also be found in \cite[Subsection~2.4]{ACT}{} in a different but equivalent form.
	
	Before proceeding with the lemma, we fix some notation about bound quiver algebras that will be used throughout the rest of the paper. For a comprehensive source of the standard notation and well-established facts about bound quiver algebras we refer the reader to \cite{ASS}{}, particularly Chapters I to III.
	
	Throughout, we let $ Q $ denote a finite quiver where $ A $ is a set of arrows, and $ k $ denotes an arbitrary field. We denote by $\BQ $ the set of all paths in $Q$, which is a $k $-basis of the path algebra $ k Q $. As usual, a \emph{relation} is a linear combination of paths of length at least two, where all paths have the same source and the same target.
	
	A path $ p $ \emph{occurs} in a non-zero element $ \sum_{q \in \BQ } \l_q \cdot q \in k Q $ if the coefficient $ \l_{p} \in k $ is non-zero.
	Furthermore, we say that $ p $ \emph{passes through} an arrow $ \a $ if it is divided by it, i.e.\ if $ p = p_1 \a p_2 $ for subpaths $ p_1 $ and $ p_2 $; otherwise, we say that $ p $ \emph{avoids} $ \a $. An element of $ k Q $ avoids an arrow if all paths occurring in the element do so. 
	Extending the terminology to subsets $ T \subseteq k Q $, we say that $ p $ occurs in $ T $ if it occurs in some element of $ T $, and $ T $ avoids $ \a $ if all elements of $ T $ avoid that arrow.

	We denote by $ \BQA $ the set of paths in $Q$ passing at least once through some arrow in $A$, and $\BQnotA $ denotes the set of paths avoiding all arrows in $ A $. Note that $\BQ = \BQA \dot \cup \BQnotA $.
	For every element $z \in k Q$, we use $ z_A$ and $z_{\nott \! A}$ to denote the unique elements in the subspaces $ {}_k \la \BQA \ra$ and ${}_k \la \BQnotA \ra$, respectively, such that $z = z_A + z_{\nott \! A}$. Moreover, we extend the notation for arbitrary subsets $ T \subseteq k Q$, that is $ T_A = \{ z_A  \colon  z \in T \}$ and $ T_{\nott \! A} $ is defined analogously. Furthermore, if $ A $ comprises a single arrow, we replace $ A $ with the arrow itself in the above symbols.

	\begin{lem}[\mbox{cf.\ \cite[Subsection~2.4]{ACT}{}}]		\label{lem:ar.rem.2}	\phantomsection	\hypertarget{lem:ar.rem.2}
		Let $\La = k Q / I$ be a bound quiver algebra, where $ A $ is a set of arrows. The following are equivalent.
		\begin{enumerate}[\rm(i)]
			\item The natural epimorphism $\pi \colon  \La \epic \La / \la A + I \ra $ admits a section algebra monomorphism with image the subalgebra of $\La $ generated by all trivial paths and all arrows in $Q_1 \setminus A$.
			\item The sum of $k $-vector spaces $\La = \Ga' \oplus \la A+ I \ra$ is direct.
			\item The subspaces $I_A $ and $ I_{\nott \! A} $ are contained in $ I $.
			\item The sum of $k$-vector spaces $I = (I \cap {}_k \la \BQA \ra) \oplus (I \cap {}_k \la\BQnotA \ra)$ is direct.
			\item The ideal $I$ is generated by a finite set of relations $S$ such that $S = S_A \dot \cup S_{\nott \! A} $.
		\end{enumerate}
	\end{lem}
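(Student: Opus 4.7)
The governing idea is the direct-sum decomposition of the path algebra
\[
kQ \; = \; {}_k \la \BQnotA \ra \; \oplus \; {}_k \la \BQA \ra ,
\]
where the first summand is the subalgebra $\tilde{\Ga}'$ generated by trivial paths and by the arrows in $Q_1 \setminus A$, and the second is the two-sided ideal $\la A \ra_{kQ}$ of $kQ$ generated by $A$ (since every path passing through an arrow of $A$ has the form $p_1 \a p_2$ with $\a \in A$). Under the canonical projection $kQ \epic \La$, this gives $\La = \Ga' + \la A + I \ra$, and in each of (i)--(v) the content is essentially that this sum should be direct. I would therefore aim for a cyclic implication $(i) \Rightarrow (ii) \Rightarrow (iii) \Rightarrow (iv) \Rightarrow (v) \Rightarrow (iii) \Rightarrow (ii) \Rightarrow (i)$, separating out the two easy equivalences first.

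For (i) $\Leftrightarrow$ (ii): a section algebra monomorphism $\sigma$ of $\pi$ with image $\Ga'$ is, by definition, a $k$-algebra isomorphism $\La/\la A+I\ra \isomto \Ga'$; surjectivity $\Ga' + \la A+I\ra = \La$ is automatic, so this is equivalent to $\Ga' \cap \la A+I\ra = 0$, i.e.\ to (ii). For (iii) $\Leftrightarrow$ (iv): the subspaces $I \cap {}_k \la \BQA \ra$ and $I \cap {}_k \la \BQnotA \ra$ always meet trivially inside $kQ$, so their sum is direct; the content of (iv) is that this sum exhausts $I$, which, by uniqueness of the $(\BQA,\BQnotA)$-decomposition, means precisely $r_A, r_{\nott A} \in I$ for every $r \in I$ — that is (iii).

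For the bridge (ii) $\Leftrightarrow$ (iii), I would lift to $kQ$: an element of $\Ga' \cap \la A+I\ra$ is represented by some $z \in \tilde{\Ga}'$ with $z = y + r$ for $y \in {}_k \la \BQA \ra$ and $r \in I$; comparing $\BQA$- and $\BQnotA$-components forces $z = r_{\nott A}$ and $-y = r_A$, so the sum in (ii) is direct exactly when $r_{\nott A} \in I$ for every $r \in I$, which is (iii). For (iv) $\Rightarrow$ (v), I would take finite bases of relations $S_A$ and $S_{\nott A}$ of the finite-dimensional spaces $I \cap {}_k \la \BQA \ra$ and $I \cap {}_k \la \BQnotA \ra$ (finiteness follows from admissibility); by (iv), $S = S_A \, \dot\cup \, S_{\nott A}$ generates $I$ as an ideal.

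The only step with any subtlety is (v) $\Rightarrow$ (iii). Given $r = \sum_i \l_i p_i s_i q_i \in I$ with $s_i \in S_A \, \dot\cup \, S_{\nott A}$ and each $p_i, q_i$ a path, the point is that each path lies wholly in $\BQA$ or wholly in $\BQnotA$, so using that ${}_k \la \BQA \ra$ is a two-sided ideal of $kQ$ one sees: the term $p_i s_i q_i$ lies in ${}_k \la \BQA \ra$ whenever $s_i \in S_A$ or $p_i \in \BQA$ or $q_i \in \BQA$, and otherwise (i.e.\ $s_i \in S_{\nott A}$ and both $p_i, q_i \in \BQnotA$) it lies in ${}_k \la \BQnotA \ra$. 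Grouping the terms by these two cases expresses $r_A$ and $r_{\nott A}$ as elements of $I$, yielding (iii). This bookkeeping, and the observation that ${}_k \la \BQA \ra$ is an ideal on both sides of $kQ$, is the only place where a non-trivial calculation is needed.
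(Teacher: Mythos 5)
Your route is essentially the paper's: the same cycle of implications, the same lift to $kQ$ for the bridge between (ii) and (iii), the same uniqueness-of-decomposition argument for (iii)$\Leftrightarrow$(iv), and the same case analysis on the terms $p_i s_i q_i$ for (v)$\Rightarrow$(iii). All of those steps are correct as you state them.

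The one step that fails as written is (iv)$\Rightarrow$(v). The subspaces $I \cap {}_k \la \BQA \ra$ and $I \cap {}_k \la \BQnotA \ra$ are \emph{not} finite-dimensional in general: admissibility gives $R_Q^N \subseteq I$ for some $N$, so whenever $Q$ has an oriented cycle the ideal $I$ contains infinitely many paths, and by the direct-sum decomposition in (iv) at least one of the two intersections is then infinite-dimensional as a $k$-vector space. Hence the ``finite bases'' you invoke need not exist, and a genuine basis of these spaces would be infinite, which does not produce the \emph{finite} set $S$ that condition (v) demands. The repair is the one the paper uses, passing through (iii) rather than (iv): since $I$ is admissible and $Q$ is finite, $I$ admits \emph{some} finite generating set $T$ of relations (for instance the finitely many paths of length $N$ together with lifts of a $k$-basis of $I/R_Q^N$); condition (iii) then gives $T_A \cup T_{\nott\! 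A} \subseteq I$, and because each $t \in T$ equals $t_A + t_{\nott\! A}$, the finite set $S = T_A \cup T_{\nott\! A}$ still generates $I$ and splits as required by (v). With that substitution your argument is complete.
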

	
	\begin{proof}
		Let $\Ga' $ denote the subalgebra of $\La $ generated by all trivial paths and all arrows in $Q_1 \setminus A$.
		Then $\Ga' = {}_k \la \BQnotA + I \ra $ and $ \la A+ I \ra = {}_k \la \BQA + I \ra $, implying that $\La = \Ga' + \la A+ I \ra$.
		
		(i) $\Rightarrow$ (ii): It follows from the fact that $\Ker \pi = \la A + I \ra $.
		
		(ii) $\Rightarrow$ (i): We assume that the sum of $k$-vector spaces $\La = \Ga' \oplus \la A+ I \ra$ is direct. Then for every $W \in \La / \la A+ I \ra $ there is a unique representative $w \in \Ga' $. It is easy to verify that $W \mapsto w$ defines an algebra monomorphism $ \i \colon { \La / \la A+ I \ra } \monicc \La $ with image equal to $ \Ga' $ and such that $\pi \i = \id_{\La / \la A+ I \ra} $.
		
		(ii) $\Rightarrow$ (iii): For every $z \in I$, we have that $ (z_A + I) + (z_{\nott \! A} + I) = I$. Since $z_A + I \in \la A + I \ra$ and $ z_{\nott \! A} + I \in \Ga'$, and the sum $ \La =  \Ga' \oplus \la A + I \ra $ is direct, it follows that $z_A , z_{\nott \! A} \in I$.
		
		(iii) $\Rightarrow$ (ii): If $Z \in \Ga' \cap \la A+I \ra$, then $Z = ( \sum_p \lambda_p \cdot p ) +I = ( \sum_q \mu_q \cdot q ) +I$ where finitely many coefficients $\lambda_p, \mu_q \in k$ are non-zero, and the paths $p$, $q$ range over $\BQA$ and $ \BQnotA$ respectively. We have that $ w =  \sum_p \lambda_p \cdot p  -   \sum_q \mu_q \cdot q \in I$ implying that $\sum_p \lambda_p \cdot p = w_A $ and $\sum_q \mu_q \cdot q = w_{\nott \! A} $ are in $ I$. In particular, we have $ Z = w_A + I = I $.
		
		(iii) $\Rightarrow$ (iv): In general, the sum on the right-hand side of the equality in (iv) is direct and contained in $I$. Condition (iii) ensures that the two subspaces are actually equal.
		
		(iv) $\Rightarrow$ (iii): Let $z , z_1, z_2 \in I$, where $z_1  $, $z_2 $ are the unique elements in $I \cap {}_k \la \BQA \ra $ and $ I \cap {}_k \la \BQnotA \ra$, respectively, such that $z = z_1 + z_2 $. Then $z_1 = z_A$ and $z_2 = z_{ \nott \! A }$, implying that $z_A, z_{ \nott \! A } \in I$.
		
		(iii) $\Rightarrow$ (v): Let $ T $ be a finite generating set of relations for $I$. Then $ T_A \cup T_{\nott \! A } $ is contained in $I$ and, therefore, it is a generating set for $I$ with the desired property.
		
		(v) $\Rightarrow$ (iii): Every element $ z \in I$ can be written as a finite sum of the form
		\[
		z = \sum_j \l_j \cdot r_1^j z_j r_2^j
		\]
		for coefficients $\l_j \in k$, where the elements $z_j$ are in the special generating set $ T $, and $r_1^j $, $r_2^j $ are paths such that $t( r_1^j ) = s( z_j ) $ and $s( r_2^j ) = t( z_j ) $. Observe that either $r_1^j z_j r_2^j \in   {}_k \la \BQnotA \ra$, when $z_j = {z_j}_{\nott \! A}$ and $r_1^j$, $r_2^j $ avoid the arrows in $A$, or else $r_1^j z_j r_2^j \in {}_k \la \BQA \ra $. In particular, we have $z_A , z_{ \nott \! A} \in I $ as $r_1^j z_j r_2^j \in I$ for all $j$.
	\end{proof}

	\begin{defn}
		\label{defn:ar.rem.6}
		Let $\La = k Q / I$ be a bound quiver algebra. A set of arrows $A$ is \emph{pre-removable} if it satisfies any of the equivalent conditions of \hyperlink{lem:ar.rem.2}{\cref{lem:ar.rem.2}{}}{}.
		In particular, the set $ A $ is pre-removable if it possesses a generating set $ T $ of relations such that $ T = T_A \cup T_{\nott \! A } $.
	\end{defn}
	
	\begin{rem}
		In the context of bound quiver algebras, the concept of pre-removability is actually equivalent to the concept of split algebra quotients over superfluous ideals, i.e.\ ideals contained in the Jacobson radical; see \cref{prop:ar.rem.3}{} for more.
	\end{rem}

	\begin{rem}
		\label{rem:ar.rem.2}
		If a set of arrows $A$ is pre-removable in $\La = k Q / I$, then the natural epimorphism induced by the ideal $ \la A + I \ra $ admits a section algebra monomorphism $ \i \colon  \La / \la A + I \ra \monicc \La $, where $ \i ( \ol{ p + I } ) = p + I $ for every path $p$ in $Q$ avoiding the arrows of $A$. See the proof of \hyperlink{lem:ar.rem.2}{\cref{lem:ar.rem.2}{}} for details.
	\end{rem}

	\begin{exam}		\label{exam:ar.rem.-1}		\hypertarget{exam:ar.rem.-1}
		For every bound quiver algebra $\La = k Q / I $ the set of all arrows is pre-removable, which is another way to state that the Wedderburn-Malcev Theorem holds for $\La $. The section algebra monomorphism $ \i \colon \La / J ( \La ) \monicc \La $ of \cref{rem:ar.rem.2}{} is given by the correspondence $\ol{ e_i } \mapsto e_i $ for every vertex $i \in Q_0$. 
	\end{exam}

	The following corollary will be employed for showing that the classical arrow removal operation \cite{arrowrem1}{} is a special instance of \cref{thm:GAR}{}; see also \cref{exam:1}{}.
	
	\begin{cor}
		\label{cor:ar.rem.5}
		Let $I$ be an admissible ideal of the path algebra $k Q$ and assume that there is a generating set $ T $ for $I$ avoiding all arrows in some set $A \subseteq Q_1$. Then for every generating set $ T' $ for $I$, the set $ T'_{\nott \! A} $ is also generating.
	\end{cor}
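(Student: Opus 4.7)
The central observation underlying my approach is that the $k$-linear projection $\pi_{\nott \! A} \colon kQ \to kQ$, defined on basis paths by $p \mapsto p$ if $p$ avoids $A$ and $p \mapsto 0$ otherwise, is a unital $k$-algebra homomorphism, with image the subalgebra $kQ_{\nott \! A}$ generated by the trivial paths and the arrows in $Q_1 \setminus A$, and with kernel the ideal $\la A \ra$ of $kQ$. The only non-formal step in verifying this is multiplicativity, which by $k$-bilinearity reduces to basis paths $p, q \in \BQ$: the product $pq$ is either zero or again a path, and in the latter case it passes through an arrow of $A$ if and only if $p$ or $q$ does, so $\pi_{\nott \! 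A}(pq) = \pi_{\nott \! A}(p) \pi_{\nott \! A}(q)$ in all cases. I regard this identification of $\pi_{\nott \! A}$ as the main (though modest) technical obstacle; the remainder of the argument is purely formal.

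Next, I would exploit the hypothesized generating set $T$ for $I$ avoiding $A$, that is $T \subseteq I_{\nott \! A}$, to conclude that $I_{\nott \! A}$ itself generates $I$ as an ideal of $kQ$. Indeed, the containments $I = \la T \ra_{kQ} \subseteq \la I_{\nott \! A} \ra_{kQ} \subseteq I$ force equality throughout, where the last containment uses $I_{\nott \! A} \subseteq I$ -- a consequence of pre-removability of $A$ via \cref{lem:ar.rem.2}{}, itself guaranteed by the very existence of $T$.

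For any other generating set $T'$ of $I$, I would then apply the ring homomorphism $\pi_{\nott \! A}$ to the equality $I = \la T' \ra_{kQ}$. Since $\pi_{\nott \! A}$ is a $k$-algebra epimorphism onto $kQ_{\nott \! A}$, and the image of an ideal generated by a set $S$ is the ideal of the target algebra generated by $\pi_{\nott \! A}(S)$, this yields $I_{\nott \! A} = \pi_{\nott \! A}(I) = \la T'_{\nott \! A} \ra_{kQ_{\nott \! A}} \subseteq \la T'_{\nott \! A} \ra_{kQ}$. Combining with the previous paragraph gives $I = \la I_{\nott \! A} \ra_{kQ} \subseteq \la T'_{\nott \! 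A} \ra_{kQ} \subseteq I$, so $T'_{\nott \! A}$ generates $I$ as required.
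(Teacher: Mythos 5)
Your proof is correct and takes essentially the same route as the paper: both arguments hinge on \cref{lem:ar.rem.2}{} (the existence of $T$ forces pre-removability, hence $I_{\nott \! A} \subseteq I$) together with the compatibility of the truncation $z \mapsto z_{\nott \! A}$ with products, which is precisely what makes the ideal $\la T'_{\nott \! A} \ra$ contain $I \cap {}_k \la \BQnotA \ra \supseteq T$. Your only departure is organizational: you package that compatibility as the algebra homomorphism $\pi_{\nott \! A}$ and invoke ``image of a generated ideal equals the ideal generated by the image,'' thereby spelling out the step that the paper's proof asserts in a single line.
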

	
	\begin{proof}
		The fact that $z = z_{\nott \! A} $ for every $ z \in T $ implies that the set $ T'_A \cup T'_{\nott \! A } $ is contained in $ I $ according to \hyperlink{lem:ar.rem.2}{\cref{lem:ar.rem.2}{}}; in particular, it is generating. But the ideal of $k Q $ generated by $ T'_{ \nott \! A } $ contains  $ I \cap {}_k \la \BQnotA \ra $ and thus $ T $. We deduce that $ T'_{ \nott \! A } $ is generating.
	\end{proof}
	
	The main theorem of this subsection concerns removable sets of arrows defined as follows.
	
	\begin{defn}
		\label{defn:ar.rem.1}
		Let $\La  = k Q / I$ be a bound quiver algebra and $A $ a pre-removable set of arrows. Then $A $ is:
		\begin{enumerate}[\rm(i)]
			\item \emph{two-sided removable} if the ideal $\la A + I \ra $ has finite projective dimension both as a right and left $\La$-module;
			\item \emph{non-repetitive} if ${\la A + I \ra} ^2 = 0$;
			\item \emph{only left removable} if $A$ is non-repetitive and the ideal $\la A + I \ra $ has finite projective dimension only as a right $\La$-module;
			\item \emph{(left) removable} if $A$ is two-sided or only left removable.
		\end{enumerate}
		Moreover, an algebra $\Ga $ is a \emph{generalized arrow removal of $\La $} if it is isomorphic to the quotient algebra of $ \La  $ induced by a removable set of arrows.
	\end{defn}

	We make a few comments on \cref{defn:ar.rem.1}{} before proceeding with the main result of the subsection. First, we have allowed $A$ to be the empty set
	as this will simplify the exposition of \cref{sec:ar.red.ver}{}. Furthermore, the terms of \cref{defn:ar.rem.1}{} will also be used for arrows instead of the respective singleton sets. Last, we say that a set of arrows $A \subseteq Q_1 $ is, for instance, only left removable \emph{in} $\La $ when we want to make clear the specific bound quiver algebra of $Q $ we are working with.

	\begin{thm}
		\label{thm:GAR}
		Let $A $ be a removable set of arrows in $\La = k Q / I$. Then
		\[
		\fpd \La < \infty  \,\, \iff \,\,  \fpd { \La / \la A + I \ra } <\infty
		\]
		and the equivalence remains valid if $\fpd $ is replaced by $\Fpd $ or $\gd$.
	\end{thm}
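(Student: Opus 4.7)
The plan is to deduce \cref{thm:GAR} as a direct application of \hyperlink{thmA}{Theorem~A} (i.e.\ \cref{thm:main.III}), by verifying that the ideal $K = \la A + I \ra$ satisfies all its hypotheses. The bound quiver algebra $\La = kQ/I$ is already left artinian, being finite-dimensional over $k$ since $I$ is admissible, so only the three structural/homological assumptions on $K$ need to be checked.

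First, I would argue that $K \subseteq J(\La)$. Since $I$ is admissible, the Jacobson radical of $\La$ coincides with the ideal of $\La$ generated by the residue classes of all arrows of $Q$. Thus the image of every $\a \in A$ lies in $J(\La)$, and since $I \subseteq J(\La)$ trivially, we obtain $K \subseteq J(\La)$. Next, to verify that the natural epimorphism $\pi \colon \La \epic \La / K$ splits, I would invoke condition (i) of \hyperlink{lem:ar.rem.2}{\cref{lem:ar.rem.2}}, which is granted by the pre-removability of $A$ built into \cref{defn:ar.rem.1}; this yields an explicit section algebra monomorphism with image the subalgebra of $\La$ generated by the trivial paths and the arrows in $Q_1 \setminus A$. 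In particular $\pi$ splits as a ring homomorphism, as required by \hyperlink{thmA}{Theorem~A}.

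The remaining finiteness hypothesis is unpacked from \cref{defn:ar.rem.1} according to the type of removability. If $A$ is two-sided removable, then both $\pd K_\La$ and $\pd {}_\La K$ are finite, matching the condition ``$\pd K_\La < \infty$ and $\pd {}_\La K < \infty$'' of \hyperlink{thmA}{Theorem~A}. If $A$ is only left removable, then $\pd K_\La < \infty$ by definition and $K^2 = 0$ by non-repetitivity, matching the alternative condition ``$\pd K_\La < \infty$ and $K^2 = 0$''. With all the hypotheses in place, \hyperlink{thmA}{Theorem~A} yields the desired equivalences for $\fpd$, $\Fpd$, and $\gd$.

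The argument is thus essentially a bookkeeping verification, and no serious obstacle is expected at this stage: the real homological and ring-theoretic content has been shifted to \hyperlink{thmA}{Theorem~A} and, further back, to the submodule-with-absorbing-complement machinery of \cref{subsec:rad.pres.clefts.2}. The only mildly subtle point is to ensure that the splitting delivered is strong enough (an algebra, hence a ring, section) and that the case split in \cref{defn:ar.rem.1} really exhausts the two alternative hypotheses of \hyperlink{thmA}{Theorem~A}; both are handled precisely by \hyperlink{lem:ar.rem.2}{\cref{lem:ar.rem.2}} and by the very wording of the definition of removability.
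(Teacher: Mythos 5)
Your proposal is correct and follows exactly the paper's own route: the paper also proves \cref{thm:GAR} as a direct application of \cref{thm:main.III}, noting that $\La$ is left artinian, that $K = \la A + I \ra$ is superfluous, that the natural epimorphism splits by the pre-removability condition of \hyperlink{lem:ar.rem.2}{\cref{lem:ar.rem.2}}, and that the two cases of \cref{defn:ar.rem.1} supply the two alternative homological hypotheses. Your write-up is merely a more explicit bookkeeping of the same verification.
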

	
	\begin{proof}
		The theorem is a direct consequence of \cref{defn:ar.rem.1}{} and \cref{thm:main.III}{}. Indeed, the algebra $\La $ is a noetherian perfect ring and the natural epimorphism $\La \epic \La / \la A + I \ra $ splits according to \hyperlink{lem:ar.rem.2}{\cref{lem:ar.rem.2}{}}.
	\end{proof}

	\begin{exam}
		\label{exam:ar.rem.0}
		For any bound quiver algebra $\La = k Q / I $, it holds that $Q_1 $ (the set of all arrows) is (two-sided) removable in $\La $ if and only if the global dimension of $\La $ is finite. Indeed, if $Q_1$ is removable in $\La $ then \cref{thm:GAR}{} implies that the global dimension of $ \La $ is finite as the same holds for the semisimple algebra $\La / J( \La )$. Conversely, if the global dimension of $ \La $ is finite then the ideal $J (\La )$ has finite projective dimension as a left and right $\La$-module. Therefore, the set $Q_1$ is two-sided removable as it is always pre-removable (\hyperlink{exam:ar.rem.-1}{\cref{exam:ar.rem.-1}{}}).
	\end{exam}

	\begin{exam}
			\label{exam:1}
		Let $ A $ be a set of arrows in a bound quiver algebra $\La = k Q / I $ such that the quotient algebra $\La / \la A + I \ra $ is an arrow removal of $\La $ in the sense of \cite{arrowrem1}{}. Then $ A $ is two-sided removable according to \cref{prop:ar.rem.1}{}, since a projective $ \La $-bimodule is also projective as a left and right $ \La $-module. We call the set of arrows \emph{$0$-biremovable} in this case, since the projective dimension of the generated ideal as a $ \La $-bimodule is zero. Similarly, an arrow will be called $0$-biremovable if the respective singleton set is $0$-biremovable.
		
		Note that an arrow is $0$-biremovable if and only if there is a generating set for $I$ avoiding the arrow according to \cite[Proposition~4.5]{arrowrem1}{}; see also \cref{cor:ar.rem.1}{}. In particular, every arrow in a $0$-biremovable set is itself $0$-biremovable. 
	\end{exam}

	\subsection{Non-triviality conditions}		\label{subsec:3.2}
	Before introducing concrete examples of arrow reductions via \cref{thm:GAR}{}, we consider the next conditions. If a bound quiver algebra $\La = k Q / I $ does not satisfy one or more of them, then the finiteness of its (big) finitistic dimension is either immediate or follows from the indicated paper.
	\begin{enumerate}[\rm(i)]
		\item $\La $ has non-zero finitistic dimensions.
		\item $\La$ is not monomial (\!\!\cite{GKK}{}). 
		\item The Loewy length of $\La$ is greater than $ 3 $ (\!\!\cite{GH}{}).
		\item Every arrow of $ Q $ occurs in every generating set for $I$ (\!\!\cite{arrowrem1}{}).
		\item $\La$ is triangular reduced (\!\!\cite{FGR}{}).
		\item The projective dimension of every simple $\La$-module is greater than $1$ (\!\!\cite{FS}{}).
		\item The injective dimension of every simple $\La$-module is infinite (\!\!\cite{arrowrem1}{}, depending on the little finitistic dimension of a specific Peirce corner algebra of $\La $).
	\end{enumerate}
	
		\begin{defn}
				\label{defn:irredu}
			We say that a bound quiver algebra is \emph{irreducible} if it satisfies conditions (i) to (vii).
		\end{defn}
	
	Recall that a bound quiver algebra $ \La $ is called \emph{triangular reduced} \cite{arrowrem1}{} if, for every idempotent $e \neq 0 , 1$ of $\La $, both $e \La ( 1 - e ) $ and $(1 - e ) \La e $ are non-trivial subspaces of $\La$. It can be easily verified that $\La $ is triangular reduced if and only if
	it is not isomorphic to a triangular matrix algebra
	$ \big(\begin{smallmatrix}
		A & M \\
		0 & B
	\end{smallmatrix}\big)$,
	where $A$ and $B$ are bound quiver algebras and $M$ is an $A$-$B$-bimodule such that $k$ acts centrally making it a finite-dimensional vector space.
	
	The only argument that needs perhaps some attention is that the algebra $ e \La e $ is isomorphic to a bound quiver algebra for every non-zero idempotent $e \in \La $. Indeed, it holds that $e$ is always the sum of $m$ primitive orthogonal idempotents for some integer $m > 0 $. Consequently, the algebra $e \La e / e J( \La ) e $ is isomorphic to the direct product of $m$ copies of $k$ and the desired property follows from \cite[Proposition~I.6.2, Corollary~I.1.4]{ASS}{}.
	
	The above observation leads to the characterization of triangular reduced bound quiver algebras as follows. One direction can be found in \cite[Proposition~5.3]{arrowrem1}{}, implicitly proven for the first time in \cite{GM}{}. Recall that a quiver $ Q $ is \emph{strongly connected} if for every pair of vertices $v, v' \in Q_0 $ there is a path with source $v $ and target $v'$.

	\begin{lem}
		\label{lem:ar.rem.9}
		A bound quiver algebra is triangular reduced if and only if the underlying quiver is strongly connected.
	\end{lem}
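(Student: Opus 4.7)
The plan is to prove both implications by contraposition, reducing throughout to idempotents of a standard form. Let $\{e_i\}_{i \in Q_0}$ denote the complete set of primitive orthogonal idempotents of $\La$ given by the trivial paths. For any non-trivial idempotent $f \in \La$, its image in $\La / J(\La) \cong \prod_{i \in Q_0} k$ equals $\sum_{i \in S} \ol{e_i}$ for a unique proper non-empty subset $S \subset Q_0$, and by the standard lifting theorem for idempotents in semiperfect rings, $f$ is conjugate to $e_S := \sum_{i \in S} e_i$ by some unit $u \in \La$. Since conjugation by $u$ restricts to an abelian-group isomorphism $f \La (1-f) \cong e_S \La (1-e_S)$ and similarly $(1-f)\La f \cong (1-e_S) \La e_S$, it suffices to check the defining conditions of triangular reducedness against idempotents of the form $e_S$ only.

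Assume first that $Q$ is not strongly connected. The DAG of strongly connected components of $Q$ then has more than one vertex, and I would pick a source component (i.e.\ one that receives no arrows from outside the component in the condensation), let $V_1$ be its vertex set, and set $V_2 := Q_0 \setminus V_1$. Both $V_1$ and $V_2$ are non-empty, and by construction no arrow of $Q$ has source in $V_2$ and target in $V_1$. Hence no path of $Q$ goes from $V_2$ to $V_1$, giving $e_i \La e_j = 0$ for all $i \in V_1$, $j \in V_2$, which amounts to $e_{V_1} \La (1 - e_{V_1}) = 0$. Thus $\La$ is not triangular reduced.

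Conversely, assume $\La$ is not triangular reduced. By the reduction above, there exists a non-empty proper $S \subset Q_0$ with $e_S \La (1-e_S) = 0$ or $(1-e_S) \La e_S = 0$; by symmetry say the former. Then $e_i \La e_j = 0$ for all $i \in S$ and $j \notin S$. Since $I$ is admissible, every arrow $\a$ of $Q$ survives as a non-zero element of $\La$, so no arrow can have source outside $S$ and target inside $S$; consequently no path in $Q$ can connect any vertex of $Q_0 \setminus S$ to a vertex of $S$, showing that $Q$ is not strongly connected.

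The only delicate point is the reduction to idempotents of the form $e_S$, which rests on the standard fact that any two idempotents of a semiperfect ring with equal image in the semisimple quotient are conjugate by a unit. The rest is a direct translation between the combinatorial structure of $Q$ and the vanishing of the subspaces $e_S \La (1-e_S)$ and $(1-e_S)\La e_S$, using the admissibility of $I$ to guarantee that arrows do not vanish in $\La$.
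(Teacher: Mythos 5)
Your proof is correct, but it is only half the same as the paper's. For the direction ``not strongly connected $\Rightarrow$ not triangular reduced'' you and the paper do essentially the same thing: pick a proper non-empty set of vertices closed under reachability (the paper takes all vertices that can reach a fixed vertex $j$, you take a source component of the condensation), and observe that the corresponding corner of $\La$ vanishes. For the converse, however, the paper does not argue directly with idempotents: it invokes the preceding discussion identifying non-triangular-reduced algebras with triangular matrix algebras $\bigl(\begin{smallmatrix} A & M \\ 0 & B\end{smallmatrix}\bigr)$ (which in turn needs that the corner algebras $e\La e$ are again bound quiver algebras) and then cites \cref{exam:ar.rem.6}{}, where the ordinary quiver of such a matrix algebra is computed via projective covers of bimodules and seen not to be strongly connected. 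Your route is more elementary and self-contained: you reduce an arbitrary non-trivial idempotent $f$ to one of the form $e_S=\sum_{i\in S}e_i$ using the standard fact that idempotents congruent modulo the nilpotent radical are conjugate by a unit (e.g.\ $u=ef+(1-e)(1-f)$), and then read off directly that a vanishing corner forbids arrows crossing the partition in one direction, hence forbids paths, hence strong connectivity fails; this buys independence from the triangular-matrix characterization and from the quiver computation in \cref{exam:ar.rem.6}{}, at the modest cost of the idempotent-lifting argument. One cosmetic remark: with the paper's convention that $e_i\La e_j$ is spanned by paths from $i$ to $j$, your ``no path from $V_2$ to $V_1$'' gives $(1-e_{V_1})\La e_{V_1}=0$ rather than $e_{V_1}\La(1-e_{V_1})=0$ (and similarly in the converse); this is harmless, since triangular reducedness requires both corners to be non-zero, so the vanishing of either one suffices.
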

	
	\begin{proof}
		Let $\La = k Q / I $ be triangular reduced and assume that there are (distinct) vertices $i, j \in Q_0$ such that there is no path in $ Q$ with source $i$ and target $j$. Define $Q_0^j$ to be the set of vertices $ v \in Q_0 $ such that there is a path with source $ v $ and target $ j$ in $Q$, $j$ included, and let $Q_0^i $ be the complement $ Q_0 \setminus Q_0^j$. Note that the idempotent $e^i = \sum_{ v \in Q_0^i} e_v $ is non-trivial as neither of $Q_0^i$ and $Q_0^j$ is empty. Therefore, the subspace $ e^i \La ( 1 - e^i ) $ is non-zero due to $ \La $ being triangular reduced. In particular, there is a path $p $ with source  $ v_1 \in Q_0^i$ and target $ v_2 \in Q_0^j$. Since $ v_2 \in Q_0^j$, there is a path $ q $ with source $ v_2 $ and target $ j $, and the path $ pq $ implies that $v_1 \in Q_0^j$, a contradiction.
		
		If $\La $ is not triangular reduced, then it is isomorphic to a triangular matrix algebra as in \cref{exam:ar.rem.6}{}. Hence, the quiver $Q$ is not strongly connected.
	\end{proof}

	\subsection{Examples}		\label{subsec:3.3}
	We hope the reader will be persuaded by the following examples that our techniques effectively yield a novel tool for reducing the validity of the $(\mathsf{FDC})$ for concrete algebras to the validity of the $(\mathsf{FDC})$ for algebras of smaller dimension. All examples are highly non-trivial in the sense of \cref{defn:irredu}{}.
	
	For a bound quiver algebra $ \La = k Q / I $ and a vertex $ i $, we write $ S_\La ( { i } ) $ and $ S_{\La^{\! \mathrm{op}}} ( { i } ) $ to denote the simple left and right $ \La $-module corresponding to that vertex.
	

	\begin{exam}
		\label{exam:ar.rem.1}
		
		Let $Q$ be the quiver below, and let $k$ be any field. We consider the bound quiver algebra $ \La_1 = k Q / I_1 $ where $I_1 $ is the ideal generated by the relations $R_1 = \{  \a \e - \d \a   , \,  \d ^2   , \,  \e ^2   , \,  \z ^2   , \,  \b \g   ,   \z \g  , \,  \g \a \} $.

		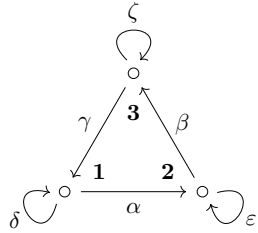
\begin{figure}[hbt!]
			\vspace*{-0.15cm}
			\centering
			\resizebox{11em}{!}{
				\begin{tikzpicture}
					
					
					\draw  ($(0,0)+(90:1.2)$) circle (.08);
					\node at ($(0,0)+(90:0.6)$) {$\mathbf{3}$};
					
					\draw  ($(0,0)+(210:1.2)$) circle (.08);
					\node at ($(0,0)+(210:0.6)$) {$\mathbf{1}$};
					
					\draw  ($(0,0)+(330:1.2)$) circle (.08);
					\node at ($(0,0)+(330:0.6)$) {$\mathbf{2}$};

					\draw[->,shorten <=7pt, shorten >=7pt] ($(0,0)+(210:1.2)$) -- ($(0,0)+(330:1.2)$);
					
					\draw[->,shorten <=7pt, shorten >=7pt] ($(0,0)+(330:1.2)$) -- ($(0,0)+(90:1.2)$) ;
					
					\draw[->,shorten <=7pt, shorten >=7pt] ($(0,0)+(90:1.2)$)  -- ($(0,0)+(210:1.2)$);

					\node at ($(0,0)+(150:0.85)$) {$\gamma$};
					
					\node at ($(0,0)+(270:0.85)$) {$\alpha$};
					
					\node at ($(0,0)+(30:0.85)$) {$\beta$};

					\draw[->,shorten <=4pt, shorten >=4pt] ($(0,0)+(210:1.3)$).. controls +(210+45:1) and +(210-45:1) .. ($(0,0)+(210:1.3)$);
					\node at ($(0,0)+(210:2.07)$) {$\delta$};
					
					\draw[->,shorten <=4pt, shorten >=4pt] ($(0,0)+(330:1.3)$).. controls +(330+45:1) and +(330-45:1) .. ($(0,0)+(330:1.3)$);
					\node at ($(0,0)+(330:2.05)$) {$\varepsilon$};
					
					\draw[->,shorten <=4pt, shorten >=4pt] ($(0,0)+(90:1.3)$).. controls +(90+45:1) and +(90-45:1) .. ($(0,0)+(90:1.3)$);
					\node at ($(0,0)+(90:2.13)$) {$\zeta$};
					
				\end{tikzpicture}
			}
			\caption{Quiver $Q$}
			\label{fig1}
			\vspace*{-0.35cm}
		\end{figure}

		The arrow $\b$ is two-sided removable and non-repetitive in $\La_1$. Indeed, it holds that $\pd {\la \b + I_1 \ra} _{ \La_1 } = 2 $ as there is a minimal projective resolution of the form
		\[
		0 \to (e_2 \La_1 )^{ 4 } \to (e_1 \La_1 )^{ 4} \to ( e_3 \La_1 )^{ 4} \to { \la \b + I_1 \ra} \to 0
		\]
		and $  \la \b + I_1 \ra  \simeq (  \La_1  e_2 ) ^{ 2}$ is projective as a left $\La_1$-module.
		To see that $\b $ is non-repetitive note that $\b \g  $ and $\b \z \g $ are zero in $\La_1 $.
		Similarly, one can show that $\a $ and $\g $ are also removable in $\La_1 $.
		
		The algebra $\La_1 $ is irreducible in the sense of \cref{defn:irredu}{}. The last three conditions follow immediately from the shape of $Q$; see \cref{lem:ar.rem.9}{} and \cite{ILP}{}. The Loewy length of $\La_1 $ is $5$, where $\a \e \b \z $ is the longest non-zero path, and $\La_1$ is not monomial as the paths $\a \e $ and $ \d \a$ are non-zero and equal in $\La_1$. Furthermore, the finitistic dimensions of $\La_1 $ are non-zero as there is no right ideal of $\La_1$ isomorphic to $S_{\La_1^{\! \mathrm{op}}} ( { 2 } )$; see \cite[Lemma~6.2]{Bass}{}. Finally, one can readily verify that all paths occurring in $R_1$ occur in every generating set for $I_1$. In particular, the same holds for all arrows of $Q$ implying that no non-empty set of arrows is $0$-biremovable in $\La _1 $.
		
		We also note that the algebra $\La_1 $ is not Iwanaga-Gorenstein due to \hyperlink{lem:Gor}{\cref{lem:Gor}{}} applied for the loop $\z $ and the arrow $\g$.
	\end{exam}

	\begin{exam}
		\label{exam:ar.rem.2}
		
		Let $Q$ be the quiver of \cref{exam:ar.rem.1}{}, and let $k $ be any field. We consider the bound quiver algebra $ \La_2 = k Q / I_2 $ where $I_2$ is the ideal generated by the relations $R_2 = \{  \a \e - \d \a  , \,  \d ^2  , \,  \e ^2 , \,  \z ^2 , \, \b \g \a   , \,  \b \z \} $.
		
		The arrow $\g$ is two-sided removable and repetitive in $\La_2$. Indeed, it holds that $\pd _{ \La_2 }  \la \g + I_2 \ra = 1 = \pd { \la \g + I_2 \ra }_{ \La_2 }$ as there are minimal projective resolutions
		\[
		0 \to ( e_2 \La_2 )^{  4 } \to ( e_1 \La_2 )^{  6 } \to  \la \g + I_2 \ra  \to 0
		\]
			\begin{center}
			and
		\end{center}
		\[
		0 \to ( \La_2 e_2 )^{  4 } \to ( \La_2 e_3 )^{  6 } \to   \la \g + I_2 \ra  \to 0  .
		\]
		Furthermore, the path $ \g \a \e \b \g $ is non-zero. It can also be shown that $ \la \g + I_2 \ra $ has projective dimension $ 1 $ as a $ \La_2 $-bimodule due to the minimal projective resolution of the form
		\[
		0 \to \La_2 e_2 \otimes_k e_2 \La_2 \to \La_2 e_3 \otimes_k e_1 \La_2 \to  \la \g + I_2 \ra  \to 0
		\]
		where $e_3 \otimes e_1 $ maps to $\g + I_2 $ and $e_2 \otimes e_2 $ to $ ( \b + I_2 ) \otimes ( \a + I_2 ) $. Therefore, we could say that $\g $ is $1$-biremovable following the terminology introduced in \cref{exam:1}{}.
		
		Similarly, one can show that $\a $ is also removable in $\La_2$, whereas $\b $ is not.
		
		Last, it can be verified that the algebra $\La_2$ is irreducible as in \cref{exam:ar.rem.1}{}. We only mention that
		the Loewy length of $\La_2$ is $8$, where $\z \g \a \e \b \g \d $ is the longest non-zero path, and the finitistic dimensions of $\La_2 $ are non-zero as there is no right ideal of $\La_2$ isomorphic to $S_{\La_2^{\! \mathrm{op}}} ( {2} )$.
	\end{exam}

	\begin{exam}
		\label{exam:ar.rem.3}
		
		Let $Q$ be the quiver of \cref{exam:ar.rem.1}{}, and let $k$ be any field. We consider the bound quiver algebra $ \La_3 = k Q / I_3 $ where $I_3 $ is the ideal generated by $R_3 = \{  \a \e - \d \a   , \,  \d ^2   , \,  \e ^2   , \,  \z ^2   ,   \e \b   , \,  \b \g   , \,  \b \z \g \a \} $.
		
		The arrow $\b $ is only left removable in $\La_3$. Indeed, it holds that $\pd {\la \b + I_3 \ra}_{ \La_3 } = 1$ as there is a minimal projective resolution of the form
		\[
		0 \to ( e_1 \La_3 )^{ 4} \oplus ( e_2 \La_3 )^{ 4} \to ( e_3 \La_3 )^{ 4} \to \la \b + I_3 \ra\to 0 .
		\]
		Furthermore, the ideal $  \la \b + I_3 \ra $ has infinite projective dimension as a left $\La_3$-module since it is isomorphic to the direct sum of $4$ copies of $X = \La_3 ( \b + I_3 ) $, which is an $\Omega$-$1$-periodic module. The arrow $\b $ is non-repetitive as the paths $\b \g  $, $\b \z \g \a $ and $\b \z \g \d \a = \b \z \g \a \e $ are zero in $\La_3$.
		
		Similarly, one can show that $\a $ is also removable in $\La_3 $, whereas $\g $ is not.
		
		Again, the algebra $\La_3$ is irreducible; see also \cref{exam:ar.rem.1,exam:ar.rem.2}{}. We only mention that
		the Loewy length of $\La_3$ is $8$, where $\z \g \a \b \z \g \d $ is the longest non-zero path, and the finitistic dimensions of $\La_3 $ are non-zero as there is no right ideal of $\La_3$ isomorphic to $S_{\La_3^{\! \mathrm{op}}} ( {3} )$.
	\end{exam}

	All three of the above algebras have finite (big) finitistic dimension. We prove this fact by computing their arrow reduced version, introduced in the next section.

	\medskip
	
	\section{Arrow reduced version of an algebra}	\label{sec:ar.red.ver}
	
	\smallskip
	
	This section introduces two distinct but related methods for simplifying bound quiver algebras by removing arrows -- the arrow reduced version and the arrow irredundant version -- and establishes their fundamental homological properties. The first method is aimed at the preservation of finiteness for the finitistic (and global) dimensions in a context that is as general as possible. The second method restricts the arrows to be removed so that a broader range of homological properties is preserved, such as Iwanaga-Gorensteinness, the finite generation condition for the Hochschild cohomology (\!\!\cite{EHTSS}{}) and the singularity category of Buchweitz.

	\subsection{Arrow reduced version}		\label{subsec:4.1}
	We begin with a technical lemma ensuring that the quotient algebra over an ideal generated by a pre-removable set of arrows possesses a canonical representation as a bound quiver algebra. Recall that $ \BQA $ denotes the set of paths in $ Q $ that pass through at least one arrow in $ A $ at least once, and $ \BQnotA $ denotes the set of all paths avoiding all arrows in $ A $. See also the beginning of \cref{subsec:3.1}{} for an overview of the relevant notation.
	
	\begin{lem}
			\label{lem:ar.red.1}
		Let $A $ be a set of arrows in a bound quiver algebra $\La = k Q / I$, and let $Q'$ be the quiver that results from $Q$ if we remove from it all the arrows in $A$. Then:
			\begin{enumerate}[\rm(i)]
				\item There is a canonical algebra epimorphism $ \phi_A \colon k Q' \epic \La / \la A + I \ra $ defined by $x \mapsto \ol{x + I}  $ for every vertex or arrow $x$ of $Q'$.
				\item The kernel of $ \phi_A $ consists of the elements $z' \in k Q'$ such that $z' + z_A \in I$ for some $z_A \in {}_k \la \BQA \ra $. In particular, it contains the ideal $I' = I \cap {}_k \la \BQnotA \ra $ of $ k Q' $ which is admissible.
				\item The epimorphism $ \phi $ induces an algebra isomorphism $ k Q' / I' \simeq \La / \la A + I \ra $ if and only if $ A $ is pre-removable.
			\end{enumerate}
	\end{lem}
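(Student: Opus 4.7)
The plan is to handle the three parts in order, relying on the universal property of path algebras, a direct description of the ideal $\la A + I \ra$ as the image of an ideal of $kQ$, and finally the characterization of pre-removability provided by \hyperlink{lem:ar.rem.2}{\cref{lem:ar.rem.2}}.

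For (i), I would use that $Q_0' = Q_0$ and $Q_1' = Q_1 \setminus A$. Sending each vertex $v$ to $\ol{e_v + I}$ and each arrow $\alpha \in Q_1 \setminus A$ to $\ol{\alpha + I}$ respects source, target and orthogonality of the vertex idempotents, so the universal property of $kQ'$ yields a unique algebra homomorphism $\phi_A \colon kQ' \to \La / \la A + I \ra$ extending this assignment. Surjectivity follows because every path in $Q$ either lies in $\BQnotA$ -- so its class is hit by $\phi_A$ -- or passes through some arrow in $A$ and is therefore zero in $\La / \la A + I \ra$.

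For (ii), the cleanest viewpoint is to factor $\phi_A$ as the natural inclusion $kQ' \hookrightarrow kQ$ (well-defined since the vertex sets agree and $Q_1' \subseteq Q_1$) followed by the canonical projection $kQ \epic \La / \la A + I \ra$. If $J_A$ denotes the two-sided ideal of $kQ$ generated by the arrows in $A$, then $J_A$ admits $\BQA$ as a $k$-basis, so $J_A = {}_k \la \BQA \ra$ as subspaces of $kQ$, and $\la A + I \ra = ( J_A + I ) / I$. Hence $z' \in kQ'$ lies in $\Ker \phi_A$ precisely when $z' \in J_A + I$, i.e.\ when $z' + z_A \in I$ for some $z_A \in {}_k \la \BQA \ra$ (after absorbing a sign). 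Taking $z_A = 0$ gives $I' \subseteq \Ker \phi_A$. Admissibility of $I'$ in $kQ'$ follows because the paths of $Q'$ correspond exactly to the elements of $\BQnotA$: any element of $I'$ is already a linear combination of paths of length at least two, and if all paths of $Q$ of length at least $N$ lie in $I$ then in particular all paths of $Q'$ of length at least $N$ lie in $I'$.

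For (iii), $\phi_A$ factors through $kQ'/I'$ and induces an algebra isomorphism exactly when $\Ker \phi_A = I'$. Combined with (ii), this equality amounts to the implication: whenever $z' \in {}_k \la \BQnotA \ra$ and $z_A \in {}_k \la \BQA \ra$ satisfy $z' + z_A \in I$, both summands lie in $I$. This is precisely the direct-sum decomposition $I = (I \cap {}_k \la \BQA \ra) \oplus (I \cap {}_k \la \BQnotA \ra)$ appearing as condition (iv) of \hyperlink{lem:ar.rem.2}{\cref{lem:ar.rem.2}}, i.e.\ the pre-removability of $A$. The main obstacle will be the bookkeeping in part (ii), specifically the identification $J_A = {}_k \la \BQA \ra$ and the passage between the ideal of $kQ$ generated by the arrows in $A$ and the ideal $\la A + I \ra$ of $\La$; once that is secured, the remaining parts follow by formal manipulations.
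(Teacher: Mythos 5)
Your proposal is correct and follows essentially the same route as the paper: construct $\phi_A$ via the universal property of $kQ'$, identify $\Ker \phi_A$ using the fact that $\la A + I \ra$ is the image of ${}_k\la \BQA \ra$ (your $J_A$), and then observe that $\Ker \phi_A = I'$ is exactly the splitting of $I$ into its $\BQA$- and $\BQnotA$-components, i.e.\ pre-removability via \cref{lem:ar.rem.2}{}. The only cosmetic differences are that you factor $\phi_A$ through the inclusion $kQ' \hookrightarrow kQ$ and invoke condition (iv) of \cref{lem:ar.rem.2}{} where the paper argues directly with condition (iii); these are interchangeable.
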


	\begin{proof}
		(i) It is clear that the image of the vertices of $ Q' $ through $ \phi_A $ (or rather of the corresponding trivial paths) is a set of orthogonal idempotent elements whose sum is equal to the unit of the target algebra. Furthermore, for every arrow $ \b \colon i \to j $ of $ Q' $ it holds that $ \phi_A ( \b ) = \phi_A ( e_i ) \phi_A( \b ) \phi_A( e_j ) $. Therefore, the given correspondence defines uniquely an algebra homomorphism $ \phi_A $, see \cite[Theorem~II.1.8]{ASS}{}. The surjectivity of $ \phi_A $ is straightforward as all paths of $ Q $ avoiding the arrows in $ A $ are contained in its image.
		
		(ii) It is clear that $ \phi_A ( z' ) = ( z' + I ) + \la A + I \ra $ for every element $ z' \in k Q ' $, since $ \phi_A $ is a ring homomorphism that respects the $ k $-structures of the algebras. Therefore, we have $ z' \in \Ker \phi_A $ if and only if $ z' + I \in \la A + I \ra $. Equivalently, we have that $ z ' \in \Ker \phi_A $ if and only if there is an element $ z_A \in {}_k \la \BQA \ra $ such that $ z ' - z_A \in I $ since $ \la A + I \ra = {}_k \la \BQA + I \ra $.
		
		For an element $ z' \in I' $, we have that $ z ' \in \Ker \phi_A $ as $ z ' - z_A \in I $ for $ z_A = 0 $. Furthermore, it is clear that all paths occurring in $ I ' $ have length at least two, and all paths of sufficiently large length in $ Q' $ are in $ I ' $, as they are also paths in $ Q $ and therefore in $ I $.
		
		(iii) We have to show that $ \Ker \phi_A = I ' $ if and only if $ A $ is pre-removable. For one direction, assume that $ \Ker \phi_A = I ' $ and $ z \in I $. It suffices to show that $ z_{\nott \! A} \in I $, where $ z_{ \nott \! A } $ denotes here the unique element of $ {}_k \la \BQnotA \ra $ such that $ z = z_{ \nott \! A } + z_A $ for some (unique) element $ z_A \in {}_k \la \BQA \ra $. We have that $ I = z + I = ( z_{ \nott \! A } + I ) + ( z_A + I ) $, implying that $ z_{ \nott \! A } + I = - z_A + I $. Therefore, we have that $ z_{ \nott \! A } \in \Ker \phi_A = I ' \subseteq I $.
		
		Conversely, assume that $ A $ is pre-removable. Since we have already established that $ I ' \subseteq \Ker \phi_A $, it remains to show the opposite inclusion. Let $ z' \in \Ker \phi_A $, and observe that there is an element $ z_A \in {}_k \la \BQA \ra $ such that $ z' + I = z_A + I $ in this case. Therefore, we have $ w = z' - z_A \in I $ implying that $ z' = w_{ \nott \! A } \in I $ according to the decomposition of $ I $ in \hyperlink{lem:ar.rem.2}{\cref{lem:ar.rem.2}{}}{}, as all paths occurring in $ z_A $ pass through some arrow in $ A $. In particular, we have that $ z' \in I ' $.
	\end{proof}
	
	Based on the above lemma, we introduce the notion of canonical representation for a quotient algebra over the ideal generated by a pre-removable set of arrows.
	
	\begin{defn}
			\label{defn:2}
		Let $ A $ be a pre-removable set of arrows in a bound quiver algebra $ \La = k Q / I $. The \emph{canonical representation} of the quotient $\La / { \la A + I \ra } $ as a bound quiver algebra is $ k Q' / I' $, where $ Q ' $ is the quiver that results from $ Q $ if we remove the arrows in $ A $ and $I' = I \cap {}_k \la \BQnotA \ra $, and we often identify the two algebras. Moreover, we say that a set of arrows $B \subseteq Q_1 \setminus A $ satisfies any of the conditions of \cref{defn:ar.rem.1}{} in the quotient algebra $\La / { \la A + I \ra } $, if it does so as an arrow of $ k Q' / I' $.
	\end{defn}

	The next result enables us to define eventually removable sets of arrows, which generalize removable sets.

	\begin{lem}		\label{lem:ar.rem.1}		\hypertarget{lem:ar.rem.1}
		Let $\La = k Q / I$ be a bound quiver algebra and consider two sets of arrows $A , B \subseteq Q_1 $.
		\begin{enumerate}[\rm(i)]
			\item Let $A$ and $B$ be disjoint, and such that $A$ is pre-removable in $\La $ and $B$ is pre-removable in $\La / \la A + I \ra $. Then their union is pre-removable in $\La $. Moreover, the canonical representation of the quotient algebra $\La / \la ( A  \cup B ) + I \ra $ as a bound quiver algebra is the same as the canonical representation of the quotient of $ \La / \la A + I \ra $ over the ideal generated by the arrows in $B$.
			\item Let $A $ and $ B  $ be pre-removable in $\La$. Then their difference $B \setminus A$ is pre-removable in $\La / \la A + I \ra $. Moreover, their union is pre-removable in $\La $.
		\end{enumerate}
	\end{lem}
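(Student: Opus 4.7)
My approach is to work throughout with the equivalent formulation of pre-removability given by condition (iii) (or (iv)) of \hyperlink{lem:ar.rem.2}{\cref{lem:ar.rem.2}{}}, which states that a set of arrows $X$ is pre-removable in $kQ/I$ precisely when $I$ decomposes as the direct sum $(I \cap {}_k\la \B_Q^X\ra) \oplus (I \cap {}_k\la \B_Q^{\nott \! X}\ra)$. Combined with the canonical representation of the quotient algebra from \cref{defn:2}{}, the whole lemma reduces to bookkeeping about paths of $Q$ according to whether they pass through arrows in $A$, $B$, both or neither. The key combinatorial identity I will use throughout is that for disjoint $A, B \subseteq Q_1$, writing $Q'$ for the quiver obtained by deleting $A$ from $Q$, one has $\B_{Q'}^B = \B_Q^{\nott \! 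A} \cap \B_Q^B$ and $\B_{Q'}^{\nott \! B} = \B_Q^{\nott \!(A \cup B)}$, and in particular $\B_Q = \B_Q^A \, \dot\cup \, \B_{Q'}^B \, \dot\cup \, \B_Q^{\nott \!(A \cup B)}$.

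For part (i), given $z \in I$, I apply the pre-removability of $A$ to write $z = z_A + z_{\nott \! A}$ with both summands in $I$. The second summand lies in $I' := I \cap {}_k \la \B_Q^{\nott \! A}\ra$, which by \hyperlink{lem:ar.red.1}{\cref{lem:ar.red.1}{}}{} is exactly the admissible ideal defining the canonical representation $kQ'/I'$ of $\La/\la A + I\ra$. Applying pre-removability of $B$ in $kQ'/I'$, I split $z_{\nott \! A} = (z_{\nott \! A})_B + (z_{\nott \! A})_{\nott \! B}$ with both pieces in $I'$. The combinatorial identity above then identifies $z_A + (z_{\nott \! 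A})_B$ as the $(A \cup B)$-part of $z$ and $(z_{\nott \! A})_{\nott \! B}$ as its $\nott \!(A \cup B)$-part, and both are in $I$. This gives the required decomposition for $I$, proving $A \cup B$ is pre-removable in $\La$. For the moreover statement, I just need to compare the two admissible ideals defining the canonical representations: on one side $I \cap {}_k\la \B_Q^{\nott \!(A \cup B)}\ra$, on the other side $I' \cap {}_k\la \B_{Q'}^{\nott \! B}\ra = (I \cap {}_k\la \B_Q^{\nott \! A}\ra) \cap {}_k\la \B_Q^{\nott \!(A \cup B)}\ra$, and these coincide since $\B_Q^{\nott \!(A \cup B)} \subseteq \B_Q^{\nott \! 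A}$.

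For part (ii), set $C := B \setminus A \subseteq Q'_1$. To show $C$ is pre-removable in $kQ'/I'$, take any $z \in I' \subseteq I$; since $z$ avoids arrows in $A$, and $B$ is pre-removable in $\La$, the decomposition $z = z_B + z_{\nott \! B}$ (with both in $I$) actually satisfies $z_B \in {}_k\la \B_Q^{\nott \! A} \cap \B_Q^B\ra = {}_k\la \B_{Q'}^C\ra$ and $z_{\nott \! B} \in {}_k\la \B_Q^{\nott \!(A \cup B)}\ra = {}_k\la \B_{Q'}^{\nott \! C}\ra$, and both lie in $I'$. This yields the required decomposition of $I'$. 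The claim that $A \cup B$ is pre-removable in $\La$ then follows by applying part (i) to the disjoint pair $A$ and $C$, since $A \cup C = A \cup B$.

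I do not anticipate a serious obstacle here; the argument is essentially symbolic manipulation of path decompositions, and the only care needed is to keep straight in which algebra ($\La$ versus its quotient $kQ'/I'$) each statement lives, so that the $X$-part of an element and the defining admissible ideal are always interpreted in the correct quiver.
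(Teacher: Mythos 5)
Your proposal is correct and follows essentially the same route as the paper: both arguments rest on condition (iii)/(iv) of \hyperlink{lem:ar.rem.2}{Lemma~\ref{lem:ar.rem.2}}, the identity that the $\nott(A\cup B)$-part of $z$ equals $(z_{\nott A})_{\nott B}$ together with the observation that paths of $Q'$ avoiding $B$ are precisely paths of $Q$ avoiding $A\cup B$, and part (ii) is deduced by reducing to $B\setminus A$ and invoking part (i) for the disjoint pair. The only cosmetic difference is that you also track the $(A\cup B)$-part explicitly, which the paper omits since $z\in I$ makes one of the two components redundant.
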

	
	\begin{proof}
		 We assume for the rest of the proof that $A$ is pre-removable in $\La $. Furthermore, we write $k Q' / I'$ to denote the canonical representation of $\La / \la A + I \ra $ as a bound quiver algebra, that is $Q' = Q_0 \cup ( Q_1 \setminus A ) $ and $I' = I \cap {}_k \la \BQnotA \ra $.
		
		(i) It is straightforward to see that $z_{ \nott ( A  \cup B ) } = ( z _{ \nott \! A} )_{\nott \! B}$ for any element $z \in k Q $, even when $ A $ and $ B $ are not disjoint. Therefore, it suffices to show that $ ( z _{ \nott \! A} )_{\nott \! B} \in I$ for any $z \in I $. The pre-removability of $A$ in $\La $ implies that $z_{\nott \! A } \in I' $ and the pre-removability of $B$ in $\La / \la A + I \ra $ implies that $( z _{ \nott \! A} )_{\nott \! B} \in I' \subseteq I$. Now let $k Q'' / I ''$ denote the canonical representation of the quotient of $\La / \la A + I \ra $ over the ideal generated by the arrows of $B$. Then $Q'' = Q_0 \cup(  ( Q_1 \setminus A ) \setminus B ) = Q_0 \cup ( Q_1 \setminus ( A  \cup B ) ) $ and $I'' = (I \cap {}_k \la \BQnotA \ra ) \cap {}_k \la \BQQnotB \ra = I \cap {}_k \la \BQ^{\nott ( A \cup B )}  \ra $, since paths in $ Q ' $ avoiding $ B $ are precisely paths in $ Q $ avoiding $ A \cup B $.

		(ii) We assume that $ B' = B \setminus A $ is non-empty as, otherwise, there is nothing to prove. For every $z \in I' = I \cap {}_k \la \BQnotA \ra $, the pre-removability of $B$ in $\La $ implies that $  z_{B '} \in I' $ as $z_{B '} = z_B $ avoids the arrows in $A$. We conclude that $B' $ is pre-removable in $\La / \la A + I \ra $. Furthermore, the first part of the proof applies for $A $ and $B' $, implying that $A \cup B = A \dot \cup { B' }$ is pre-removable in $\La $.
	\end{proof}

	We now define eventually removable sets of arrows, a notion that will allow us to formulate the main results of this subsection in a compact way.

	\begin{defn}
		\label{defn:ar.rem.3}
		Let $\La  = k Q / I$ be a bound quiver algebra. A set of arrows $ A $ is \emph{eventually removable} in $\La $ if there is an ordered partition $A = A_1 \dot \cup A_2 \dot \cup \ldots \dot \cup A_m $ such that $A_j $ is removable in $\La / { \la ( A_1  \dot  \cup \ldots \dot \cup A_{j-1} ) + I \ra } $ for every $j =1, 2, \ldots , m$. Every such ordered partition of $A$ is called \emph{admissible}.
	\end{defn}

	\begin{rem}
			\label{rem:2}
		The above definition makes sense as the set
		$A_1 \dot \cup \ldots \dot \cup A_{j-1} $ is pre-removable in $\La $ for every $j = 1 , 2 , \ldots , m $, by inductive application of \hyperlink{lem:ar.rem.1}{\cref{lem:ar.rem.1}{}.(i)}. Therefore, the quotient algebra $\La / { \la ( A_1  \dot  \cup \ldots \dot \cup A_{j-1} ) + I \ra } $ has a canonical representation as a bound quiver algebra, and the condition that $A_j $ is removable has a precise meaning.
	\end{rem}

	\begin{lem}
		\label{lem:ar.rem.7}
		Let $\La = k Q / I$ be a bound quiver algebra. Let $A , B \subseteq Q_1 $ be two sets of arrows such that $A  $ is pre-removable in $\La $ and $B  $ is two-sided (resp.\ only left) removable in $\La $. Then $B \setminus A$ is two-sided (resp.\ only left) removable in $ \La / { \la A + I \ra } $.
	\end{lem}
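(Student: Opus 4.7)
The plan is to verify the two defining conditions of \cref{defn:ar.rem.1}{} for $B' := B \setminus A$ inside the canonical representation of $\Ga := \La / \la A + I \ra$ from \cref{defn:2}{}. Pre-removability of $B'$ in $\Ga$ comes for free from \hyperlink{lem:ar.rem.1}{\cref{lem:ar.rem.1}{}}.(ii), so the substance of the argument is to bound the relevant one-sided projective dimensions of the ideal $K'$ of $\Ga$ generated by the images of $B'$, and to control its square in the only left case.

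I would fix the canonical section algebra monomorphism $\i \colon \Ga \monicc \La$ of \cref{rem:ar.rem.2}{} and identify $\Ga$ with its image $\i(\Ga) \subseteq \La$. The pre-removability of $A$ supplies the $k$-vector space decomposition $\La = \i(\Ga) \oplus K_A$ with $K_A = \la A + I \ra$, and hence a decomposition of the two-sided ideal $K_{A \cup B} = K_A + K_B$ of $\La$ as $K_{A \cup B} = \i(K') \oplus K_A$, where $\i(K') = K_{A \cup B} \cap \i(\Ga)$ is naturally identified with the image of $K'$ under $\i$. Moreover, $\i(K') \subseteq K_B$, since $\i(K')$ is generated as a sub-$\i(\Ga)$-bimodule of $\La$ by elements of the form $\b + I$ with $\b \in B' \subseteq B$.

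Intersecting $K_{A \cup B} = \i(K') \oplus K_A$ with $K_B$ produces the key direct sum decomposition of abelian groups $K_B = \i(K') \oplus (K_A \cap K_B)$, in which $\i(K')$ is a right $\Ga$-submodule (via $\i$) and $K_A \cap K_B$ is a right $\La$-submodule. The ideal inclusions $\i(K') \cdot K_A \subseteq K_A$ and $\i(K') \cdot K_A \subseteq K_B$ jointly exhibit $K_A \cap K_B$ as an absorbing $\La$-complement of $\i(K')$ inside $K_B$, in the sense of \cref{defn:cleft.1}{}. Since $K_A \subseteq J(\La)$ (as $A \subseteq Q_1$ and $I$ is admissible), the cleft extension $(\La, \Ga, \pi, \i)$ is radical-preserving with superfluous kernel, and the right-module analogue of \cref{cor:1}{}, obtained by applying its left-module statement to the cleft extension of opposite rings, yields $\pd (K')_\Ga \leq \pd (K_B)_\La < \infty$. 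In the two-sided case, the symmetric argument on the left gives $\pd {}_\Ga K' \leq \pd {}_\La K_B < \infty$; in the only left case, the inclusion $\i(K')^2 \subseteq K_B^2 = 0$ translates to $(K')^2 = 0$, that is, non-repetitivity of $B'$ in $\Ga$.

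The sole conceptual obstacle I foresee is identifying the correct ambient $\La$-module, namely $K_B$ rather than $K_{A \cup B}$ or a quotient thereof, in which $\i(K')$ sits as a right $\Ga$-submodule with absorbing $\La$-complement; once this structural decomposition of $K_B$ is extracted, the remaining ideal-theoretic bookkeeping and the single appeal to \cref{cor:1}{} are straightforward, requiring no long exact sequence or spectral sequence machinery.
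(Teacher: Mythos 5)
Your proposal is correct and takes essentially the same route as the paper: pre-removability of $B \setminus A$ via \cref{lem:ar.rem.1}{}, realizing the ideal it generates (via the section $\i$) as a one-sided $\Ga$-submodule of $\la B + I \ra$ with an absorbing $\La$-complement, and applying \cref{cor:1}{} on each side, your complement $K_A \cap K_B$ being exactly the paper's subspace $\la B + I \ra_A$ spanned by paths passing through both $A$ and $B$. The only point left open -- that in the ``only left'' case the left projective dimension remains infinite rather than possibly becoming finite -- is likewise not addressed in the paper's own proof, so your argument matches it in both substance and level of rigor.
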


	\begin{proof}
		We assume that the set $B' = B \setminus A$ is non-empty. Let $\Ga $ denote the canonical representation $ k Q' / I' $ of $\La / \la A + I \ra $. As $ B $ is in particular pre-removable in $ \La $, it follows from \hyperlink{lem:ar.rem.1}{\cref{lem:ar.rem.1}{}.(ii)} that $ B' $ is pre-removable in $\Ga $.
		Furthermore, it holds that $B$ is non-repetitive in $\La $ if and only if every path of $Q$ that passes at least twice through an arrow in $B$ is in $I$. Therefore, it is clear that $B'$ is non-repetitive in $\Ga $ if $B$ is non-repetitive in $\La $.
		
		Now let $ \tilde{ \pi } \colon \La \epic \Ga $ denote the composition of the natural epimorphism induced by the ideal $ { \la A + I \ra } $, followed by the inverse of the isomorphism $\Ga \xrightarrow{\sim} \La / { \la A + I \ra } $ established in the proof of \cref{lem:ar.red.1}{}. It holds that $\tilde{ \pi } $ is the algebra epimorphism induced by the projection of quiver $Q$ onto $Q' = Q_0 \cup ( Q_1 \setminus A ) $, sending every vertex and every arrow of $Q$ to itself except for the arrows in $A $ that are sent to zero. Furthermore, let $ \tilde{\i} \colon \Ga \monicc \La $ denote the algebra monomorphism induced by the inclusion of quiver $Q'$ into $Q$, using \cite[Theorem~II.1.8]{ASS}{} as in the proof of \cref{lem:ar.red.1}{}. Then the quadruple $( \La , \Ga , \tilde{\i} , \tilde{ \pi } )$ is a radical-preserving algebra cleft extension with superfluous kernel, see \cref{defn:1}{}. We show next that the ideal $ \la B' + I' \ra $ is a left and right $ \Ga $-submodule of the ideal $ \la B + I \ra $ admitting an absorbing $\La$-complement, see \cref{defn:cleft.1}{}, up to a $ \Ga $-bimodule monomorphism.

		Let $ \la B + I \ra_{\nott \! A} $ and $ \la B + I \ra_{A} $
		denote the subspaces of $ \la B + I \ra $ generated by the paths in $ \BQB \cap \BQnotA $ and $ \BQB \cap \BQA $, respectively. Since $ A $ is pre-removable in $ \La $, we have that $I = I_A \oplus I_{\nott A}$, implying the direct sum decomposition $\la B + I \ra = \la B + I \ra_{\nott \! A} \oplus \la B + I \ra_{A}$.

		Next, we observe that $ \i $ is a $ \Ga $-bimodule homomorphism if $ \La $ is viewed as a $ \Ga $-bimodule by restricting scalars along $ \i $. Since the ideal $ \la B' + I' \ra $ is mapped onto $ \la B + I \ra_{\nott \! A} $, we may view $ \la B' + I' \ra $ as a left and right $ \Ga $-submodule of $ \la B + I \ra $ by identifying it with $ \la B + I \ra_{\nott \! A} $.
		Furthermore, the subspace $ \la B + I \ra_{A} $ is a $\La$-subbimodule of $\la B + I \ra $ and therefore a $ \La $-complement of $ \la B' + I' \ra $ from both sides. Finally, multiplying an element in $\la B + I \ra_{\nott \! A } $ with an element in $  \la A + I \ra $ from either side yields an element in $ \la B + I \ra_{A}$, implying that $ \la B + I \ra_{A} $ is an absorbing $ \La $-complement. It remains to apply \cref{cor:1}{} twice, once for each side, to deduce that $\pd \la B' + I' \ra_\Ga \leq \pd \la B + I \ra_\La $ and $\pd _\Ga \la B' + I' \ra \leq \pd_\La \la B + I \ra $, completing thus the proof.
	\end{proof}

	We are now ready to prove the central result of this subsection.

	\begin{prop}
		\label{prop:ar.rem.2}
		There exists a unique maximal eventually removable set of arrows for every bound quiver algebra.
	\end{prop}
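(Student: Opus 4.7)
The plan is to reduce to a closure property: show that the union of two eventually removable sets of arrows is again eventually removable. Since $Q_1$ is finite, the set
\[
A_{\max} := \bigcup \{ A \subseteq Q_1 \mid A \text{ is eventually removable in } \La \}
\]
is then a finite union of eventually removable sets, hence eventually removable itself, and is by construction the unique maximum with respect to inclusion. So the whole proof reduces to establishing the following claim: \emph{if $A$ and $B$ are eventually removable in $\La = kQ/I$, then so is $A \cup B$}.

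To prove the claim, fix admissible partitions $A = A_1 \dot\cup \ldots \dot\cup A_m$ and $B = B_1 \dot\cup \ldots \dot\cup B_n$ and set
\[
B'_j := B_j \setminus (A \cup B_1 \cup \ldots \cup B_{j-1}) = B_j \setminus A,
\]
the second equality holding because the $B_j$'s are pairwise disjoint. I would then show that the concatenated sequence $A_1, \ldots, A_m, B'_1, \ldots, B'_n$ is an admissible partition of $A \cup B$. The first $m$ blocks work by assumption on the partition of $A$. For the $j$-th block on the right, the task is to prove that $B'_j$ is removable in
\[
\La_j := \La / \langle (A \cup B_1 \cup \ldots \cup B_{j-1}) + I \rangle.
\]

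For this, note first that by iterating \hyperlink{lem:ar.rem.1}{\cref{lem:ar.rem.1}{}.(i)} both $A$ and $B_1 \cup \ldots \cup B_{j-1}$ are pre-removable in $\La$; so by \hyperlink{lem:ar.rem.1}{\cref{lem:ar.rem.1}{}.(ii)} the difference $A \setminus (B_1 \cup \ldots \cup B_{j-1})$ is pre-removable in the intermediate algebra $\Ga_j := \La / \langle (B_1 \cup \ldots \cup B_{j-1}) + I \rangle$. Applying \hyperlink{lem:ar.rem.1}{\cref{lem:ar.rem.1}{}.(i)} once more identifies the canonical representations
\[
\La_j \;\simeq\; \Ga_j / \langle (A \setminus (B_1 \cup \ldots \cup B_{j-1})) + I_{\Ga_j} \rangle.
\]
Since $B_j$ is removable in $\Ga_j$ by hypothesis on the admissible partition of $B$, \cref{lem:ar.rem.7}{} applied inside $\Ga_j$ (with $A$ there taken to be $A \setminus (B_1 \cup \ldots \cup B_{j-1})$ and $B$ there taken to be $B_j$) yields that $B_j \setminus A = B'_j$ is removable in $\La_j$, of the same type (two-sided or only left) as $B_j$ in $\Ga_j$.

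The main technical delicacy is keeping the various canonical presentations aligned across the nested quotients, so that ``removability in $\La_j$'' in the sense of \cref{defn:2}{} genuinely agrees with what is output by \cref{lem:ar.rem.7}{}. This is exactly what \hyperlink{lem:ar.rem.1}{\cref{lem:ar.rem.1}{}} was designed to handle, so the bookkeeping is routine once the identifications above are written down. With the claim established, maximality and uniqueness of $A_{\max}$ follow immediately from the finiteness of $Q_1$.
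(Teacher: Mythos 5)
Your proposal is correct, but it is organized differently from the paper's proof. You establish the closure property that the union of two eventually removable sets is again eventually removable, by concatenating the two admissible partitions: after the blocks of $A$ you append the blocks $B_j' = B_j \setminus A$, and the verification that $B_j'$ is removable in $\La / \la (A \cup B_1 \cup \ldots \cup B_{j-1}) + I \ra$ is exactly the combination of \cref{lem:ar.rem.1}{} (to keep the canonical representations of the nested quotients aligned) and \cref{lem:ar.rem.7}{} (to pass removability of $B_j$ through the further quotient by $A \setminus (B_1 \cup \ldots \cup B_{j-1})$) that you describe; the only cosmetic point is that some $B_j'$ may be empty, which is harmless since the paper explicitly allows the empty set as removable (or you may simply drop such blocks). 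The paper instead proves uniqueness by taking two \emph{maximal} eventually removable sets $A$ and $B$ with admissible partitions and inducting on the length of the partition of $A$: it passes to the quotient by $A_1 \cup B_1$, shows that the truncated sets $A'$ and $B'$ remain maximal eventually removable there, and concludes $A = B$ from the inductive hypothesis, with existence of a maximal set being immediate from finiteness of $Q_1$. Both arguments rest on the same two lemmas, but your route buys slightly more: it shows directly that eventually removable sets are closed under union, so the maximal set is exhibited as the union of all of them together with an explicit admissible partition, whereas in the paper this containment of every eventually removable set in $A^{\textsc{er}}_\La$ only follows a posteriori from uniqueness; the paper's route, on the other hand, avoids your concatenation bookkeeping at the cost of the maximality-preservation argument in the quotient.
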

	
	\begin{proof}
		Let $\La = k Q / I$ be a bound quiver algebra and let $A = A_1 \dot \cup A_2 \dot \cup \ldots \dot \cup A_m $ and $B = B_1 \dot \cup B_2 \dot \cup \ldots \dot \cup B_t $ be two maximal eventually removable sets of arrows in $\La$, where the given partitions are admissible. We assume that $ t \leq m$ and prove the corollary by induction on $m $. If $m = 1$, then each of $A \setminus B $ and $B \setminus A $ is removable in $\La / { \la B + I \ra }$ and $\La / { \la A + I \ra }$, respectively, due to \cref{lem:ar.rem.7}{}. It follows from the maximality of $A$ and $B$ that $A \setminus B $ and $B \setminus A $ are empty, and thus $ A = B $.

		Assume now that  $ m \geq 2 $, and let $A_i' = A_i \setminus B_1 $ and $B_j' = B_j \setminus A_1$ for $i = 2, 3, \ldots , m$ and $j =2 , 3 , \ldots , t$. We want to show that the sets $A' = A_2' \dot \cup A_3' \dot \cup \ldots \dot \cup A_m' $ and $B' = B_2' \dot \cup B_3' \dot \cup \ldots \dot \cup B_t' $ are two maximal eventually removable sets of arrows in the quotient algebra $ \La ' = \La / {  \la ( B_1 \cup A_1 ) + I \ra  } $. Note that the set $B_1 \cup A_1 $ is pre-removable in $\La $ according to \hyperlink{lem:ar.rem.1}{\cref{lem:ar.rem.1}{}.(ii)}, allowing the identification of $ \La ' $ with its canonical representation as a bound quiver algebra (\cref{defn:2}{}).
		
		We begin by showing that the set of arrows $A'$ is eventually removable in $\La '$ via the given partition.
		For any integer $ \mu $ such that $1 \leq \mu \leq m$, the set $A_1 \cup A_2 \cup \ldots  \cup A_\mu $ is pre-removable in $\La $ according to \cref{rem:2}{}.
		Furthermore, the set $B_1$ is removable in $\La $, implying that $B_1 \setminus  ( A_1   \cup \ldots  \cup A_\mu ) $ is removable in $\La '' = \La / \la ( A_1 \cup \ldots  \cup A_\mu  ) + I   \ra $ due to \cref{lem:ar.rem.7}{}. Since $A_{ \mu + 1 } $ is also removable in $\La''$, \cref{lem:ar.rem.7}{} implies that $A_{ \mu + 1 }' = A_{ \mu + 1 } \setminus ( B_1 \setminus ( A_1 \cup \ldots \cup A_\mu  )) $ is removable in the quotient of $\La''$ over the ideal generated by the arrows of $B_1 \setminus  ( A_1 \cup \ldots \cup A_\mu ) $. Note that this is equivalent to $A_{ \mu + 1 }' $ being removable in $\La / \la ( B_1 \cup A_1 \cup A_2' \cup \ldots \cup A_\mu' ) + I \ra $ due to \hyperlink{lem:ar.rem.1}{\cref{lem:ar.rem.1}{}}, as $ B_1 \cup A_1 \cup A_2' \cup \ldots \cup A_\mu' = B_1 \cup A_1 \cup A_2 \cup \ldots \cup A_\mu$.

		We show next that $A'$ is maximal as an eventually removable set of arrows in $\La '$. The set $B_1 \setminus A $ is removable in $\La / \la A + I \ra $ according to \cref{lem:ar.rem.7}{}, as $ A $ is pre-removable in $\La $. Therefore, the maximality of $A$ implies that $B_1 \subseteq A$. In particular, the canonical representation of the quotient of $\La ' $ over the ideal generated by $A'$ is the same as the canonical representation of $\La / \la A + I \ra $ according to \hyperlink{lem:ar.rem.1}{\cref{lem:ar.rem.1}{}}. The maximality of $ A' $ in $\La'$ follows now from the maximality of $ A $ in $\La $.

		Analogous arguments show that $B'$ is also a maximal eventually removable set of arrows in $\La '$ via the given partition, and that $A_1 $ is contained in $ B$. Furthermore, since the lengths of the given admissible partitions of $A'$ and $B'$ are $m-1 $ and $t-1$, respectively, it follows from the inductive hypothesis that $A' = B' $. Therefore, we have $A   =  A' \cup B_1 \cup A_1  =   B' \cup A_1 \cup B_1 = B$, where we have used equalities $A = A \cup B_1 $ and $B = B \cup A_1 $.
	\end{proof}

	\cref{prop:ar.rem.2}{} suggests the following definition.
	
	\begin{defn}
		Let $\La = k Q / I $ be a bound quiver algebra.
			\begin{enumerate}[\rm(i)]
				\item An arrow of $Q$ is \emph{eventually removable in $\La$} if it belongs to the (unique) maximal eventually removable set of arrows, denoted as $A^{\textsc{er}}_\La$.
				\item The \emph{arrow reduced version of $\La$}, denoted as $\La_{ \textsc{arv} }$, is the bound quiver algebra occurring as the canonical representation of the quotient $\La / \la A^{\textsc{er}}_\La + I \ra  $.
				\item An algebra is \emph{arrow reduced} if it coincides with its arrow reduced version, i.e.\ if it possesses no non-empty removable sets of arrows.
			\end{enumerate}
	\end{defn}

	The motivation for the definition of the arrow reduced version of a bound quiver algebra rests on the following theorem.

	\begin{thm}
		\label{thm:ar.rem.main}
		For every bound quiver algebra $\La  $, it holds that
		\[
		\fpd \La < \infty   \,\,   \iff  \,\,   \fpd \La_{ \textsc{arv} } < \infty
		\]
		and the equivalence remains valid if $\fpd $ is replaced by $\Fpd $ or $\gd $.
	\end{thm}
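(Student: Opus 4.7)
The plan is to prove the theorem by iterating Theorem~B (\cref{thm:GAR}{}) along any admissible partition of the maximal eventually removable set of arrows. Everything has been set up precisely so that this is essentially formal: the existence of a maximal eventually removable set is guaranteed by \cref{prop:ar.rem.2}{}, and the compatibility of successive quotients with the canonical representations is ensured by \hyperlink{lem:ar.rem.1}{\cref{lem:ar.rem.1}{}}{}.

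More concretely, fix an admissible partition $A^{\textsc{er}}_\La = A_1 \dot\cup A_2 \dot\cup \ldots \dot\cup A_m$ as in \cref{defn:ar.rem.3}{}. Define inductively $\La^{(0)} = \La$ and, for $1 \leq j \leq m$, let $\La^{(j)}$ denote the canonical representation of the quotient of $\La^{(j-1)}$ over the ideal generated by the arrows in $A_j$. By \cref{rem:2}{}, the union $A_1 \cup \ldots \cup A_j$ is pre-removable in $\La$, so the canonical representation of $\La^{(j)}$ as a bound quiver algebra is well-defined; moreover, by an inductive application of \hyperlink{lem:ar.rem.1}{\cref{lem:ar.rem.1}{}.(i)}, the algebra $\La^{(j)}$ coincides with the canonical representation of $\La / \la (A_1 \dot\cup \ldots \dot\cup A_j) + I \ra$. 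In particular, $\La^{(m)} = \La_{\textsc{arv}}$.

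By the definition of an admissible partition, the set $A_j$ is removable in $\La^{(j-1)}$ for every $j = 1, 2, \ldots, m$. Applying \cref{thm:GAR}{} at each step yields the equivalences
\[
\fpd \La^{(j-1)} < \infty \, \, \iff \, \, \fpd \La^{(j)} < \infty,
\]
and similarly with $\fpd$ replaced by $\Fpd$ or $\gd$. Chaining these $m$ equivalences together gives the desired statement, since $\La^{(0)} = \La$ and $\La^{(m)} = \La_{\textsc{arv}}$.

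There is no genuine obstacle here beyond ensuring coherence of the chain: the only subtle point is that at the $j$-th step one must know that ``$A_j$ removable in $\La^{(j-1)}$'' in the sense of \cref{defn:2}{} is exactly what is needed to apply \cref{thm:GAR}{} to $\La^{(j-1)}$, and that the output $\La^{(j-1)}/\la A_j + I^{(j-1)}\ra$ of that application can be identified with $\La^{(j)}$ as a bound quiver algebra so that the induction can continue. Both points are exactly what \hyperlink{lem:ar.rem.1}{\cref{lem:ar.rem.1}{}.(i)} was proved for, so once the notation is set up the proof is a one-line induction.
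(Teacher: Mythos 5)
Your proposal is correct and follows essentially the same route as the paper: fix an admissible partition of $A^{\textsc{er}}_\La$, apply \cref{thm:GAR}{} at each stage (using \hyperlink{lem:ar.rem.1}{\cref{lem:ar.rem.1}{}} and \cref{rem:2}{} to identify the successive canonical representations), and chain the equivalences. The paper's proof additionally notes that $\La_{\textsc{arv}}$ is itself arrow reduced by maximality of $A^{\textsc{er}}_\La$, but that observation is not needed for the stated equivalence, so your omission of it is not a gap.
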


	\begin{proof}
		Let $A^{\textsc{er}}_\La = A_1 \dot \cup A_2 \dot \cup \ldots \dot \cup A_m $ be an admissible ordered partition of $A^{\textsc{er}}_\La $, that is $A_j $ is removable in $ \La_j : = \La / { \la ( A_1  \dot  \cup \ldots \dot \cup A_{j-1} ) + I \ra } $ for every $j =1, 2, \ldots , m$. \cref{thm:GAR}{} implies that
		\[
		\fpd \La_{j-1} < \infty   \,\,   \iff  \,\,   \fpd \La_j < \infty
		\]
		for every $j = 1 , 2 , \ldots , m$, as well as the respective equivalences for $\Fpd $ and $\gd $. If the algebra $ \La_{ \textsc{arv} } $ was not arrow reduced, then there would be a non-empty removable set of arrows $A_{m+1}$ in $ \La_{ \textsc{arv} } $. Consequently, the set $ A^{\textsc{er}}_\La \dot \cup A_{m+1}$ would be eventually removable in $\La$, a contradiction to the maximality of $A^{\textsc{er}}_\La $.
	\end{proof}

	In the next corollary, we showcase the depth of our approach by linking arrow removability to the finiteness of the global dimension. We then provide combinatorial criteria (in \cref{subsec:4.2}{}) and illustrative examples (in \cref{subsec:4.3}{}) before introducing the arrow irredundant version in \cref{subsec:4.4}{}.
	
	\begin{cor}
			\label{cor:2}
		For a bound quiver algebra $ \La = k Q / I $, the following conditions are equivalent:
			\begin{enumerate}[\rm(i)]
				\item The global dimension of $ \La $ is finite.
				\item The arrow reduced version of $ \La $ is semisimple.
			\end{enumerate}
	\end{cor}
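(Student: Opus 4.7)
The plan is to deduce both directions directly from \hyperlink{exam:ar.rem.0}{\cref{exam:ar.rem.0}{}} (on the removability of $Q_1$) and \cref{thm:ar.rem.main}{} (on the preservation of finite global dimension by the arrow reduced version), without any additional computation.

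For the implication (i) $\Rightarrow$ (ii), assume that $\gd \La < \infty$. By \hyperlink{exam:ar.rem.0}{\cref{exam:ar.rem.0}{}}{}, the set $Q_1$ of all arrows is (two-sided) removable in $\La$, so it is eventually removable via the one-step partition $Q_1 = Q_1$ (\cref{defn:ar.rem.3}{}). By the maximality of $A^{\textsc{er}}_\La$ (\cref{prop:ar.rem.2}{}), we obtain $A^{\textsc{er}}_\La = Q_1$. Hence $\La_{\textsc{arv}}$ is the canonical representation of $\La / \la Q_1 + I \ra \simeq \La / J(\La)$ as a bound quiver algebra; by \cref{lem:ar.red.1}{} this is $k Q_0$, a finite product of copies of the base field, which is semisimple.

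For the implication (ii) $\Rightarrow$ (i), assume $\La_{\textsc{arv}}$ is semisimple. Then $\gd \La_{\textsc{arv}} = 0$, and \cref{thm:ar.rem.main}{} immediately yields $\gd \La < \infty$.

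No step is a real obstacle, since each direction is a one-line consequence of results already established in the paper; the only point that warrants attention is the identification of the canonical representation of $\La / \la Q_1 + I \ra$ in the forward direction, which however is immediate from \cref{lem:ar.red.1}{} because removing every arrow leaves the quiver $Q_0$ with empty arrow set and the induced admissible ideal $I \cap {}_k \la \BQnotA \ra = 0$ trivial.
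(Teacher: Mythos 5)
Your proposal is correct and follows essentially the same route as the paper: the forward direction uses the removability of $Q_1$ from \cref{exam:ar.rem.0}{} (so that $\La_{\textsc{arv}} = \La / J(\La)$ is semisimple), and the converse is the immediate application of \cref{thm:ar.rem.main}{} to a semisimple algebra. You merely spell out a few details (the one-step admissible partition, the maximality argument giving $A^{\textsc{er}}_\La = Q_1$, and the identification of the canonical representation with $kQ_0$) that the paper leaves implicit.
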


	\begin{proof}
		If $ \gd \La < \infty $, then the set of all arrows in $ \La $ is (two-sided) removable; see \cref{exam:ar.rem.0}{}. In particular, the arrow reduced version $ \La_{ \textsc{arv} } = \La / J( \La ) $ of $ \La $ is semisimple. The converse implication follows directly from \cref{thm:ar.rem.main}{}, since semisimple algebras have zero global dimension.
	\end{proof}

	\subsection{Two combinatorial criteria}		\label{subsec:4.2}
	
	This section establishes two criteria that will be employed in the next subsection; one for eliminating arrows from being eventually removable (\cref{cor:ar.rem.2}{}) and one for excluding algebras from being Iwanaga-Gorenstein (\hyperlink{lem:Gor}{\cref{lem:Gor}{}}{}). The relevance of the latter result rests on the well-known fact that Iwanaga-Gorenstein Artin algebras have finite little finitistic dimension, see \cite[Proposition~6.10]{applications.of.contrvar.finite}{}.
	Both criteria are based on the following lemma.

	\begin{lem}
		\label{lem:ar.rem.6}
		Let $\La = k Q / I$ be a bound quiver algebra with a squared-zero loop $\lal \colon i \to i$ and let $ {}_\La M$ be a module. If there is an element $  m_0 \in M  $ not contained in $\rad_\La M $ and such that $ e_i  m_0 = m_0 $ and $( \lal + I )  m_0 = 0 $, then $ \pd_\La M = \infty $.
	\end{lem}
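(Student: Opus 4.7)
The plan is to produce, from the data given for $M$, entirely analogous data for the first syzygy $\Omega^1_\La M$, and then iterate. This forces $\Omega^n_\La M \neq 0$ for all $n \geq 0$, hence $\pd_\La M = \infty$.

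First, I would exploit the hypothesis $m_0 \notin \rad_\La M$ together with $e_i m_0 = m_0$: the image of $m_0$ in $\topp_\La M$ is nonzero and lies in $e_i \topp_\La M$, so $S_\La(i)$ is a direct summand of $\topp_\La M$. Using the standard construction recalled before \cref{lem:cleft.1}{}, I would choose a projective cover of the form $p \colon P_\La(M) = \La e_i \oplus P' \epic M$ where the distinguished summand $\La e_i$ maps via $e_i \mapsto m_0$.

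Next, set $\xi = (\lal + I,\, 0) \in \La e_i \oplus P'$. Since $(\lal + I)\,m_0 = 0$ by hypothesis, $\xi$ lies in $\Omega^1_\La M = \Ker p$. Because $\lal$ is a loop at $i$ we have $e_i \xi = \xi$, and because $\lal^2 \in I$ we have $(\lal + I)\,\xi = (\lal^2 + I,\,0) = 0$. So two of the three required properties already hold for $\xi$ as an element of $\Omega^1_\La M$.

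The heart of the argument — and the step I expect to be the most delicate — is verifying that $\xi \notin \rad_\La \Omega^1_\La M$. The plan here is to use that $p$ is a projective cover, so $\Omega^1_\La M \subseteq \rad_\La P_\La(M) = J(\La)e_i \oplus \rad_\La P'$; in particular, the first coordinate of every element of $\Omega^1_\La M$ already lies in $J(\La)e_i$. If we had $\xi = \sum_j a_j \eta_j$ with $a_j \in J(\La)$ and $\eta_j = (x_j, y_j) \in \Omega^1_\La M$, then the first-coordinate equation would yield $\lal + I = \sum_j a_j x_j \in J(\La)^2 e_i$. But since $I$ is admissible, the images of the arrows of $Q$ form a $k$-basis of $J(\La)/J(\La)^2$, so $\lal + I \notin J(\La)^2$ — a contradiction. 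Once this is established, the triple $(\Omega^1_\La M,\, \xi,\, \lal)$ satisfies the hypotheses of the lemma, and a straightforward induction on $n$ produces a nonzero element with the same three properties in $\Omega^n_\La M$ for every $n \geq 0$, forcing $\pd_\La M = \infty$.
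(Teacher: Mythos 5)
Your proposal is correct and follows essentially the same route as the paper's proof: take a projective cover of the form $\La e_i \oplus P' \epic M$ with $e_i \mapsto m_0$, check that $(\lal + I, 0)$ lies in the syzygy and satisfies the same three hypotheses, and iterate. The only difference is that you spell out the first-coordinate/admissibility argument for $\xi \notin \rad_\La \Omega^1_\La(M)$, which the paper merely asserts "since $I$ is an admissible ideal of $kQ$", and your justification is the intended one.
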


	\begin{proof}
		It suffices to prove that $M$ is not projective and that $\Omega^1_\La (M)$ satisfies the same conditions as $M$. Observe that there is a projective cover of the form $ f \colon \La e_i \oplus P  \epic M$ such that $f( e_i , 0  ) = m_0$, where $ {}_\La P $ is some projective module. The element $z = ( \lal + I , 0 )$ is in $ \Omega^1_\La (M) = \Ker f$ implying that $M$ is not projective. However, it holds that $z $ is not in $ \rad  \Omega^1_\La (M) $ since $I$ is an admissible ideal of $k Q$. It remains to observe that $e_i  z = z$ and $( \lal + I )  z = 0$.
	\end{proof}
	
	\begin{rem}
		One can recover the Strong No Loop Theorem \cite{ILP}{} via \cref{lem:ar.rem.6}{}, but only for squared-zero loops. Indeed, if $\lal \colon i \to i $ is such a loop in $\La = k Q / I$, then the simple $\La$-module corresponding to vertex $i$ clearly satisfies the assumptions of \cref{lem:ar.rem.6}{} for any non-zero element.
	\end{rem}

	\begin{cor}
		\label{cor:ar.rem.2}
		Let $\La = k Q / I$ be a bound quiver algebra with a squared-zero loop $\lal \colon i \to i$, and assume that $\a  $ is an arrow with target $ t ( \a ) = i $. Then $ \a $ is not eventually removable in $\La $ if $\a \lal \in I$. In particular, a squared-zero loop is never eventually removable.
	\end{cor}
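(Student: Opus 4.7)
The plan is to prove both statements simultaneously by contradiction, combining a case analysis with the right-module analog of \cref{lem:ar.rem.6}{}. Suppose $\a$ is eventually removable, and fix an admissible partition $A^{\textsc{er}}_\La = A_1 \dot\cup \cdots \dot\cup A_m$ with $\a \in A_j$; for each stage $s$, write $\La_{s-1}$ for the intermediate algebra obtained after removing $A_1, \ldots, A_{s-1}$, identified with its canonical representation $k Q' / I'$ as per \cref{defn:2}{}. The goal is to locate an index $j'$ and an arrow $\b \in A_{j'}$ with $t(\b) = i$ such that the ideal $K = \la A_{j'} + I' \ra$ of $\La_{j'-1}$ has infinite projective dimension as a right module, contradicting the removability of $A_{j'}$.

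Next, I would carry out the following case analysis. If $\lal \notin A_1 \cup \cdots \cup A_{j-1}$, set $\b = \a$ and $j' = j$; then $\lal$ is still an arrow of $Q'$, and the path $\a \lal$ avoids all previously removed arrows, so $\a \lal \in I'$. Otherwise, let $j'$ be minimal with $\lal \in A_{j'}$ and set $\b = \lal$; by minimality $\lal$ persists as a loop in the quiver of $\La_{j'-1}$, the path $\lal^2$ avoids the arrows removed so far so $\lal$ remains a squared-zero loop, and $\b \lal = \lal^2 \in I'$. In either case $\b \in A_{j'}$ has $t(\b) = i$ and $\b \lal \in I'$, so the residue $\bar\b = \b + I'$ lies in $K$ and satisfies $\bar\b \cdot e_i = \bar\b$ together with $\bar\b \cdot (\lal + I') = 0$.

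It remains to check $\bar\b \notin \rad K_{\La_{j'-1}} = K \cdot J(\La_{j'-1})$. If $\bar\b = \sum_k \bar y_k \bar r_k$ with $\bar y_k \in K$ and $\bar r_k \in J(\La_{j'-1})$, then lifting to $k Q'$ yields $\b - \sum_k y_k r_k \in I'$; every $y_k r_k$ is a linear combination of paths of length at least $2$ (the $y_k$ pass through an arrow of $A_{j'}$ and $r_k$ is in the arrow ideal), while $I'$ is admissible by \cref{lem:ar.red.1}{}, so comparing the coefficient of the length-one path $\b$ forces $\b \in I'$, a contradiction. The right-module analog of \cref{lem:ar.rem.6}{} applied to $K$ with the element $\bar\b$ then yields $\pd K_{\La_{j'-1}} = \infty$, contradicting the removability of $A_{j'}$. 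The ``in particular'' clause is the specialization $\a = \lal$, for which the hypothesis $\a \lal \in I$ reduces to the given $\lal^2 \in I$.

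The main obstacle I expect is invoking the right-module analog of \cref{lem:ar.rem.6}{}. The lemma as stated uses left projective covers, but since $\La$ is a finite-dimensional algebra and so both left and right artinian, the proof transfers verbatim to right modules via the opposite algebra (the element $\bar\b$ satisfies the mirror-image hypotheses). I would either cite this dual version directly or record it as a brief remark following \cref{lem:ar.rem.6}{}.
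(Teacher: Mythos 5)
Your proof is correct and follows essentially the same route as the paper: both apply the right-module form of \cref{lem:ar.rem.6}{} to the ideal generated by a putative removable set, taking as distinguished element the residue of an arrow with target $i$ annihilated on the right by $\lal$, and both rely on the persistence of $\lal$ and of the relation $\a\lal$ in the canonical representations along an admissible partition, with the case $\a=\lal$ giving the ``in particular'' clause. Your explicit case split on whether $\lal$ is removed at an earlier stage, your verification that $\bar\b\notin\rad K$, and your remark about dualizing \cref{lem:ar.rem.6}{} to right modules simply make explicit steps the paper leaves implicit (the paper excludes the early removal of $\lal$ by applying its first paragraph to $\lal$ itself).
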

	
	\begin{proof}
		Let $A $ be a set of arrows in $\La $. If $ \a \in A$, then the ideal $\la A + I \ra $ satisfies the conditions of \cref{lem:ar.rem.6}{} as a right $ \La $-module for $m_0 = \a + I $. Therefore, it holds that $\pd \la A + I \ra_\La = \infty $, and a removable set of arrows cannot contain $\a$. Furthermore, if $A$ is a removable set of arrows in $\La$, then $\lal$ is again a squared-zero loop in the canonical representation of $\La / { \la A + I \ra } $, and $\a $ is an arrow satisfying the same properties as in $\La $. We conclude that $\a $ is not eventually removable by induction over some admissible partition of $ A^{ \textsc{er} }_\La $.
		
		It remains to note that $ \a = \lal $ clearly satisfies the assumptions of the corollary if $ \lal $ is a squared-zero loop. 
	\end{proof}

	\begin{cor}		\label{lem:Gor}		\hypertarget{lem:Gor}
		Let $\La = k Q / I$ be a bound quiver algebra with a squared-zero loop $\lal \colon i \to i$. If there is an arrow $\a \neq \lal $ with target $ t ( \a ) = i $ such that $\a \b \in I $ for every arrow $\b $, then $\La $ is not Iwanaga-Gorenstein. Furthermore, the same holds if there is an arrow $ \g \neq \lal $ with source $ s ( \g ) = i $ and $\b \g \in I$ for every arrow $ \b $.
	\end{cor}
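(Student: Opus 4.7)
The plan is to show that $\idim \La_\La = \infty$, which rules out Iwanaga-Gorensteinness. Through the standard $k$-duality $D = \Hom_k(-,k)$ exchanging left and right $\La$-modules one has $\idim \La_\La = \pd_\La D(\La_\La)$, and the decomposition of $\La$ as a direct sum of indecomposable projective right modules yields $D(\La_\La) = \bigoplus_{l \in Q_0} D(e_l \La)$ as left $\La$-modules, with each $D(e_l \La)$ the injective envelope of $S_\La (l)$. It therefore suffices to exhibit a single vertex $l$ with $\pd_\La D(e_l \La) = \infty$, and I will take $l := s(\a)$.

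To obtain this infinite projective dimension I plan to invoke \cref{lem:ar.rem.6}{} for the left module $M = D(e_l \La)$, together with the element $m_0 := \a^* \in \Hom_k (e_l \La, k)$ that sends $\a + I$ to $1$ and every other element of the natural $k$-basis of $e_l \La$ (given by the classes modulo $I$ of paths starting at $l$ that do not lie in $I$) to $0$. Under the left $\La$-action $(b \cdot f)(y) = f(y b)$, the three required conditions translate as follows. First, $e_i m_0 = m_0$ is equivalent to $m_0$ being supported on $e_l \La \cdot e_i$, which holds because $\a = \a e_i$ since $t(\a) = i$. Second, $\lal m_0 = 0$ amounts to $m_0$ vanishing on $e_l \La \cdot (\lal + I)$; any nonzero element of this subspace is represented by a path $y \lal$ with $y \in e_l \La$, and such a path either equals $\lal$ itself (only when $l = i$, via $y = e_i$) or has length at least two, so $\a \neq \lal$ rules out $y \lal = \a$. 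Third, $m_0 \notin \rad_\La D(e_l \La)$ follows from the natural identification $\topp_\La D(e_l \La) \simeq D(\mathrm{soc}(e_l \La))$ combined with the hypothesis $\a \b \in I$ for every arrow $\b$, which places $\a + I$ in the right socle of $e_l \La$ and makes the restriction of $m_0$ to this socle nonzero.

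With \cref{lem:ar.rem.6}{} yielding $\pd_\La D(e_l \La) = \infty$, the decomposition above gives $\pd_\La D(\La_\La) = \infty$, hence $\idim \La_\La = \infty$, so $\La$ is not Iwanaga-Gorenstein. For the companion statement I will apply the first part to $\La^{\mathrm{op}} = k Q^{\mathrm{op}} / I^{\mathrm{op}}$: reversing arrows keeps $\lal$ a squared-zero loop at $i$ and converts $\g$ into an arrow with target $i$ in $Q^{\mathrm{op}}$ satisfying the analogous right-annihilation condition, while Iwanaga-Gorensteinness is a left-right symmetric property. I expect the most delicate step to be the verification of $\lal m_0 = 0$, which hinges on a careful length and endpoint analysis of products $y \lal$ and uses crucially that $\a \neq \lal$.
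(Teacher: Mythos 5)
Your argument is correct and is essentially the paper's own proof: both hinge on applying \cref{lem:ar.rem.6}{} to the $k$-dual of the regular right module with $m_0 = \a^*$ the functional dual to the arrow $\a$ (you merely restrict to the indecomposable summand $D(e_{s(\a)}\La)$ and check non-membership in the radical via the top/socle duality instead of the paper's direct computation $((\b+I)\phi)(\a+I)=\phi(\a\b+I)=0$), and both handle the second statement by passing to $\La^{\mathrm{op}}$. One small repair: the classes modulo $I$ of paths starting at $s(\a)$ that do not lie in $I$ need not be linearly independent (e.g.\ in the presence of commutativity relations), so $\a^*$ should be defined with respect to a chosen set of paths containing all arrows whose classes form a $k$-basis, exactly as the paper does; admissibility of $I$ then guarantees that no path of length at least two contributes to the coefficient of $\a+I$, which is the precise point behind your verification that $(\lal+I)m_0=0$.
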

	
	\begin{proof}
		We assume that $\a \neq \lal $ is an arrow with $t ( \a ) = i$  such that $\a \b \in I $ for every arrow $\b $. It suffices to show that $\pd _\La D (\La _\La ) = \infty $ where $D = \Hom_k ( - , k)$ is the standard duality between left and right finitely generated $\La $-modules. Let $\CC$ denote any set of paths in $Q$ such that $\CC + I $ is a $k$-basis of $\La $, and observe that all trivial paths and all arrows of $Q$ are in $\CC$ since $I$ is admissible. Let $\a^*$ be the element of $D ( \La _\La ) $ defined by $\a + I \mapsto 1_k $ and $ p + I \mapsto 0 $ for every other path $ p $ in $ \CC $.
		Then it holds that $\a^* \notin \rad_\La D(\La_\La )$.  Indeed, if $\phi \in D( \La _\La )$ and $ \b $ is any arrow of $Q$, then $( ( \b + I )  \phi)( \a + I ) = \phi ( \a \b + I ) = 0 $ since $\a \b \in I$. Furthermore, it holds that $e_i  \a^* = \a^* $ and $( \lal + I )  \a^* = 0$ since $\a \neq \lal $ and $I$ is an admissible ideal of $k Q$. Our claim follows now from \cref{lem:ar.rem.6}{} applied for $M = D( \La _\La )$ and $ m_0 = \a^*$.
		
		The proof of the second part is dual and therefore omitted.
	\end{proof}

	\subsection{Examples}		\label{subsec:4.3}
	In this subsection, we calculate the arrow reduced version of concrete bound quiver algebras. As a consequence, we show that the (big) finitistic dimension of the algebras in \cref{subsec:3.3}{} is finite, see \cref{exam:ar.rem.4}{}. Moreover, \cref{exam:ar.rem.5}{} illustrates that the arrow reduced version of an algebra depends on the chosen representation of the algebra as a bound quiver algebra and not just its isomorphism class.

	\begin{exam}
		\label{exam:ar.rem.4}
		Let $\La_3 = k Q / I_3 $ be the algebra of \cref{exam:ar.rem.3}{}. We have already shown that the arrow $\b $ is (only left) removable in $\La_3$. It can be similarly verified that the non-repetitive arrow $\a $ is only left removable in $\La_3 $. Specifically, it holds that $\la \a + I_3 \ra_{\La_3} \simeq ( e_2 \La_3 )^{ 3} $ is projective. However, we have $ \pd _{ \La _3 } { \la \a + I_3 \ra } = \infty $ since the $\Omega$-$1$-periodic module $X$ occurs as a direct summand of its first syzygy.

		On the other hand, the arrow $\g $ is not removable in $\La_3 $. Indeed, it is repetitive (for instance $\g \a \b \z \g$ is non-zero in $\La_3$), and $\pd _{\La _3} \la \g + I_3 \ra = \infty $ since $\Omega^1_{\La _3} ( \la \g + I_3 \ra ) \simeq X^{ 10 } $. Nonetheless, it holds that $\g $ is $0$-biremovable (and thus two-sided removable) in $\La_3 / { \la \b + I_3 \ra }$. Indeed, the ideal $I'$ in the canonical representation of $\La_3 / { \la \b + I_3 \ra }$ is generated by the relations $R_0 = \{  \a \e - \d \a   , \,  \d ^2   , \,  \e ^2   , \,  \z ^2  \}$.
		We conclude that $\g $ is eventually removable in $\La_3 $.
		
		It follows from \cref{lem:ar.rem.7}{} that arrows $ \a $, $ \b $ and $ \g $ are eventually removable. On the other hand, no loop of $ \La_3 $ is eventually removable as they are all squared-zero, see \cref{cor:ar.rem.2}{}. All in all, we have that $A^{ \textsc{er} }_{\La_3} = \{\a , \b , \g \} $. Furthermore, it holds that $\Fpd { ( \La_3 )_{ \textsc{arv} } } = 0$ as $ ( \La_3 )_{ \textsc{arv} } $ is isomorphic to the direct product three times with itself of the local algebra $\La_0$ with one vertex and one squared-zero loop. We conclude that $\Fpd { \La_3 } < \infty$ according to \cref{thm:ar.rem.main}{}, thus demonstrating the utility of the arrow reduced version for studying the finitistic dimensions.
		
		Similarly, one may calculate that $A_{ \La_1 }^{ \textsc{er} } = \{ \a , \b , \g \}$ for the algebra $\La_1 $ of \cref{exam:ar.rem.1}{}. In particular, it holds that $( \La_1 )_{\textsc{arv}} \simeq ( \La_3 )_{\textsc{arv}}$, and thus $\Fpd { \La_1 } < \infty $ 
		
		As for algebra $\La_2 $ of \cref{exam:ar.rem.2}{}, one can show that $A_{ \La _2 }^{ \textsc{er} } = \{ \a , \g \} $. Note that the relation $\b \z $ causes the ideal generated by $\b $ in $\La_2 / { \la \{ \a , \g \} + I_2 \ra } $ to have infinite projective dimension as a right $\La_2$-module according to \cref{lem:ar.rem.6}{}. It is clear though that $( \La_2 )_{ \textsc{arv} } $ is isomorphic to the direct product of $\La_0$ and a triangular matrix algebra defined by two copies of $\La_0$. In particular, we have that $\Fpd { ( \La_2 )_{ \textsc{arv} } } \leq 1$ (see \cite{FGR}{}), implying that $ \Fpd { \La_2 } < \infty $.
	\end{exam}

	\begin{exam}
		\label{exam:ar.rem.5}
		
		Let $Q$ be the quiver of \hyperlink{fig2'}{Figure \ref{fig2}{}}, and let $k $ be any field. Consider the bound quiver algebra $\La = k Q / I $ where $I$ is the ideal of $k Q$ generated by the relations $R$ in the table of the same figure.
		
		It is quite straightforward to verify that $\La $ is irreducible for any $m \geq 3 $ in the sense of \cref{defn:irredu}{}; see also \cref{exam:ar.rem.1}{}. For instance, the finitistic dimensions of $\La $ are non-zero as $ \La $ does not possess a right ideal isomorphic to $S_{\La^{\! \mathrm{op}}} ( { 1 } ) $, the Loewy length of $ \La $ is $ m+1 $ with longest non-zero paths the paths $\a _j \b_1 \ldots \b_{m-1} $ for $j =1 , 2$, and $\La $ is not monomial as the paths $\l_0 \a_1 $ and $\a_2 \l _1 $ are equal and non-zero in $\La $. Furthermore, no arrow of $Q$ is $0$-biremovable. Indeed, every arrow different from $\b_1 $ occurs in some generating relation of $I$ in a path of length 2. To see that the same holds for $ \b_1 $, note that one of the paths $\l_1 \b_1 \l_2 $, $\l_1 \b_1 $ and $\b_1 \l_2 $ has to occur in every generating set for $I$ as $\l_1 \b_1 \l_2 \in I$.

		Moreover, the algebra $\La$ is not Iwanaga-Gorenstein, as \cref{lem:Gor}{} applies for the loop $\l_0$ and the arrow $\e$.

		\begin{figure}[h!]		\phantomsection		\hypertarget{fig2'}{}
			\vspace*{0.6cm}
			\centering
			\begin{minipage}{0.69\textwidth}
				\centering
				\begin{tikzpicture}[scale=0.82]


					\draw  ($(0,0)$) circle (.08);
					\node at ($(0,0)+(270:0.36)$) {$\mathbf{3}$};
					
					\draw  ($(0,0)+(180:1.4)$) circle (.08);
					\node at ($(0,0)+(180:1.4)+(270:0.36)$) {$\mathbf{2}$};
					
					\draw  ($(0,0)+(180:2.8)$) circle (.08);
					\node at ($(0,0)+(180:2.7)+(270:0.36)$) {$\mathbf{1}$};
					
					\draw  ($(0,0)+(180:4.2)$) circle (.08);
					\node at ($(0,0)+(180:4.23)+(270:0.36)$) {$\mathbf{0}$};

					\draw  ($(0,0)+(0:2.6)$) circle (.08);
					\node at ($(0,0)+(0:2.6)+(270:0.36)$) {$\mathbf{m-1}$};
					
					\draw  ($(0,0)+(0:4)$) circle (.08);
					\node at ($((0,0)+(0:3.90)+(270:0.36)$) {$\mathbf{m}$};
					
					\draw  ($(0,0)+(180:3.5)+(270:1)$) circle (.08);
					\node at ($((0,0)+(180:3.5)+(270:1)+(359:0.80)$) {$\mathbf{m+1}$};

					\draw[fill=black]  ($(0,0)+(0:1.4)$) circle (.02);
					\draw[fill=black]  ($(0,0)+(0:1.25)$) circle (.02);
					\draw[fill=black]  ($(0,0)+(0:1.55)$) circle (.02);


					\draw[->,shorten <=6pt, shorten >=6pt] ($(0,0)+(180:4.2)$) .. controls +($(0:0.4)+(90:0.67)$) and +($(180:0.4)+(90:0.67)$) .. ($(0,0)+(180:2.8)$);
					\node at ($((0,0)+(180:3.4)+(90:0.77)$) {$\alpha_1$};
					\draw[->,shorten <=6pt, shorten >=6pt] ($(0,0)+(180:4.2)$) -- ($(0,0)+(180:2.8)$);
					\node at ($((0,0)+(180:3.5)+(90:0.2)$) {$\alpha_2$};

					\draw[->,shorten <=6pt, shorten >=6pt] ($(0,0)+(180:2.8)$) -- ($(0,0)+(180:1.4)$) ;
					\node at ($(0,0)+(180:2.1)+(90:0.3)$) {$\beta_1$};
					
					\draw[->,shorten <=6pt, shorten >=6pt] ($(0,0)+(180:1.4)$)  -- ($(0,0)$);
					\node at ($(0,0)+(180:0.7)+(90:0.3)$) {$\beta_2$};
					
					\draw[->,shorten <=6pt, shorten >=10pt] ($(0,0)$)  -- ($(0,0)+(0:1.4)$);
					\node at ($(0,0)+(0:0.6)+(90:0.3)$) {$\beta_3$};
					
					\draw[->,shorten <=10pt, shorten >=6pt] ($(0,0)+(0:1.4)$)  -- ($(0,0)+(0:2.6)$);
					
					\draw[->,shorten <=6pt, shorten >=6pt] ($(0,0)+(0:2.6)$)  -- ($(0,0)+(0:4)$);
					\node at ($(0,0)+(0:3.3)+(90:0.3)$) {$\, \, \beta_{m-1}$};

					\path 
					($(0,0)+(180:4.2)$)				coordinate (1)      
					($(0,0)+(180:3.5)+(270:1)$)		coordinate (2)
					($(0,0)+(180:2.8)$)			 	coordinate (3)
					;
					
					\coordinate (Mid 1 2) at ($(1)!0.6!(2)$);
					\coordinate (Mid 2 3) at ($(2)!0.45!(3)$);

					\draw[->,shorten <=6pt, shorten >=6pt] ($(0,0)+(180:4.2)$) -- ($(0,0)+(180:3.5)+(270:1)$) ;
					\node at ($(Mid 1 2)+(270-55.08:0.25)$) {$\gamma$};
					
					\draw[->,shorten <=6pt, shorten >=6pt] ($(0,0)+(180:3.5)+(270:1)$) -- ($(0,0)+(180:2.8)$) ;
					\node at ($(Mid 2 3)+(270+55.08:0.2)$) {$\delta$};


					\draw[->,shorten <=6pt, shorten >=6pt] ($(0,0)+(0:4)$) .. controls +($(180:0.5)+(90:2.7)$) and +($(0:0.5)+(90:2.7)$) .. ($(0,0)+(180:4.2)$);
					\node at ($(0,0)+(180:0.1)+(90:2.28)$) {$\varepsilon$};


					\draw[->,shorten <=6pt, shorten >=6pt] ($(0,0)+(180:4.2)$).. controls +(180+40:1) and +(180-40:1) .. ($(0,0)+(180:4.2)$);
					\node at ($(0,0)+(180:5.1)$) {$\l_0$};
					
					\draw[->,shorten <=6pt, shorten >=6pt] ($(0,0)+(0:4)$).. controls +(0+40:1) and +(0-40:1) .. ($(0,0)+(0:4)$);
					\node at ($(0,0)+(0:4.96)$) {$\l_m$};
					
					\draw[->,shorten <=6pt, shorten >=6pt] ($(0,0)+(180:2.8)$).. controls +(85+15:0.92) and +(85-30:1) .. ($(0,0)+(180:2.8)$);
					\node at ($(0,0)+(180:2.7)+(81:0.98)$) {$\l_1$};
					
					\draw[->,shorten <=6pt, shorten >=6pt] ($(0,0)+(180:1.4)$).. controls +(90+30:1) and +(90-30:1) .. ($(0,0)+(180:1.4)$);
					\node at ($(0,0)+(180:1.4)+(88:0.98)$) {$\l_2$};
					
					\draw[->,shorten <=6pt, shorten >=6pt] ($(0,0)$).. controls +(90+30:1) and +(90-30:1) .. ($(0,0)$);
					\node at ($(0,0)+(88:0.98)$) {$\l_3$};
					
					\draw[->,shorten <=6pt, shorten >=6pt] ($(0,0)+(0:2.6)$).. controls +(90+30:1) and +(90-30:1) .. ($(0,0)+(0:2.6)$);
					\node at ($(0,0)+(0:2.6)+(88:0.98)$) {$\l_{m-1}$};

					\draw[->,shorten <=6pt, shorten >=6pt] ($(0,0)+(180:3.5)+(270:1)$).. controls +(180+90+40:1) and +(180+90-40:1) .. ($(0,0)+(180:3.5)+(270:1)$);
					\node at ($(0,0)+(180:3.5)+(270:1.85)$) {$\l_{m+1}$};

				\end{tikzpicture}
			\end{minipage}\hfill
			\begin{minipage}{0.31\textwidth}
				\centering
				\resizebox{4cm}{!}{
					\begin{tabular}{| C{1.8em} l|}
						\hline
						\multicolumn{2}{|c|}{\TTT Relations $R$ \BBB}	 \\
						\hhline{|==|}
						
						\multicolumn{2}{|c|}{\TTT $ \l_0  \a_1  -  \a_2  \l_1  \, , \, \l_0  \a_2  -  \a_1  \l_1  $ \BBB}    \\
						\hline
						\multicolumn{1}{|c}{\TTT $\l_i^2  $}  & \multicolumn{1}{l|}{ $ \colon \,\,\,  0 \leq i \leq  m+1 $   \BBB}     \\
						\hline
						\TTT	$	\l_i \b_i  $ 		& 	\multirow{2}{*}{$  \colon \,\,\, 2 \leq  i \leq m-1 $} \\
						$	\b_i \l_{i+1} $ 	&	 \BBB \\ 
						\hline
						\multicolumn{2}{|c|}{\TTT $ \g \l_{m+1}  \, , \,  \l_{m+1} \d  \, , \,  \l_m \e  \, , \,  \e \l_0  $}  \\
						\multicolumn{2}{|c|}{$ \l_1 \b_1 \l_2  \, , \,  \d \b_1 \l_2  $ \BBB} 	 \\
						\hline
						\multicolumn{2}{|c|}{\TTT $ \b_{m-1} \e  \, , \,  \e \a_1  \, , \,  \e \a_2  \, , \,  \e \g $ \BBB}   \\ 
						\hline	
						\multicolumn{2}{|c|}{\TTT $ \l_1 \b_1 \b_2  \, , \,  \d \b_1 \b_2  $ \BBB}   \\ 
						\hline
					\end{tabular}
				}
			\end{minipage}
			\caption{Quiver $Q$ and relations defining $I$}
			\label{fig2}
		\end{figure}
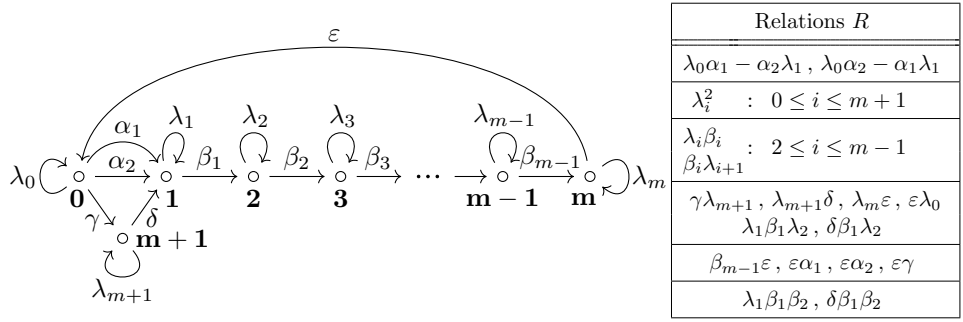

		Now let $A $ be a removable set of arrows in $\La $. Applying \cref{cor:ar.rem.2}{} multiple times for all loops and most non-loop arrows, we deduce that $ A_{ \La }^{ \textsc{er} }  \subseteq \{ \a_1   ,   \a_2   ,   \b_1   ,   \d  \} $; in particular, the same holds for $ A $. If $\b_1 \in A $, then the ideal $ \la A + I \ra $ satisfies the conditions of \cref{lem:ar.rem.6}{} as a right $\La$-module for the loop $\l_2$ and $m_0 = \l_1 \b_1 + I $, leading to the contradiction that $\pd \la A + I \ra_\La = \infty $. If $\d \in A$, then the simple module $S_{\La^{\! \mathrm{op}}} ( { 2 } ) $, which has infinite projective dimension according to \cite{ILP}{}, is a direct summand of the second syzygy of $ \la A + I \ra _\La $, leading to the same contradiction $\pd \la A + I \ra_\La = \infty $. Indeed, it can be verified that $S_{\La^{\! \mathrm{op}}} ( { 2 } ) $ is a direct summand of the first syzygy of $ Z =( \d + I ) \La $, which is a direct summand of $ \la A + I \ra_\La $.

		On the other hand, the set $A = \{ \a_1 , \a_2 \}$ is (only left) removable in $\La $. Indeed, it is pre-removable and non-repetitive in $\La$, the right $\La$-module $\la A + I \ra_\La \simeq e_1 \La ^{ 2} $ is projective and $\pd _\La \la A + I \ra = \infty $. To verify the last claim observe that the ideal $ \la A + I \ra $ contains as direct summands $2 (m -1 )$ copies of $S_{\La} ( 0 ) $ when viewed as a left $ \La $-module.
		
		Using similar arguments yields that $\La / \la A + I \ra $ is arrow reduced.
		
		All in all, we have that $ A_{ \La }^{ \textsc{er} } = \{ \a _1 , \a _2 \} $ and the arrow reduced version of $\La $ is the algebra $\La_{ \textsc{arv} } = k Q' / I '$ where $Q'$ is the quiver that results from $Q$ if we remove the arrows $\a_1$ and $\a_2 $, and $I'$ is the ideal of $k Q' $ generated by all the relations in the table of \hyperlink{fig2'}{Figure \ref{fig2}{}} except for the ones involving $\a_1 $ or $\a_2 $. In particular, the algebra $\La_{ \textsc{arv} } $ is monomial, implying that $\Fpd \La < \infty $ due to \cref{thm:ar.rem.main}{} and \cite{GKK}{}.
		
		Now let $ \phi \colon k Q \to k Q $ be the unique $k$-algebra homomorphism defined via the correspondence sending every trivial path of $ Q $ and every arrow different from $\a_1$ and $\a_2 $ to itself, and $\a_j $ to $\a_j + \g \d $ for $j = 1, 2 $. It holds that $\phi $ is surjective as $\phi ( \a_j - \g \d  ) = \a_j $ for $j =1, 2$. To see that $\phi$ is an isomorphism, write an arbitrary element $z \in k Q $ in the form
		\[
		z = \sum_{i=1}^\zeta \mu_i \cdot p_i + \sum_{j=1}^\xi \nu_j \cdot q_j
		\]
		for non-zero coefficients $\mu_i, \nu_j \in k $, where the paths $p_i$ avoid $\a_1 $ and $\a_2$, and each path $q_j$ passes through at least one of $\a_1 $ and $\a_2$; in other words, we have $ z_{ \nott \! A } = \sum \mu_i \cdot p_i $ and $ z_A = \sum \nu_j \cdot q_j $ for $A = \{ \a_1 , \a_2 \}$. Then $\phi (z) = z + z'$, where $z'$ is the sum of the elements of the form $\nu_j \cdot w_j^\e$ for $w_j^\e $ ranging over the paths that result from $q_j$ after replacing any non-empty combination of the arrows $\a_1$ and $\a_2 $ that occur in $q_j$ with $\g \d $. If $\xi > 0 $, then we may assume that $ q_1 $ has minimal length among the paths $q_j$. Then every path occurring in $z'$ has length greater than the length of $q_1 $, implying that $q_1$ occurs only once in $\phi (z )$ with coefficient $\nu_1 $. Therefore, if $\phi (z) = 0$ then $\nu_1= 0$, implying that $\xi = 0$ and $z = \phi (z ) = 0 $.

		Let $\tilde{ \La } = k Q / \tilde{ I } $ be the isomorphic algebra of $\La $ induced by $\phi$, that is $ \tilde{ I } = \phi ( I)$. It holds that
		the ideal $ \tilde{ I } $ is generated by $ \phi (R)$ or, equivalently, by all relations in $R$ except for the relations in the first row of the table of \hyperlink{fig2'}{Figure \ref{fig2}{}}, which have to be replaced by $\l_0 \a_1 - \a_2 \l_1 + \l_0 \g \d - \g \d \l_1 $ and $ \l_0 \a_2 - \a_1 \l_1 + \l_0 \g \d - \g \d \l_1 $. It follows that $\tilde{ I } $ is an admissible ideal of $k Q $ as the square of all loops is in $\tilde{ I }$ and $\b_{m-1} \e , \l_m \e \in \tilde{ I } $.
		
		If $\tilde{ A }  $ is a removable set of arrows in $\tilde{ \La }$, then one may argue in the same way as we did for $\La$ that $ \tilde{ A } \subseteq \{ \a_1 , \a_2 , \d \}$. On the other hand, the paths $\a_2 \l_1 $, $\l_0 \a_1 $ and $\l_0 \g \d - \g \d \l_1$ are not in $\tilde{ I } $, implying that the only subset of $ \{ \a_1 , \a_2 , \d \} $ that is pre-removable in $\tilde{ \La }$ is $ \tilde{ A } = \{ \a_1 , \a_2 , \d \}$. However, the set $\tilde{ A } $ is not removable. Indeed, the ideal $ \la \tilde{ A } + \tilde{ I } \ra $ decomposes as a direct sum of right $\tilde{ \La } $-modules as follows
		\[
		\la \tilde{ A } + \tilde{ I } \ra  = (\a_1 + \tilde{ I } ) \tilde{ \La } \oplus (\a_2 + \tilde{ I } ) \tilde{ \La } \oplus (\d + \tilde{ I } ) \tilde{ \La }
		\oplus (\l_0 \a_1 + \tilde{ I } ) \tilde{ \La } \oplus (\g \d + \tilde{ I } ) \tilde{ \La }
		\]
		where the first two summands are isomorphic to $e_1 \tilde{ \La }$ and the rest are isomorphic to each other. Furthermore, the simple module $S_{\tilde{ \La }^{\mathrm{op}}} ( { 2 } ) $ is a direct summand of the first syzygy of the right ideal $ (\d + \tilde{ I } ) \tilde{ \La }$, implying that $\pd { \la \tilde{ A } + \tilde{ I } \ra }_{\tilde{ \La }} = \infty $.
		
		We conclude that $\tilde{ \La }$ is arrow reduced even though its isomorphic algebra $\La $ has two eventually removable arrows.
	\end{exam}

	These examples illustrate the practical computation and properties of the arrow reduced version. Next, we explore a related but distinct concept: the arrow irredundant version, which addresses arrows that are `redundant' in a different sense. Since the assumptions on redundant arrows are stronger, an algebra and its arrow irredundant version have even more homological properties in common.

	\subsection{Arrow irredundant version}		\label{subsec:4.4}
	Let $ \La = k Q / I $ be a bound quiver algebra. This subsection aims to show that arrows not occurring in a generating set for $ I $ do not impact key homological properties of $\La$. Specifically, we show that condition (ii) of the arrow removal operation \cite{arrowrem1}{} (as recalled in the \hyperlink{appendix}{Appendix}) is redundant regarding the invariance of (finiteness of) the global, little and big finitistic dimensions. Moreover, it is redundant for the preservation of the three homological invariants studied in \cite{arrowrem2}{}.

		\begin{defn}
			Let $\La = k Q / I $ be a bound quiver algebra. An arrow $\a $ is \emph{redundant} if there is a generating set for $ I $ avoiding $ \a $. Furthermore, we denote the set of redundant arrows in $\La $ by $ A^{\textsc{red}}_\La $.
		\end{defn}

		Note that an arrow is redundant if and only if it is $0$-biremovable or, equivalently, if the quotient algebra induced by the ideal generated by the arrow is an arrow removal of the initial algebra in the sense of \cite{arrowrem1}{}. This is essentially a restatement of \cite[Proposition~4.5]{arrowrem1}{}; see also \cref{cor:ar.rem.1}{} and \cref{exam:1}{}.

		\begin{rem}
			Every $0$-biremovable set of arrows in $\La $ is contained in $ A^{\textsc{red}}_\La $, whereas the converse is not true in general.
			However, every subset of $ A^{\textsc{red}}_\La $ is at least pre-removable in $\La $, see \hyperlink{lem:ar.rem.1}{\cref{lem:ar.rem.1}{}}.
			In particular, for any subset $ A \subseteq A^{\textsc{red}}_\La $, the quotient algebra $\La / \la A + I \ra $ may, and will, be identified with its canonical representation introduced in \cref{defn:2}{}.
		\end{rem}

	\begin{defn}
		The \emph{arrow irredundant version} of $\La = k Q / I $, denoted by $\La_{ \textsc{aiv} }$, is the bound quiver algebra occurring as the canonical representation of the quotient algebra $\La / \la A^{\textsc{red}}_\La + I \ra  $.
	\end{defn}
	
	In the next theorem, we establish the key properties of the arrow irredundant version of an algebra. In particular, we show that this construction is indeed irredundant. 
	
	\begin{thm}
		\label{cor:ar.rem.3}
		For a bound quiver algebra $\La = k Q / I $ and a set of redundant arrows $ A $, let $ \Ga = \La / \la A + I \ra $. Then:
		\begin{enumerate}[\rm(i)]
			\item It holds that $\fpd \La < \infty $ if and only if $\fpd \Ga < \infty $, and the equivalence remains valid if $\fpd$ is replaced by $\Fpd$ or $\gd$.
			\item The algebra $\La $ is Iwanaga-Gorenstein if and only if the same holds for $  \Ga $.
			\item The algebra $\La $ satisfies the finite generation condition for the Hochschild cohomology if and only if the same holds for $  \Ga $.
			\item The singularity categories of $\La $ and $  \Ga $ are triangle equivalent.
		\end{enumerate}
		In particular, the above statements hold for the arrow irredundant version $ \Ga = \La_{ \textsc{aiv} } $ of $ \La $, which additionally possesses no more redundant arrows.
	\end{thm}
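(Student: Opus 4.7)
The plan is to proceed by induction on $|A|$, reducing the problem to the case of a single redundant arrow, where the existing arrow-removal theory of \cite{arrowrem1,arrowrem2} applies directly. For the base case $|A| = 1$, write $A = \{\a\}$: combining \cref{exam:1} with \cref{cor:ar.rem.1} shows that a single arrow is redundant if and only if it is $0$-biremovable, i.e.\ $\la \a + I \ra$ is projective as a $\La$-bimodule, so that $\Ga$ is an arrow removal algebra of $\La$ in the sense of \cite{arrowrem1}. Then (i) follows from \hyperlink{thmB}{Theorem~B} (as $0$-biremovable implies two-sided removable), while (ii)--(iv) are precisely the statements established in \cite{arrowrem2} for arrow removal algebras.

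For the inductive step with $|A| \geq 2$, I would fix any $\a \in A$ and set $A' = A \setminus \{\a\}$, $\La_1 = \La / \la \a + I \ra$, identified via \cref{defn:2} with its canonical representation $k Q' / I'$. The key claim is that $A'$ consists of arrows that remain redundant in $\La_1$. Granting this, the base case gives the equivalences between $\La$ and $\La_1$, the induction hypothesis applied to $(\La_1, A')$ yields the equivalences between $\La_1$ and $\La_1 / \la A' + I' \ra$, and part (i) of \cref{lem:ar.rem.1} identifies the latter with $\Ga$, completing the induction. To prove the claim, fix $\beta \in A'$ and choose a generating set $T$ for $I$ avoiding $\beta$ (available by redundancy of $\beta$ in $\La$). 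I would then verify that $T_{\nott \! \a}$ is a generating set for $I'$ in $k Q'$ avoiding $\beta$: avoidance of $\beta$ is automatic since the operation $(\cdot)_{\nott \! \a}$ only deletes paths; for generation, given $z \in I' \subseteq I$ and an expression $z = \sum_j \l_j r_1^j t_j r_2^j$ using $T$, applying $(\cdot)_{\nott \! \a}$ to both sides and using $z = z_{\nott \! \a}$ yields $z = \sum_{j \, : \, r_1^j, \, r_2^j \text{ avoid } \a} \l_j r_1^j (t_j)_{\nott \! \a} r_2^j \in \la T_{\nott \! \a} \ra_{k Q'}$, which handles generation.

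For the final assertion that $\La_{\textsc{aiv}}$ possesses no redundant arrows, set $A := A^{\textsc{red}}_\La$ and observe that $A$ is pre-removable in $\La$ by iterated application of part (ii) of \cref{lem:ar.rem.1}. Pick any generating set $S$ for $I$ in $k Q$; iterated application of \cref{cor:ar.rem.5} (one arrow of $A$ at a time) shows $S_{\nott \! A}$ is also a generating set for $I$, and the same computation as in the preceding paragraph shows that $S_{\nott \! A}$ additionally generates $I'$ as an ideal of $k Q'$. If some arrow $\beta \notin A$ of $\La_{\textsc{aiv}}$ were redundant in $\La_{\textsc{aiv}}$, witnessed by a generating set $T'$ for $I'$ in $k Q'$ avoiding $\beta$, then expressing each element of $S_{\nott \! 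A}$ through $T'$ via $k Q'$-ideal operations and embedding back into $k Q$ yields $\la S_{\nott \! A} \ra_{k Q} \subseteq \la T' \ra_{k Q}$, so $I = \la T' \ra_{k Q}$, contradicting $\beta \notin A^{\textsc{red}}_\La$. The principal technical subtlety throughout is the combinatorial bookkeeping relating generating sets of $I$ to generating sets of $I'$ via the $(\cdot)_{\nott \! A}$ operation, handled uniformly via the decomposition $I = I_A \oplus I_{\nott \! A}$ of \cref{lem:ar.rem.2} and its corollary.
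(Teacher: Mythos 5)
Your proposal is correct and follows essentially the same route as the paper: reduce to successive single-arrow removals, note via \cref{cor:ar.rem.1} that a redundant arrow yields a classical arrow removal algebra so that (i) follows from \cref{thm:GAR} and (ii)--(iv) from \cite{arrowrem2}{}, and control generating sets through the $(\cdot)_{\nott \! \a}$ operation and the decomposition of \hyperlink{lem:ar.rem.2}{\cref{lem:ar.rem.2}{}}. The only difference is organizational: the paper first proves that the redundant arrows of the quotient are exactly $A^{\textsc{red}}_\La \setminus A$ (both directions at once), whereas you run an induction using the forward direction and treat the ``no more redundant arrows'' claim separately with the same converse argument applied globally -- the substance is identical.
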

	
	\begin{proof}
		Let $ A $ be any set of arrows contained in $ A^{\textsc{red}}_\La  $. We show first that the set of redundant arrows in $\La / \la A + I \ra $ is $ A^{\textsc{red}}_\La \setminus A $. In particular, this will imply that the quotient algebra ${ \La / \la A^{\textsc{red}}_\La + I \ra   }$ does not possess any redundant arrows. It suffices to show that, given a redundant arrow $ \a $ in $ \La $, another arrow $ \b $ is redundant in $\La $ if and only if it is redundant in $\La / \la \a + I \ra $. In fact, it is enough to assume that $\a$ is pre-removable for the ``only if'' part.
		
		We begin by assuming that $\b $ is redundant in $\La $ and, thus, there is a finite generating set $ T $ of relations for $ I $ avoiding $ \b $. The set $ T_{\nott \! \a} $, which obviously avoids $ \b $, generates the admissible ideal $ I' = I \cap {}_k \la \BQnota \ra $ appearing in the canonical representation of $ \La / \la \a + I \ra $. In particular, the arrow $ \b $ is redundant in $ \La / \la \a + I \ra $.
		Conversely, assume that $ \b $ is redundant in $ \La / \la \a + I \ra $, that is the ideal $ I' $ is generated by a finite set $ T ' $ of relations avoiding $ \b $. It follows that $ I ' = I \cap {}_k \la \BQnota \ra $, viewed as a subspace of the path algebra $ k Q $, is contained in the ideal of $ k Q $ generated by $ T ' $. As $I$ is generated by a set avoiding $ \a $ by assumption, it is generated by $ I ' $ and thus by $ T ' $.
		
		We observe now that an arrow $ \a $ is redundant in $\La $ if and only if $\La / { \la \a + I   \ra } $ is an arrow removal algebra of $\La $ in the sense of \cite{arrowrem1}{}, see \cref{cor:ar.rem.1}{}. In particular, the arrow irredundant version of $\La $ can be obtained from $\La $ by successively taking arrow removal quotients by one arrow at a time. Therefore, the first part follows from \cref{thm:GAR}{}, and the remaining parts follow from the relevant results of \cite{arrowrem2}{}.
	\end{proof}

	Using techniques developed in \cite{arrowrem1}{}, we show that the finitistic (and global) dimensions are actually invariant (and not just their finiteness) when removing redundant arrows, that is when the quotient algebra is not semisimple.
	
	\begin{prop}	
			\label{thm:1}
		For a bound quiver algebra $\La = k Q / I $ and a set of redundant arrows $ A $, let $ \Ga = \La / \la A + I \ra $. Then
			\[
				\fpd \La = \left\{
				\begin{array}{ll}
					\fpd \Ga ,  & \mbox{if } \fpd \Ga > 0 \\
					\fpd \Ga + 1 , & \mbox{if } \fpd \Ga = 0 .
				\end{array}
				\right.
			\]
		Moreover, the statement remains valid if $ \fpd $ is replaced by $ \Fpd $ or $ \gd $.
	\end{prop}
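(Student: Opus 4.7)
The plan is to proceed by induction on $|A|$, reducing to the case of a single redundant arrow, and then handling that base case directly. For the inductive step, the proof of \cref{cor:ar.rem.3}{} shows that redundancy of arrows is preserved under quotients by redundant arrows, so we may enumerate $A = \{\a_1, \ldots, \a_n\}$ and traverse a chain of single redundant-arrow removals $\La = \La_0 \to \La_1 \to \cdots \to \La_n = \Ga$. The piecewise formula composes correctly across this chain: the $+1$ jump occurs at most once---namely at the unique transition (if any) from $\fpd = 0$ to $\fpd = 1$---and once the finitistic dimension becomes positive it stabilizes under further single-arrow removals.

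For the single-arrow base case $A = \{\a\}$, by \cref{prop:ar.rem.1}{} the ideal $K = \la \a + I \ra$ is projective as a $\La$-bimodule, so $\pd K_\La = \pd_\La K = 0$, and \cref{prop:main.III}{} yields
\[
\fpd \Ga \,\leq\, \fpd \La \,\leq\, \fpd \Ga + 1.
\]
If $\fpd \Ga = 0$, then $0 \to K \to \La \to \Ga \to 0$ is a projective resolution of ${}_\La \Ga$ of length exactly one, since $K \neq 0$ is projective as a left $\La$-module (redundancy yields $\a + I \neq 0$ as $I$ is admissible) and $\Ga$ is not $\La$-projective. Hence $\fpd \La \geq 1$ and the upper bound forces $\fpd \La = 1 = \fpd \Ga + 1$.

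The case $\fpd \Ga \geq 1$ requires the sharp inequality $\fpd \La \leq \fpd \Ga$, which is the technical core and uses the cleft-extension methods of \cite{arrowrem1}{}. The crucial observation is that, because $K$ is projective as a bimodule, for any $\La$-module $M$ the tensor $KM \simeq K \otimes_\La M$ is projective over $\La$; the short exact sequence $0 \to KM \to M \to M/KM \to 0$ then transfers the projective dimension of $M$ to the $\Ga$-module $M/KM$ as soon as $\pd_\La M \geq 2$. Combined with \cref{lem:cleft.3}{}---which realizes $\pd_\Ga N = \pd_\La (\La \otimes_\Ga N)$ for every $\Ga$-module $N$, once the required Tor-vanishing hypothesis is derived from the bimodule projectivity of $K$---one produces a $\Ga$-module of projective dimension at least $\pd_\La M$ whenever such $M$ exists, which rules out the extremal case $\fpd \La = \fpd \Ga + 1$.

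The analogous arguments for $\Fpd$ and $\gd$ follow by the same strategy, using the corresponding versions of \cref{prop:main.III}{} and \cref{lem:cleft.3}{}. The main obstacle throughout is establishing the sharp upper bound $\fpd \La \leq \fpd \Ga$ in the case $\fpd \Ga \geq 1$; the remaining bookkeeping is routine given the framework of cleft extensions with superfluous kernel already developed in Section~\ref{sec:rad.pres.clefts}{}.
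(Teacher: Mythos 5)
Your overall architecture is reasonable: the reduction to a single redundant arrow via a chain of quotients (redundancy being preserved, as in the proof of \cref{cor:ar.rem.3}{}), the two-sided bound $\fpd \Ga \leq \fpd \La \leq \fpd \Ga + 1$ obtained from \cref{prop:ar.rem.1}{} together with \cref{prop:main.III}{}, and the treatment of the case $\fpd \Ga = 0$ by noting that $0 \to K \to \La \to \Ga \to 0$ is a minimal projective resolution of ${}_\La \Ga$ of length exactly one are all correct; the last point is in fact a slightly more direct route than the paper's, which instead invokes Bass's characterization of vanishing finitistic dimension via simple right ideals. However, the step you yourself call the technical core contains a genuine error. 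For a $\La$-module $M$ it is neither true that $KM \simeq K \otimes_\La M$ nor that $KM$ is projective over $\La$: since $K_\La$ is projective, the kernel of the multiplication map $K \otimes_\La M \to KM$ is identified with $\Tor_1^\La(\Ga, M)$, which is nonzero already for $M = {}_\La \Ga$ (there $KM = 0$ while $K \otimes_\La M \simeq K / K^2 = K \neq 0$); and $KM$ is merely a sum of quotients of copies of $\La e_{s(\a)}$, which in small examples is a non-projective simple module. The object that is genuinely projective in this situation is the kernel $G(M)$ of the counit $\La \otimes_\Ga M \to M$, where the tensor is taken over $\Ga$ via the section $\i$; this is \cite[Lemma 2.7, Proposition 4.6]{arrowrem1} and it does not follow formally from the $\La$-bimodule projectivity of $K$ in the way your sketch uses it.

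Moreover, once the projectivity of $G(-)$ is in hand, your detour through $M/KM$ becomes superfluous and the argument collapses onto the paper's: the counit sequence gives $\pd_\La M \in \{0,1\}$ or $\pd_\La M = \pd_\La ( \La \otimes_\Ga M )$, and since $K$ is projective as a right $\Ga$-module (by \cite[Proposition 4.4]{arrowrem1}, which is needed so that induction along $\i$ is exact and preserves minimal projective resolutions; \cref{lem:cleft.3}{} alone does not supply this, and its last part is unavailable for $N = M/KM$ because $K \otimes_\Ga N$ need not vanish), one obtains $\pd_\La ( \La \otimes_\Ga M ) = \pd_\Ga ( {}_\Ga M )$; the inequality $\fpd \La \geq \fpd \Ga$ in the positive case is then witnessed by restricting a $\Ga$-module of maximal finite projective dimension along $\pi$. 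As written, therefore, the case $\fpd \Ga \geq 1$ does not go through, and repairing it essentially reproduces the paper's proof.
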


	\begin{proof}
		It suffices to prove the equalities for $\La $ and the quotient $ \Ga = \La / \la \a + I \ra $, where $ \a $ is a redundant arrow.
		Note that any of the three homological dimensions of $\La $ is infinite if and only if the respective homological dimension of $\Ga $ is infinite according to \cref{thm:GAR}{}. Therefore, we may restrict our attention to the finite case, whence the desired equality for $\gd $ follows from the respective equality for $\fpd $ or $\Fpd $.

		For any module ${}_\La Y $ we have a short exact sequence of the form
			\[
				0 \to G(Y) \to \La \otimes_\Ga Y \to Y \to 0
			\]
		where $ G $ is by definition the kernel functor induced by the counit of the adjunction $( \La \otimes_\Ga - , {}_\Ga ( - ) )$; see also discussion before \cref{lem:ar.rem.3}{}, where we denote also by $ G $ the respective functor for right modules.
		
		In our case, the $ \La $-module $ G( Y )$ is always projective according to \cite[Lemma~2.7, Proposition~4.6]{arrowrem1}{}, since $ \Ga $ is an arrow removal algebra of $ \La $. 
		Therefore, we have
			\[
				\pd _\La Y = \left\{
				\begin{array}{ll}
					\pd _\La \La \otimes_\Ga Y,  & \mbox{if } \pd _\La \La \otimes_\Ga Y \geq 1 \\
					0 \mbox{ or } 1, & \mbox{else}
				\end{array}
				\right.
			\]
		from a well-known homological argumenent; see for instance \cite[Lemma~1.1]{FS}{}. To be more precise, the first case is determined by the condition $ \pd _\La \La \otimes_\Ga Y > \pd _\La G( Y ) = 0 $, and the second one by its complement and the inequality $ \pd _\La Y \leq \pd _\La G ( Y ) + 1 $. 
			
		On the other hand, let $ \i \colon \Ga \monicc \La $ denote the section algebra monomorphism of the natural epimorphism $ \pi \colon \La \epic \Ga $ established in \cref{rem:ar.rem.2}{}. Then $ \i $ is radical-preserving (with superfluous kernel) according to \cref{lem:cleft.2}{}; see also \cite{Giata1}{}. Furthermore, the ideal $ \la \a + I \ra $ is projective as a $\Ga$-bimodule according to \cite[Proposition~4.4.(ii)]{arrowrem1}{}; in particular, it is projective as a right $\Ga$-module. Therefore, the induction functor $\La \otimes_\Ga - $ preserves projective dimension, since it preserves projective covers non-trivially according to \cite[Corollary~3.9]{Giata1}{}, i.e.\ we have $\pd _\La \La \otimes_\Ga X = \pd _\Ga X $ for any module ${}_\Ga X $. Consequently, it holds that
			\[
				\pd _\La Y = \left\{
				\begin{array}{ll}
					\pd _\Ga Y ,  & \mbox{if } \pd _\Ga Y \geq 1 \\
					0 \mbox{ or } 1, & \mbox{else}
				\end{array}
				\right.
			\]
		for any module ${}_\La Y $.

		Assume that $\Fpd \Ga = d \geq 1 $ is finite. Let $ Y $ be a $ \La $-module of finite non-zero projective dimension. If ${}_\Ga Y $ is projective, then $\pd _\La Y \leq 1 \leq d $. Otherwise, we have that $ \pd _\La Y = \pd _\Ga Y $ is finite, implying that $ \pd_\La Y \leq d $. We deduce that $\Fpd \La \leq d $. Let now $ X $ be a $ \Ga $-module such that $\pd _\Ga X = d $, and set $ { Y ' } = {}_\La X $. Then ${}_\Ga { Y ' } $ has the same $ \Ga $-structure as the initial $ \Ga $-structure of $ X $ and thus $\pd_\La { Y ' } = \pd _\Ga { Y ' } = d $ since $ {}_\Ga { Y ' } = X $ is not projective. We conclude that $\Fpd \La = d = \Fpd \Ga $.
		
		Now let $ \Fpd \Ga = 0 $, and we aim to show that $ \Fpd \La = 1 $. We need the following characterization of zero finitistic dimension for an Artin algebra $ R $. According to \cite[Lemma~6.2]{Bass}{} and the standard duality between left and right finitely generated $ R $-modules, it holds that $ \Fpd R = 0 $ if and only if every simple right $ R $-module is isomorphic to a right ideal of $ R $.
		
		Since $ \a \colon i \to j $ is a redundant arrow, it holds that the quotient algebra $ \La / \la \a + I \ra $ is an arrow removal of $ \La $ in the sense of \cite{arrowrem1}{}. In particular, the ideal $ \la \a + I \ra $ is projective as a $ \La $-bimodule according to \cref{prop:ar.rem.1}{}, and thus also as a right $ \La $-module. Therefore, \cref{cor:cleft.3}{} implies that $ \Fpd \La \leq \Fpd \Ga + 1 = 1$, and it suffices to show that $ \Fpd \La $ is non-zero. We do this by showing that there is no right ideal of $ \La $ isomorphic to the simple right $ \La $-module corresponding to vertex $ i $ denoted by $ S_{ \La^{\textrm{op}}} (i) $.
		
		The existence of such an ideal that is isomorphic to $ S_{ \La^{\textrm{op}}} ( { i } ) $ is equivalent to the existence of an element $ z \in \La e_i $ such that $ z \ol{ \b } = 0 $ for every arrow $ \b $ of $ \La $. In the proof of \cref{lem:appendixA.1}{}, we show that the map $ f \colon \La e_i \otimes_k e_j \La \epic \la \a + I \ra $ defined by $ z_1 \otimes z_2 \mapsto z_1 ( \a + I ) z_2 $ for every $ z_1 \in \La e_i $ and $ z_2 \in e_j \La $ is a projective cover of $ \La $-bimodules. In this case, it is an isomorphism as the ideal $\la \a + I \ra $ is a projective $ \La $-bimodule. Therefore, if $ z \ol{ \a } = 0 $, then $ z = 0 $ as $ f( z \otimes e_j ) = z \ol{ \a } $. We conclude that there is no right ideal of $ \La $ isomorphic to $ S_{\La^{\! \textrm{op}}} ( { i } ) $, and thus $ \Fpd \La $ is non-zero.

		The proof for $\fpd $ is omitted as it is identical to the proof for $ \Fpd $, after observing that the restriction functors $ {}_\Ga ( - ) $ and $ {}_\La ( - ) $ preserve finite generation. For the case $ \fpd \Ga = 0 $, we also use the fact that the two finitistic dimensions of a left perfect ring are either both zero or both non-zero according to \cite[Theorem~6.3]{Bass}{}.  
	\end{proof}

	Using the well-known characterization of hereditary bound quiver algebras in terms of their bound quiver representation, we derive the following corollary.
	
	\begin{cor}
			\label{cor:3}
		For a bound quiver algebra $ \La $, the following are equivalent:
			\begin{enumerate}[\rm(i)]
				\item Algebra $ \La $ is hereditary (i.e.\ its global dimension is at most one).
				\item The arrow irredundant version of $ \La $ is semisimple.
			\end{enumerate} 
	\end{cor}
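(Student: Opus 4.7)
The plan is to derive the equivalence from two ingredients already available: the standard characterization that a bound quiver algebra $\La = kQ/I$ is hereditary if and only if the admissible ideal $I$ vanishes (equivalently, $\La \simeq kQ$ with $Q$ acyclic, where the acyclicity is forced by admissibility together with finite-dimensionality), and the precise dimension formula of \cref{thm:1}{}.

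For the implication (i) $\Rightarrow$ (ii), I would invoke the characterization to reduce to the case $I = 0$. In this situation every arrow of $Q$ is redundant, since the empty set is a generating set for $I = 0$ and trivially avoids every arrow; hence $A^{\textsc{red}}_\La = Q_1$. By the construction of the arrow irredundant version, the underlying quiver of $\La_{\textsc{aiv}}$ then has vertex set $Q_0$ and no arrows, so $\La_{\textsc{aiv}} \simeq \prod_{i \in Q_0} k$, which is semisimple.

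For the converse (ii) $\Rightarrow$ (i), I would proceed homologically. If $\La_{\textsc{aiv}}$ is semisimple, then $\gd \La_{\textsc{aiv}} = 0$. Applying the $\gd$-version of the formula of \cref{thm:1}{} to the set of redundant arrows $A = A^{\textsc{red}}_\La$ with quotient $\Ga = \La_{\textsc{aiv}}$, the second branch of the formula yields $\gd \La = \gd \La_{\textsc{aiv}} + 1 = 1$, so $\La$ is hereditary.

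No step poses a substantive obstacle: once one observes that the zero ideal is generated by the empty set (so every arrow becomes redundant) and applies \cref{thm:1}{}, the rest is bookkeeping. The only place requiring external input is the characterization of hereditary bound quiver algebras, which the statement explicitly invites us to use.
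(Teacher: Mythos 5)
Your argument is correct, and the forward direction coincides with the paper's; the converse, however, takes a genuinely different route. The paper proves the whole equivalence combinatorially: $\gd \La \leq 1$ iff $Q$ is acyclic and $I = 0$ (by \cite[Theorem~VII.1.7]{ASS}), and $I = 0$ iff $A^{\textsc{red}}_\La = Q_1$, which is exactly the semisimplicity of $\La_{\textsc{aiv}}$. The delicate half of that second equivalence (all arrows redundant $\Rightarrow I = 0$) is what you avoid: instead you feed $\gd \La_{\textsc{aiv}} = 0$ into the $\gd$-version of \cref{thm:1}{} and read off $\gd \La \leq 1$ directly. What each approach buys: the paper's proof is essentially free once one grants the combinatorial claim (which really rests on the iterative generating-set argument inside the proof of \cref{cor:ar.rem.3}{}), and it also yields the quiver-level description $\La \simeq k Q$ with $Q$ acyclic; your proof trades that for the heavier machinery behind \cref{thm:1}{} (cleft extensions and the arrow removal results of \cite{arrowrem1}{}), but needs no separate argument that redundancy of every arrow kills $I$. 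One small point to flag: if $Q$ has no arrows at all, then $A^{\textsc{red}}_\La = \emptyset$ and $\La_{\textsc{aiv}} = \La$, so the ``$+1$'' branch of \cref{thm:1}{} does not literally apply (it would give $\gd \La = 1$ rather than $0$); in that degenerate case $\La$ is semisimple and hence trivially hereditary, so your conclusion is unaffected, but the sentence ``$\gd \La = 1$'' should be restricted to the case $Q_1 \neq \emptyset$, where $A^{\textsc{red}}_\La = Q_1$ is nonempty and the formula applies as you use it.
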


	\begin{proof}
		It is well-known that for $ \La = k Q / I $, it holds that $ \gd \La \leq 1 $ if and only if $ Q $ is acyclic and $ I = 0 $; see for instance \cite[Theorem~VII.1.7]{ASS}{}. Furthermore, it is clear that $ I = 0 $ if and only if $ A^{\textsc{red}}_\La = Q_1 $ or, equivalently, if the arrow irredundant version $\La / \la A^{\textsc{red}}_\La + I \ra  $ of $ \La $ is semisimple.
	\end{proof}

	Comparing \cref{cor:2,cor:3} quantifies in a way the difference between the generalized (\cref{subsec:3.1}{}) and the classical (\!\!\cite{arrowrem1}{}) arrow removal operation.

	We close this section with a criterion implying finite global dimension for cyclic Na\-ka\-ya\-ma algebras. Recall that an algebra is called Nakayama if all left and right indecomposable projective modules are uniserial. If the algebra is connected and can be represented by a bound quiver, then it is Nakayama exactly when its ordinary quiver is either the linearly oriented quiver of type $\mathbb{A}_n $ or the quiver $ C_n $ for some positive integer $ n $, where $C_n$ denotes the linearly oriented cyclic quiver with $ n $ vertices; see for instance \cite[Chapter~V]{ASS}{}. Furthermore, the algebra is called \emph{linear} or \emph{cyclic} Nakayama depending on the shape of its ordinary quiver.

	\begin{cor}
		Let $\La = k C_n / I $ be a cyclic Nakayama algebra with at least one redundant arrow. Then
			\[
				\gd \La \leq \lal_{\textsf{max}} \leq n - 1
			\]
		where $ \lal_{\textsf{max}} $ denotes the maximal length of a path avoiding redundant arrows. The provided bounds are both optimal.
	\end{cor}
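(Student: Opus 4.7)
The plan is to pass to the arrow irredundant version $\Gamma = \La_{\textsc{aiv}}$ and exploit its split combinatorial structure. Since at least one arrow of $C_n$ is redundant, removing all redundant arrows from $C_n$ yields a quiver $Q'$ that is a disjoint union of linearly oriented $\mathbb{A}$-type components $\mathbb{A}_{m_1 + 1}, \ldots, \mathbb{A}_{m_r + 1}$. Accordingly, $\Gamma$ is a direct product of linear Nakayama algebras $\Gamma_1, \ldots, \Gamma_r$, and $\lal_{\textsf{max}} = \max_s m_s$.

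The first step is to apply \cref{thm:1}{} to the set $A^{\textsc{red}}_\La$, which yields $\gd \La = \gd \Gamma$ when $\gd \Gamma \geq 1$ and $\gd \La = 1$ when $\gd \Gamma = 0$. The degenerate case $\gd \Gamma = 0$ would force every arrow of $\La$ to be redundant; by \cref{cor:3}{} this is equivalent to $\La$ being hereditary, which is impossible for an admissible quotient of $k C_n$. Hence $\gd \La = \gd \Gamma = \max_s \gd \Gamma_s$ throughout.

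The central remaining step is the bound $\gd \Gamma_s \leq m_s$ for each linear Nakayama component. The intended argument is standard: in $\Gamma_s \simeq k\mathbb{A}_{m_s+1}/I_s$, every indecomposable projective is uniserial, every syzygy of a simple module is again uniserial, and the top vertex of $\Omega^k S(i)$ moves strictly to the right as $k$ grows. Since the sink of $\mathbb{A}_{m_s+1}$ is the unique vertex whose simple module is projective, after at most $m_s + 1 - i$ steps the syzygy becomes projective, giving $\pd S(i) \leq m_s + 1 - i \leq m_s$. Combining with the previous step gives $\gd \La \leq \lal_{\textsf{max}}$, while $\lal_{\textsf{max}} \leq n-1$ is immediate since $Q'$ contains at most $n-1$ arrows. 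The main technical point I anticipate is the clean verification of the strict monotonicity of the top vertex under syzygy, which needs a short case analysis distinguishing two types of steps in the syzygy chain (one corresponding to the inclusion $\mathrm{rad}\,P(i) \hookrightarrow P(i)$, the other to peeling off an initial segment of a uniserial proper submodule of $P(i+1)$).

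For optimality I would take $\La_n = k C_n / \la \alpha_{j+1}\alpha_j : 2 \leq j \leq n - 1 \ra$ for $n \geq 3$. A direct check confirms that the defining ideal is admissible and that $\alpha_1$ is the unique redundant arrow, because for any other $j$ the length-two relation $\alpha_{j+1}\alpha_j$ (or $\alpha_j \alpha_{j-1}$) cannot be recovered as a two-sided multiple of generators avoiding $\alpha_j$ for length reasons. Thus $\lal_{\textsf{max}} = n-1$ and the arrow irredundant version is $\Gamma_n \simeq k \mathbb{A}_n/\rad^2$, whose global dimension equals $n-1$. Consequently $\gd \La_n = n-1$, saturating both inequalities simultaneously.
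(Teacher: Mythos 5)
Your proposal is correct and follows essentially the same route as the paper: pass to the arrow irredundant version (a product of linear Nakayama algebras on linearly oriented $\mathbb{A}$-quivers), transfer the global dimension via \cref{thm:1}{} after excluding the semisimple case, bound each component's global dimension by its number of arrows, and saturate both bounds with exactly the paper's example (a single redundant arrow on $C_n$ with all other length-two paths zero). The extra details you supply -- the syzygy argument for $\gd \leq m_s$ on a linear Nakayama component and the explicit check that $\a_1$ is the unique redundant arrow -- are correct but only flesh out steps the paper treats as immediate.
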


		\begin{proof}
			By assumption, the set of redundant arrows $ A = A^{\textsc{red}}_\La $ is non-empty. It is not difficult to see that the arrow irredundant version $ \La_{ \textsc{aiv} } $ of $\La $ is isomorphic to a direct product of linear Nakayama algebras $\La_q $, one for each maximal path $q $ avoiding redundant arrows. Furthermore, the ordinary quiver of each $\La_q $ is the linearly oriented quiver of type $\mathbb{A}_{l(q) + 1} $, implying that $ \gd \La_q \leq l (q)$. Since the quiver of $ \La $ is not acyclic we have $\gd \La \geq 2 $ (see for instance \cite[Theorem~VII.1.7]{ASS}{}). Therefore, \cref{thm:1}{} implies that $\gd \La = \gd \La_{ \textsc{aiv} } $, and the desired inequalities follow from the above description of $ \La_{ \textsc{aiv} } $ as a direct product of linear Nakayama algebras.
			
			To see that the bound is optimal, consider the cyclic Nakayama algebra $\La = k C_n / I $, where $ n \geq 3 $ and $ I $ is the ideal generated by all relations of length $ 2 $ except for the two such relations passing through a distinguished arrow $ \a $. Then the algebra $ \La_{ \textsc{aiv} } = \La / { \la \a + I \ra } $ is the bound quiver algebra over the linearly oriented quiver of type $\mathbb{A}_{n}$ where all paths of length $ 2 $ are zero. Indeed, this is the canonical representation of the quotient algebra $ \La / { \la \a + I \ra } $ as a bound quiver algebra since $ \a $ is pre-removable (\cref{defn:2}{}). We conclude that
				\[
					\gd \La = \gd \La_{ \textsc{aiv} } = n - 1
				\]
			where the first equality follows from \cref{thm:1}{}, and the second is an easy computation. This completes the proof.
		\end{proof}

		We point out that the upper bounds provided in the above corollary are smaller than the one provided in \cite[Theorem~2.2]{MadMar}{} for arbitrary cyclic Nakayama algebras.

	All in all, in this section, we have introduced two distinct methods for simplifying bound quiver algebras -- the arrow reduced version and the arrow irredundant version. We have demonstrated that both constructions preserve the finiteness of key homological dimensions, and in the case of redundant arrows, precisely quantified their relative values. Our methods yield novel means to determine the homological properties of complex algebras by reducing them to the same properties over simpler but homologically equivalent algebras, as showcased by the provided examples.

	\section{Inverse operations to arrow removal}		\label{sec:adding.ar}
	
	\smallskip
	
	Understanding inverse operations is vital in algebra, as it allows for the controlled construction of complex algebraic structures from simpler ones, while preserving key properties. This section develops such an inverse for our generalized arrow removal (\cref{sec:gen.ar.rem}{}), enabling the creation of new algebras with finite finitistic dimensions.
	
	Toward this end, we make use of the concept of \emph{split extensions of algebras} as presented in \cite{Pierce}{}; see also \cite{thetaexts}{}, where the same concept is introduced in the broader context of arbitrary rings under the term \emph{$\theta$-extensions}. Given a ring $\Ga$ and a multiplicative $\Ga$-bimodule $(M, \theta)$, i.e.\ a $\Ga$-bimodule $M$ equipped with an associative homomorphism $\theta \colon M \otimes_\Ga M \to M$, we can construct a new ring $E = \Ga \ltimes_\theta M$ which is the direct sum $\Ga \oplus M$ as an abelian group. Furthermore, the subgroup $\wt{M} = 0 \oplus M$ is an ideal of $ E $ which plays a key role in our context.

	In the general case, preserving the finiteness of the finitistic dimension requires imposing homological conditions on $ M $ as a right (and sometimes also as a left) module over $ E $, where the $ E $-structures are the ones inherited from the ideal $\wt{M}$ under the obvious identification. Therefore, in \cref{subsec:adding.ar.2,subsec:adding.ar.3}{}, we establish two special setups where suitable conditions on $M$ as a $\Ga$-bimodule guarantee these homological requirements are met. Notably, the setup involving bimodules with strongly-finite projective dimension in \cref{subsec:adding.ar.3}{} leads to an explicit method for constructing bound quiver algebras with finite finitistic dimensions from smaller algebras with the same property; see \cref{exam:ar.rem.11}{} and \cref{exam:3}{}.

	\subsection{Nilpotent multiplicative bimodules}		\label{subsec:adding.ar.1}

	This subsection begins by recalling notions related to multiplicative bimodules and establishing the notation used throughout the section.
	
	Let $ \Ga $ be a ring. A \emph{multiplicative bimodule for $\Ga $} or a \emph{multiplicative $ \Ga $-bimodule} is a pair $(M, \theta)$ where $M$ is a non-zero $\Ga$-bimodule and $\theta \colon M \otimes_\Ga M \to M$ is an \emph{associative} $\Ga$-bimodule homomorphism, that is $\theta \circ ( { \theta \otimes 1_M } ) = \theta \circ ( { 1_M \otimes \theta } ) $. The \emph{split extension of $\Ga $ by $( M, \theta ) $}, denoted by $E = \Ga \ltimes _\theta M $, is the ring that is equal to $\Ga \oplus M$ as an abelian group with multiplication given by
		\[
			( \g_1 , m_1 )  ( \g_2 , m_2 ) = ( \g_1 \g_2 , \g_1  m_2 + m_1  \g_2 + \theta ( m_1 \otimes m_2 ) ) .
		\]
	For the purposes of this section, a \emph{split extension of a ring} is defined as an extension with respect to some multiplicative bimodule.
	
	It should be noted that the case $\theta = 0$ recovers the notion of trivial ring extensions, introduced by Nagata in 1962 \cite{Nagata}{}. Furthermore, we call the pair $ ( M , 0 ) $ for any non-zero $ \Ga $-bimodule $ M $ the \emph{trivial multiplicative bimodule induced by $ M $}.
	
	Taking a split extension of a ring is the inverse operation to taking a split quotient, as one would expect. Indeed, for any ring cleft extension $(\La , \Ga , \pi , \i )$, ring $\La $ is isomorphic to the ring $E = \Ga \ltimes_\theta \Ker \pi $, where the kernel of $ \pi $ is viewed as a $\Ga $-bimodule through restriction of scalars along $ \i $, and the map $\theta \colon \Ker \pi \otimes_\Ga \Ker \pi \to \Ker \pi $ is the one induced by multiplication in $\La $. Specifically, an isomorphism $\La \isomto E $ is given by $a \mapsto ( \pi(a) , a - \i \pi(a) )$ with inverse $(\g , x ) \mapsto \i (\g ) + x $, for every $a \in \La $, every $\g \in \Ga $ and $x \in \Ker \pi $. We call this map the \emph{standard isomorphism} between $\La $ and $ E $. Furthermore, we call the multiplicative bimodule $( \Ker \pi , \theta )$ the \emph{induced multiplicative bimodule of the ring cleft extension $(\La , \Ga , \pi , \i )$}.

	Conversely, for an arbitrary multiplicative bimodule $ ( M , \theta ) $ over a ring $ \Ga $, the split extension ring $E = \Ga \ltimes_\theta M$ naturally gives rise to a ring cleft extension $( E , \Ga , \pi, \i )$. Specifically, the map $\i \colon \Ga \monicc E$ is the \emph{inclusion monomorphism} sending $\g \in \Ga$ to $ ( \g , 0 ) $, and $\pi \colon E \epic \Ga $ is the \emph{projection epimorphism} mapping $( \g , m ) \in E$ to $\g$. We denote the kernel of $ \pi $ by $ \wt{ M } = 0 \oplus M$ throughout the rest of this section.

	Now let us restrict our attention to Artin algebras over a commutative artinian ring $k$. Then the notion of an algebra cleft extension $(\La , \Ga , \pi , \i )$ (see \cref{sec:rad.pres.clefts}{}), or equivalently the notion of a split algebra quotient of $\La $, correspond to multiplicative $ \Ga $-bimodules $(M, \theta )$ such that $k$ acts centrally on $M$ making it a finitely generated module. In fact, this extra condition is equivalent to requiring that the split extension ring $E = \Ga \ltimes_\theta M$ is an Artin algebra with the natural $ k $-algebra structure induced by the ring homomorphism $ k \to E $ sending $ c $ to $ ( c \cdot 1_\Ga , 0) $.

	For the rest of this section, we use the convention that a bimodule $ M $ over an Artin $ k $-algebra $ \Ga $ is such that \emph{$ k $ acts centrally on $ M $ making it a finitely generated module}. Consequently, for a multiplicative $ \Ga $-bimodule $(M, \theta )$, we call the ring $ E = \Ga \ltimes_\theta M $ a split extension \emph{algebra} of $\Ga $ to indicate that $ E $ is equipped with the canonical $ k $-structure discussed above.

	We begin by characterizing when a multiplicative bimodule $(M , \theta )$ for a bound quiver algebra $\Ga $ makes $\Ga$ an arrow pre-removal of $E = \Ga \ltimes_\theta M$ (see \cref{defn:ar.rem.4}{}).
	To do this, we use the notion of nilpotency from \cite{Pierce}{}. For a multiplicative bimodule $(M , \theta ) $ over a ring $\Ga $, define recursively $\theta ^r \colon M^{\otimes_\Ga ( r + 1 )} \to M$ by $\theta^0 = \id_M$ and $ \theta^r = \theta \circ ( { \theta^{r-1} \otimes 1_M } ) $ for every $r \geq 1 $.
	The bimodule $( M , \theta )$ is then \emph{nilpotent} if $\theta^N = 0 $ for some positive integer $ N $. This is clearly equivalent to $ \wt{ M } $ being a nilpotent ideal of $ E $ (with $ \wt{ M }^{N+1} = 0 $).

	\begin{defn}
		\label{defn:ar.rem.4}
		Let $\Ga $ and $\La $ be two finite dimensional algebras over a field $k$. Then $\Ga $ is an \emph{arrow pre-removal algebra of $\La $} if there is a representation $k Q_\La / I_\La $ of $\La $ as a bound quiver algebra and a pre-removable set of arrows $A$ in $k Q_\La / I_\La $ such that $\Ga $ is isomorphic to the quotient of $k Q_\La / I_\La $ over the ideal generated by $A$.
	\end{defn}

	The next technical lemma forms the basis for what is to follow.
	
	\begin{lem}
		\label{lem:ar.rem.8}
		Let $\Ga = k Q_\Ga / I_\Ga  $ be a bound quiver algebra, and let $(M , \theta )$ be a multiplicative $ \Ga $-bimodule. Then the split extension algebra $E = \Ga \ltimes_\theta M $ is isomorphic to a bound quiver algebra whose underlying quiver has the same number of vertices as $Q _\Ga $ if and only if $( M , \theta )$ is nilpotent. Furthermore, algebra $\Ga $ is an arrow pre-removal of $ E $ in that case.
	\end{lem}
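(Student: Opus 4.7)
The plan is to leverage the natural cleft extension $(E, \Ga, \pi, \i)$ arising from the split extension, noting that $\wt{M} = \Ker \pi$, and that $(M, \theta)$ is nilpotent if and only if $\wt{M}$ is a nilpotent ideal of $E$.

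For the forward direction, assume $(M, \theta)$ is nilpotent. Then $\wt{M} \subseteq J(E)$, so $\Ker \pi$ is superfluous, and \cref{lem:cleft.2}{} gives $J(E) = J(\Ga) \oplus \wt{M}$. Hence $E/J(E) \cong \Ga/J(\Ga) \cong k^{|(Q_\Ga)_0|}$, so $E$ is a basic split finite-dimensional $k$-algebra and thereby isomorphic to a bound quiver algebra $kQ_E/I_E$ with $|(Q_E)_0| = \dim_k E/J(E) = |(Q_\Ga)_0|$.

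For the backward direction, assume $E \cong kQ_E/I_E$ with $|(Q_E)_0| = |(Q_\Ga)_0|$. Through $\pi$, the simple $\Ga$-modules correspond bijectively to the simple $E$-modules annihilated by $\wt{M}$; matching cardinalities forces every simple $E$-module to be annihilated by $\wt{M}$, whence $\wt{M} \subseteq J(E)$. Since $E$ is finite-dimensional, $J(E)$ is nilpotent and so is $\wt{M}$.

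For the \emph{furthermore} part, set $J = J(E)$ and $N = J(\Ga)\wt{M} + \wt{M}J(\Ga) + \wt{M}^2 \subseteq \wt{M}$; a direct computation using $E = \Ga \oplus \wt{M}$ gives $J^2 = J(\Ga)^2 \oplus N$ and the canonical direct sum decomposition of $k$-vector spaces
\[
	J/J^2 \, \cong \, J(\Ga)/J(\Ga)^2 \, \oplus \, \wt{M}/N .
\]
Using the shared primitive orthogonal idempotents from \cref{lem:semiperfect.clefts}{}, we pick a bound quiver presentation of $E$ whose arrows respect this decomposition: the \emph{old} arrows lift elements of $J(\Ga)$ and form a basis of $J(\Ga)/J(\Ga)^2$, while the \emph{new} arrows $A$ lift elements of $\wt{M}$ and form a basis of $\wt{M}/N$. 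A standard Nakayama argument applied to the nilpotent ideal $J(\Ga)$ shows that the subalgebra of $E$ generated by vertices and old arrows is precisely $\Ga$. To verify condition (ii) of \hyperlink{lem:ar.rem.2}{\cref{lem:ar.rem.2}{}}, it suffices to show $\la A + I_E \ra = \wt{M}$ inside $E$: the inclusion $\subseteq$ holds by construction, and the identity $\wt{M} = \la A + I_E \ra + J\wt{M} + \wt{M} J$ (encoding that $A$ is a basis of $\wt{M}/N$) iterated via the nilpotency of $J$ forces the reverse inclusion. Combined with $E = \Ga \oplus \wt{M}$, this shows that $A$ is pre-removable with canonical quotient $\Ga$, so $\Ga$ is an arrow pre-removal of $E$. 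The main obstacle is precisely this final two-sided Nakayama-type iteration controlling the ideal generated by the new arrows.
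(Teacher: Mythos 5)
Your proof is correct, but it takes a genuinely different route from the paper's. For the easy direction you deduce $\wt{M} \subseteq J(E)$ by counting simple modules (simples of $E$ killed by $\wt{M}$ correspond to simples of $\Ga$, and the vertex counts match), where the paper instead argues that the idempotents $(e_i,0)$ stay primitive and compares $k$-dimensions to get $J(E)=J(\Ga)\oplus M$; both are fine. The substantive difference is in the hard direction and the ``furthermore'' part: the paper builds the presentation of $E$ by hand, taking a projective cover of $M$ as a $\Ga$-bimodule, attaching one new arrow for each generator $m_\k \notin \Image\,\theta$, and proving surjectivity of the resulting map $\phi\colon kQ_E \to E$ via an explicit description of $\Image\,\theta$, admissibility of $\Ker\phi$, and pre-removability from $\phi(z_A)\in\wt{M}$, $\phi(z_{\nott A})\in\Ga\oplus 0$. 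You instead invoke the standard Gabriel-type presentation theorem for the split basic algebra $E$ (choosing arrow lifts compatibly with the idempotent-stable decompositions $J(E)=J(\Ga)\oplus\wt{M}$ and $J(E)^2=J(\Ga)^2\oplus N$, which you compute correctly since $N=J\wt{M}+\wt{M}J$), and then run two Nakayama-type iterations: one identifying the subalgebra generated by the old arrows with $\Ga\oplus 0$, and a two-sided one identifying the ideal generated by the new arrows with $\wt{M}$; together with $E=\Ga\oplus\wt{M}$ this gives condition (ii) of \cref{lem:ar.rem.2} and the identification of the quotient with $\Ga$, so the conclusion follows. Your argument is shorter and more conceptual, at the cost of leaning on the standard presentation theorem (with free choice of radical lifts), which the paper essentially reproves in this special case; the paper's explicit construction also yields by-products your proof does not, namely that the new arrows are indexed by a minimal set of bimodule generators of $M$ modulo $\Image\,\theta$ and the Loewy-length bound $\Ll(E)\le (N+1)\Ll(\Ga)$ that \cref{rem:ar.rem.1} extracts ``directly from the proof''. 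For the lemma as stated, however, your proposal is complete.
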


	\begin{proof}
		Assume that $ Q _\Ga $ has $ n $ vertices, labeled $ 1, 2, \ldots, n $. Assume $ E $ is isomorphic to a bound quiver algebra with $n$ vertices. Then the set $\{ ( e_i , 0 ) \}_{i \in (Q_\Ga) _0 }$ is a complete set of primitive orthogonal idempotents for $ E $. Indeed, if $( e_i , 0 ) $ was not primitive for some $i \in ( Q_\Ga )_0 $, it would follow that $ E $ has more vertices than $\Ga $ since primitive idempotents correspond to vertices. In particular, the decomposition $ E  = {}_k \la (e_i , 0 ) \colon i \in (Q_\Ga )_0 \ra \oplus J( E ) $ holds since $ E $ is isomorphic to a bound quiver algebra, implying that $\dim _k E = n + \dim _k J ( E ) $. Considering also the analogous decomposition for $ \Ga $ yields $\dim _k J( E ) = \dim _k J (\Ga ) + \dim_k M$.
		
		On the other hand, the quotient $E / ( J(\Ga) \oplus M )$, with $ \Ga $-structure induced by the fact that $\wt{M} (E / (J(\Ga) \oplus M)) = 0$, is isomorphic to ${}_\Ga { \Ga / J (\Ga ) } $. The fact that $E / ( J(\Ga) \oplus M )$ is semisimple over $ \Ga $ implies the same property over $ E $. In particular, the inclusion $J(E) \subseteq J(\Ga) \oplus M$ holds. We deduce that $J ( E ) = J (\Ga ) \oplus M$ due to the equality of $k$-dimensions for the two subspaces. Furthermore, the ideal $ \wt{M} $ of $ E $ is nilpotent as $J ( E )$ is nilpotent. Therefore, there exists a minimal integer $N > 0 $ such that $ \wt{ M }^{N+1} = 0$, which implies $\theta^N = 0 $.

		Conversely, assume $( M , \theta )$ is a nilpotent multiplicative $ \Ga $-bimodule, with $ N $ being the minimal positive integer such that $\theta ^N = 0$. Note that $\theta $ cannot be surjective, as nilpotency would then imply $M = 0 $. More generally, the equality $ \Image \,  \theta^{  i + 1 }  = \Image \,  \theta ^{ i }  $ holds if and only if $i \geq N $. It follows that the chain of $\Ga$-bimodules
			\[
				0 = \Image \,  \theta ^N  \leq \Image \,  \theta ^{  N-1  }  \leq \ldots \leq \Image \, \theta \leq M
			\]
		is strictly decreasing as the power of $ \theta $ increases.
		
		Now fix a projective cover
		\[
		f \colon \oplus_{\k = 1}^t ( \Ga e_{i_\k} \otimes_k e_{j_\k} \Ga ) \epic M
		\]
		of $M$ as a $\Ga $-bimodule. Since $M$ is generated by $m_\k = f( e_{i_\k} \otimes e_{j_\k} ) $ for $\k = 1, \ldots, t$ as a $\Ga$-bimodule and $ \theta $ is not surjective, we may assume that there is an integer $t_0$ such that $1 \leq t_0 \leq t$ and $m_\k  \in \Image \, \theta $ if and only if $\k > t_0 $.
		Let $Q_E = Q_\Ga \, \dot \cup A$ where $A =  \, \{ \a_\k \colon i_\k \to j_\k \, /  \, \k = 1, 2 , \ldots, t_0 \} $, i.e.\ the quiver $Q_E $ results from $Q_\Ga $ if we add the $t_0$ new arrows $\a_\k \colon i_{\k } \to j_\k $. Note that the arrows to be added are determined by the indecomposable direct summands of the projective cover of $M$ whose generator does not map into the image of $ \theta $ under $ f $. Furthermore, isomorphic but different such summands give rise to (distinct) parallel arrows. We show next that $ E $ is isomorphic to an admissible quotient of $k Q_E $.
		
		Let $\phi \colon k Q_E \to E $ be the unique $k$-algebra homomorphism defined by $e_i \mapsto (e_i , 0 )$ for every vertex $i $ of $ Q_\Ga $, and by $\b \mapsto (\b + I_\Ga, 0 )$ for every arrow $\b $ of $ Q_\Ga $ and $\a _\k \mapsto (0 , m_\k) $ for every new arrow $ \a_\k \in A $. To see that $ \phi $ is well-defined, note for instance that $(e_{i_\k} , 0) ( 0 , m_\k ) ( e_{j_\k} , 0 ) = ( 0 , m_\k )$; see also \cite[Theorem~II.1.8]{ASS}{}. It is straightforward to verify that $\Ga \oplus 0 \subseteq \Image \, \phi$, since $\phi (p) = (p + I_\Ga , 0 )$ for every path $p$ in $Q_\Ga $. Therefore, it remains to show that $( 0 , m_\k ) \in \Image \, \phi $ for all $\k$ in order for $\phi $ to be surjective, given that $ \{ m_\k \}_{\k =1 }^t$ generates $M$ as a $\Ga $-bimodule. Indeed, for any paths $p$ and $q $ in $Q_\Ga $ such that $t (p ) = i_\k $ and $s( q ) = j_\k $ for some index $ \k $, we have $( 0 , ( p + I_\Ga ) m_\k  ( q + I_\Ga ) ) = ( p + I_\Ga , 0 )  ( 0, m_\k )  ( q + I_\Ga , 0 )$. Note that the elements $(0 , m_\k ) $ are in $ \Image \, \phi $ for all $\k \leq t_0 $ as $\phi ( \a_\mu ) = ( 0 , m_\k ) $ by definition. Therefore, it suffices to show that $ 0 \oplus \Image \, \theta \subseteq \Image \, \phi $ since $ ( 0 , m_\k ) \in 0 \oplus \Image \, \theta $ for every $\k > t_0 $.

		In order to achieve the above goal, we establish a convenient description for the elements of $\Image \, \theta $. First, observe that every element in $\Image \,  \theta ^{ N - 1 }  $ is a sum of terms of the form $m = \theta^{ N-1 } ( x_0 \otimes x_1 \otimes \ldots \otimes x_{N-1} )$, where each $x_\nu \in M$. If $x_\nu \in \Image \, \theta $ for some $ \nu $ then $m = 0$, and we may therefore assume that all $ x_\nu $ belong to the $\Ga $-subbimodule of $M$ generated by $m_\k $ for $\k \leq t_0 $. It follows that $\Image \,  \theta ^{ N - 1 }  $ is generated as a $k$-vector space by the elements of the form
		\phantomsection		\hypertarget{eq:2'}{}	
		\begin{equation}	
			\label{eq:2}
			z = \theta^{r} (  \big[ ( p_0 + I_\Ga )  m_{\k_0}  ( p_1 + I_\Ga ) \big] \otimes \big[ m_{\k_1}  ( p_ 2 + I_\Ga ) \big] \otimes \ldots \otimes \big[ m_{\k_{r} }  ( p_{ r + 1 } + I_\Ga ) \big] )
		\end{equation}
		where $r = N-1$; the indices $\k_\xi $ are in $ \{ 1, 2, \ldots, t_0  \}$ for every $\xi \in \{ 0 , 1 , \ldots , r \} $, and $p_\xi $ are non-zero paths in $\Ga$ such that $t (p_\xi ) = i_{\k_\xi}$ for every $\xi \in \{ 0 , 1 , \ldots, r \} $ and $ s( p_\xi ) = j_{\k_{\xi-1}} $ for every $\xi \in  \{ 1, 2, \ldots, r+1  \}$. Similarly, every element in $\Image \,  \theta ^{ N-2 }  $ is a sum of elements of the form $m = \theta^{ N-2 } ( x_0 \otimes x_1 \otimes \ldots \otimes x_{N-2} )$ where $x_\nu \in M$. If $x_\nu \in \Image \, \theta$ for some $ \nu $ then $ m \in \Image \,  \theta ^{ N-1 }  $, and such elements are already accounted for. Therefore, the subspace $\Image \,  \theta ^{ N-2 }  $ is generated by all the elements of the form
		\hyperlink{eq:2'}{(\ref{eq:2}{})} where $r \in \{ N-2 , N-1 \}$. Inductively, every element in $ \Image \, \theta $ is a $k$-linear combination of elements of the form \hyperlink{eq:2'}{(\ref{eq:2}{})} where $r \in \{ 1, 2 , \ldots, N-1  \}$. This completes the proof of our claim that $0 \oplus \Image \, \theta\subseteq \Image \, \phi $, since
		\begin{multline*}
			\phi ( p_0 \a_{\k_0} p_1  \ldots p_r \a_{\k_r} p_{r+1}   ) =		\\
			( p_0 + I_\Ga , 0 )  ( 0 , m_{\k_0} )  ( p_1 + I_\Ga , 0 )   \ldots ( p_r + I_\Ga ) ( 0 , m_{\k_r } )  ( p_{r+1} + I_\Ga , 0 ) 	
			= ( 0 , z )
		\end{multline*}
		implying that all elements of the form \hyperlink{eq:2'}{(\ref{eq:2}{})} are contained in the image of $ \phi $ for any $ r $ between $ 1 $ and $ N - 1 $. We deduce that $ \phi $ is surjective.
		
		Note that we have also proven that $ \phi ( {}_k \la \mathcal{B}_{\! {Q_E}}^{ A} \ra ) = \wt{ M } $.

		We show next that the kernel of $\phi $, denoted by $I_E$, is an admissible ideal of $k Q_E $, and that the set $A$ is pre-removable in $k Q_E / I_E $. Recall that $A$ is pre-removable in $k Q_E / I_E $ if for any $z \in I_E$, the elements $z_A $ and $z_{\nott \! A } $ belong to $ I_E $, where $z = z_A + z_{\nott \! A}$ is the unique decomposition of $ z $ as a sum of elements from ${}_k \la \mathcal{B}_{\! {Q_E}}^{ A} \ra$ and ${}_k \la \mathcal{B}_{{Q_E}}^{\, \nott \!\! A} \ra$, respectively.
		But we have already shown that $\phi (z_A) \in \wt{ M } $ and $\phi (z_{\nott \! A } ) = ( z_{\nott \! A } + I_\Ga , 0 )$ for every $z \in k Q_E $.
		Therefore, it holds that $z \in I_E $ if and only if both $z_A $ and $z_{\nott \! A} $ are in $ I_E $. In particular, we have that $ I_E $ decomposes as a direct sum $ I_E = I_\Ga \oplus I_A $, where $ I_A $ consists of elements $ z = z_A $ that are in $ I_E $; see also \hyperlink{lem:ar.rem.2}{\cref{lem:ar.rem.2}{}}{}.
		
		As for the admissibility of $ I_E $, observe that any path occurring in $ I_E $ has length at least two. Furthermore, every path in $Q_E $ of length at least $ (N+1) \Ll (\Ga ) $ is in $I_E$, where $\Ll(\Ga )$ is the Loewy length of $\Ga $. Indeed, such a path either contains a subpath of length $ \Ll(\Ga ) $ that is in $ Q_\Ga $ or it is divided by some arrow in $ A $ at least $ N + 1 $ times. In both cases, it follows that the path is in $ I_E $, in the latter case due to the fact that $\theta^N = 0 $.
		
		We conclude that $ E $ is isomorphic to the bound quiver algebra $k Q_E / I_E $, where the set of arrows $A$ is pre-removable.
		Moreover, the induced isomorphism from $ k Q_E / I_E $ to $ E $ maps the ideal $\la A + I_E \ra $ onto $ \wt{ M } $. Consequently, algebra $\Ga $ is an arrow pre-removal of $ E $, since it is isomorphic to the quotient $ ( { k Q_E / I_E } ) / \la A + I_E \ra $. 
	\end{proof}
	
	The second part of the following corollary may be found in \cite[Subsection~1.2]{ACT}{}, in a different but equivalent form.
	
	\begin{cor}
			\label{rem:ar.rem.1}		\hypertarget{rem:ar.rem.1}
		Let $(M, \theta )$ be a nilpotent multiplicative bimodule for a bound quiver algebra $\Ga $, and let $ E = { \Ga \ltimes_\theta M } $. If $ N $ is the minimal positive integer such that $\theta^N = 0$, then
			\[
				\Ll(\Ga ) \leq \, \Ll( E ) \, \leq (N+1) \Ll(\Ga) .
			\]
		Furthermore, there is a bound quiver algebra $ k Q_E / I_E $ isomorphic to $E$ such that $ \wt{ M } = 0 \oplus M $ corresponds to an ideal generated by a pre-removable set of arrows.
	\end{cor}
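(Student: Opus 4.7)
The second statement is essentially a direct restatement of the construction carried out in \cref{lem:ar.rem.8}{}. That lemma explicitly produces a bound quiver algebra $k Q_E / I_E$ isomorphic to $E$ via the surjective $k$-algebra homomorphism $\phi$, and shows that the newly adjoined arrows $A = \{ \a_\k \colon i_\k \to j_\k \}_{\k = 1}^{t_0}$ form a pre-removable set. Under the induced isomorphism $ k Q_E / I_E \isomto E $, the ideal $\la A + I_E \ra$ is mapped precisely onto $\wt{M} = 0 \oplus M$, so there is nothing further to prove for this part.

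For the upper Loewy-length bound, the plan is to extract the admissibility argument already established in the proof of \cref{lem:ar.rem.8}{}. There it was shown that every path in $Q_E$ of length at least $(N+1)\Ll(\Ga)$ lies in $I_E$: such a path either contains a subpath of length $\Ll(\Ga)$ lying entirely in $Q_\Ga$ (hence in $I_\Ga \subseteq I_E$), or else it passes through arrows of $A$ at least $N+1$ times (and is thus in $I_E$ by the minimality of $N$ with $\theta^N = 0$). Transferring this through the isomorphism $\phi$ yields $J(E)^{(N+1)\Ll(\Ga)} = 0$, which gives $\Ll(E) \leq (N+1)\Ll(\Ga)$.

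For the lower bound, I would use the projection epimorphism $\pi \colon E \epic \Ga$ sending $(\g, m) \mapsto \g$. Its kernel is $\wt{M}$, which is contained in $J(E)$ by the equality $J(E) = J(\Ga) \oplus M$ established inside the proof of \cref{lem:ar.rem.8}{}. Consequently $\pi$ maps $J(E)$ onto $J(\Ga)$, and therefore $\pi(J(E)^n) = J(\Ga)^n$ for every $n \geq 0$. Hence $J(\Ga)^n \neq 0$ forces $J(E)^n \neq 0$, giving $\Ll(\Ga) \leq \Ll(E)$.

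Since essentially all the substantive work is already contained in the preceding lemma, there is no genuine obstacle here — the corollary is pure bookkeeping. The only point requiring a small amount of care is to observe that the bound $(N+1)\Ll(\Ga)$ produced by the admissibility argument is exactly the Loewy length bound sought, and to use the decomposition $J(E) = J(\Ga) \oplus M$ from the lemma to transport radicals between $E$ and $\Ga$ for the opposite inequality.
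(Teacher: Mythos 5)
Your proposal is correct and follows essentially the same route as the paper, whose proof of this corollary simply defers to the proof of \cref{lem:ar.rem.8}{}: the pre-removability and the upper bound are read off from the construction and admissibility argument there, and the lower bound from the decomposition $J(E) = J(\Ga) \oplus \wt{M}$ (equivalently, from $J(\Ga) \subseteq J(E)$). Your explicit bookkeeping with $\pi(J(E)^n) = J(\Ga)^n$ is a valid way to phrase the lower bound and introduces no gap.
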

	
	\begin{proof}
		The corollary follows directly from the proof of \cref{lem:ar.rem.8}{}.
	\end{proof}

	\begin{exam}
		\label{exam:ar.rem.6}
		Let $A = k Q_A / I_A $ and $B = k Q_B / I_B $ be two bound quiver algebras, and let $M$ be an $A$-$B$-bimodule where $k$ acts centrally making it a finite-dimensional vector space. We claim that:
			\begin{enumerate}[\rm(i)]
				\item the algebra $ C= A \times B$ is an arrow pre-removal of the triangular matrix algebra $\La = \big(\begin{smallmatrix}
					A & M\\
					0 & B
				\end{smallmatrix}\big)$;
			\item the quiver of $\La $ is not strongly connected, i.e.\ there are distinct vertices $ i , j $ in the ordinary quiver of $ \La $ such that there is no path in the quiver with source $ i $ and target $ j $.
			\end{enumerate}
		
		To see the first claim, observe that the algebra $ C $ is isomorphic to the bound quiver algebra $k Q_C / I_C $ where $Q_C $ is the disjoint union of $Q_A $ and $Q_B$, and $I_C$ is the ideal of the path algebra $k Q_C$ generated by $I_A $ and $I_B$. Furthermore, the algebra $\La $ is isomorphic to the trivial extension $ C \ltimes M $ where $M$ is considered as a $ C $-bimodule through the actions $(a, b) m = a m $ and $m (a , b ) = m b $ for every $a \in A$, every $b \in B$ and $m\in M$. Therefore, the algebra $\La $ is isomorphic to a bound quiver algebra with underlying quiver equal to the disjoint union of $Q_A$ and $Q_B$, with extra arrows that are determined by the indecomposable projective $ C $-bimodules occurring as direct summands of the projective cover of $M$; see the proof of \cref{lem:ar.rem.8}{} for more details.

		For the second claim, note that $M e_A = 0$ and $e_B M = 0$ for the idempotents $e_A = ( 1_A , 0 ) $ and $e_B = ( 0 , 1_B ) $.
		Consequently, every indecomposable projective $C$-bimodule that occurs as a direct summand of the projective cover of $M$
		is of the form $ C ( e_i^A , 0 ) \otimes_k ( 0 , e_j^B ) C $, where $e_i^A$ and $ e_j^B$ are trivial paths of $A $ and $B$ respectively. We conclude that all arrows of $\La $ that are not in $Q_A $ or $Q_B $ have source in $Q_A $ and target in $Q_B$. In particular, there is no path in the quiver of $\La$ with source in $ Q_B $ and target in $ Q_A $.
	\end{exam}

	It is well-known that an algebra over a field is isomorphic to a bound quiver algebra if and only if it is basic and all simple modules are one-dimensional, with the latter condition being redundant if the base field is algebraically closed; see \cite[Corollary~III.1.10]{repth}{}. If $\La $ is such an algebra over an arbitrary field $k$, then we say that \emph{$k Q_\La / I_\La $ is a representation of $\La $ as a bound quiver algebra} when $ \La \simeq k Q_\La / I_\La $.

	We characterize next split algebra quotients over superfluous ideals in terms of pre-removable sets of arrows for the above class of algebras.

	\begin{prop}[\mbox{cf.\ \cite[Subsection~1.4]{ACT}{}}]
		\label{prop:ar.rem.3}
		The following are equivalent for two basic algebras $\Ga $ and $\La $ with one-dimensional simple modules.
		\begin{enumerate}[\rm(i)]
			\item Algebra $\Ga $ is an arrow pre-removal of $\La $.
			\item Algebra $\Ga $ is isomorphic to a split quotient of $\La $ over a superfluous ideal.
			\item Algebra $\La $ is isomorphic to a split extension of $\Ga $ by a nilpotent multiplicative bimodule.
		\end{enumerate}
	\end{prop}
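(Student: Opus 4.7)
The plan is to reduce each of the three implications to a result already established in this paper, so that the proposition follows by assembling three short translations between definitions. Since each step invokes exactly one preceding lemma, I do not anticipate a serious technical obstacle.

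For (i) $\Rightarrow$ (ii), I would fix a representation $\La = k Q_\La / I_\La$ together with a pre-removable set of arrows $A$ realizing $\Ga \simeq \La / \la A + I_\La \ra$. \cref{lem:ar.rem.2}{} then supplies a section algebra monomorphism for the natural epimorphism $\La \epic \La / \la A + I_\La \ra$, giving an algebra cleft extension with kernel $\la A + I_\La \ra$. This kernel is superfluous, since $I_\La$ is admissible and therefore $J(\La)$ coincides with the image in $\La$ of the ideal of $k Q_\La$ generated by all arrows; in particular it contains every ideal generated by a subset of the arrows.

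For (ii) $\Rightarrow$ (iii), I would start from an algebra cleft extension $(\La, \Ga, \pi, \i)$ with $\Ker \pi \subseteq J(\La)$ and invoke the standard isomorphism recalled at the beginning of \cref{subsec:adding.ar.1}{}, which realizes $\La$ as a split extension $\Ga \ltimes_\theta \Ker \pi$ with $\theta$ induced by multiplication in $\La$. The resulting multiplicative bimodule $(\Ker \pi, \theta)$ is nilpotent: under the standard isomorphism the ideal $\wt{\Ker \pi} = 0 \oplus \Ker \pi$ of the split extension corresponds to $\Ker \pi$ itself, which is a nilpotent ideal of $\La$ because $\Ker \pi \subseteq J(\La)$ and $\La$ is finite-dimensional.

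For (iii) $\Rightarrow$ (i), I would fix a bound quiver representation $\Ga = k Q_\Ga / I_\Ga$, which exists since $\Ga$ is basic with one-dimensional simple modules. Given an isomorphism $\La \simeq \Ga \ltimes_\theta M$ for a nilpotent multiplicative bimodule $(M, \theta)$, a direct application of \cref{lem:ar.rem.8}{} produces a bound quiver representation $\La \simeq k Q_E / I_E$ together with a pre-removable set of arrows $A$ in $k Q_E / I_E$ such that $\la A + I_E \ra$ corresponds to $\wt{M}$ under the induced isomorphism. The resulting identification of $\Ga$ with $( k Q_E / I_E ) / \la A + I_E \ra$ exhibits $\Ga$ as an arrow pre-removal of $\La$ in the sense of \cref{defn:ar.rem.4}{}. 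The only subtle point in the whole argument is the nilpotency step in (ii) $\Rightarrow$ (iii), which rests on the observation recalled at the start of \cref{subsec:adding.ar.1}{} that $(M, \theta)$ is a nilpotent multiplicative bimodule precisely when $\wt{M}$ is a nilpotent ideal of the corresponding split extension; this lets me transfer nilpotency of $\Ker \pi$ inside $\La$ into nilpotency of the associated multiplicative bimodule.
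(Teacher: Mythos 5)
Your proposal is correct and follows essentially the same route as the paper: (i) $\Rightarrow$ (ii) directly from the definitions (with \cref{lem:ar.rem.2}{} and the fact that an ideal generated by arrows lies in the radical), (ii) $\Rightarrow$ (iii) via the standard isomorphism from the start of \cref{subsec:adding.ar.1}{} together with nilpotency of ideals contained in the Jacobson radical of a finite dimensional algebra, and (iii) $\Rightarrow$ (i) by \cref{lem:ar.rem.8}{}.
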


	\begin{proof}
		Condition (ii) follows directly from condition (i) by the definitions. Condition (iii) follows from condition (ii) by the discussion at the beginning of this subsection and the fact that every ideal contained in the Jacobson radical of a finite dimensional algebra is nilpotent. Finally, condition (i) follows from condition (iii) by \cref{lem:ar.rem.8}{}.
	\end{proof}
	
	\begin{cor}
		\label{cor:ar.rem.4}
		For a basic algebra $ \La $ with one-dimensional simple modules, the natural algebra epimorphism induced by an ideal contained in $J(\La)^2$ does not split.
	\end{cor}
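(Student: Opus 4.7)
The plan is to argue by contradiction, invoking the equivalence (i) $\Leftrightarrow$ (ii) of \cref{prop:ar.rem.3}{} (which itself rests on \cref{lem:ar.rem.8}{}). So suppose $K \neq 0$ is an ideal of $\La $ with $K \subseteq J(\La)^2$ such that the natural epimorphism $\pi \colon \La \epic \La/K $ admits a section algebra monomorphism. Since $K \subseteq J(\La)$, the ideal $K$ is superfluous, hence $\La/K$ is a split algebra quotient of $\La$ over a superfluous ideal. Because $\La$ is basic with one-dimensional simple modules, \cref{prop:ar.rem.3}{} applies and tells us that $\La/K$ is an arrow pre-removal of $\La$: there exists a representation $\La \simeq k Q / I$ as a bound quiver algebra together with a pre-removable set of arrows $A$ for which $\La/K$ is isomorphic to $(k Q / I) / \la A + I \ra$.

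Next I would transport the hypothesis on $K$ through this isomorphism. Under the identification $\La \simeq kQ/I$, the ideal $K$ corresponds to $\la A + I \ra$ in $kQ/I$. Since $K \neq 0$, the set $A$ must be non-empty; fix any arrow $\a \in A$. Then $\a + I$ lies in $\la A + I \ra$, hence corresponds to an element of $K$, and by assumption this element lies in $J(kQ/I)^2$.

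The final contradiction is a clean consequence of admissibility. Because every path occurring in $I$ has length at least two, the square of the Jacobson radical $J(kQ/I)$ is spanned by the cosets of paths of length at least two in $Q$. Consequently the cosets of the arrows of $Q$ descend to a $k$-basis of the quotient $J(kQ/I) / J(kQ/I)^2$. In particular $\a + I$ is non-zero modulo $J(kQ/I)^2$, contradicting the conclusion of the previous step.

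The main -- indeed only -- obstacle is conceptual rather than computational: one needs to set up the correspondence between the given ideal $K$ and the arrow-generated ideal $\la A + I \ra$ through the bound quiver presentation provided by \cref{prop:ar.rem.3}{}, and to exclude the trivial case $K = 0$ (for which the statement fails, as the identity is a splitting). Once this setup is in place, the argument reduces to the elementary fact that admissibility of $I$ identifies the arrows of $Q$ with a $k$-basis of $J/J^2$.
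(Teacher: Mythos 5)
Your proposal follows essentially the same route as the paper: the paper's proof is in effect the implication (ii) $\Rightarrow$ (i) of \cref{prop:ar.rem.3}{} redone with attention to the specific ideal, followed by the same observation that arrows of a bound quiver algebra are never contained in the square of the Jacobson radical (your justification via the identification of arrows with a basis of $J/J^2$, using admissibility of $I$, is exactly the intended one, and your exclusion of $K=0$ is a harmless and correct remark).

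The one soft spot is the step you yourself flag as the ``main obstacle'': the statement of \cref{prop:ar.rem.3}{} only asserts that $\La/K$ is isomorphic to the quotient of some presentation $kQ/I$ of $\La$ by an ideal $\la A + I \ra$ generated by a pre-removable set of arrows; it does not, by itself, say that the presentation isomorphism $\La \simeq kQ/I$ carries the given ideal $K$ onto $\la A + I \ra$, and an isomorphism of quotients does not in general identify the two kernels under a fixed isomorphism of the ambient algebras. This tracking of $K$ is true, but it has to be extracted from the proof rather than the statement: the paper does it by forming the split extension $E = \Ga \ltimes_\theta K$ attached to the cleft extension, noting that the standard isomorphism $\La \isomto E$ maps $K$ onto $\wt{K}$, and then invoking the second part of \cref{rem:ar.rem.1}{} (established inside the proof of \cref{lem:ar.rem.8}{}), which produces a bound quiver presentation of $E$ in which $\wt{K}$ is precisely the ideal generated by a pre-removable set of arrows. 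If you replace your citation of \cref{prop:ar.rem.3}{} by this slightly finer use of its proof, your argument is complete and coincides with the paper's.
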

	
	\begin{proof}
		Let $\La$ be a basic algebra over a field $k$ with one-dimensional simple modules. Let $K \subseteq J( \La ) $ be an ideal of $\La $ such that there is an algebra monomorphism $ \i \colon  \Ga \monicc \La $ with $\pi \i = \id_\Ga $, where $\Ga = \La / K $ and $\pi \colon \La \epic \Ga $ is the natural epimorphism.
		Furthermore, consider the split extension algebra $ E = \Ga \ltimes_\theta K $, where $ ( K , \theta )$ is the multiplicative bimodule induced by the algebra cleft extension $( \La , \Ga , \pi , \i )$.
		We claim that there is a representation of $\La $ as a bound quiver algebra such that $K$ corresponds to the ideal generated by a pre-removable set of arrows.
		
		Observe that $ \Ga $ is also a basic algebra with one-dimensional simple modules. Indeed, its basicness comes from \cref{lem:semiperfect.clefts}{} and the basicness of $ \La $, and the simple $ \Ga $-modules are one-dimensional as they can be obtained from the simple $ \La $-modules via restriction of scalars along $ \i $. In particular, algebra $ \Ga $ is isomorphic to a bound quiver algebra.
		Our claim follows now from the fact that the standard isomorphism between $ \La $ and $ E $ maps $ K $ onto $ \wt{ K } $, and the second part of \hyperlink{rem:ar.rem.1}{\cref{rem:ar.rem.1}{}}{}.
		Let $ k Q_\La / I_\La $ be a representation of $ \La $ as a bound quiver algebra with a pre-removable set of arrows $ A $ such that the ideal $ K $ corresponds to the ideal $ \la A + I \ra $.
		
		If we assume that $ K \subseteq J ( \La )^2 $, then the ideal $ \la A + I \ra $ is contained in the square of the Jacobson radical of $ k Q_\La / I_\La $, a contradiction since arrows are not contained in the square of the Jacobson radical of a bound quiver algebra.
	\end{proof}


	\subsection{Removable and perfect multiplicative bimodules}		\label{subsec:adding.ar.2}
	
	This subsection establishes conditions on a multiplicative bimodule $ ( M , \theta ) $ over a bound quiver algebra $ \Ga $ such that $ \Ga $ is a generalized arrow removal of the split extension algebra $ E = \Ga \ltimes_\theta M $. The interest in such conditions stems from the fact that the finiteness of any of the three homological dimensions of $ \Ga $ studied in this paper (little/big finitistic and global) implies the finiteness of the respective dimension of $ E $.
	Specifically, the first main result of the subsection (\hyperlink{thm:main.ar.rem.3}{\cref{thm:main.ar.rem.3}{}}{}) establishes necessary and sufficient conditions, and may also be viewed as a characterization of generalized arrow removal algebras in the spirit of \cite[Proposition~4.5]{arrowrem1}{}.
	Note that the term `generalized arrow removal' is used in a broader sense here (cf.\ \cref{defn:ar.rem.1}{}), since algebras are allowed to be represented by any isomorphic bound quiver algebra.
	
	In the second main result (\cref{cor:ar.rem.6}{}), we show that tensor nilpotent bimodules that are perfect (or only right perfect if the extension is trivial) are equipped with the above properties independently of the chosen multiplicative structure. A non-zero bimodule $ M $ over a ring $ \Ga $ is called \emph{tensor nilpotent} if $ M^{\otimes_\Ga i } = 0 $ for some integer $ i > 1 $; see also \cref{defn:ar.rem.5}{} for (right) perfectness.
	For recent results analogous to \cref{cor:ar.rem.6}{}, we refer the reader to \cite[Corollary 6.14]{PanosPsarou}{} in the context of injective generation for derived module categories, and to \cite[Proposition 5.8]{Panos}{} for Iwanaga-Gorensteiness of noetherian rings.



	Before presenting our results, we make some useful observations about the bimodule $ \wt{M} = { 0 \oplus M } $ of a split extension $E = \Ga \ltimes_\theta M$ over a ring $\Ga $. First, it is easy to verify that if we identify $ \wt{ M } $ with $ M $ as abelian groups, the resulting $ E $-bimodule structure on $  M  $ is given by $m \ast (\g_1 , m_1) = m  \g_1 + \theta ( m \otimes m_1)$ and $ (\g_1 , m_1) \ast m  = \g_1  m  + \theta ( m_1 \otimes m)$ for all $ m , m_1 \in M $ and $ \g_1 \in \Ga $. Second, the $E$-structures on $\wt{M}$ are generally different from the $E$-structures on $M$ via restriction of scalars along the projection epimorphism $ \pi \colon E \epic \Ga $. In fact, the two left (or right) $E$-structures coincide if and only if $\theta=0$, that is if $E$ is a trivial extension of $\Ga$. Finally, the $\Ga$-bimodule structure of $ \wt{ M } $ through restriction of scalars along the inclusion monomorphism $ \i \colon \Ga \monicc E $ coincides with the original $ \Ga $-bimodule structure of $M$.
	
	Retaining the above notation, we introduce the core concept of this subsection.
	
	
	\begin{defn}
			\label{defn:ar.rem.7}
		Let $ \Ga $ be an Artin algebra, and let $ E = \Ga \ltimes_\theta M $ for a nilpotent multiplicative bimodule $(M, \theta )$. Then $ (M, \theta ) $ is:
		\begin{enumerate}[\rm(i)]
			\item \emph{two-sided removable} if both $\pd \wt{M}_E $ and $\pd_E \wt{M}  $ are finite;
			\item \emph{only left removable} if $\pd \wt{M}_E $ is finite but $\pd_E \wt{M} $ is infinite, and $\theta = 0$;
			\item \emph{(left) removable} if it is two-sided or only left removable.
		\end{enumerate}
	\end{defn}

	As discussed at the beginning of \cref{subsec:adding.ar.1}{}, if $ \Ga $ is an Artin algebra and $(M, \theta )$ is a multiplicative $ \Ga $-bimodule (where by assumption the base commutative artinian ring acts centrally on $ M $ making it a finitely generated module), then the ring $ \Ga \ltimes_\theta M $ is equipped with a canonical Artin algebra structure. If we assume that $(M, \theta )$ is nilpotent, then the ideal $ \wt{ M } $ is nilpotent and therefore contained in $ J ( \Ga \ltimes_\theta M ) $. If we furthermore assume that $(M, \theta )$ is removable, then it holds that
		\[
			\fpd { \Ga \ltimes_\theta M } < \infty  \, \, \iff  \, \,  \fpd \Ga < \infty 
		\]
	by an application of \cref{thm:main.III}{} for $ \La = \Ga \ltimes_\theta M $ and $ K = \wt{ M } $, as $ \Ga \simeq ( \Ga \ltimes_\theta M ) / \wt{ M } $ via the projection epimorphism. Moreover, the analogous equivalences are valid for the big finitistic and global dimensions of the algebras.
	
	The next example illustrates \cref{defn:ar.rem.7}{} in the classic context of triangular matrix algebras.

	\begin{exam}[cf.\!\mbox{\cite[Corollary 4.21]{FGR}{}}]
			\label{exam:2}
		Let $ A $ and $ B $ be two Artin $ k $-algebras, and let $ M $ be an $ A $-$ B $-bimodule where $ k $ acts centrally making it a finitely generated module. Recall that the associated triangular matrix algebra
		$\La = \big(\begin{smallmatrix}
			A & M\\
			0 & B
		\end{smallmatrix}\big)$
		can be identified with the trivial extension ${ C } \ltimes M $ where $ C = A \times B $ and $ M $ is endowed with the $ C $-bimodule structure defined by $ ( a ,b )m = a m$ and $ m ( a , b ) = m b $ for all $ a \in A $, all $ b \in B $ and $ m \in  M $. It holds that $ C $ has a natural Artin algebra structure inherited from the $ k $-structures of $ A $ and $ B $. Furthermore, we have that $ \La $ is an Artin algebra with respect to the canonical $ k $-structure discussed at the beginning of \cref{subsec:adding.ar.1}{}, as $ k $ acts centrally on $ M $ when the latter is viewed as the above $ C $-bimodule, and the induced $ k $-structure is the same as the initial one.
		
		
		It is clear that $\pd \wt{ M }_\La = \pd M_B ( = \pd M_C ) $ since $ \wt{M} = \wt{M} e_B $ and $ e_B \La e_A = 0 $ for the idempotents
		$e_A = \big(\begin{smallmatrix}
			1 & 0 \\
			0 & 0
		\end{smallmatrix}\big)
		$
		and
		$e_B = \big(\begin{smallmatrix}
			0 & 0 \\
			0 & 1
		\end{smallmatrix}\big)
		$. Therefore, the trivial multiplicative $ C $-bimodule induced by $ M $ is removable if and only if $\pd M_B < \infty $. In particular, using \cref{thm:main.III}{}, we recover the well-known fact that $\fpd \La < \infty $ if and only if $\fpd A < \infty $ and $\fpd B < \infty $ under the assumption $ \pd M_B < \infty $, as well as the analogous equivalences for $\Fpd $ and $\gd $.
	\end{exam}

		We proceed with the first main result of the subsection, which is a characterization of generalized arrow removal algebras.
	
	\begin{thm}		\label{thm:main.ar.rem.3}		\hypertarget{thm:main.ar.rem.3}
		Let $\Ga $ and $\La $ be two basic finite dimensional algebras with one-dimensional modules over a field $ k $. Then $\Ga $ is a generalized arrow removal of $\La$ if and only if $\La $ occurs as a split extension of $\Ga $ by a removable multiplicative bimodule.
	\end{thm}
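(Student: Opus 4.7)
The plan is to reduce the theorem to \cref{prop:ar.rem.3}{}, which already establishes the analogous equivalence at the level of pre-removable sets of arrows versus nilpotent multiplicative bimodules, by verifying that the canonical isomorphism between $\La$ and the appropriate split extension identifies the two distinguished ideals -- namely $\la A + I\ra$ on one side and $\wt{M} = 0 \oplus M$ on the other -- and therefore transfers all the homological conditions that upgrade pre-removability to removability.

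For the forward direction, I would start from a presentation $\La = kQ/I$ together with a removable set of arrows $A$ satisfying $\Ga \simeq \La/\la A+I\ra$. Since $A$ is in particular pre-removable, \cref{prop:ar.rem.3}{} supplies an algebra cleft extension $(\La, \Ga, \pi, \iota)$ with $K := \Ker \pi = \la A+I\ra$, together with the standard isomorphism $\phi \colon \La \isomto E := \Ga \ltimes_\theta K$, where $\theta$ is induced by the multiplication of $\La$. Unwinding the explicit formula $\phi(a) = (\pi(a), a - \iota\pi(a))$ one sees that $\phi$ restricts to a bijection $K \to \wt{K}$ which respects both the left and the right module structure after identification of $\La$ with $E$ via $\phi$. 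This yields $\pd{}_\La K = \pd{}_E \wt{K}$ and $\pd K_\La = \pd \wt{K}_E$, as well as the chain of equivalences $K^2 = 0 \iff \wt{K}^2 = 0 \iff \theta = 0$, which matches the three cases of \cref{defn:ar.rem.1}{} with those of \cref{defn:ar.rem.7}{} verbatim.

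For the backward direction, I would start from $\La \simeq \Ga \ltimes_\theta M$ with $(M,\theta)$ removable. Removability forces $(M,\theta)$ to be nilpotent, so \cref{lem:ar.rem.8}{} together with \hyperlink{rem:ar.rem.1}{\cref{rem:ar.rem.1}{}}{} provides a bound quiver representation $\La \simeq kQ_E/I_E$ and a pre-removable set of arrows $A$ whose generated ideal corresponds to $\wt{M}$ under the underlying isomorphism, with quotient algebra $\Ga$. The same identification transfers every homological invariant at stake, so removability of $(M,\theta)$ translates directly into removability of $A$, showing that $\Ga$ is a generalized arrow removal of $\La$.

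The only substantive step in either direction is the verification that the isomorphism $\phi$ (or the isomorphism supplied by \cref{lem:ar.rem.8}{}) really carries the relevant ideal onto its counterpart as a bimodule, not merely as an abelian group. I expect this to follow mechanically from the explicit formulas and from the observation, made at the beginning of \cref{subsec:adding.ar.1}{}, that the $\Ga$-bimodule structure on $\wt{M}$ inherited from $E$ via $\iota$ coincides with the original $\Ga$-bimodule structure on $M$. So I do not foresee a real obstacle beyond careful bookkeeping to match the three cases appearing in each of \cref{defn:ar.rem.1}{} and \cref{defn:ar.rem.7}{}.
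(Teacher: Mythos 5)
Your proposal is correct and follows essentially the same route as the paper: the forward direction uses the cleft extension coming from pre-removability together with the standard isomorphism carrying $\la A + I \ra$ onto $\wt{M}$, and the backward direction invokes \cref{lem:ar.rem.8}{} and \hyperlink{rem:ar.rem.1}{\cref{rem:ar.rem.1}{}} to identify $\wt{M}$ with the ideal of a pre-removable set of arrows, after which the homological conditions and the case distinction ($\theta = 0$ versus two-sided) transfer exactly as you describe. Framing this as an upgrade of \cref{prop:ar.rem.3}{} is only a cosmetic difference from the paper's exposition.
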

	
	\begin{proof}
		Let $\Ga $ be a generalized arrow removal of $\La$. Then $\La $ is isomorphic to a bound quiver algebra $k Q_\La / I_\La $ such that $\Ga $ is isomorphic to the quotient of $ { k Q_\La / I_\La } $ over the ideal generated by a removable set of arrows $A$.
		If we fix such an isomorphism and a section algebra monomorphism for the natural epimorphism induced by the ideal $  \la A + I_\La \ra $, then we obtain an algebra cleft extension of the form $( { k Q_\La / I_\La } , \Ga , \i' ,\pi' )$.
		Let $ ( M , \theta  ) $ be the multiplicative $ \Ga $-bimodule induced by this cleft extension, that is $M = \la A + I_\La \ra $ and $ \theta $ is induced by multiplication in $ k Q_\La / I_\La $. Then the standard isomorphism between $k Q_\La / I_\La $ and $E = \Ga \ltimes _\theta M$ maps the ideal $\la A + I_\La \ra $ onto $ \wt{ M } $.
		In particular, it holds that $\pd \wt{M} _E = \pd { \la A + I_\La \ra }_{ k Q_\La / I_\La } $ is finite since $A$ is removable. Similarly, the dimension $\pd_E \wt{M} = \pd_{k Q_\La / I_\La }  { \la A + I_\La \ra } $ is finite if $A$ is two-sided removable. Otherwise, the set $ A $ is only left removable, in which case
			\[
				( 0 \oplus { \Image \, \theta } ) = {\wt{ M }}^2 \simeq \la A + I_\La \ra^2 = 0
			\]
		implying $ \theta = 0 $. Therefore, the multiplicative bimodule $(M, \theta )$ is removable in any case, and the result follows since $\La \simeq E$.

		Conversely, let $ ( M , \theta) $ be a removable multiplicative  $\Ga $-bimodule such that $\La $ is isomorphic to the split extension algebra $E = \Ga \ltimes_\theta M$. Then it follows from \cref{lem:ar.rem.8}{} and \hyperlink{rem:ar.rem.1}{\cref{rem:ar.rem.1}{}} that there is a representation $k Q_E / I_E $ of $ E $ as a bound quiver algebra such that the ideal $ \wt{ M } $ corresponds to the ideal $ \la A + I_E \ra $ for a pre-removable set of arrows $ A $. In particular, the algebra $\Ga $ is isomorphic to the quotient $ ( k Q_E / I_E ) / \la A + I_E \ra $. Moreover, the set $A$ is removable in $k Q_E / I_E $  because $ ( M , \theta ) $ is a removable multiplicative bimodule. Specifically, the dimension $\pd \la A + I_E \ra _{ k Q_E / I_E } $ is finite as it is equal to $ \pd \wt{M} _E $. Furthermore, if $ ( M , \theta ) $ is two-sided removable then the dimension $\pd _{ k Q_E / I_E } \la A + I_E \ra  = \pd _E \wt{M} $ is also finite and $ A $ is two-sided removable. Otherwise, the multiplicative bimodule $ ( M , \theta ) $ is only left removable implying that the ideal $ \la A + I_E \ra^2 $ is trivial as $ \theta $ is the zero map in this case. The proof is complete as $ \La \simeq k Q_E / I_E $.
	\end{proof}

		In \cref{exam:2}{}, we saw an instance of a trivial extension such that the finiteness of the right projective dimension of the bimodule over the initial algebra ensures removability. However, a nilpotent multiplicative bimodule for an algebra $\Ga $ fails in general to be removable if we only impose conditions on the underlying bimodule over $\Ga$. We illustrate this fact with an example.

	\begin{exam}
		\label{exam:ar.rem.7}
		A nilpotent multiplicative bimodule $ ( M , \theta ) $ over a semisimple Artin algebra $ \Ga $ is removable if and only if its associated split extension algebra $ E = \Ga \ltimes_\theta M $ has finite global dimension. Indeed, if $ ( M , \theta ) $ is removable, then $ \gd E < \infty $ since $ \gd \Ga = 0 < \infty $; see also the discussion after \cref{defn:ar.rem.7}{}. Conversely, if $ \gd E < \infty $, then $ ( M , \theta ) $ is two-sided removable as the ideal $ \wt{ M } $ has finite projective dimension on both sides.
	\end{exam}

	We continue this subsection by establishing sufficient conditions for a bimodule $ M $ over an Artin algebra $ \Ga $ ensuring that $ M $ equipped with any multiplicative structure is removable. The following notion is central to this end.

	\begin{defn}[\mbox{\!\!\cite[Definition 4.4]{perfectbimods}{}, \cite[Definition 5.1]{PanosPsarou}{}}]
		\label{defn:ar.rem.5}
		A bimodule $M$ over a ring $\Ga$ is \emph{right perfect} if it satisfies the following conditions:
		\begin{enumerate}[\rm(i)]
			\item	$\Tor_i^\Ga ( M^{ \otimes _\Ga j } , M ) = 0 $ for every $i , j \geq 1 $;
			\item	$\pd M_\Ga < \infty$.
		\end{enumerate}
		Furthermore, left perfectness is defined analogously, and a bimodule is \emph{perfect} if it is left and right perfect.
	\end{defn}

		\begin{rem}
			Condition (i) in \cref{defn:ar.rem.5}{} already appears in \cite{FGR}{} in the context of trivial extensions of abelian categories; see also \cite{Beli2}{}, where it is considered in the more general context of abelian category cleft extensions. Furthermore, condition (i) is equivalent to its dual; see \cite[Corollary 4.3]{perfectbimods}{}. 
		\end{rem}

	The following lemma, which is essentially \cite[Proposition~6.6]{PanosPsarou}{}, showcases the good homological behavior of nilpotent right perfect bimodules with respect to split extensions. We include a simpler proof of the lemma for the sake of completeness, after introducing the necessary notation.
	
	Let $E = \Ga \ltimes_\theta M$ be a split extension of the ring $\Ga$. We write $\er = ( - )_\Ga $ and $\lr = - \otimes_\Ga E $ for the restriction and induction functors, respectively, along the inclusion monomorphism $ \i \colon \Ga \monicc E $. Furthermore, we use $G$ to denote the endofunctor of the category of right $ E $-modules ($ E^{\textrm{op}} \rMod  $) defined as the kernel of the canonical counit $\e \colon \lr \er \to \id_{ E^{\textrm{op} } \rMod }$ of the adjoint pair $(\lr , \er )$. Recall that $\e_Y \colon Y \otimes_\Ga E \to Y$ is induced by the right $E$-action on $Y$ for every $Y \in E^{\textrm{op}} \rMod  $, that is $\e_Y(y \otimes a)= y a $ for every $y \in Y $ and $a \in E $.
	

	\begin{lem}
		\label{lem:ar.rem.3}
		Let $(M, \theta)$ be a multiplicative bimodule for a ring $\Ga$ and $ E = \Ga \ltimes_\theta M $. If $M$ is tensor nilpotent right perfect over $ \Ga $, then $ \pd \widetilde{M}_E < \infty $. 
	\end{lem}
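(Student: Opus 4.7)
The plan is to exploit the counit-induced short exact sequence
\[
0 \to G(Y) \to Y \otimes_\Ga E \xrightarrow{\,\e_Y\,} Y \to 0
\]
(for any right $E$-module $Y$) iteratively, starting from $Y = \wt{M}$. The tensor nilpotency of $M$ will force the iteration to terminate after finitely many steps, and the right perfectness of $M$ will control the projective dimensions of the induced modules $G^i(\wt{M}) \otimes_\Ga E$ over $E$.

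First, I would identify $G$ at the level of right $\Ga$-modules. Using the decomposition ${}_\Ga E \simeq {}_\Ga \Ga \oplus {}_\Ga M$, we have $Y \otimes_\Ga E \simeq Y \oplus (Y \otimes_\Ga M)$ as right $\Ga$-modules, and under this identification $\e_Y$ is the sum of the identity on $Y$ and the action map $\rho_Y \colon Y \otimes_\Ga M \to Y$ induced by the right $\wt{M}$-action on $Y$. A direct verification then shows that $x \mapsto (-\rho_Y(x), x)$ is a right $\Ga$-module isomorphism $Y_\Ga \otimes_\Ga M \simeq G(Y)_\Ga$. Iterating yields $G^k(\wt{M})_\Ga \simeq M^{\otimes (k+1)}$ for every $k \geq 0$. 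Letting $N \geq 2$ be the minimal integer with $M^{\otimes N} = 0$, it follows that $G^{N-1}(\wt{M})$ has trivial underlying abelian group, hence vanishes as a right $E$-module.

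Next, I would bound the projective dimension of each induced module $G^i(\wt{M}) \otimes_\Ga E$. For any right $\Ga$-module $X$ of finite projective dimension with $\Tor_j^\Ga(X, M) = 0$ for every $j \geq 1$, a projective resolution of $X_\Ga$ tensored with $E$ over $\Ga$ remains a projective resolution of $X \otimes_\Ga E$ over $E$ (exactness follows from $\Tor_j^\Ga(X, E) = \Tor_j^\Ga(X, M) = 0$ via the decomposition of ${}_\Ga E$), so $\pd (X \otimes_\Ga E)_E \leq \pd X_\Ga$. For $X = M^{\otimes(i+1)}$ the Tor-vanishing is exactly the right perfectness condition, and $\pd (M^{\otimes k})_\Ga$ is finite for every $k \geq 1$ by induction: given a projective resolution of $M^{\otimes(k-1)}$ over $\Ga$, tensoring on the right with $M$ (still exact by the same Tor-vanishing) yields a resolution of $M^{\otimes k}$ whose terms are direct summands of powers of $M_\Ga$, and a standard dimension-shifting argument then gives $\pd (M^{\otimes k})_\Ga \leq k \cdot \pd M_\Ga$.

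Chaining the $N - 1$ short exact sequences $0 \to G^{i+1}(\wt{M}) \to G^i(\wt{M}) \otimes_\Ga E \to G^i(\wt{M}) \to 0$ for $0 \leq i \leq N - 2$, and combining the vanishing of $G^{N-1}(\wt{M})$ with the uniform finiteness of each $\pd (G^i(\wt{M}) \otimes_\Ga E)_E$, will give $\pd \wt{M}_E < \infty$. The main subtlety is the right $\Ga$-module identification of $G(Y)_\Ga$: despite the $\wt{M}$-action on $Y$ being non-trivial in general (when $\theta \neq 0$), $G(Y)_\Ga$ is still canonically isomorphic to $Y_\Ga \otimes_\Ga M$, and this must be verified by hand using the explicit description of the right $E$-structure on $Y \otimes_\Ga E$.
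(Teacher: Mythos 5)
Your proposal is correct and follows essentially the same route as the paper's proof: the counit short exact sequences $0 \to G^{i+1}(\wt{M}) \to G^i(\wt{M})\otimes_\Ga E \to G^i(\wt{M}) \to 0$, the identification $G^i(\wt{M})_\Ga \simeq M^{\otimes_\Ga (i+1)}$ so that tensor nilpotency terminates the chain, and the Tor-vanishing from right perfectness to make induction carry resolutions and give $\pd (M^{\otimes_\Ga j}\otimes_\Ga E)_E \leq j\,\pd M_\Ga$. The only difference is cosmetic: you verify by hand the facts the paper cites (the isomorphism $\er G \simeq F\er$ from \cite[Lemma 2.4]{arrowrem1} and the bound $\pd M^{\otimes_\Ga k}_\Ga \leq k\,\pd M_\Ga$ from \cite[Lemma 4.5]{perfectbimods}), and you conclude by backward induction on the short exact sequences rather than splicing them into one long exact sequence.
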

	
	\begin{proof}
		As $ M $ is tensor nilpotent, there is a minimal positive integer $ N' $ such that $M^{\otimes _\Ga ( N' +1 ) } = 0  $. We begin by showing that $\pd ( M^{ \otimes _\Ga j } \otimes_\Ga E )_E \leq  j \pd M_\Ga $ for every $ j = 1 , 2 , \ldots , N' $.
		First, we have that $\pd M^{ \otimes _\Ga j }_\Ga \leq j  \pd M_\Ga $ due to \cite[Lemma 4.5]{perfectbimods}{}. Now let $\mathbb{P} $ denote a projective resolution of minimal length for $M^{ \otimes _\Ga j} $, viewed as a right $\Ga$-module. Then the complex $\mathbb{P} \otimes_\Ga E $ is a projective resolution of $ M^{ \otimes _\Ga j} \otimes_\Ga E $. Indeed, we have
			\[
				\Tor_i^\Ga ( M^{ \otimes _\Ga j} , E ) \simeq \Tor_i^\Ga ( M^{ \otimes _\Ga j} , \Ga ) \oplus \Tor_i^\Ga ( M^{ \otimes _\Ga j} , M )
			\]
		as ${}_\Ga E \simeq {}_\Ga \Ga \oplus {}_\Ga M $, and all the above Tor-groups are zero for $ i \geq 1 $ since the regular module $ {}_\Ga \Ga $ is flat and $ M $ is right perfect; hence, the complex $\mathbb{P} \otimes_\Ga E $ is exact. Our claim follows now from the fact that tensor functors preserve projectivity, implying that the complex $\mathbb{P} \otimes_\Ga E $ is a projective resolution of the right $ E $-module $ M^{ \otimes _\Ga j } \otimes_\Ga E $ of length $\pd M^{ \otimes _\Ga j }_\Ga \leq j  \pd M_\Ga $.
		
		
		For the final claim, we begin by noting that there exists a short exact sequence 
		\[
		0 \to G^{j+1}( \widetilde{M} ) \to \lr \er  G^j ( \widetilde{ M } )  \to G^j ( \widetilde{ M } ) \to 0
		\]	
		in $ E^{ \mathrm{op} } \rMod $ for every integer $j \geq 0$ by the definition of the endofunctor $G$, where $G^0 = \id_{ E^{\mathrm{op } }\rMod }$. Splicing the above short exact sequences at their common ends we get a long exact sequence
		\[
		\ldots \to  \lr \er G^{ j + 1 }  ( \widetilde{ M } )  \to  \lr \er  G^j  ( \widetilde{ M } )  \to \ldots \to  \lr \er  G  ( \widetilde{ M } )  \to  \lr \er  ( \widetilde{ M } ) \to \widetilde{ M } \to 0.
		\]
		Let $ F $ denote the tensor endofunctor of $ \Ga^{\textrm{op}} \rMod $ induced by $M$. According to \cite[Lemma 2.4]{arrowrem1}{}, it holds that $ e G^j \simeq F^j e $ for every $ j \geq 0 $. Therefore, it holds that $ \lr \er G^{j} ( \widetilde{ M } ) \simeq \lr F^j \er ( \widetilde{ M } ) \simeq M^{ \otimes_\Ga (j +1 )} \otimes_\Ga E $ for every integer $ j \geq 0 $, where the second isomorphism follows from the fact that $ \er ( { \widetilde{ M } } ) \simeq M_\Ga $. In particular, it holds that $ \lr \er G^{j} ( \widetilde{ M } ) = 0 $ for every $j \geq N' $, and $ \pd \lr \er G^{j} ( \widetilde{ M } ) \leq  N' \pd M_\Ga $ for every other $ j \geq 0 $. All in all, we have
			\[
				\pd \widetilde{ M }_E \leq N' \pd M_\Ga + ( N' - 1 ) =  N' ( 1 + \pd M_\Ga ) - 1  < \infty 
			\]
		by a standard iterative argument on long exact sequences of Ext-functors, and our claim follows.
	\end{proof}

	\begin{cor}
		\label{cor:ar.rem.6}
		Let $(M, \theta)$ be a multiplicative bimodule for an Artin algebra $\Ga$, where $M$ is tensor nilpotent as a $ \Ga $-bimodule. If
		\begin{enumerate}[\rm(i)]
			\item the bimodule $M$ is perfect over $\Ga$, or
			\item the bimodule $M$ is right perfect over $\Ga$ and $\theta = 0$,
		\end{enumerate}
		then $ ( M , \theta ) $ is removable.
		In particular, it holds that
		\[
		\fpd { \Ga \ltimes_\theta M } < \infty  \, \, \iff  \, \,  \fpd \Ga < \infty
		\]
		and the equivalence remains valid if $\fpd $ is replaced by $\Fpd $ or $\gd $.
	\end{cor}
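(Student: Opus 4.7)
The plan is to verify that $(M,\theta)$ satisfies the conditions of \cref{defn:ar.rem.7}{} by invoking \cref{lem:ar.rem.3}{} (and its left-right dual), and then to deduce the homological equivalence from the discussion immediately following \cref{defn:ar.rem.7}{}, which in turn reduces matters to \cref{thm:main.III}{}.

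First, I would handle case (ii). Since $M$ is tensor nilpotent and right perfect over $\Ga$, \cref{lem:ar.rem.3}{} directly yields $\pd \wt{M}_E < \infty$. Because $\theta = 0$ by hypothesis, the pair $(M,\theta)$ either falls under condition (i) or condition (ii) of \cref{defn:ar.rem.7}{}, depending on whether $\pd_E \wt{M}$ is finite or infinite; in either case $(M,\theta)$ is removable. Next, for case (i), apply \cref{lem:ar.rem.3}{} to $M$ viewed as a right perfect tensor nilpotent $\Ga$-bimodule to obtain $\pd \wt{M}_E < \infty$. For the left side, I would invoke the left-right symmetric version of \cref{lem:ar.rem.3}{}, applied to $M$ as a left perfect tensor nilpotent bimodule (equivalently, applying \cref{lem:ar.rem.3}{} over $\Ga^{\mathrm{op}}$ and transporting across the canonical isomorphism of opposite rings $(\Ga \ltimes_\theta M)^{\mathrm{op}} \simeq \Ga^{\mathrm{op}} \ltimes_{\theta^{\mathrm{op}}} M^{\mathrm{op}}$), to deduce $\pd_E \wt{M} < \infty$. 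Hence $(M,\theta)$ is two-sided removable in the sense of \cref{defn:ar.rem.7}{}.

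For the final equivalence, I would apply \cref{thm:main.III}{} to the ring $\La = E = \Ga \ltimes_\theta M$ and the ideal $K = \wt{M}$. The nilpotency of $(M,\theta)$ ensures $\wt{M}$ is nilpotent and therefore contained in $J(E)$; the natural epimorphism $E \epic E/\wt{M} \simeq \Ga$ splits via the inclusion monomorphism $\i \colon \Ga \monicc E$. Removability supplies $\pd \wt{M}_E < \infty$ together with either $\pd_E \wt{M} < \infty$ (case (i)) or $\wt{M}^2 = 0$, which holds in case (ii) because $\theta = 0$ forces the multiplication $\wt{M} \cdot \wt{M} \subseteq 0 \oplus \Image\,\theta = 0$. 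All hypotheses of \cref{thm:main.III}{} being met, the equivalence $\fpd E < \infty \iff \fpd \Ga < \infty$ follows, together with the analogous statements for $\Fpd$ and $\gd$.

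The proof is almost entirely formal, since the substantive homological content has been concentrated in \cref{lem:ar.rem.3}{} and \cref{thm:main.III}{}. The only point requiring care is the left-right duality argument in case (i): one must check that \cref{lem:ar.rem.3}{} is genuinely symmetric, i.e.\ that its proof goes through unchanged for left modules when $M$ is tensor nilpotent and left perfect, producing $\pd{}_E \wt{M} < \infty$. This symmetry is unproblematic because \cref{defn:ar.rem.5}{} is stated symmetrically and the functorial arguments of \cref{lem:ar.rem.3}{} (restriction/induction along $\i$, the sequence involving the counit of the adjunction) have evident left-module counterparts; no new idea is needed.
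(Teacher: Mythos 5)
Your proposal is correct and follows essentially the same route as the paper: the paper's proof is exactly the application of \cref{lem:ar.rem.3}{} and its left-right dual (giving two-sided, respectively only left, removability in the sense of \cref{defn:ar.rem.7}{}), followed by \cref{thm:main.III}{} for $\La = \Ga \ltimes_\theta M$ and $K = \wt{M}$, as in the discussion after \cref{defn:ar.rem.7}{}. The only detail worth making explicit is that tensor nilpotency of $M$ implies nilpotency of the multiplicative bimodule $(M,\theta)$, which is what places you in the setting of \cref{defn:ar.rem.7}{} and guarantees $\wt{M} \subseteq J(E)$.
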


	\begin{proof}
		The theorem is a direct application of \cref{lem:ar.rem.3}{} and its dual, and \cref{thm:main.III}{} for the ideal $ K = \wt{ M } $. See also the discussion after \cref{defn:ar.rem.7}{}.
	\end{proof}


	The next example shows that the condition of tensor nilpotency is irredundant in \cref{cor:ar.rem.6}{} and \cref{lem:ar.rem.3}{}.
	
	\begin{exam}[\mbox{cf.\ \cite[Corollary~7.12]{Beligiannis}{}}]
		\label{exam:ar.rem.8}	
		Let $ \La = k Q / I $ be a bound quiver algebra, and let $ \Ga = \La / J( \La ) $ be its semisimple part.
		Then the algebra monomorphism $ \i \colon \Ga \monicc \La $ defined by $\ol{e_i} \mapsto e_i $ for every vertex $i \in Q_0 $ is a section of the natural epimorphism $\pi \colon \La \epic \Ga $ (\cref{rem:ar.rem.2}{}). Therefore, algebra $ \La $ may be realized as the split extension $ E = \Ga \ltimes_\theta J ( \La ) $ where $ ( J( \La ) , \theta ) $ is the multiplicative bimodule induced by the cleft extension $ ( \La , \Ga , \pi , \i ) $, see the discussion at the beginning of \cref{subsec:adding.ar.1}{}. In particular, the ideal $ J( \La ) $ is viewed as a $ \Ga $-bimodule via restriction of scalars along $ \i $, and it is always perfect since $\Ga $ is semisimple.
		
		However, the induced bimodule $ ( J( \La ) , \theta ) $ is removable if and only if the global dimension of $ \La $ is finite, since the modules $ J ( \La )_\La $ and $ \wt{ J ( \La ) } _E $ are the same if we identify $ \La  $ and $ E $ through the standard isomorphism (see also \cref{exam:ar.rem.0}{}). Furthermore, the analogous assertion holds for the left side.
		We claim that $J(\La )$ is tensor nilpotent over $\Ga$ in an even more restrictive case, that is if and only if the underlying quiver of $ \La $ is acyclic.


		Let $\CC^{ \geq 1 }$ denote a set of non-trivial paths in $Q$ such that the set $\CC^{ \geq 1 } \dot \cup \{ e_i \}_{i \in Q_0} $ forms a $k$-basis of $\La $ modulo $I$. This is always possible since the set of all paths generates $ \La $ as a $ k $-vector space. Then
		\[
		J( \La  ) \simeq \bigoplus_{p \in \CC^{ \geq 1 }} \Ga ( s(p) , t(p) )
		\]
		where $ J (\La )$ is viewed as a $\Ga$-bimodule through restriction of scalars along $ \i $, and $ \Ga ( i , j ) $ denotes the projective indecomposable $ \Ga $-bimodule $ \Ga e_i \otimes_k e_j \Ga $ for all vertices $i , j \in Q_0 $.
		
		Restricting our attention to indecomposable projective $ \Ga $-bimodules, we have
		\[
		\Ga ( i_1 , j_1  ) \otimes_\Ga \Ga ( i_2 , j_2  ) \simeq \Ga e_{i_1}  \otimes_k ( e_{j_1} \Ga \otimes_\Ga  \Ga e_{i_2} ) \otimes_k e_{j_2} \Ga \simeq \Ga ( i_1 , j_2 )^{ \dim_k e_{ j_1 } \Ga e_{ i_2 } }
		\]
		for any vertices $i_\k , j_\k \in Q_0 $ and $\k =1, 2$, as the $ k $-vector space isomorphism $ e_{j_1} \Ga \otimes_\Ga  \Ga e_{i_2} \simeq e_{ j_1 } \Ga e_{ i_2 }$ holds. Here, we write $ \Ga ( i_1 , j_2 )^{ \dim_k e_{ j_1 } \Ga e_{ i_2 } } $ to denote the direct sum of $ { \dim_k e_{ j_1 } \Ga e_{ i_2 } } $ copies of $ \Ga ( i_1 , j_2 ) $.
		Furthermore, it holds that $ \dim_k e_{ j_1 } \Ga e_{ i_2 } = \d_{j_1 , i_2} $ as $\Ga$ is basic semisimple, where $ \d_{j_1 , i_2} $ is the Kronecker delta.
		In particular, we have
		\[
		J( \La  )^{ \otimes_\Ga l } \simeq \bigoplus_{q} \Ga ( s(q) , t(q) )
		\]
		for any positive integer $l$, where $q $ ranges over paths of the form $p_1 p_2 \ldots p_l $ for $ p_i $ in $ { \CC^{ \geq 1 } }$ implying that $ l ( q ) \geq l $.
		
		For the proof of our initial claim, let $\a_1 \a_2 \ldots \a_m $ be an oriented cycle in $Q$; in other words, let $\a_1 , \a_2 , \ldots, \a_m$ be arrows such that $t(\a_\m ) = s( \a_{\m +1 })$ for every $\m =1 , 2, \ldots, m-1 $ and $t( \a_m ) = s( \a_1 )$.
		Then, letting $\a_0 = \a_m $, the $ \Ga $-bimodule $J( \La  )^{\otimes_\Ga l} $ possesses an indecomposable direct summand isomorphic to $\Ga ( s(\a_1 ) , t(\a_r) ) $ for every $l > 0 $, where $r$ is the remainder of the division of $l$ by $m$. On the other hand, if $Q$ does not possess any oriented cycle, then $J(\La )^{\otimes_\Ga | Q_0 | } = 0 $ as every path in $Q$ has length at most $ | Q_0 | - 1 $.
	\end{exam}
	
	In general, it is still not an easy task to compute whether a specific bimodule over a bound quiver algebra is tensor nilpotent and/or (right) perfect. For this reason, we introduce a more specialized class of tensor nilpotent right perfect bimodules through the novel notion of strongly-finite projective dimension.

	\subsection{Strongly finite projective dimension}		\label{subsec:adding.ar.3}

	Let $\Ga $ be a left perfect ring, and fix a basic set of primitive orthogonal idempotents $\{ e_i \}_i $. 
	To any module $ {}_\Ga M $ with minimal projective resolution
	\[
	\ldots \to P_j \to \ldots \to P_1 \to P_0 \to M \to 0
	\]
	we associate 
	the following sets of primitive idempotents.
	\begin{enumerate}[\rm(i)]
		\item The \emph{support} of $M $: the set $\supp ( {}_\Ga M ) = \{ e_i \, | \, e_i M \neq 0 \}  $.
		\item The \emph{$j$-th homological support} of $ M$, denoted $\supp_j ( {}_\Ga M )  $: the set containing all idempotents $e_i$ such that $ \Ga e_i $ is isomorphic to a direct summand of $P_j $.
		\item The \emph{$\infty$-homological support} of $M$, denoted $\supp_\infty ( {}_\Ga M )  $: the union of the $j$-th homological supports of $M$ for all integers $j \geq 0 $.
	\end{enumerate}
	Evidently, one may define analogous sets for right modules over a right perfect ring.
	
	\begin{rem}
		The definition of homological supports over a left perfect ring $ \Ga $ is motivated by the fact that left perfectness ensures all projective modules are isomorphic to a unique direct sum of the indecomposable modules $ \Ga e_i $ (see \cite[Proposition~28.13]{AndersonFuller}{}). Consequently, each $ P_j $ in the minimal projective resolution of a $ \Ga $-module $ M $ is isomorphic to a direct sum of the form $ \oplus_i \Ga e_i ^{ ( T_i^j ) } $ for sets $ T_i^j $ of uniquely determined cardinalities, and $ e_i \in \supp_j ( {}_\Ga M ) $ precisely when $ T_i^j $ is non-empty.
		
		Similarly, these homological supports can also be defined for finitely generated modules over a semiperfect ring, as analogous decompositions are available for finitely generated projective modules in this case (see \cite[Characterization~27.13]{AndersonFuller}{}).
	\end{rem}
	
	We illustrate the above notions with an example.
	
	\begin{exam}
		\label{exam:ar.rem.12}
		Let $ {}_\Ga M$ be a module over a left perfect ring $\Ga $. Then it is clear that $\supp_0 ({}_\Ga M ) \subseteq \supp ({}_\Ga M) $, and the two sets are actually equal if $M$ is semisimple. In that case, we have an isomorphism
		\[
		M \simeq \oplus_i ( \topp_\Ga \Ga e_i )^{(T_i)}
		\]
		for possibly infinite sets $T_i$, where $ T_i $ is non-empty if and only if $ e_i \in \supp ({}_\Ga M) $. Furthermore, we have $e_j { \topp_\Ga \Ga e_i } \neq 0 $ if and only if $j = i $, due to the isomorphisms
		\[
		e_j ( \topp_\Ga \Ga e_i ) \simeq \Hom _\Ga ( \Ga e_j , { \topp_\Ga \Ga e_i } ) \simeq \Hom _\Ga ( { \topp_\Ga \Ga e_j } , { \topp_\Ga \Ga e_i } )
		\]
		that follow from Schur's Lemma and the fact that the set $\{ e_i \}_i$ is basic.
		It remains to note that the projective cover of $M$ is $ \oplus_i { \Ga e_i } ^{(T_i)} $.
	\end{exam}


	\begin{defn}
			\label{defn:strongly-finite}
		Let $M$ be a bimodule over a perfect ring $\Ga$.
		We call the right projective dimension of $M$ \emph{strongly-finite} if
			\begin{enumerate}[\rm(i)]
				\item it is finite (i.e.\ $ \pd M_\Ga < \infty $), and
				\item the set $ \supp ( {}_\Ga M ) \cap \supp_\infty ( M_\Ga ) $ is empty.
			\end{enumerate} 	
	\end{defn}
	
	\begin{rem}
		The requirement that the intersection $ \supp ( {}_\Ga M )  \cap \supp_\infty ( M_\Ga ) $ is empty can also be expressed through the notion of codominant dimension, which is dual to dominant dimension introduced in \cite{domdim}{}. Let $e$ denote the sum of all idempotents in $\{ e_i \}_i$ and let $ f $ denote the sum of all idempotents in $ \{ e_i \}_i \setminus \supp ({}_\Ga M)$. Then the intersection $ \supp ( {}_\Ga M ) \cap \supp_\infty ( M_\Ga ) $ is empty if and only if the relative codominant dimension of $ M_\Ga $ with respect to the right $ \Ga $-module $ f \Ga $ is infinite, denoted $ ( f \Ga ) \codomdim { M_\Ga } = \infty $. See also \cite{Tiago}{}.
	\end{rem}
	
	\begin{exam}
		\label{exam:ar.rem.10}
		Let $\La$ be a basic algebra over a field $k$ with one-dimensional simple modules, and let $\Ga = \La / J ( \La )$ be its semisimple part. We claim that the right projective dimension of the ideal $J(\La )$ viewed as a $\Ga$-bimodule is strongly-finite if and only if every vertex of the ordinary quiver of $\La$ is a source or a sink.
		Without loss of generality, we identify $\La $ with one of its representations as a bound quiver algebra, say $k Q / I $, and retain the notation of \cref{exam:ar.rem.8}{}.
		It holds that
			\[
				\supp_\infty ( J(\La )_\Ga ) = \supp_0 ( J(\La )_\Ga ) = \supp ( J(\La )_\Ga )
			\]
		on the grounds that $J( \La )$ is projective and semisimple as a right $\Ga $-module, see \cref{exam:ar.rem.12}{}.
		It follows that a vertex $ i $ is in $ \supp_\infty ( J(\La )_\Ga ) $ if and only if there is a non-zero path of positive length with target $ i $, that is if $ i $ is not a source. 
		Similarly, it holds that $ i \in \supp ( {}_\Ga J(\La ) ) $ if and only if $ i $ is not a sink. Consequently, the intersection $ \supp_\infty ( J(\La )_\Ga ) \cap \supp ( {}_\Ga J( \La ) )  $ is empty if and only if every vertex of $ Q $ is a source or a sink.
	\end{exam}

	We show next that bimodules of strongly-finite right projective dimension are tensor nilpotent and right perfect.

	\begin{prop}
		\label{lem:ar.rem.5}
		Let $ \Ga $ be a perfect ring, and let $M$ be a $ \Ga $-bimodule of strongly-finite right projective dimension. Then $M$ is right perfect and $M \otimes_\Ga M = 0$.
	\end{prop}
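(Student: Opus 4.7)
The plan is to use a minimal projective resolution of $M$ as a right $\Ga$-module and exploit the disjointness of supports directly. The key observation is elementary: for any primitive idempotent $e_i$, one has $(e_i \Ga) \otimes_\Ga M \simeq e_i M$, and by definition $e_i M = 0$ whenever $e_i \notin \supp({}_\Ga M)$. This turns the strongly-finite condition into a vanishing statement at the chain-level.

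Concretely, let $\mathbb{P}_\bullet \to M$ be a minimal projective resolution of $M_\Ga$. Since $\Ga$ is perfect, each $P_j$ is a direct sum of indecomposable projective right $\Ga$-modules of the form $e_i \Ga$ with $e_i \in \supp_j(M_\Ga) \subseteq \supp_\infty(M_\Ga)$. By assumption $\supp_\infty(M_\Ga) \cap \supp({}_\Ga M) = \emptyset$, so $e_i M = 0$ for every $e_i$ appearing in any $P_j$. Therefore
\[
P_j \otimes_\Ga M \simeq \bigoplus e_i M = 0 \quad \text{for every } j \geq 0,
\]
and the entire complex $\mathbb{P}_\bullet \otimes_\Ga M$ is the zero complex. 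Taking homology yields $\Tor_i^\Ga(M, M) = 0$ for every $i \geq 0$. In particular $M \otimes_\Ga M = 0$, which immediately forces $M^{\otimes_\Ga j} = 0$ for every $j \geq 2$.

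To conclude right perfectness, observe that condition (ii) of \cref{defn:ar.rem.5}{} is given as condition (i) of the strongly-finite hypothesis. For condition (i) of \cref{defn:ar.rem.5}{}, the group $\Tor_i^\Ga(M^{\otimes_\Ga j}, M)$ vanishes trivially for $j \geq 2$ by the tensor-nilpotency just established, and vanishes for $j = 1$ by the computation above. This completes the proof. There is essentially no obstacle here beyond correctly unpacking the definitions; the subtlety lies entirely in having chosen the definition of strongly-finite right projective dimension so that the support condition kills $P_\bullet \otimes_\Ga M$ on the nose.
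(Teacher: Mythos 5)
Your argument is correct and is essentially the paper's own proof: both use that each term of a minimal projective resolution of $M_\Ga$ is a sum of modules $e_i\Ga$ with $e_i \in \supp_\infty(M_\Ga)$, so the disjointness with $\supp({}_\Ga M)$ forces $P_j \otimes_\Ga M \simeq \oplus\, e_i M = 0$, whence the whole complex $\mathbb{P}\otimes_\Ga M$ vanishes and all $\Tor_i^\Ga(M,M)=0$, including $M\otimes_\Ga M=0$. Your explicit final check of the two conditions in \cref{defn:ar.rem.5}{} (using tensor nilpotency for $j\geq 2$) is just a slightly more spelled-out version of what the paper leaves implicit.
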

	
	\begin{proof}
		We have to show that $ M \otimes_\Ga M = 0 $ and $\Tor_i^\Ga (M, M) = 0 $ for every $i \geq 1$. Let $\{ e_i \}_{i}$ be a basic set of primitive orthogonal idempotents for $\Ga $. If
		\[
		\mathbb{P} \colon \, \, \, 0 \to P_m \to \ldots \to P_1 \to P_0 \to M \to 0
		\]
		is a minimal projective resolution of $ M_\Ga $, then we may assume that $P_j = \bigoplus_i { e_i \Ga }^{ ( T_i^j ) } $ for possibly infinite sets $ T_i^j $ and $j = 0 , 1 , \ldots , m $, where the set $ T_i^j $ is non-empty if and only if $ e_i \in \supp_j ( M_\Ga ) $.
		
		We claim that $ P_j \otimes_\Ga M = 0 $ for every $j = 0 , 1 , \ldots , m $. Indeed, the intersection $\supp({}_\Gamma M) \cap \supp_\infty(M_\Gamma)$ is empty as the right projective dimension of $ M $ is strongly-finite by assumption, implying that for every $e_i \in \supp_j(M_\Gamma)$ we have $ e_i M = 0 $. Therefore, we have $ e_i \Ga \otimes_\Ga M \simeq e_i M = 0 $ for every $e_i \in \supp_j(M_\Gamma)$, and our claim follows from the fact that $P_j $ is a direct sum of such modules.
		
		Consequently, tensoring the right part of $ \mathbb{P} $ with $ M $ from the right yields the sequence
			\[
				0 \to 0 \to M \otimes_\Ga M \to 0
			\]
		which is exact as tensor functors are right exact; in particular, we have $M \otimes_\Ga M = 0 $.
		We deduce that $\Tor_i^\Ga ( M , M ) = 0 $ for all $i \geq 0 $ as $\mathbb{P } \otimes_\Ga M $ is the zero complex.
	\end{proof}

	\begin{cor}
		Let $ \Ga $ be an Artin algebra, and let $M$ be a $ \Ga $-bimodule of strongly-finite right projective dimension. Then the trivial multiplicative bimodule over $M$ is removable.
	\end{cor}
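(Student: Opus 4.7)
The plan is to deduce the corollary as a direct consequence of the two immediately preceding results. First, I would invoke \cref{lem:ar.rem.5} to extract the key structural properties of $M$: under the hypothesis that the right projective dimension of $M$ is strongly-finite, that proposition guarantees both that $M$ is right perfect over $\Ga$ and that $M \otimes_\Ga M = 0$. The second conclusion immediately implies that $M$ is tensor nilpotent as a $\Ga$-bimodule, with nilpotency index at most $2$ (indeed, $M^{\otimes_\Ga 2} = 0$).

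Next, the trivial multiplicative bimodule over $M$ is by definition the pair $(M, 0)$, so the associated multiplication map satisfies $\theta = 0$. The hypotheses of \cref{cor:ar.rem.6}{} case (ii) are thus all met: $M$ is a tensor nilpotent, right perfect $\Ga$-bimodule, and the multiplicative structure is the zero map. Applying case (ii) of that corollary to $(M, 0)$ yields directly that the trivial multiplicative bimodule is removable, which is exactly the desired conclusion.

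There is no substantive obstacle in this argument; all the hard work has been done earlier. The conceptual content of the corollary lies in the definition of strongly-finite right projective dimension and in \cref{lem:ar.rem.5}{}, which bridges that combinatorial/homological notion with the tensor nilpotency and right perfectness conditions required by \cref{cor:ar.rem.6}{}. The corollary itself therefore functions as a convenient packaging of these two results, producing a large and easily recognisable family of removable trivial multiplicative bimodules that will be used subsequently to construct trivial one-arrow extensions.
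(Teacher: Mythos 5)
Your argument is correct and follows exactly the paper's own proof: apply \cref{lem:ar.rem.5}{} to get right perfectness and $M \otimes_\Ga M = 0$ (hence tensor nilpotency), then apply case (ii) of \cref{cor:ar.rem.6}{} to the pair $(M,0)$. The only detail the paper adds, which your observation $M\otimes_\Ga M=0$ already implies, is that the trivial structure is in fact the only possible multiplicative structure on $M$.
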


	\begin{proof}
		The corollary follows directly from \cref{lem:ar.rem.5}{} and \cref{cor:ar.rem.6}{}. Note that the only possible multiplicative structure on $ M $ is the trivial one.
	\end{proof}

	The next result shows that the notion of strongly-finite right projective dimension can be equivalently described over the induced trivial extension algebra.
	
	\begin{prop}
		\label{prop:ar.rem.4}
		Let $\La $ be a perfect ring. Let $K \subseteq J(\La )$ be an ideal such that $K^2 = 0 $ and the natural ring epimorphism $ \pi \colon \La \epic \La / K = \Ga $ splits. Then the right projective dimension of $K$ is strongly-finite over $\La$ if and only if it is over $\Ga$.
	\end{prop}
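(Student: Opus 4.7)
The plan is to exploit that $K^2=0$ forces the right $\La$-action on $K$ to factor through $\pi$, so $K_\La$ coincides with the restriction of $K_\Ga$ along $\pi$; dually, $K$ sits inside $K_\La$ as a right $\Ga$-submodule of itself with the zero submodule as an absorbing $\La$-complement (the condition $K\cdot K=0$ being exactly what is needed). The cleft extension $(\La,\Ga,\pi,\iota)$ is radical-preserving with superfluous kernel by \cref{lem:cleft.2}{} as $K\subseteq J(\La)$, so by \cref{lem:semiperfect.clefts}{} $\La$ and $\Ga$ share a basic set of primitive orthogonal idempotents $\{e_i\}$. Identifying $\iota(e_i)$ with $e_i$, both the left support $\supp({}_R K)$ and the right support $\supp(K_R)$ coincide for $R=\La$ and $R=\Ga$, so the task reduces to comparing the homological right supports $\supp_\infty(K_R)$.

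For the direction ``$\Ga$-strongly-finite $\Rightarrow$ $\La$-strongly-finite'', the key observation is that strong finiteness over $\Ga$ forces $\Tor^\Ga_i(K,K)=0$ for every $i\geq 0$. Indeed, in a minimal projective resolution $\mathbb{P}$ of $K_\Ga$, every primitive idempotent appearing in any term lies in $\supp_\infty(K_\Ga)$, hence outside $\supp({}_\Ga K)$ by assumption, so $e_i K=0$ and the complex $\mathbb{P}\otimes_\Ga K$ vanishes identically. I would then apply the right-module analog of \cref{lem:cleft.3}{} to conclude that $\mathbb{P}\otimes_\Ga\La$ is a minimal projective $\La$-resolution of $K_\La$. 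Since its terms $(e_i\La)^{(T_i^j)}$ are indexed by the same primitive idempotents as those of $\mathbb{P}$, this gives $\supp_j(K_\La)=\supp_j(K_\Ga)$ for every $j$, finiteness of $\pd K_\La$, and the required support disjointness over $\La$.

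For the converse, the right-module analog of \cref{cor:1}{} applied to $K$ as a right $\Ga$-submodule of $K_\La$ with absorbing trivial $\La$-complement yields $\pd K_\Ga\leq\pd K_\La<\infty$. Iterating the right-module analog of \cref{lem:cleft.1}{} produces, at each syzygy level $j$, a split $\Ga$-monomorphism from the minimal $\Ga$-projective cover of $\Omega^j_\Ga(K)$ into the minimal $\La$-projective cover of $\Omega^j_\La(K)$ with absorbing $\La$-complement; reading off the primitive idempotents on the $\Ga$-summand then gives the inclusion $\supp_j(K_\Ga)\subseteq\supp_j(K_\La)$. Passing to unions over all $j$ and combining with the already noted equality of left supports yields the desired support disjointness over $\Ga$. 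The only subtle point is the right-module reformulation of Lemmas~\ref{lem:cleft.1}, \ref{lem:cleft.3} and \ref{cor:1}, which follows by passing to opposite rings since $\La$ is perfect on both sides and all ingredients (radical preservation, superfluous kernel, projective covers, and the decomposition $\La=\Ga\oplus K$) are symmetric.
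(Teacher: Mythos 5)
Your proposal is correct, and the forward direction is essentially the paper's argument: you re-derive the vanishing $\Tor^\Ga_i(K,K)=0$ for all $i\geq 0$ (which the paper gets by citing \cref{lem:ar.rem.5}{}, whose proof is exactly your support computation), apply the right-module analog of \cref{lem:cleft.3}{}, and use $K^2=0$ to identify the $\pi$-restricted right $\La$-structure with the right ideal structure, yielding equality of homological supports. Where you diverge is the converse. The paper restricts the minimal projective resolution of $K_\La$ along $\i$ and shows it is already a minimal projective $\Ga$-resolution: for $e_i\in\supp_\infty(K_\La)$ one has $e_iK=0$, hence $e_i\La=e_i\Ga$, and minimality follows from $J(\La)=J(\Ga)\oplus K$ (\cref{lem:cleft.2}{}); this gives the equalities $\supp_j(K_\Ga)=\supp_j(K_\La)$ and $\pd K_\Ga=\pd K_\La$. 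You instead view $K$ as a right $\Ga$-submodule of $K_\La$ with the zero submodule as absorbing $\La$-complement (exactly as in the proof of \cref{prop:main.III}{}) and iterate the right-module versions of \cref{lem:cleft.1}{} and \cref{cor:1}{}, obtaining only the inequalities $\pd K_\Ga\leq\pd K_\La$ and $\supp_j(K_\Ga)\subseteq\supp_j(K_\La)$ — which indeed suffice for strong finiteness over $\Ga$. One small caveat: the inclusion of $j$-th homological supports does not follow from the bare statement of \cref{lem:cleft.1}{} (a split $\Ga$-monomorphism into the restricted $\La$-projective cover does not by itself control which indecomposable projectives occur), but it does follow from the explicit construction in its proof, where $P_\Ga(N)=\oplus_i\Ga e_{g_i}$ and the same idempotents $e_{g_i}$ index summands of $P_\La(M)$; as long as you read the lemma through its proof, your argument is complete. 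The trade-off is that the paper's route gives slightly sharper conclusions (equalities of supports and dimensions), while yours stays entirely within the general absorbing-complement machinery and avoids the explicit $e_i\La=e_i\Ga$ identification.
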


	\begin{proof}
		Let $ \i \colon \Ga \monicc \La $ be any section ring monomorphism of $ \pi $, and consider $ K $ as a $\Ga$-bimodule through restriction of scalars along $ \i $. Let $\{e_i \}_i $ be a basic set of primitive orthogonal idempotents for $\Ga$. Then it follows from \cref{lem:semiperfect.clefts}{} that $\{ \i (e_i ) \}_i $ is a basic set of primitive orthogonal idempotents for $\La$. Note that the sets $\supp ( {}_\Ga  K )  $ and $ \supp ( {}_\La  K ) $ are equal if we identify the idempotent $e_i$ of $\Ga $ with its image $ \i( e_i ) $ in $\La $ for every $ i $. In the rest of the proof we identify $ \Ga $ with a subring of $ \La $ so that $ \i $ is just the inclusion monomorphism.

		For one direction, assume that the right projective dimension of $K$ is strongly-finite over $\Ga$. Then the groups $\Tor_j^\Ga ( K , K ) $ are trivial for all $j \geq 0$ according to \cref{lem:ar.rem.5}{}. Let now $ \mathbb{P} $ be a minimal projective resolution of $K_\Ga $. Then the complex $ \mathbb{ P } \otimes_\Ga \La $ is a minimal projective resolution of $( K _\Ga ) _\La $ according to \cref{lem:cleft.3}{}. Furthermore, observe that the right $ \La $-module structure of $( K _\Ga ) _\La $ is the same as the structure that $ K $ inherits as a right ideal of $ \La $ since $K^2 = 0 $. Hence, it holds that $\supp_j ( K_\Ga ) = \supp_j ( K_\La ) $ for every non-negative integer $j$. In particular, we have that $\supp_\infty ( K_\La ) = \supp_\infty ( K_\Ga ) $ and $ \pd K_\La = \pd K_\Ga $ is finite. It remains to notice that $ \supp_\infty ( K_\La ) $ and $ \supp ( {}_\La  K ) $ are disjoint as $ \supp ( {}_\La  K )  = \supp ( {}_\Ga  K ) $ and the sets $ \supp_\infty ( K_\Ga ) $ and $ \supp ( {}_\Ga  K ) $ are disjoint by assumption.
		
		Conversely, let the right projective dimension of $K$ be strongly-finite over $ \La $ and
		\[
		\mathbb{P} \colon \, \, \, 0 \to P_m \to \ldots \to P_1 \to P_0 \to K \to 0
		\]
		be a minimal projective resolution of $K_\La$. We may assume that $P_j = \bigoplus_i  e_i \La ^{ ( T_i^j ) } $ for possibly infinite sets $ T_i^j $, where the set $ T_i^j $ is non-empty if and only if $e_i \in \supp_j ( K_\La ) $. Our target is to show that the complex $ \mathbb{ P }_\Ga $ is a minimal projective resolution of $K _\Ga $.

		Observe that if an idempotent $ e_i $ is in $ \supp_j ( K_\La ) $ for some $j \geq 0$, then $e_i K = 0$ as the right projective dimension of $K$ is strongly-finite over $ \La $ by assumption; in particular, we have $e_i \La = e_i \Ga $ for every $ e_i \in \supp_\infty ( K_\La ) $ as $\La =  \Ga \oplus K$. Therefore, the inclusion $ \i $ (which is also a monomorphism of right $\Ga$-modules) yields the identification $e_i \Ga = e_i \La_\Ga $. In particular, the complex $ \mathbb{ P }_\Ga $ is a projective resolution of $K _\Ga $. For the minimality, note that
		\[
			\rad  e_i \La _\La = e_i J( \La ) = e_i J( \Ga ) = \rad e_i \Ga_\Ga = \rad e_i \La  _\Ga     
		\]
		due to the equality $J ( \La ) = J( \Ga ) \oplus K$ established in \cref{lem:cleft.2}{}.
		It remains to note that $ ( P_j )_\Ga \simeq \bigoplus_i { e_i \Ga } ^{ ( T_i^j ) } $, and thus the equality $\supp_j ( K_\Ga )  = \supp_j ( K_\La )$ holds for every $ j \geq 0 $. In particular, we have $\supp_\infty ( K_\Ga ) = \supp_\infty ( K_\La ) $ which implies the disjointness of $ \supp_\infty ( K_\Ga ) $ and $ \supp ( {}_\Ga K ) $ as in the proof of the first direction, and $ \pd K_\Ga = \pd K_\La $ is finite. 
	\end{proof}

	\begin{exam}
		Let $\La_1 = k Q / I_1 $ be the algebra of \cref{exam:ar.rem.1}{}. It holds that $\g $ is a non-repetitive arrow in $\La_1 $, i.e.\ the square of the ideal $\la \g + I_1 \ra $ is zero. Furthermore, we have $\pd { \la \g + I_1 \ra }_{\La_1} = 1 $ as there is a minimal projective resolution of the form
		\[
		0 \to e_2 \La_1 \to e_1 \La_1 \to \la \g + I_1 \ra \to 0.
		\]
		We conclude that the right projective dimension of $\la \g + I_1 \ra $ viewed as a $\La_1 $-bimodule is strongly-finite since $\supp_\infty ( \la \g + I_1 \ra _{ \La _1 } ) = \{ e_1, e_2 \} $ and $ \supp ( {}_{ \La _1 } \la \g + I_1 \ra ) = \{ e_3 \} $, where the latter fact is due to the zero relations $ \b \g $ and $ \z \g $.
		
		As $\g$ is pre-removable in $\La_1$, \cref{prop:ar.rem.4}{} implies that the ideal $\la \g + I_1 \ra $ is also of strongly-finite right projective dimension as a $\Ga_1 $-bimodule, where $\Ga_1 = \La_1 / \la \g + I_1 \ra $. The $\Ga_1$-bimodule structure of $\la \g + I_1 \ra $ is given by restriction of scalars along any section of the natural algebra epimorphism $ \pi \colon \La_1 \epic \Ga_1 $.
	\end{exam}

	Before proceeding with the last part of this subsection, we introduce a lemma that measures how different are the various types of multiplicative bimodules studied so far. In particular, the following lemma shows that \hyperlink{thm:main.ar.rem.3}{\cref{thm:main.ar.rem.3}{}} is strictly more general than \cref{cor:ar.rem.6}{}.

	\begin{lem}
		The following hold for a bound quiver algebra $\La $ up to isomorphism:
		\begin{enumerate}[\rm(i)]
			\item The algebra $\La$ occurs as the split extension of a semisimple algebra by a removable multiplicative bimodule if and only if $ \gd \La < \infty $.
			\item The algebra $\La $ occurs as the split extension of a semisimple algebra induced by a tensor nilpotent (perfect) bimodule if and only if the quiver of $\La $ is acyclic.
			\item The algebra $\La $ occurs as the trivial extension of a semisimple algebra by a bimodule of strongly-finite right projective dimension if and only if every vertex in the quiver of $\La $ is a source or a sink.
		\end{enumerate}
		In all cases, the base semisimple algebra is $ \La / J ( \La ) $.
	\end{lem}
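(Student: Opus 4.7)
The plan is to reduce each of the three assertions to the canonical realization $E = \Gamma \ltimes_\theta J(\Lambda)$ of $\Lambda$ as a split extension of its semisimple part $\Gamma = \Lambda/J(\Lambda)$, which is available by the Wedderburn-Malcev Theorem (\cref{exam:ar.rem.-1}{}). First I would establish that this realization is essentially the only one of its kind. Suppose $\Lambda$ is realized as $S \ltimes_\theta M$ with $S$ semisimple and $(M,\theta)$ a nilpotent multiplicative bimodule, via the standard isomorphism. Then $\wt{M}$ corresponds to a nilpotent ideal $K$ of $\Lambda$ with $\Lambda/K \simeq S$: since $K$ is nilpotent, $K \subseteq J(\Lambda)$; since $\Lambda/K$ is semisimple, $J(\Lambda) \subseteq K$. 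Hence $K = J(\Lambda)$ and $S \simeq \Gamma$. The hypothesis on $(M,\theta)$ in each of (i)--(iii) indeed forces nilpotency of the multiplicative bimodule: removability requires nilpotency by \cref{defn:ar.rem.7}{}; tensor nilpotency means $M^{\otimes_\Gamma N} = 0$ for some $N$ and hence $\theta^N = 0$; and a trivial extension satisfies $\wt{M}^2 = 0$.

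For (i), the standard isomorphism identifies the $E$-module $\wt{J(\Lambda)}$ with $J(\Lambda)$ as both a left and right $\Lambda$-module. Therefore $(J(\Lambda),\theta)$ is removable if and only if $\pd J(\Lambda)_\Lambda < \infty$ together with either $\pd _\Lambda J(\Lambda) < \infty$ or $\theta = 0$. Since $\Lambda$ is an Artin algebra we have $\gd \Lambda = \gd \Lambda^{\mathrm{op}}$, so $\pd _\Lambda J(\Lambda)$ and $\pd J(\Lambda)_\Lambda$ are simultaneously finite, and their common finiteness is equivalent to $\gd \Lambda < \infty$; in particular the only-left-removable case of \cref{defn:ar.rem.7}{} contributes nothing new here.

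For (ii), every $\Gamma$-bimodule is perfect since $\Gamma$ is semisimple (all modules are projective and flat, so the Tor-vanishing and finite projective dimension conditions are trivial). Hence the only condition to verify is tensor nilpotency of $J(\Lambda)$ as a $\Gamma$-bimodule, which was shown to be equivalent to the acyclicity of the quiver of $\Lambda$ in \cref{exam:ar.rem.8}{}. For (iii), the canonical $\theta$ vanishes precisely when $J(\Lambda)^2 = 0$, and by \cref{exam:ar.rem.10}{} the right projective dimension of $J(\Lambda)$ as a $\Gamma$-bimodule is strongly-finite exactly when every vertex of the quiver is a source or a sink. The latter condition already implies $J(\Lambda)^2 = 0$, since any path of length two would pass through a middle vertex that is neither a source nor a sink; hence the source/sink condition automatically forces the canonical $\theta$ to vanish, completing the equivalence.

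The main obstacle is the uniqueness step at the outset, as one needs to argue that every nilpotent realization coincides (up to isomorphism) with the canonical one; once this is secured, the remaining equivalences are direct applications of the results recalled in \cref{exam:ar.rem.8}{} and \cref{exam:ar.rem.10}{}, together with the symmetry of global dimensions for Artin algebras used in (i).
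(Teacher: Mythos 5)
Your route is the paper's own: realize $\La$ canonically as $\Ga \ltimes_\theta J(\La)$ with $\Ga = \La / J(\La)$ via Wedderburn--Malcev, show that any realization over a semisimple base by a nilpotent multiplicative bimodule must have ideal $J(\La)$ and base $\Ga$, and then read off (i) from two-sided finiteness of the projective dimensions of $J(\La)$ (using $\gd \La = \gd \La^{\mathrm{op}}$), (ii) from \cref{exam:ar.rem.8}{}, and (iii) from \cref{exam:ar.rem.10}{}. Your identification of the ideal ($K$ nilpotent forces $K \subseteq J(\La)$, semisimplicity of $\La / K$ forces the reverse inclusion) is a slightly more elementary version of the paper's argument via \cref{lem:ar.rem.8}{} and \cref{lem:cleft.2}{}, and your explicit observation that the source/sink condition forces $J(\La)^2 = 0$, hence $\theta = 0$, in (iii) is a point the paper leaves implicit; both are fine.

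The one place you are thinner than the paper is the passage from the ideal-level identification to the bimodule-level one, which is precisely what is needed to apply \cref{exam:ar.rem.8}{} and \cref{exam:ar.rem.10}{} in the forward directions of (ii) and (iii). Given an isomorphism $\phi \colon \La \to S \ltimes_\theta M$, the section $\Ga \to \La$ it induces need not be the canonical one sending $\ol{e_i}$ to $e_i$, so a priori $M$ is isomorphic to $J(\La)$ only for the $\Ga$-bimodule structure coming from that section, while the two cited examples are computed with the canonical splitting. The paper closes this by choosing the bound quiver presentation of $\La$ compatibly with $\phi$ and exhibiting a commutative diagram, yielding a genuine $\Ga$-bimodule isomorphism $J(\La) \cong \wt{M}$. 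In your setup you would instead invoke conjugacy of Wedderburn--Malcev complements: any two algebra sections of $\La \epic \Ga$ differ by conjugation with a unit $1 + x$, $x \in J(\La)$, so the two $\Ga$-bimodule structures on $J(\La)$ are isomorphic and tensor nilpotency, perfectness and strongly-finite right projective dimension are unaffected. With that one sentence added your argument is complete; part (i) does not need it, since removability only involves conditions on $\wt{M}$ as a one-sided module over the extension, which $\phi$ transports directly to $J(\La)$ over $\La$.
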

	
	\begin{proof}
		We begin by establishing some useful technical facts.
		Let $ \Ga $ be a semisimple algebra  isomorphic to a finite number of copies of the base field $ k $.
		Let $( M , \theta )$ be a nilpotent multiplicative bimodule for $ \Ga $ such that there is an isomorphism $\phi \colon \La \isomto { \Ga \ltimes_\theta M } = E $. Let $ i \colon \Ga \monicc E $ and $\pi \colon E \epic \Ga $ denote the inlcusion monomorphism and the projection epimorphism, respectively. Nilpotency of the multiplicative bimodule $( M , \theta )$ implies that the ideal $ \wt{ M } $ is nilpotent. As $ E $ is isomorphic to a bound quiver algebra according to \cref{lem:ar.rem.8}{}, it holds that $J(E)$ is the largest nilpotent ideal of $ E $ and, thus, it contains $ \wt{ M } $. Moreover, the equality $J(E) = \wt{ M } $ holds according to \cref{lem:cleft.2}{}, as $ \Ga $ is semisimple; in particular, the isomorphism $\phi$ maps the ideal $J( \La )$ onto $ \wt{ M } $ and $ \Ga $ is isomorphic to $ \La / J ( \La ) $. Furthermore, the projective dimension of $J(\La)$ over $\La$ from either side is equal to the respective projective dimension of $\wt{ M } $ over $E$.
		
		In particular, if the global dimension of $ \La $ is finite, then the projective dimension of $ \wt{ M } $ over $ E $ is finite from both sides and $ ( M , \theta ) $ is a removable multiplicative bimodule. Conversely, if $ ( M , \theta ) $ is removable, then the right projective dimension of $ J ( \La ) $ over $ \La $ is finite implying that $ \gd \La < \infty $.
		
		We identify now $\La $ with one of its representations as a bound quiver algebra, say $k Q / I$, so that $ \phi $ maps $ e_j $ into $ \Ga \oplus 0 $ for every vertex $ j \in Q_0 $. Furthermore, we identify $ \Ga $ with $ \La / J ( \La ) $ through the correspondence $\pi ( \phi ( e_j ) ) \mapsto \ol{ e_j } $ for every $ j \in Q_0 $.
		Then the two squares of the following diagram commute
		\[\begin{tikzcd}
			{\Lambda =k Q / I} & {\Gamma \ltimes_\theta M = E} \\
			\Gamma & \Gamma
			\arrow["\sim", "\phi"', from=1-1, to=1-2]
			\arrow["{\pi'}", curve={height=-6pt}, two heads, from=1-1, to=2-1]
			\arrow["\pi", curve={height=-6pt}, two heads, from=1-2, to=2-2]
			\arrow["{i'}", curve={height=-6pt}, hook', from=2-1, to=1-1]
			\arrow[equals, from=2-1, to=2-2]
			\arrow["i", curve={height=-6pt}, hook', from=2-2, to=1-2]
		\end{tikzcd}\]
		where $\pi ' $ is the natural algebra epimorphism, and $i'$ is the section of $\pi '$ such that $ i'( \ol{ e_j } ) = e_j $ for every vertex $ j \in Q_0 $. It follows that $\phi $ induces an isomorphism of $ \Ga $-bimodules between $J( \La )$ and $ \wt{ M } $, where the $ \Ga $-bimodule structures are due to restrictions of scalars along $i'$ and $i$ respectively.

		For the second equivalence, note that $ Q $ is acyclic if and only if $J ( \La )$ is tensor nilpotent according to \cref{exam:ar.rem.8}{}. Claim (ii) follows now from the isomorphism of $ M $ and $J ( \La )$ as $ \Ga $-bimodules, taking into account the fact that all bimodules are perfect over a semisimple ring.
		Similarly, the last claim holds as every vertex of $ Q $ is a source or a sink if and only if the right projective dimension of $ J ( \La ) $ is strongly-finite over $ \Ga $ according to \cref{exam:ar.rem.10}{}.
	\end{proof}

	The final part of this section establishes a novel method for constructing bound quiver algebras of finite finitistic (or global) dimensions from smaller algebras with the same property, leveraging \cref{cor:ar.rem.6}{} and \cref{lem:ar.rem.5}{}. Moreover, the method is illustrated with the construction of an explicit algebra with finite finitistic dimensions, a highly non-trivial result in the sense of \cref{defn:irredu}{}; see \cref{exam:3}{}.
	
	We begin with a combinatorial construction.
	
	\begin{con}
		\label{exam:ar.rem.11}
		Let $\Ga = k Q_\Ga / I_\Ga $ be a bound quiver algebra, and let $i , j $ be a pair of distinct vertices. Let $ V $ be a $ \Ga $-submodule of $\Ga e_i$ such that $ e_j \Ga e_i \subseteq V \subseteq \rad _\Ga \Ga e_i $, and choose a a generating set $\{ u_\nu + I_\Ga \}_\nu $ for $ V $ as a left $\Ga$-module. We denote by $ \La = \Ga_{ i \to j }^V $ the algebra $ k Q_\La / I_\La $, where:
			\begin{enumerate}[\rm(i)]
				\item $Q_\La = Q_\Ga \dot \cup \{ \a \colon i \to j \}$ is the quiver that results from $Q_\Ga $ if we add a new arrow $\a \colon i \to j$;
				\item $I_\La $ is the ideal of $k Q_\La $ generated by $ I_\Ga \cup T_\a $, where $ T_\a = \{ u_\nu \a \}_\nu $.
			\end{enumerate}
		We call the algebra $ \La $ a \emph{trivial one-arrow extension} of $ \Ga $. 
	\end{con}

	The notation introduced in \cref{exam:ar.rem.11}{} will be retained throughout the rest of the subsection. We recall that $ \Ll( - ) $ denotes the Loewy length. 
	
	\begin{lem}	\phantomsection		\hypertarget{lem:last}{}
			\label{lem:last}
		The following hold for the one-arrow extension algebra $ \Ga_{ i \to j }^V $.
			\begin{enumerate}[\rm(i)]
				\item It does not depend on the choice of a generating set for $ V $.
				\item It is a bound quiver algebra, where every path passing through $ \a $ at least twice is zero.
				\item It holds that $ \Ll( \Ga ) \leq \Ll( \Ga_{ i \to j }^V ) \leq 2 \Ll ( \Ga ) $.
			\end{enumerate}
	\end{lem}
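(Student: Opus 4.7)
The plan for (i) is to show directly that replacing one left $\Ga$-generating set of $V$ by another leaves the ideal unchanged. If $\{u_\nu' + I_\Ga\}_\nu$ is a second such generating set, then for each $\nu$ there exist $r_{\nu,\mu} \in kQ_\Ga$ with $u_\nu' - \sum_\mu r_{\nu,\mu} u_\mu \in I_\Ga$. Multiplying on the right by $\a$ places $u_\nu' \a$ into the ideal generated by $I_\Ga \cup \{u_\mu \a\}_\mu$, since $I_\Ga \cdot \a$ is already contained in that ideal. Symmetry then forces the two ideals to coincide, so the construction is well defined.

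For (ii), I would first dispatch admissibility of $I_\La$. Because $V \subseteq \rad_\Ga \Ga e_i$, one can always choose representatives $u_\nu$ with no trivial-path summand, so every path occurring in $u_\nu \a$ has length at least two; combined with admissibility of $I_\Ga$, the length-at-least-two condition on generators is immediate. The substantive claim is that every path $p$ of $Q_\La$ passing through $\a$ at least twice lies in $I_\La$. The key step is to write $p = p_1 \a p_2 \a p_3$ where the displayed $\a$'s are the first two occurrences, so that $p_2$ is a path in $Q_\Ga$ from $j$ to $i$. Then $p_2 + I_\Ga \in e_j \Ga e_i \subseteq V$, so $p_2$ is a left $\Ga$-combination of the $u_\mu$'s modulo $I_\Ga$; right-multiplication by $\a$ yields $p_2 \a \in I_\La$, and therefore $p \in I_\La$. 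The remaining admissibility condition—that all sufficiently long paths lie in $I_\La$—follows from the Loewy length bound established in (iii).

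For (iii), the upper bound is obtained by case analysis on a nonzero path $p$ of maximal length in $\La = \Ga_{i \to j}^V$. By (ii), such a $p$ passes through $\a$ at most once. If $p$ avoids $\a$, then $p$ is a nonzero path in $\Ga$ and has length at most $\Ll(\Ga) - 1$. Otherwise $p = p_1 \a p_2$ with $p_1, p_2$ nonzero paths in $\Ga$, each of length at most $\Ll(\Ga) - 1$, for a total length of at most $2\Ll(\Ga) - 1$; hence $\Ll(\La) \leq 2\Ll(\Ga)$. For the lower bound, the singleton $\{\a\}$ is pre-removable in $\La$ because the given generating set $I_\Ga \cup T_\a$ of $I_\La$ already splits as (avoiding $\a$) $\cup$ (passing through $\a$), so \cref{lem:ar.red.1}{} identifies $\La / \la \a + I_\La \ra$ with $\Ga$; since Loewy length cannot increase under quotients, $\Ll(\Ga) \leq \Ll(\La)$.

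The main obstacle will be the reduction in part (ii). The inclusion $e_j \Ga e_i \subseteq V$ is what allows every $j$-to-$i$ path in $Q_\Ga$ to be expressed, modulo $I_\Ga$, as a left $\Ga$-combination of the $u_\mu$'s, and this is precisely what makes $p_2 \a$ land in $I_\La$. Without this hypothesis the argument would break, so I would expect the verification of this inclusion—implicit in the setup—to do the real work.
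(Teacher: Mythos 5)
Your argument is correct and, for parts (i), (ii) and the upper bound in (iii), it is essentially the paper's own proof: the well-definedness in (i) is the same computation modulo $I_\Ga$, and the crucial reduction in (ii) is the same use of a subpath $\a q \a$ with $q + I_\Ga \in e_j \Ga e_i \subseteq V$, hence $q\a \in I_\La$. Where you differ is in the organisation and in the lower bound of (iii). The paper proves admissibility directly by showing that every path of $Q_\La$ of length at least $2\Ll(\Ga)$ lies in $I_\La$ (it either contains a $Q_\Ga$-subpath of length $\Ll(\Ga)$ or a subpath $\a q\a$), and then obtains $\Ll(\Ga)\leq\Ll(\Ga_{i\to j}^V)$ by observing that a non-zero path of $\Ga$ stays non-zero in $\Ga_{i\to j}^V$ because every path occurring in $T_\a$ is divided by $\a$. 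You instead defer admissibility to the Loewy-length bound of (iii); as literally phrased this is circular, since $\Ll(\Ga_{i\to j}^V)$ only makes sense once one knows the ideal is admissible (equivalently, that the quotient is finite dimensional). The circularity is harmless, because your case analysis in (iii) is really a statement about individual paths (any path not in $I_\La$ meets $\a$ at most once and hence has length at most $2\Ll(\Ga)-1$), which gives the admissibility condition directly without mentioning $\Ll(\Ga_{i\to j}^V)$; but the write-up should be reordered accordingly. For the lower bound you take a genuinely different (and pleasant) route: pre-removability of $\a$ plus monotonicity of Loewy length under quotients. Note, however, that \cref{lem:ar.red.1}{} only identifies the quotient with $k Q_\Ga / I'$ where $I' = I_\La \cap {}_k\la \mathcal{B}_{Q_\La}^{\,\nott\,\a}\ra$; to conclude that this is $\Ga$ you still need $I_\La \cap k Q_\Ga = I_\Ga$, which follows from exactly the observation the paper uses (the ideal generated by $T_\a$ only contributes paths through $\a$), so this step should be made explicit — it is where the real content of the lower bound sits in either approach.
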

	
	\begin{proof}
		(i) It holds that every $ z \in V $ is a $ k $-linear combination of elements of the form $ p u_\nu + I_\Ga $, where $ p $ stands for a path in $ Q_\Ga $. In particular, it holds that $ z \a \in I_\La $ for every $ z \in k Q_\Ga e_i $ such that $ z + I_\Ga \in V $. This implies directly that if $ \{ u'_\nu + I_\Ga \}_\nu $ is another generating set for $ V $, then $ u'_\nu \a \in I_\La $ for every $ \nu $. A symmetric argument shows that $ I_\La $ does not depend on the chosen generating set for $ V $.
		
		(ii) Every path occurring in $ I \cup T_\a $ is clearly of length at least two as every path occurring in $ V \subseteq \rad _\Ga \Ga e_i $ has length at least one. Furthermore, every path $ r $ of $ Q_\La $ of length at least $ 2 \Ll( \Ga ) $ is in $I_\La $. Indeed, path $ r $ either possesses a subpath in $Q_\Ga $ of length $\Ll( \Ga )$, or it possesses a subpath of the form $ \a q \a $ for some path $ q $ in $Q_\Ga $. In the first case, we have $ r \in I_\Ga \subseteq I_\La $. In the second case, the desired property follows from the first part of the proof, since $ q + I_\Ga \in e_j \Ga e_i \subseteq V $ implies that $ q \a \in I_\La $.
		
		(iii) We have already proven the first inequality. For the second, it remains to observe that a non-zero path in $ \Ga $ is also non-zero in $\La $ since all paths occurring in $ T_\a $ are divided by $ \a $.
	\end{proof}
	
	\begin{prop}
			\label{thm:last}
		Every trivial one-arrow extension $ \Ga_{ i \to j }^V $ of a bound quiver algebra $ \Ga $ is isomorphic to a trivial extension of $ \Ga $ by a $ \Ga $-bimodule of strongly-finite right projective dimension. In particular, it holds that
			\[
				\fpd { \Ga_{ i \to j }^V } < \infty  \,\, \iff \,\, 	\fpd \Ga < \infty 
			\]
		and the equivalence remains valid if $ \fpd $ is replaced by $ \Fpd $ or $ \gd $.
	\end{prop}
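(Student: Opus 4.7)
The plan is to exhibit a concrete $\Ga$-bimodule $M$ of strongly-finite right projective dimension such that $\La := \Ga_{i \to j}^V$ is isomorphic to the trivial extension $\Ga \ltimes M$, and then apply the machinery of \cref{subsec:adding.ar.3}{}. I would first observe that the singleton $\{\a\}$ is pre-removable in $\La$: the generating set $I_\Ga \cup T_\a$ of $I_\La$ already satisfies condition (v) of \hyperlink{lem:ar.rem.2}{\cref{lem:ar.rem.2}{}}, and by \cref{lem:last}{}.(ii) every path through $\a$ twice vanishes, so the ideal $K = \la \a + I_\La \ra$ has square zero. Combined with the canonical section monomorphism of \cref{rem:ar.rem.2}{}, this produces an algebra cleft extension $(\La, \Ga, \pi, \i)$ whose induced multiplicative bimodule automatically has $\theta = 0$, yielding the identification $\La \simeq \Ga \ltimes K$, where $K$ is viewed as a $\Ga$-bimodule via restriction of scalars along $\i$.

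The next step is to identify $K$ explicitly with
\[
M = (\Ga e_i / V) \otimes_k e_j \Ga
\]
as $\Ga$-bimodules. I would define a $\Ga$-bimodule homomorphism $M \to K$ by $\bar x \otimes y \mapsto x \a y + I_\La$, which is well-defined thanks to the defining relations $u_\nu \a \in I_\La$ for generators $u_\nu$ of $V$, and is surjective since $K$ is spanned by residues of $q \a p$ with $q \in k Q_\Ga e_i$ and $p \in e_j k Q_\Ga$. For bijectivity, the cleanest route is to produce the inverse at the algebra level: define $\psi \colon \Ga \ltimes M \to \La$ by $(\g, \bar x \otimes y) \mapsto \i(\g) + x \a y$ and verify its multiplicativity, where the cross term $x_1 \a y_1 x_2 \a y_2$ appearing in $\psi(\cdot)\psi(\cdot)$ vanishes precisely because $K^2 = 0$. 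The opposite composition $\La \to \Ga \ltimes M \to \La$ is the identity on vertices and arrows, so the natural map $\La \to \Ga \ltimes M$ is an isomorphism, and consequently so is $M \to K$.

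Finally, the strongly-finite right projective dimension of $M$ is easy to read off. Since $\Ga e_i / V$ is finite-dimensional over $k$, the right $\Ga$-module $M$ is a finite direct sum of copies of the projective $e_j \Ga$; thus $\pd M_\Ga = 0$ and $\supp_\infty(M_\Ga) \subseteq \{e_j\}$. For any primitive idempotent $e_\ell$, we have $e_\ell M = (e_\ell \Ga e_i / e_\ell V) \otimes_k e_j \Ga$, so $e_\ell \in \supp({}_\Ga M)$ iff $e_\ell \Ga e_i \not\subseteq V$; the defining inclusion $e_j \Ga e_i \subseteq V$ thus excludes $e_j$ from $\supp({}_\Ga M)$, giving $\supp({}_\Ga M) \cap \supp_\infty(M_\Ga) = \emptyset$. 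This establishes the first claim of the proposition. The finitistic and global dimension equivalences then follow from \cref{lem:ar.rem.5}{} (which upgrades strongly-finite right projective dimension to right perfect and tensor nilpotent) together with case (ii) of \cref{cor:ar.rem.6}{} applied to the trivial multiplicative structure $(M, 0)$. The main technical step is the identification of $K$ with $M$, which I expect to be handled most cleanly by writing down the inverse algebra homomorphism explicitly rather than by dimension-counting non-zero paths through $\a$.
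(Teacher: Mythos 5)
Your overall architecture coincides with the paper's: show $\a$ is pre-removable and $K=\la \a + I_\La\ra$ has square zero, so that $\La=\Ga_{i\to j}^V\simeq \Ga\ltimes K$ with trivial multiplication; identify $K$ with $M=(\Ga e_i/V)\otimes_k e_j\Ga$ as a $\Ga$-bimodule; read off that $M$ has strongly-finite right projective dimension from $M_\Ga\simeq (e_j\Ga)^{\dim_k(\Ga e_i/V)}$ and $e_jM=0$ (using $e_j\Ga e_i\subseteq V$); and conclude via \cref{lem:ar.rem.5} and \cref{cor:ar.rem.6}. Those steps are all correct and match the paper. The only real divergence is how the bijectivity of $M\to K$ is finished: the paper constructs the algebra surjection $\La\epic \Ga\ltimes M$ (via the machinery of \cref{lem:ar.rem.8}) and compares dimensions against the surjection $M\epic K$, whereas you propose writing down the inverse $\psi\colon \Ga\ltimes M\to\La$ and composing with ``the natural map'' $\La\to\Ga\ltimes M$.

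Here is the gap: you never verify that this natural map exists, i.e.\ that the $k$-algebra homomorphism $\Phi\colon kQ_\La\to \Ga\ltimes M$ (vertices $\mapsto(e_v,0)$, arrows of $Q_\Ga\mapsto(\b+I_\Ga,0)$, $\a\mapsto(0,\ol{e_i}\otimes e_j)$) annihilates $I_\La$. This is exactly the point where the quotient by $V$ in $M$ does its work, and it is the verification the paper's proof spends a paragraph on; note also that your surjection $\psi$ and your $\Ga$-bimodule surjection $M\epic K$ both give the \emph{same} inequality $\dim_k M\geq\dim_k K$, so without the map in the opposite direction nothing forces the isomorphism. The gap is easily filled: $\Ker\Phi$ is a two-sided ideal, so it suffices to check the generators $I_\Ga\cup T_\a$ of $I_\La$, and indeed $\Phi(w)=(w+I_\Ga,0)=0$ for $w\in I_\Ga$, while $\Phi(u_\nu\a)=(u_\nu+I_\Ga,0)(0,\ol{e_i}\otimes e_j)=(0,\ol{u_\nu+I_\Ga}\otimes e_j)=0$ because $u_\nu+I_\Ga\in V$. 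With this, $\psi\circ\Phi$ is the canonical projection $kQ_\La\epic\La$, the induced map $\La\to\Ga\ltimes M$ is injective, and it is surjective since its image contains the algebra generators $(e_v,0)$, $(\b+I_\Ga,0)$, $(0,\ol{e_i}\otimes e_j)$ of $\Ga\ltimes M$ (a point you should also state, since $\psi g=\id_\La$ alone only gives a split injection). Once these two sentences are added, your explicit-inverse route is complete and is arguably a little cleaner than the paper's dimension count.
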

	
	\begin{proof}
		To see that $ \La = \Ga_{ i \to j }^V $ may be realized as a trivial extension of $ \Ga $, it suffices due to \cref{prop:ar.rem.3}{} to show that there is a split algebra epimorphism $ \La \epic \Ga $ such that the square of its kernel is zero. Observe that the added arrow $ \a \colon i \to j $ is pre-removable in $ \La $ as $ I _\La $ is generated by the set $ I_\Ga \cup T_\a $, and $ \Ga $ is the canonical representation of the quotient algebra $ \La / \la \a + I \ra $. Therefore, one part of our first claim follows from \hypertarget{lem:ar.rem.2}{\cref{lem:ar.rem.2}{}}{}. Furthermore, the square of $ \la \a + I \ra $ is zero as for every path $ q $ in $ Q_\Ga $ either $ \a q \a = 0 $, or $ q $ is a path from to $ j $ to $ i $ whence $ q + I_\Ga \in V $ and $ q \a \in I_\La $. We conclude that $ \La \simeq \Ga \ltimes \la \a + I_\La \ra $ where the $ \Ga $-structure on both sides of $ \la \a + I_\La \ra $ is given by restriction of scalars along the algebra monomorphism $ \i \colon \Ga \monicc \La $ such that $ p + I_\Ga \mapsto p + I_\La $ for every path $ p $ of $ Q_\Ga $; see \cref{rem:ar.rem.2,lem:ar.red.1}{}.
		
		To complete the proof of the first claim, we have to show that the right projective dimension of the ideal $ \la \a + I \ra $ is strongly-finite over $ \Ga $. We do that by showing that $ \la \a + I \ra $ is isomorphic to $ M =  W \otimes_k e_j \Ga $ as a $ \Ga $-bimodule, where $ W = \Ga e_i / V $. The latter $ \Ga $-bimodule can also be identified with the quotient of the inlcusion of $ \Ga $-bimodules $ V \otimes_k e_j \Ga \monicc \Ga e_i \otimes_k e_j \Ga $.
		
		First, we establish that the $ \Ga $-bimodule homomorphism $ f \colon { \Ga e_i \otimes_k e_j \Ga } \to \la \a + I_\La \ra $ defined by $ e_i \otimes e_j \mapsto \a + I_\La $ is surjective. This follows immediately from the fact that the square of $ \la \a + I_\La \ra $ is zero, as it means equivalently that every path in $ Q_\La $ divided by $ \a $ at least twice is in $ I_\La $. In particular, it implies that every element of $ \la \a + I_\La \ra $ is a $ k $-linear combination of paths of the form $ p \a q $ modulo $ I_\La $, where $ p $ and $ q $ are in $ Q_\Ga $. Furthermore, for every $ z \in  k Q_\Ga e_i $ such that $ z + I_\Ga \in V$ and every $ w \in e_j k Q_\Ga $, we have $ ( z + I_\Ga ) ( \a + I_\La ) ( w + I_\Ga ) = z \a w + I_\La = 0 $ as $ z \a \in I_\La $. In other words, the kernel of $ f $ contains $ V \otimes_k e_j \Ga $, inducing thus a $ \Ga $-bimodule epimorphism $ \bar{ f } \colon M \epic \la \a + I_\La \ra $. We show next that $ \bar{ f } $ is actually an isomorphism via a dimension argument that leverages the thechnical work done in the proof of \cref{lem:ar.rem.8}{}.
		
		Let $ E = \Ga \ltimes M $ be the trivial extension algebra induced by $ M $. It holds that the quiver of $E$ is equal to $Q_\La$ according to the proof of \cref{lem:ar.rem.8}{}. Moreover, it follows from the same proof that the unique $k$-algebra homomorphism $\phi \colon k Q_\La \to E $ sending $ e_i $ to $ ( e_i , 0 ) $ for every vertex $i $, every arrow $ \b  $ in $Q_\Ga$ to $ ( \b + I_\Ga , 0  )$ and the arrow $\a $ to $ ( 0 , \ol{ e_i } \otimes e_j ) $, is surjective. Our target is to show that the kernel of $ \phi $ contains $I_\La $.
		
		Recall that for every $ z \in k Q_\La $ we have that $ \phi ( z_{ \nott \! \a } ) \in \Ga \oplus 0 $, while $ \phi ( z_\a ) \in 0 \oplus M $. Specifically, if $ p $ is a path in $ Q_\Ga $ we have $ \phi ( p ) = ( p + I_\Ga , 0 ) $, and if $ p $, $ q $ are paths with target $ i $ and source $ j $, respectively, then $ \phi ( p \a q ) = ( 0 , \ol{ p + I_\Ga } \otimes ( q + I_\Ga ) ) $. 
		Moreover, since $ \a $ is pre-removable, it holds that $ z_{ \nott \! \a } , z_\a \in I_\La $ if $ z \in I_\La $. But $ I_\La \cap \la \BQnota \ra = I_\Ga $, implying that $ \phi ( z_{ \nott \! \a } ) = 0 $ for every $ z \in I_\La $. For $ z_\a $, observe that it is a $ k $-linear combination of elements of the form $ p w q $, where $ w \in I_\Ga $ or $ w = w' \a $ for $ w' \in V $, and $ p $, $ q $ are paths. Let $ w \in I_\Ga $. Then $ \a $ divides $ p $ or $ q $, and we may assume that it divides exactly one of them because, otherwise, it is immediate that $ p w q \in \Ker \phi $ as $ \a $ maps into $ 0 \oplus M $ and $ E $ is a trivial extension. Let $ p = p_1 \a p_2 $ for paths $ p_1, p_2 $ and assume that $ q $ avoids $ \a $. Then $ \phi ( p w q ) = ( 0 , \ol{ p_1 + I_\Ga } \otimes ( p_2 w q + I_\Ga ) ) = 0 $ as $ p_2 w q \in I_\Ga $. Now let $ w = w' \a $ for some $ w' \in V $, and observe that it suffices to deal with the case where both $ p $ and $ q $ avoid $ \a $. Then $ \phi ( p w q ) = ( 0 , \ol{ p w' + I_\Ga } \otimes ( q + I_\Ga ) ) = 0 $ as $ p w' + I_\Ga \in V $. All in all, we have shown that $ I_\La \subseteq \Ker \phi $.
		Therefore, the map $ \phi $ induces a $ k $-algebra epimorphism $ \bar{ \phi } \colon \La \epic E $. This implies in particular that $ \dim_k \la \a + I_\La \ra \geq \dim_k M $, which shows that the two $ \Ga $-bimodules are in fact isomorphic due to the previouslh established $ \Ga $-bimodule epimorphism $ \bar{ f } \colon M \epic \la \a + I_\La \ra $.
		
		Now it is immediate that $ \la \a + I_\La \ra $ has strongly-finite right projective dimension due to its identification with $ M $. Indeed, it is evident that $ M $ is isomorphic to $ \dim_k W $ copies of $ e_j \Ga $ as a right $ \Ga $-module and thus projective. Furthermore, it holds that $ \supp_\infty ( M_\Ga ) = \supp_0 ( M_\Ga ) = \{ e_j \} $ and $ e_j M = 0 $ as $ e_j ( \Ga e_i / V ) = 0 $ due to the assumption $ e_j \Ga e_i \subseteq V $.
		
		The last claim follows now from \cref{cor:ar.rem.6}{} and \cref{lem:ar.rem.5}{}, completing the proof.
	\end{proof}

	\begin{exam}
		If $e_j \Ga e_i = 0 $ for vertices $i, j $ of a bound quiver algebra $ \Ga = k Q_\Ga / I_\Ga $, then we recover the main result of \cite[Section~4]{arrowrem1}{} through \cref{thm:last}{} by choosing $ V $ to be the trivial submodule of $\Ga e_i $ in the context of \cref{exam:ar.rem.11}{}.
	\end{exam}

	\begin{exam}
			\label{exam:3}
		Let $ Q ' $ be the quiver of the following figure, and let $ k $ be any field. We consider the bound quiver algebra $ \La_3' = k  Q' / I_3' $ where $ I_3' $ is the ideal of $ k Q' $ generated by the relations in
			\[
				R_3' = \{ \a \e - \d \a   , \,  \d ^2   , \,  \e ^2   , \,  \z ^2   , \,   \e \b  , \,   \b \g   , \,  \b \z \g \a   , \,  \b \eta   , \,  \b \z \eta  \} .
			\]
		Then it is not difficult to verify that $ \La_3' $ is the trivial one-arrow extension of the algebra $\La_3 $ of \cref{exam:ar.rem.3}{}, determined by the new arrow $\eta \colon 3 \to 2 $ and the new relations $ \b \eta $ and $ \b \z \eta $ corresponding to a basis of the $\La_3$-submodule $ V $ of $ \La_3 e_3$ generated by the subspace $ e_2 \La_3 e_3 = {}_k \la \b + I_3 , \b \z + I_3 \ra $. Consequently, it follows from \cref{thm:last}{} that $\Fpd \La_3' < \infty $, as we have already established that $\Fpd \La_3 < \infty $ in \cref{exam:ar.rem.3}{}.
		
		
		\begin{figure}[hbt!]
			\vspace*{-0.15cm}
			\centering
			\resizebox{11em}{!}{
				\begin{tikzpicture}
					
					
					\draw  ($(0,0)+(90:1.2)$) circle (.08);
					\node at ($(0,0)+(90:0.6)$) {$\mathbf{3}$};
					
					\draw  ($(0,0)+(210:1.2)$) circle (.08);
					\node at ($(0,0)+(210:0.6)$) {$\mathbf{1}$};
					
					\draw  ($(0,0)+(330:1.2)$) circle (.08);
					\node at ($(0,0)+(330:0.6)$) {$\mathbf{2}$};

					\draw[->,shorten <=7pt, shorten >=7pt] ($(0,0)+(210:1.2)$) -- ($(0,0)+(330:1.2)$);
					
					\draw[->,shorten <=7pt, shorten >=7pt] ($(0,0)+(330:1.2)$) -- ($(0,0)+(90:1.2)$) ;
					
					\draw[->,shorten <=7pt, shorten >=7pt] ($(0,0)+(90:1.2)$)  -- ($(0,0)+(210:1.2)$);

					\node at ($(0,0)+(150:0.85)$) {$\gamma$};
					
					\node at ($(0,0)+(270:0.85)$) {$\alpha$};
					
					\node at ($(0,0)+(30:0.85)$) {$\beta$};

					\draw[->,shorten <=4pt, shorten >=4pt] ($(0,0)+(210:1.3)$).. controls +(210+45:1) and +(210-45:1) .. ($(0,0)+(210:1.3)$);
					\node at ($(0,0)+(210:2.07)$) {$\delta$};
					
					\draw[->,shorten <=4pt, shorten >=4pt] ($(0,0)+(330:1.3)$).. controls +(330+45:1) and +(330-45:1) .. ($(0,0)+(330:1.3)$);
					\node at ($(0,0)+(330:2.05)$) {$\varepsilon$};
					
					\draw[->,shorten <=4pt, shorten >=4pt] ($(0,0)+(90:1.3)$).. controls +(90+45:1) and +(90-45:1) .. ($(0,0)+(90:1.3)$);
					\node at ($(0,0)+(90:2.13)$) {$\zeta$};
					
					\draw[->,shorten <=7pt, shorten >=7pt] ($(0,0)+(90:1.2)$) arc (90:-30:1.2);
					\node at ($(0,0)+(30:1.4)$) {$\eta$};
					
				\end{tikzpicture}
			}
			\label{fig3}
			\vspace*{-0.35cm}
		\end{figure}		
		
		It is not difficult to check that $ \La_3' $ is an irreducible bound quiver algebra in the sense of \cref{defn:irredu}{}. For instance, note that $\Ll( \La_3' ) = \Ll ( \La_3 ) = 8 $ as the longest non-zero path of $ Q' $ passing through $\eta $ is $\z \eta \b \z \g \d$ (see also \cref{lem:last}{}), and the regular right $ \La_3' $-module does not possess a submodule isomorphic to the simple right module corresponding to vertex $ 3 $. In particular, it holds that $ \Fpd \La_3' $ is positive finite.
		
		For a final example, observe that one may also choose $ V' = \rad_{\La_3} \La_3 e_3 $ instead of $ V $. Then the resulting trivial one-arrow extension algebra is $ \La_3'' = k Q' / I'' $, where $ I'' $ is the ideal of $ k Q' $ generated by $ R_3'' = R_3' \cup \{ \z \eta \}$. One can verify that $ \La_3'' $ is an irreducible bound quiver algebra of positive finite big finitistic dimension, in the same way as we did for $ \La_3 ' $.
	\end{exam}

	\medskip

	\appendix

	\setcounter{thm}{0}
	\renewcommand{\thethm}{A.\arabic{thm}}
	
		\phantomsection		\hypertarget{appendix}{}
	
		\section*{Appendix. Characterization of classical arrow removal}

	\smallskip
	
	Let $\La = k Q / I$ be a bound quiver algebra with a set of arrows $ A $ such that $\La / \la A + I \ra $ is an arrow removal algebra of $\La $ in the sense of \cite{arrowrem1}{}. The aim of this appendix is to show that this is equivalent to $ A $ being pre-removable in $\La $ (\cref{defn:ar.rem.6}{}) and $\la A + I \ra $ being projective as a $\La $-bimodule.
	It will follow in particular that $ A $ is two-sided removable in $\La $ in this case (\cref{defn:ar.rem.1}{}) and, therefore, our reduction technique generalizes the classical arrow removal operation.
	
	We begin by recalling some basic notions and facts about Gr{\"o}bner bases for admissible ideals of path algebras due to Green \cite{Green}{}, which play an important role in the proof of \cref{prop:ar.rem.1}{}.
	We refer the reader to the beginning of \cref{sec:gen.ar.rem}{} for notation not explained here.

	Let ``$\preceq$'' denote a fixed \emph{admissible} order on $\BQ$ (see \cite[Subsection~2.2.2]{Green}{} for more details).
	%
	Then for a non-zero element $w = \sum \lambda_p \cdot p \in k Q$,
	its \emph{tip} is
	\[
	\tip(w) = \max\nolimits_\preceq \{p \in \BQ \, | \, \lambda_p \neq 0 \}.
	\]
	Similarly, we let $\tip( T ) = \{ \tip( t ) \, | \, t \in T \setminus \{ 0 \} \}$ and $\ntip( T ) = \BQ \setminus \tip( T )$ for any subset $T \subseteq  k Q$.

	Now let $\La = k Q / I$ be an admissible quotient of the path algebra $k Q$, that is all elements in the ideal $ I $ comprise paths of length at least two and $ I $ contains all paths of sufficiently large length. The set of paths $ \NN = \ntip (I)$ is closed under subpaths, and is a $k$-basis of $\La  = k Q / I$ modulo $ I $. Furthermore, there is always a finite set of relations $ \GG $ for $ I $ such that $ \la \tip (\GG ) \ra = \la \tip ( I  ) \ra $, i.e.\ there is always a \emph{finite Gr{\"o}bner basis for $I$} (see \cite[Subsection~2.2.3]{Green}{}). A key property of $ \GG $ is that, for any path $ p \in \BQ $, it holds that $ p \in \tip (I) $ if and only if $ p $ is divided by the tip $ \tip ( g ) $ of some element $ g \in \GG $.
	

	Let us recall now the definition of an arrow removal algebra in the sense of \cite{arrowrem1}{}. For a set of arrows $A \subseteq Q_1$, the quotient algera $ \La / \la A + I \ra $ is called an \emph{arrow removal} of $\La$ if the following conditions hold.
	\begin{enumerate}[\rm(i)]
		\item There is a generating set $ T $ for $I$ that avoids all arrows in $A$.
		\item It holds that $e_{t(\a_1)} \La e_{s(\a_2)} = 0 $ for all arrows $\a_1 , \a_2 \in A $.
	\end{enumerate}
	
	The following preliminary lemma is employed in the proof of \cref{prop:ar.rem.1}{}.
	
	\begin{lem}
		\label{lem:appendixA.1}
		Let $\a \colon i \to j $ be an arrow of the bound quiver algebra $\La = k Q / I $ and let $\CC_{ \rightsquigarrow i} $, $\CC_{ j \rightsquigarrow }$ be two sets of paths with target $i$ and source $j$, respectively, such that the sets $\CC_{ \rightsquigarrow i} + I$ and $ \CC_{j \rightsquigarrow} + I $ are $k$-bases of $ \La e_i $ and $e_j \La $, respectively. Then the ideal $\la \a + I \ra $ is a projective $\La $-bimodule if and only if the set $\CC_{ \rightsquigarrow i} \a \CC_{ j \rightsquigarrow }$ is linearly independent modulo $I$. In particular, it holds that $e_j \La e_i = 0  $ in this case. 
	\end{lem}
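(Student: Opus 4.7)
The plan is to identify the natural $\La$-bimodule surjection
\[
f \colon \La e_i \otimes_k e_j \La \lxr \la \a + I \ra, \qquad z_1 \otimes z_2 \longmapsto z_1 (\a + I) z_2,
\]
as a projective cover in $\La^\env \rMod$, and then translate its injectivity into the stated linear independence condition. Since $\La$ is basic with one-dimensional simples, the enveloping algebra $\La^\env = \La \otimes_k \La^{\mathrm{op}}$ is again basic with one-dimensional simples of the form $S_\La(v) \otimes_k S_{\La^{\mathrm{op}}}(w)$, and $\La e_i \otimes_k e_j \La$ is the indecomposable projective bimodule corresponding to $S_\La(i) \otimes_k S_{\La^{\mathrm{op}}}(j)$. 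The ideal $\la \a + I \ra$ is cyclic over $\La^\env$, generated by $\a + I = e_i(\a + I)e_j$, which is nonzero by admissibility of $I$; since the idempotent $e_i \otimes e_j$ fixes this generator, Nakayama's lemma forces the bimodule top of $\la \a + I \ra$ to be precisely $S_\La(i) \otimes_k S_{\La^{\mathrm{op}}}(j)$. Thus $f$ is a projective cover, so $\la \a + I \ra$ is projective as a $\La$-bimodule if and only if $f$ is an isomorphism, equivalently injective.

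The first equivalence in the lemma follows now by expressing $f$ via the given bases: the set $\{(p+I) \otimes (q+I) \colon p \in \CC_{\rightsquigarrow i}, q \in \CC_{j \rightsquigarrow}\}$ is a $k$-basis of $\La e_i \otimes_k e_j \La$, and $f$ sends it to $\CC_{\rightsquigarrow i} \a \CC_{j \rightsquigarrow}$ modulo $I$. Therefore $f$ is injective if and only if this image set is $k$-linearly independent in $\La$, which is precisely the stated condition.

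For the second assertion I argue by contradiction: assume $f$ is an isomorphism and $e_j \La e_i \neq 0$, and produce a nonzero element of $\Ker f$. Choose a path $r \colon j \to i$ with $r + I \neq 0$; then $r \in \La e_i$ and $r \in e_j \La$, so $r \otimes e_j$ is a nonzero simple tensor mapping under $f$ to $r \a + I$. If $r \a \in I$ this already contradicts injectivity, so assume $r \a \notin I$, and consider the element $\a r \otimes e_j - e_i \otimes r \a$; both summands are sent by $f$ to $\a r \a + I$, so the difference lies in $\Ker f$. When $\a r \notin I$, both simple tensors are nonzero and distinct, because $\a r + I$ lies in $J(\La)$ (by admissibility of $I$) whereas $e_i + I$ does not, so the two classes are not proportional. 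When $\a r \in I$, the first tensor vanishes, but $\a r \a \in I$ as well, so the nonzero simple tensor $-e_i \otimes r \a$ is itself in $\Ker f$. Either way $\Ker f \neq 0$, contradicting injectivity.

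The main technical hurdle is the projective-cover identification in the first paragraph; once this is in place, the remainder reduces to explicit manipulation of simple tensors together with the admissibility of $I$, which is what keeps the length-zero class $e_i + I$ separated from the length-at-least-two class $\a r + I$ in $\La e_i$.
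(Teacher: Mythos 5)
Your proposal is correct, and its core coincides with the paper's argument: you identify $f \colon \La e_i \otimes_k e_j \La \to \la \a + I \ra$, $z_1 \otimes z_2 \mapsto z_1(\a+I)z_2$, as a projective cover of bimodules and translate bijectivity of $f$ into linear independence of $\CC_{\rightsquigarrow i}\, \a\, \CC_{j \rightsquigarrow}$ modulo $I$ via the tensor basis; the paper does exactly this, merely asserting the projective-cover property "since $I$ is admissible", whereas you justify it explicitly through the enveloping algebra, the simplicity of the top $S_\La(i)\otimes_k S_{\La^{\mathrm{op}}}(j)$, and the fact that the generator $\a+I$ is nonzero. Where you genuinely diverge is the final assertion $e_j \La e_i = 0$: the paper argues that for a nonzero path $p$ from $j$ to $i$, bijectivity of $f$ forces $p(\a p)^i$ to be nonzero for every $i$, contradicting admissibility of $I$ (arbitrarily long paths must lie in $I$); you instead exhibit a nonzero element of $\Ker f$ directly, namely $r\otimes e_j$ if $r\a \in I$, and otherwise $\a r \otimes e_j - e_i \otimes r\a$ (or $e_i\otimes r\a$ when $\a r \in I$), using that $e_i + I$ and $\a r + I$ cannot be proportional since one lies in $J(\La)$ and the other does not. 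Both routes work; yours avoids the induction and uses only injectivity of $f$ plus a short case analysis, while the paper's iteration leans on the nilpotency built into admissibility and is slightly shorter on the page. No gaps.
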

	
	\begin{proof}
		The map $f \colon \La e_i \otimes_k e_j \La \epic \la \a + I \ra $ defined by $z_1 \otimes z_2 \mapsto z_1 (  \alpha  + I ) z_2$ for every $z_1 \in \La e_i$ and $z_2 \in e_j \La $ is a projective cover of $ \la \a + I \ra $ as a $\La$-bimodule, since $I$ is admissible. Therefore, the ideal $ \la \a + I \ra $ is a projective $\La$-bimodule if and only if $f$ is bijective or, equivalently, if $\CC_{\rightsquigarrow i} \alpha \CC_{j \rightsquigarrow}$ is linearly independent modulo $I$. Now assume that the ideal $\la \a + I \ra $ is projective as a $\La$-bimodule. If $ p $ is a non-zero path in $ \La $ with source $ j $ and target $ i $, then $p \alpha p $ is non-zero as $(p + I ) \otimes (p + I) $ is non-zero in $ \La e_i \otimes_k e_j \La $ and $f$ is bijective. Inductively, we get that $p (\alpha p)^i $ is non-zero in $ \La $ for every positive integer $ i $, a contradiction given that $I$ is admissible.
	\end{proof}
	
	Recall that, for a set of arrows $A  \subseteq Q_1$, we use $\BQA$ to denote the set of paths in $Q$ that pass through at least one of the arrows in $A$, and $\BQnotA $ denotes the complement of $ \BQA $ in $ \BQ$. Furthermore, for every $z \in k Q$, we write $ z_A$ and $z_{\nott \! A}$ to denote the unique elements in $ {}_k \la \BQA \ra$ and ${}_k \la \BQnotA \ra$, respectively, such that $z = z_A + z_{\nott \! A}$. We also use the same notation for subsets of $k Q$.
	
	
	\begin{prop}
		\label{prop:ar.rem.1}
		Let $\La = k Q  / I$ be a bound quiver algebra and $ A $ a set of arrows. It holds that $\La / \la A + I \ra$ is an arrow removal algebra of $\La $ in the sense of \textup{\cite{arrowrem1}{}} if and only if $A $ is pre-removable in $\La $ and $ \langle A + I \rangle $ is projective as a $\La$-bimodule.
	\end{prop}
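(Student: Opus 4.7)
The plan is to analyze the natural epimorphism
\[
f \colon P = \bigoplus_{\a \in A} \La e_{s(\a)} \otimes_k e_{t(\a)} \La \epic \la A + I \ra ,
\]
sending $e_{s(\a)} \otimes e_{t(\a)}$ in the $\a$-summand to $\a + I$. Since $I$ is admissible, this is the projective cover of $\la A+I\ra$ as a $\La$-bimodule, so projectivity of $\la A+I\ra$ is equivalent to bijectivity of $f$; in that case $\la A+I\ra \simeq \bigoplus_{\a \in A} \la \a + I\ra$ and each summand is itself projective. For the backward direction, I would first assume that $A$ is pre-removable and $\la A+I\ra$ is a projective $\La$-bimodule, and derive condition (ii). The case $\a_1 = \a_2$ follows directly from \cref{lem:appendixA.1}. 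For distinct $\a_1, \a_2 \in A$ and any path $r$ in $Q$ from $t(\a_1)$ to $s(\a_2)$, observe that $\a_1 r \a_2 + I$ admits two preimages under $f$: namely $e_{s(\a_1)} \otimes r \a_2$ in the $\a_1$-summand and $\a_1 r \otimes e_{t(\a_2)}$ in the $\a_2$-summand. Injectivity of $f$ combined with the direct-sum structure of $P$ forces both tensors to vanish, yielding $r \a_2 \in I$ and $\a_1 r \in I$; a second application of injectivity to $e_{s(\a_1)} \otimes r$, which then maps to $\a_1 r + I = 0$, together with $e_{s(\a_1)} \neq 0$, gives $r \in I$.

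The main obstacle will be extracting condition (i), namely a generating set for $I$ avoiding $A$. Pre-removability and \cref{lem:ar.rem.2} supply the $k$-space decomposition $I = (I \cap {}_k\la \BQA \ra) \oplus I'$ with $I' := I \cap {}_k \la \BQnotA \ra$, so the task reduces to showing $I \cap {}_k \la \BQA \ra \subseteq \la I' \ra_{k Q}$. I would split any $z$ in this subspace according to the number of $A$-arrows each supporting path traverses. Paths through at least two $A$-arrows factor as $p_0 \a_1 p_1 \a_2 p_2 \cdots$, where each intermediate subpath $p_i \colon t(\a_i) \to s(\a_{i+1})$ lies in $I'$ by condition (ii) just established, placing the whole path in $\la I' \ra_{k Q}$. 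For the component $z_{1, \a}$ through exactly one $A$-arrow $\a$, the direct sum decomposition of $\la A+I\ra$ forces $z_{1, \a} \in I$, and its preimage in the $\a$-summand $\La e_{s(\a)} \otimes_k e_{t(\a)} \La$ must vanish. Writing $z_{1, \a} = \sum c_{q_1, q_2} q_1 \a q_2$ with $q_1, q_2$ paths avoiding $A$, this translates into the vanishing of $\sum c_{q_1, q_2} (q_1 + I) \otimes (q_2 + I)$ inside the sub-tensor product $\bar{\Ga} e_{s(\a)} \otimes_k e_{t(\a)} \bar{\Ga}$, where $\bar{\Ga} \simeq k Q' / I'$ is the image of the pre-removable section. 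Lifting to $k Q' e_{s(\a)} \otimes_k e_{t(\a)} k Q'$ and identifying the kernel of the natural surjection onto $\bar{\Ga} \otimes_k \bar{\Ga}$ with $I' \otimes_k k Q' + k Q' \otimes_k I'$, multiplication by $\a$ will then place $z_{1, \a}$ inside $I' \cdot \a \cdot k Q + k Q \cdot \a \cdot I' \subseteq \la I' \ra_{k Q}$, as desired.

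For the converse direction, pre-removability is immediate from (i) via condition (v) of \cref{lem:ar.rem.2}. To establish the projectivity of $\la A+I\ra$, I would use (i) to obtain a Gröbner basis $\GG$ for $I$ avoiding $A$, since Buchberger's algorithm preserves the $A$-avoiding subalgebra. Together with (ii), this forces every path in $\NN = \ntip(I)$ ending at $s(\a)$ for some $\a \in A$ to itself avoid $A$: otherwise the path would factor through an $A$-arrow followed by a subpath lying in $I$ by (ii), contradicting its membership in $\NN$. Hence $\dim_k \La e_{s(\a)} = |\NN \cap \BQnotA \cap \La e_{s(\a)}|$, and analogously for $e_{t(\a)} \La$. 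Counting the $\NN$-basis of $\la A+I\ra$, whose elements are paths in $\NN \cap \BQA$ passing through $A$ exactly once by (ii), yields $\dim_k \la A+I\ra = \sum_\a \dim_k \La e_{s(\a)} \cdot \dim_k e_{t(\a)} \La = \dim_k P$. Since $f$ is surjective between finite-dimensional spaces of matching dimension, it is an isomorphism, yielding the projectivity of $\la A+I\ra$.
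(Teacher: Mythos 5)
Your proof is correct, but it takes a genuinely different route from the paper's. The paper first settles the single-arrow case: the forward direction via a Gr\"obner basis avoiding $\a$ and the linear independence of $(\NN e_i)\,\a\,(e_j\NN)$ modulo $I$, the backward direction by expanding elements of $I$ supported on paths through $\a$ in the nontip bases $\NN e_i$ and $e_j\NN$; it then reduces the general case to single arrows, proving directness of the sum $\oplus_\k\la\a_\k+I\ra$ and iterating \cref{cor:ar.rem.5} to assemble a generating set avoiding all of $A$. You instead treat the whole set $A$ uniformly in both directions: condition (ii) is read off directly from injectivity of the bimodule projective cover $f$ (two preimages of $\a_1 r\a_2+I$ lying in distinct summands), and condition (i) is obtained by splitting an element of $I\cap{}_k\la\BQA\ra$ by the number of $A$-occurrences and, for the once-through component, identifying the kernel of $kQ'e_{s(\a)}\otimes_k e_{t(\a)}kQ' \epic \Ga' e_{s(\a)}\otimes_k e_{t(\a)}\Ga'$ with $I'\otimes_k kQ'+kQ'\otimes_k I'$. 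This tensor-kernel argument replaces the paper's normal-form manipulation and the single-arrow bootstrapping, and is arguably cleaner; what the paper's modular organization buys is the standalone byproduct that condition (ii) is redundant for a single arrow (\cref{cor:ar.rem.1}), which your argument does not reproduce (nor needs to).

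Two points in the forward direction should be made explicit. First, the dimension count as written uses both that $\NN\cap\BQA$ spans $\la A+I\ra$ and that every product $q_1\a q_2$ with $q_1\in\NN e_{s(\a)}$, $q_2\in e_{t(\a)}\NN$ is again a nontip; neither is automatic, though both follow from the Gr\"obner basis $\GG$ avoiding $A$. The economical fix is to prove only the second claim (if $q_1\a q_2\in\tip(I)$ it is divisible by $\tip(g)$ for some $g\in\GG$, and since $\tip(g)$ avoids $A$ it must divide $q_1$ or $q_2$, contradicting $q_1,q_2\in\NN$), which already gives $\dim_k\la A+I\ra\ge\dim_k P$ and hence bijectivity of the surjection $f$; this is precisely the tip-divisibility argument of the paper. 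Second, the existence of a Gr\"obner basis avoiding the whole set $A$ should be justified as you indicate (overlap relations and reductions of elements avoiding $A$ again avoid $A$), since the cited statement in the arrow-removal paper is for a single arrow. A final cosmetic remark: in your condition (ii) argument the degenerate case $t(\a_1)=s(\a_2)$, i.e.\ $r$ trivial, yields $\a_2\in I$, impossible by admissibility, so projectivity rules out such configurations; a sentence to this effect makes the claim $e_{t(\a_1)}\La e_{s(\a_2)}=0$ complete.
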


	\begin{proof}
		We divide the proof into two steps, where we deal first with single arrow removals and then build on this step to prove the general case.
		
		\textbf{Step I:} The case $A = \{ \a \} $ for a single arrow $\a \colon i \to j$.	\\[0.05cm]
		Let $ T $ be a generating set for $I$ avoiding $ \a $. Then for any admissible order on $\BQ$ there is a Gr{\"o}bner basis $\GG$ for $I$ avoiding $\a$; see \cite[Lemma A.4]{arrowrem1}{}. We assume without loss of generality that every element of $ T $ is a relation.
		It is clear that $\a $ is pre-removable as condition (i) of arrow removal implies that $ I $ has a generating set avoiding $ \a $, which by definition implies pre-removability.
		
		Let now $ \NN = \ntip(I) $ and recall that the paths of $ \NN e_i $ and $ e_j \NN $ form $k$-bases of $\La e_i$ and $e_j \La$ modulo $I$, respectively.
		Let
			\[
				w = \sum_{p, q} \lambda_{p, q} \cdot p \alpha q \in I
			\]
		for coefficients $\lambda_{p , q} \in k$, where $p$ and $q$ range over $ \NN e_i $ and $ e_j \NN $ respectively. If $w $ is non-zero, then $p_0 \alpha q_0 \in \tip(I)$ for some paths $p_0 \in \NN e_i $ and $q_0 \in e_j \NN $. In particular, there is an element $ g \in \GG $ such that $ \tip ( g ) $ divides $ p_0 \alpha q_0 $. It follows that $\tip ( g )$ divides either $ p_0 $ or $ q_0 $ since $\tip ( g )$ avoids $\a $, a contradiction to the fact that $p_0 , q_0 \in \NN $. Therefore, we have $\lambda_{p, q} = 0$ for all $p$ and $q$, and the set $ ( \NN e_i ) \a ( e_j \NN ) $ is linearly independent modulo $ I $. We deduce that the ideal $ \la \a + I \ra $ is a projective $\La $-bimodule according to \cref{lem:appendixA.1}{}.

		Conversely, assume that the arrow $ \a \colon i \to j $ is pre-removable in $\La $ and the ideal $\la \a + I \ra $ is projective as a $\La$-bimodule. It suffices to prove that the subspace $ I \cap {}_k \la \BQnota \ra $ generates $I$, since $ e_j \La e_i = 0  $ according to \cref{lem:appendixA.1}{}.
		Specifically, we have to prove that every element $z \in I \cap {}_k \la \BQa \ra$ can be generated by elements of $ I \cap {}_k \la \BQnota \ra $ as $ I = ({ I \cap {}_k \la \BQa \ra }) \oplus ({ I \cap {}_k \la \BQnota \ra }) $, see \hyperlink{lem:ar.rem.2}{\cref{lem:ar.rem.2}{}}.
		
		The first step is to show that if we take two elements $x \in k Q  e_i$ and $y \in e_j k Q$ such that $x \in I$ or $y \in I$, then $ x \alpha y $ can be generated by elements in $ I \cap {}_k \la \BQnota \ra $. Assume for instance that $x \in I$. Then both $x_{\nott \! \a} \a y$ and $x_{\a} \a y  $ are generated by $ I \cap {}_k \la \BQnota \ra $. Indeed, we have that $x_{\nott \! \a} \in I \cap {}_k \la \BQnota \ra $ according to the above decomposition of $ I $ (\hyperlink{lem:ar.rem.2}{\cref{lem:ar.rem.2}{}.(iii)}), and each path occurring in $x_{\a} \a y$ contains a subpath avoiding $ \a $ with source $ j $ and target $ i $ which is in $I$ as $e_j \La e_i = 0 $.
		
		Now let $z = \sum_{m} x_m \alpha y_m $ be an arbitrary element of $ I  \cap {}_k \la \BQa \ra $,
		where $ x_m \in kQ e_i$ and $ y_m \in e_j kQ$. As the set $ \NN e_i + I$ is a $k$-basis of $\La e_i$, for every $ m $ we can write
		\[
			x_m =  \sum_p \lambda_m^p \cdot p  + x'_m
		\]
		for coefficients $\lambda_m^p \in k$, where $p$ ranges over $ \NN e_i $ and $ x'_m \in I e_i $. In particular, 
		\begin{equation*}
			\begin{split}
				z &
				= \sum_p p \alpha  (\sum_m \lambda_m^p \cdot y_m  ) + \sum_m x'_m \alpha y_m 
			\end{split}
		\end{equation*}
		where the first summand, denoted by $\tilde{ z }$, is in $I$, as the same holds for $z $ and the second summand. Similarly, let
		\[
		\sum_m \lambda_m^p \cdot y_m  = \sum_q \mu^q_p \cdot q + y'_p
		\]
		for every $ p \in \NN e_i $, for coefficients $\mu^q_p \in k$, where $q$ ranges over $ e_j \NN $ and $y'_p \in e_j I$. Then 
		we have that
		\[
		\tilde{ z } = \sum_{p, q} \mu^q_p \cdot p \alpha q + \sum_p p \alpha y'_p
		\]
		implying that $ \sum_{p, q} \mu^q_p \cdot p \alpha q \in I$. But the set $ ( \NN e_i ) \a ( e_j \NN ) + I $ is linearly independent according to \cref{lem:appendixA.1}{}, implying that $ \mu^q_p = 0 $ for all $p$ and $q$. We conclude that
		\[
			z = \sum_p p \alpha y'_p + \sum_m x'_m \alpha y_m
		\]
		is generated by $ I \cap {}_k \la \BQnota \ra $ as the same holds for every summand due to the fact that $x'_m, y'_p \in I$. See also the previous paragraph.
		
		\smallskip
		
		Before proceeding with the general case, note that we have proved that an arrow $ \a $  induces an arrow removal algebra of $ \La $ in the sense of \cite{arrowrem1}{} if and only if $ I $ possesses a generating subset avoiding $ \a $. In other words, the second condition of the arrow removal operation is redundant for single arrows.
		
		\smallskip
		
		\textbf{Step II:} The general case: $A = \{ \a_\k \colon i_\k \to j_\k  \, | \,  \k = 1 , 2 , \ldots , m \} $ for $ m \geq 1 $.	\\[0.05cm]
		We begin by assuming that $\La / \la A + I \ra $ is an arrow removal algebra of $\La $ in the sense of \cite{arrowrem1}{}.
		It follows from the previous paragraph that each arrow $\a_\k $ induces itself an arrow removal algebra of $ \La $. Therefore, the first part of the proof implies that $\a_\k $ is pre-removable in $\La $ and the ideal $\la \a_\k + I \ra $ is a projective $\La $-bimodule for all $ \k $. In order for $ \la A + I \ra = \sum_{ \k =1 }^m { \la \a_\k + I \ra }$ to be a projective $\La $-bimodule, it remains to prove that the sum of the ideals $\la \a_\k + I \ra $ is direct.
		
		Since $e_{j_{\k_1}} \La e_{i_{\k_2}} = 0 $ for all indices $\k_1, \k_2 \in \{ 1, 2, \ldots, m \}$, every element of $\la \a_\k + I \ra $ is equal modulo $I$ to a $k$-linear combination of paths passing trough $\a_\k $ exactly once and avoiding all arrows in $A \setminus \{ \a_\k \}$. Let $z_{ \a_\k } $ be such an element for every $ \k $ and assume that $ w = \sum_{ \k } z_{ \a _\k } \in I$. Since $ \a_\k $ is pre-removable in $\La $ for every $ \k $, it follows from \hyperlink{lem:ar.rem.2}{\cref{lem:ar.rem.2}{}.(iii)} that $z_{\a _\k} = w_{ \a_\k } \in I$ for every $ \k $. We deduce that the ideal $ \la A + I \ra = \oplus_{ \k =1 }^m { \la \a_\k + I \ra }$ is projective as a $ \La $-bimodule.
		
		In order to see that $A $ is pre-removable in $\La $, take $z \in I$ and define inductively the elements $z^\k = (z^{\k-1})_{\nott \! \a_\k} $ for $ \k = 1 , 2 , \ldots , m $, where $z^0 := z$. Then $z_{\nott \! A} = z^m $ since $z^m$ contains exactly the paths occurring in $ z $ that avoid all arrows in $ A $ with the same coefficients. Furthermore, applying the first part of the proof successively for the arrows $\a_1, \a_2 , \ldots, \a_m $ yields that $z_{ \nott \! A } \in I$.

		For the converse implication, assume that the set of arrows $A $ as above is pre-removable in $\La $ and the ideal $ \la A + I \ra $ is a projective $\La $-bimodule. It is straightforward to verify that the unique $\La$-bimodule homomorphism
		\[
		f \colon \bigoplus_{\k = 1}^m ( \La e_{i_\k} \otimes_k e_{j_\k} \La ) \epic \la A + I \ra
		\]
		such that $f( e_{i_\k} \otimes e_{j_\k} ) = \a_\k + I $ for every $ \k $ is a projective cover of $ \la A + I \ra $ as a $\La $-bimodule. It follows that $f$ is an isomorphism and $ \la A + I \ra  = \bigoplus_\k \la \a_\k + I \ra $. In particular, the ideal $\la \a_\k + I \ra $ is a projective $\La $-bimodule implying that $e_{j_{\k}} \La e_{i_{\k}} = 0$ for every $ \k $ according to \cref{lem:appendixA.1}{}. Now let $ \k_1 , \k_2 \in \{ 1 , 2, \ldots , m \} $ be two distinct indices, and assume that there is a non-zero path $ p $ in $\La $ with source $ j_{\k_1 } $ and target $ i_{\k_2} $. Then the projectivity of the ideal $\la \a_{\k_\nu} + I \ra $ as a $\La $-bimodule for $ \nu = 1 , 2 $ implies that $\a_{\k_1} p \a_{\k_2} $ is non-zero in $ \La $ as in the proof of \cref{lem:appendixA.1}{}, a contradiction since the intersection of ${ \la \a_{\k_1} + I \ra }$ and $ {  \la \a_{\k_2} + I \ra } $ is trivial.
		
		Our next target is to show that for every arrow $\a \in A$ there is a generating set for $I$ avoiding $\a $. We show first that every $\a  $ is pre-removable in $\La $ or, equivalently according to \hyperlink{lem:ar.rem.2}{\cref{lem:ar.rem.2}{}.(iii)}, that $z_\a \in I$ for every $z \in I$. Let $z_{\leq 1}$ and $z_{\geq 2} $ be the unique elements of $k Q$ such that each path occurring in $z_{\leq 1}$ passes through at most one arrow of $A$ and at most once, each path occurring in $z_{\geq 2}$ passes through some arrow in $A$ at least twice (for two possibly distinct arrows in $ A $), and $z = z_{\leq 1} + z_{\geq 2}$. By the previous paragraph, each path of $z_{\geq 2}$ is in $I$ implying that $(z_{\geq 2})_\a  $ and $ z_{\leq 1} = z - z_{ \geq 2}  $ are in $ I$. On the other hand, it holds that $(z_{\leq 1})_A = \sum_\k (z_{\leq 1})_{\a_\k} $, and it follows from the pre-removability of $ A $ that $(z_{\leq 1})_A  \in I$. It remains to notice that $ (z_{\leq 1})_{\a_\k}  \in I $ for every $\k $ due to the decomposition of $ \la A + I \ra $ as the direct sum of the ideals $\la \a_\k + I \ra $; in particular, we have $ (z_{\leq 1})_{\a}  \in I $ implying that $z_\a = (z_{\leq 1})_\a + (z_{\geq 2})_\a \in I$.
		Our claim follows now from the first part of the proof as we have shown that $\a$ is pre-removable in $\La $ and the ideal $\la \a + I \ra $ is a projective $\La$-bimodule.
		
		
		Now let $T$ be a generating set for $I$ and define inductively the sets $T^\k = ( T^{\k-1} )_{\nott \! \a_\k }$ for $ \k = 1 , 2 , \ldots , m $, where $T^0 := T$. A successive application of \cref{cor:ar.rem.5}{} for the arrows $\a_1, \a_2, \ldots , \a_m $ yields that $ T^m $ generates $I$. The set $ T^m $ clearly avoids all arrows in $A$ by construction.
	\end{proof}
	
	\begin{cor}
		\label{cor:appendixA.1}
		Let $\La = k Q  / I$ be a bound quiver algebra and $ A $ a set of arrows such that $\La / \la A + I \ra$ is an arrow removal algebra of $\La $ in the sense of \cite{arrowrem1}{}. Then $A$ is two-sided removable in $\La $.
	\end{cor}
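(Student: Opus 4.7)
The plan is to derive the corollary as an immediate consequence of \cref{prop:ar.rem.1}{} combined with a general fact about bimodule projectivity. Specifically, by \cref{prop:ar.rem.1}{}, the hypothesis that $\La / \la A + I \ra $ is an arrow removal of $\La $ in the sense of \cite{arrowrem1}{} is equivalent to the conjunction of two conditions: (a) the set $A$ is pre-removable in $\La $, and (b) the ideal $\la A + I \ra $ is projective as a $\La$-bimodule. Condition (a) already satisfies the first requirement of \cref{defn:ar.rem.1}{}, so it remains to upgrade condition (b) to finite projective dimension from both sides.

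For this, I would invoke the well-known fact (cf.\ \cref{exam:1}{}) that a projective $\La$-bimodule is automatically projective both as a left and as a right $\La$-module. The argument is that every projective $\La$-bimodule is a direct summand of a free $\La$-bimodule $\bigoplus ( \La \otimes_k \La )$, and the free bimodule $\La \otimes_k \La $ is in turn free (hence projective) as a one-sided $\La$-module, since $k$ is a field. Applied to $\la A + I \ra $, this gives $\pd { \la A + I \ra }_\La = 0 = \pd _\La  \la A + I \ra $.

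Combining the two steps, the set $A $ is pre-removable with $\la A + I \ra $ of projective dimension zero as a left and right $\La $-module. By \cref{defn:ar.rem.1}{}, this precisely says that $A$ is two-sided removable in $\La$, completing the proof. There is no serious obstacle here; the corollary is essentially a reformulation of \cref{prop:ar.rem.1}{} in the language of \cref{defn:ar.rem.1}{}, and its main content has already been carried out in the proof of the proposition.
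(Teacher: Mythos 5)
Your proposal is correct and follows the paper's own route: the paper also deduces the corollary directly from \cref{prop:ar.rem.1}{} and \cref{defn:ar.rem.1}{}, with the intermediate fact that a projective $\La$-bimodule is projective as a one-sided module being exactly the observation the paper records in \cref{exam:1}{}. Your explicit justification via the free bimodule $\La \otimes_k \La$ just spells out that standard step.
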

	
	\begin{proof}
		Direct consequence of \cref{prop:ar.rem.1}{} and \cref{defn:ar.rem.1}{}.
	\end{proof}

	We end this appendix by showing that condition (ii) of the original arrow removal operation is redundant when $ A $ comprises a single arrow.
	
	\begin{cor}[\mbox{cf.\ \cite[Proposition~4.5]{arrowrem1}{}}]
		\label{cor:ar.rem.1}
		Let $\La  = k Q / I$ be a bound quiver algebra and $\a \in Q_1 $ an arrow. Then $\La / \la \a + I \ra $ is an arrow removal algebra of $\La $ in the original sense if and only if there is a generating set for $I$ avoiding $\a $.
	\end{cor}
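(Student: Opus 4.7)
The plan is to derive the corollary as an immediate consequence of \cref{prop:ar.rem.1} combined with \cref{lem:appendixA.1}, specialized to the singleton case $A = \{\a\}$ where $\a \colon i \to j$. The ``only if'' direction is tautological from the definition of classical arrow removal, which requires precisely condition (i) on the generating set. So the entire content of the statement lies in the ``if'' direction.

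For the ``if'' direction, suppose $I$ admits a generating set $T$ avoiding $\a$. Then $\a$ is pre-removable in $\La$ by \cref{defn:ar.rem.6} applied to $T$. To invoke \cref{prop:ar.rem.1} (and thus conclude that $\La/\la \a + I \ra$ is a classical arrow removal of $\La$), the first key step is to establish that the ideal $\la \a + I \ra$ is projective as a $\La$-bimodule. The second step is then to observe that, by \cref{lem:appendixA.1}, this projectivity automatically yields $e_j \La e_i = 0$, which is precisely condition (ii) of the classical arrow removal operation.

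The projectivity is established via the Gröbner basis argument already carried out in Step~I of the proof of \cref{prop:ar.rem.1}. Fix any admissible order on $\BQ$; by Lemma~A.4 of \cite{arrowrem1}, there exists a finite Gröbner basis $\GG$ for $I$ that avoids $\a$. Setting $\NN = \ntip(I)$, we aim to show that the set $(\NN e_i)\, \a\, (e_j \NN)$ is linearly independent modulo $I$, which by \cref{lem:appendixA.1} is equivalent to $\la \a + I \ra$ being $\La$-bimodule projective. The argument is that if $w = \sum \lambda_{p,q}\cdot p \a q \in I$ were non-zero with $p \in \NN e_i$ and $q \in e_j \NN$, then $\tip(w) = p_0 \a q_0$ would lie in $\tip(I)$, forcing some $\tip(g)$ with $g \in \GG$ to divide $p_0 \a q_0$. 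Since every element of $\GG$ avoids $\a$, so does $\tip(g)$, which then must divide either $p_0$ or $q_0$ entirely --- contradicting $p_0, q_0 \in \NN$.

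There is no substantial obstacle here; the corollary is essentially the parenthetical observation already recorded between Steps~I and~II of the proof of \cref{prop:ar.rem.1}, namely that condition (ii) of the classical arrow removal operation is redundant for a single arrow. The only mild subtlety is logistical: one must make sure the Gröbner basis machinery is invoked correctly (in particular, the existence of a Gröbner basis avoiding $\a$, which rests on Lemma~A.4 of \cite{arrowrem1}), since the remainder of the reasoning is purely the tip-avoidance dichotomy together with \cref{lem:appendixA.1}.
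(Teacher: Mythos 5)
Your proposal is correct and follows essentially the same route as the paper: the ``only if'' direction is immediate from condition (i) of the definition, and for the ``if'' direction the Gr\"obner basis argument from Step~I of the proof of \cref{prop:ar.rem.1}{} (which uses only the existence of a generating set avoiding $\a$) gives projectivity of $\la \a + I \ra$ as a $\La$-bimodule, after which \cref{lem:appendixA.1}{} yields $e_{t(\a)} \La e_{s(\a)} = 0$, i.e.\ condition (ii). Whether one then concludes by citing the characterization in \cref{prop:ar.rem.1}{} or by checking the two conditions of classical arrow removal directly, as the paper does, is an immaterial difference.
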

	
	\begin{proof}
		If there is a generating set for $I$ avoiding $\a $, then it follows from the proof of \cref{prop:ar.rem.1}{} that the ideal $\la \a + I \ra $ is projective as a $\La $-bimodule. \cref{lem:appendixA.1}{} implies now that $e_{t( \a )} \La e_{ s( \a )} = 0 $. 
	\end{proof}

\end{document}